\numberwithin{equation}{section}
\numberwithin{figure}{section}
\theoremstyle{plain}
\newtheorem{thm}{\protect\theoremname}[section]
\theoremstyle{plain}
\newtheorem{prop}[thm]{\protect\propositionname}
\theoremstyle{plain}
\newtheorem{cor}[thm]{\protect\corname}
\theoremstyle{plain}
\newtheorem{lem}[thm]{\protect\lemmaname}
\theoremstyle{definition}
\newtheorem{defn}[thm]{\protect\definitionname}
\theoremstyle{remark}
\newtheorem{rem}[thm]{\protect\remarkname}
\theoremstyle{plain}
\newtheorem{exa}[thm]{\protect\examplename}
\def\vep{\varepsilon}
\def\R{\mathbb{R}}
\def\cH{\mathbf{H}}
\def\intr{\int_{\R^d}}
\def\grad{\nabla}
\DeclareMathOperator*{\argmin}{arg\,min}
\providecommand{\corname}{Corollary}
\providecommand{\lemmaname}{Lemma}
\providecommand{\propositionname}{Proposition}
\providecommand{\remarkname}{Remark}
\providecommand{\theoremname}{Theorem}
\providecommand{\definitionname}{Definition}
\providecommand{\examplename}{Example}
\title[Regularity of the free boundary for the supercooled Stefan problem]{Regularity of the free boundary for the supercooled Stefan problem in arbitrary dimensions}
\author{Max Engelstein, Inwon Kim, Sebastian Munoz}
\date{}
 \address{ME: School of Mathematics, University of Minnesota, Minneapolis, MN, 55455, USA}
 \address{IK, SM: Department of Mathematics, University of California, Los Angeles, CA,
90095, USA}
	\email{mengelst@umn.edu}
	\email{ikim@math.ucla.edu}
	 \email{sebastian@math.ucla.edu}
\begin{document}
 \keywords{supercooled Stefan problem; free boundary regularity; parabolic obstacle problem; blow-up analysis; parabolic Hausdorff dimension; freezing time; nucleation; freezing front jump discontinuities.} 
\subjclass[2010]{35R35, 35B65, 80A22}
\maketitle
\begin{abstract}
    We study the free boundary in the supercooled Stefan problem, a classical model for the solidification of water below its freezing temperature. In contrast with the melting problem, physical experiments and heuristics indicate that the water--ice interface in the supercooled problem may exhibit fractal freezing sets, infinite-speed propagation of the frozen front, and nucleation (the spontaneous appearance of ice). Despite this, we show that the free boundary has a robust structure. 
    
    We decompose the free boundary into three parts: (1) a regular part that advances with finite speed in time; (2) a singular part consisting of points where the front attains infinite speed or nucleates, but with controlled space-time (i.e., $\leq d-1$ parabolic) dimension; and (3) a jump component, which can have large dimension in a time slice, but which is contained in a space-time smooth graph and occurs only at a zero-dimensional set of times. Examples show that each of these parts can be nonempty. 

    Furthermore, we prove that the free boundary is the graph $t=s(x)$ of a continuously differentiable freezing time $s$, and the singular set coincides with the critical set of $s$, proving that singularities in supercooled freezing always occur with infinite speed. 

    These results provide the first free boundary regularity theory for the supercooled Stefan problem in arbitrary dimensions. 
\end{abstract}

\tableofcontents

\section{Introduction}\label{s:intro}

We study the free boundary of solutions to the supercooled Stefan problem
\begin{equation} \label{eq:eta intro}
\eta_t-\Delta \eta =(\chi_{\{\eta>0\}})_t \; \;\text{in } \;\;\Omega  \times (0,\infty),
\end{equation}
where $\Omega \subset \R^d$, $-\eta\leq0$ represents the temperature of the supercooled water, and $\chi_{\{\eta>0\}}$ represents the change in latent heat when water freezes into ice (see \cite{Rub}). We denote the \emph{transition zone} of points that freeze in finite positive time by
\begin{equation*}
    \Gamma:=\{x\in \Omega: \inf\{t:\eta(x,t)=0\}\in (0,\infty)\}.
\end{equation*}
Then the transformation $w(x,t):=\int_{t}^{\infty}\eta(x,s)ds$ converts the problem into the weighted parabolic obstacle problem $w_t -\Delta w=-\chi_{\{w>0\}}\chi_{\Gamma}$. In particular, if $x_0$ is an interior point of $\Gamma$ that freezes at time $t_0$, then, near the free boundary point $(x_0,t_0)$, the problem is locally equivalent to
\begin{equation} \label{eq:obstacle intro}
    \begin{cases}w_t -\Delta w=-\chi_{\{w>0\}}\\
    w\geq 0\\
    w_t\leq 0, \;\{w>0\}=\{w_t<0\}.         
    \end{cases}
\end{equation}

The goal of this paper is to study the regularity of the free boundary $\partial\{w>0\}$ for solutions to \eqref{eq:obstacle intro}. This amounts to studying the free boundary of solutions to \eqref{eq:eta intro} near interior points of the transition zone $\Gamma$. In particular, if $\Gamma$ is an open set, then our results apply to all free boundary points. 

Although \eqref{eq:eta intro} is a classical and well-studied model, there are surprisingly few results on the regularity of the free boundary. This is due to the ill-posedness of \eqref{eq:eta intro}; indeed the existence of weak solutions to \eqref{eq:eta intro} for general initial data was only proven recently \cite{CKK}. Physical heuristics suggest that there exist solutions of the supercooled Stefan problem with highly irregular liquid--ice interfaces. This includes jump propagation in one dimension \cite{She70,HV96,DT19}, and much richer patterns in higher dimensions \cite{DF84,HOL85,HMV00}. On the other hand, for certain classes of weak solutions in one space dimension, the third named author showed that the free boundary's jump times are highly constrained \cite{Mun25}. This suggests that, while arbitrary weak solutions may develop pathological behavior, the singularities in the free boundary of certain special weak solutions may be better controlled. 

Together with its companion paper \cite{CKM25}, this work provides further evidence for this perspective. In particular, we develop the first free boundary regularity result for a physically meaningful class of weak solutions to \eqref{eq:eta intro} that holds in arbitrary dimensions. More precisely, \cite[Thm. 1.4]{CKM25} shows that the main existence result available in the literature, \cite[Thm. 1.2]{CKK}, produces {\it maximal} solutions for which the transition zone $\Gamma$ is open modulo a low-dimensional set\footnote{The set $\Gamma \setminus \text{int}(\Gamma)$ is $(d-2)$--dimensional in the Hausdorff sense.}. For such solutions, the results  of the present paper  yield a thorough description of the free boundary. 
On the other hand, \cite[Thm. 1.3]{CKM25} shows that for general initial data one can construct non-maximal solutions for which $\Gamma$ exhibits
genuinely pathological behavior, including fractal freezing and interior waiting times.
 Maximal solutions may be interpreted as solutions that, in an averaged sense, delay solidification as much as possible. Our results therefore support the heuristic that this bias against solidification promotes additional free boundary regularity relative to arbitrary weak solutions of \eqref{eq:eta intro}. Finally, we note that the statements proved here also apply to the \emph{global} solutions constructed in \cite[Thm. 9.3]{KK}, for which $\Gamma=\R^d$ (see the discussion after Theorem \ref{thm:sigma}).

We now turn to the model \eqref{eq:obstacle intro}. Classical results on the parabolic obstacle problem (see, e.g., \cite{CaPeSh}) show that, outside of a closed set $\Sigma$ of \emph{singular} points, the free boundary of any solution to \eqref{eq:obstacle intro} is a $C^{\infty}$ space-time surface, and that surface moves with finite positive speed, leading to $(d-1)$--dimensional time slices. The next step in understanding the free boundary is, therefore, to study the nature of the singular
points. Without any assumptions on the sign of $w_t$, a first step in this analysis was completed by \cite{Bla06,monneau}, who stratified the singular points by their infinitesimal symmetries and gave dimension bounds describing how the different strata are situated in space. However, their methods did not discuss the question of how the singular points are distributed in space-time. 

This question remained largely open until the recent breakthrough work of Figalli, Ros-Oton and Serra \cite{figalli}. In the case of the \emph{melting problem}, corresponding to $w_t>0$, \cite{figalli} showed that the \emph{parabolic} Hausdorff dimension of the singular set satisfies the sharp estimate $\dim_{\operatorname{par}}(\Sigma)\leq d-1$. Such a bound is false in the supercooled case. Indeed, the simple example
\begin{equation} \label{eq:-t example intro}
    w(x,t)=\max(-t,0), \quad (x,t)\in \R^d \times (-\infty,\infty),
\end{equation}
which readily solves \eqref{eq:obstacle intro}, shows that $\Sigma$ may, in general, have time slices of spatial dimension $d$ that appear with infinite speed. Our first result provides the correct sharp estimate for the parabolic Hausdorff dimension\footnote{Recall the parabolic Hausdorff dimension is simply the Hausdorff dimension with respect to the parabolic metric, $d_{\text{par}}((x,t), (y,s)):= |x-y| + |t-s|^{1/2}$. To illustrate this, consider a set $E \subset \mathbb R^{d+1}$ of the form $E = E_x\times E_t$ where $E_x$ is a subset of $t= 0$ and $\dim_H E_x = d_1$ and $E_t$ is a subset of $x= 0$ and $\dim_H E_t = d_2$. Then $\dim_{\text{par}} E = d_1 + 2d_2$.} of the singular set for the supercooled Stefan problem. 
\begin{thm}\label{thm: dim par} Let $\Omega \subset \R^d$ be an open set, let $T>0$, let $w$ be a bounded solution to \eqref{eq:obstacle intro} on $\Omega \times (0,T)$, and let $\Sigma \subset \R^d \times (0,T)$ be the set of singular free boundary points of $w$ (i.e. points around which $\partial \{w > 0\}$ cannot be written as a smooth hypersurface advancing smoothly in time). Then
\begin{equation} \label{eq:dim par intro}
    \dim_{\operatorname{par}}(\Sigma)\leq d.
\end{equation}  
Moreover, if $\eta:=-w_t<1$ , then $\dim_{\operatorname{par}}(\Sigma)\leq d-1$.
\end{thm}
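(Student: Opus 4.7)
The plan is to perform a blow-up analysis at each singular point, classify the resulting parabolically $2$-homogeneous global solutions, and then bound the parabolic Hausdorff dimension of each stratum of $\Sigma$ separately. At each $(x_0,t_0)\in\Sigma$ I would consider the parabolic rescalings $w_r(x,t):=r^{-2}w(x_0+rx,\,t_0+r^{2}t)$ and, via a Weiss-type monotonicity formula adapted to \eqref{eq:obstacle intro} (exploiting the identity $\{w>0\}=\{w_t<0\}$), extract subsequential limits $w_0$ that are $2$-parabolically homogeneous global solutions of the obstacle problem satisfying $w_0\geq 0$ and $(w_0)_t\leq 0$. The key structural claim is that such homogeneous blow-ups split into two disjoint families: \emph{spatial} profiles $w_0(x,t)=\tfrac12\,x^{T}Ax$ with $A$ symmetric positive semidefinite and $\mathrm{tr}\,A=1$; and \emph{nucleation} profiles that genuinely depend on $t$ and satisfy $-(w_0)_t\equiv 1$ on a set of positive measure, the model case being $w_0(x,t)=\max(-t,0)$. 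I would then decompose $\Sigma=\Sigma_{S}\sqcup\Sigma_{N}$ accordingly.

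For $\Sigma_{S}$ the blow-ups are time-independent, so the local structure at such points is governed by the elliptic theory of singular points in the parabolic obstacle problem. The Figalli--Ros-Oton--Serra argument from \cite{figalli}, together with the Blanchet--Monneau stratification of \cite{Bla06,monneau}, should adapt under the one-sided sign condition $w_t\leq 0$ (rather than $w_t\geq 0$ as in the melting case) to yield $\dim_{\operatorname{par}}(\Sigma_{S})\leq d-1$, since that argument never actually uses the sign of $w_t$ at a time-independent blow-up.

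For $\Sigma_{N}$ the plan is to show that the time projection $\pi_t(\Sigma_{N})\subset(0,T)$ is $0$-dimensional. Since $w\geq 0$ and $w_t\leq 0$, the set $\{w>0\}$ is monotone non-increasing in time, so the map $t\mapsto|\{x\in\Omega':w(x,t)>0\}|$ is monotone non-increasing and bounded on any compact $\Omega'\Subset\Omega$. A nucleation blow-up at $(x_0,t_0)$ forces instantaneous freezing of a positive-measure neighborhood of $x_0$ at $t_0$, so $t_0$ is a genuine jump discontinuity of this monotone function; since monotone bounded functions admit only countably many jumps, $\pi_t(\Sigma_{N})$ is countable, and in particular $\dim_H(\pi_t(\Sigma_{N}))=0$. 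Combined with the trivial spatial slice bound $\dim_H(\Sigma_{N}\cap\{t=t_0\})\leq d$, the parabolic product dimension formula yields $\dim_{\operatorname{par}}(\Sigma_{N})\leq d+2\cdot 0=d$, and therefore $\dim_{\operatorname{par}}(\Sigma)\leq\max(d-1,d)=d$. Under the extra hypothesis $\eta=-w_t<1$, any nucleation blow-up would produce $-(w_0)_t\equiv 1$ on a positive-measure set, incompatible with passing $\eta<1$ to the parabolic limit; hence $\Sigma_{N}=\emptyset$ and $\dim_{\operatorname{par}}(\Sigma)=\dim_{\operatorname{par}}(\Sigma_{S})\leq d-1$.

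I expect the main obstacle to be the rigid classification of $2$-homogeneous blow-ups into the spatial and nucleation families. In particular, one must rule out homogeneous mixed-type candidates such as $\bigl(\tfrac12\,x^{T}Ax - t\bigr)_{+}$ with $\mathrm{tr}\,A=0$, or at least show that such profiles do not arise as blow-ups at points of $\Sigma$. This should require a Monneau-type monotonicity formula calibrated to the supercooled obstacle problem, combined with the sign condition $w_t\leq 0$ and the identity $\{w>0\}=\{w_t<0\}$ to propagate rigidity across time slices. The countability of nucleation times is a secondary delicate point, since it relies on a global-in-space monotonicity argument rather than purely local blow-up information, but the monotonicity of $|\{w>0\}|$ in time should make it tractable.
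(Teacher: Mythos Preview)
Your proposal has two genuine gaps that prevent the argument from going through.

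\textbf{The blow-up dichotomy is false.} Singular blow-ups at points of $\Sigma$ have the form $p_2(x,t)=-mt+\tfrac12 Ax\cdot x$ with $m\in[0,1]$, $A\ge 0$, $\mathrm{tr}\,A=1-m$. The intermediate profiles with $0<m<1$ \emph{do occur} and cannot be ``ruled out''; they constitute the strata $\Sigma_{\mathrm{dyn},k}$ for $k\le d-1$ in the paper. So your decomposition $\Sigma=\Sigma_S\sqcup\Sigma_N$ (with $\Sigma_S$ having $m=0$ and $\Sigma_N$ having $m=1$) misses an entire family of singular points, and your hoped-for rigidity classification is simply not true. This already breaks the $\eta<1$ conclusion: ruling out $m=1$ still leaves all the $0<m<1$ strata, and you have said nothing about their parabolic dimension.

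\textbf{The countability-of-nucleation-times argument fails.} Even restricting to $\Sigma_d$ (blow-up $=-t$), a point $(x_0,t_0)\in\Sigma_d$ does \emph{not} force instantaneous freezing of a positive-measure set at time $t_0$. The convergence $r^{-2}w_r\to -t$ is an asymptotic statement and does not imply $\{w(\cdot,t_0)=0\}$ has interior near $x_0$. The paper constructs radial solutions (Proposition~\ref{p:sigma2example}) in which $\Sigma_d$ consists of a single isolated space-time point, with no jump in $|\{w>0\}|$ at that time. So $\pi_t(\Sigma_N)$ need not sit inside the jump set of a monotone function, and your countability claim is unfounded. (Separately, the ``parabolic product dimension formula'' you invoke is not a theorem: one cannot simply add $\dim_H$ of a spatial slice to $2\cdot\dim_H$ of the time projection.)

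The paper's route is quite different. It stratifies $\Sigma_{\mathrm{dyn}}=\bigcup_{k\le d}\Sigma_{\mathrm{dyn},k}$ by $\dim\ker A$, uses Monneau's theorem to bound $\dim_{\mathcal H}(\pi_x(\Sigma_{\mathrm{dyn},k}))\le k$, and then upgrades this to $\dim_{\mathrm{par}}(\Sigma_{\mathrm{dyn},k})\le k$ via a GMT lemma (Lemma~\ref{lem: FRS cleaning}): it suffices that each point of the stratum has a backward parabola free of other stratum points, which follows from the positivity of $p_2$ for $t<0$ when $m>0$ (Lemma~\ref{lem: freeze non-stationary}). For $\Sigma_{\mathrm{stat}}$ one first shows $\Sigma_{\mathrm{stat},d-1}=\emptyset$ by a second-blow-up argument---this is the nontrivial part and does not follow by ``adapting FRS'', since several estimates (continuity of $w_t$, two-sided semiconvexity) fail in the supercooled case.
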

Equality in \eqref{eq:dim par intro} is attained for any time translation of the solution in \eqref{eq:-t example intro} (see also Example \ref{ex:tychonoff}). The second statement reflects the previously observed fact that the value $1$ is the critical supercooling  threshold for $\eta$ to exhibit a discontinuous jump (see \cite{Pel,CaCaRo} regarding the physical role of this critical parameter).
In particular, if $d=2$ and $\eta<1$, the singular set is empty outside of a $(1/2)$--dimensional set of times.

Our next result concerns the space-time regularity of the free boundary. In the classical melting problem, it is known that one may locally write $\partial \{w>0\}=\{t=s(x)\}$, where the melting time $s$ is Lipschitz continuous but, in general, not $C^1$ (see \cite{Ca78,FioLatRos})\footnote{Note that this expression of the free boundary as a graph is qualitatively different from the local $C^\infty$--graphicality of $\partial \{w > 0\}\backslash \Sigma$ discussed above. See Remark \ref{r:graphs} for more discussion.}. A prototypical example of this would be a singular point $(x_0,t_0)\in \Sigma$ arising as the collision of two independent fronts that are melting at different (finite) speeds, with melting times $s_1$ and $s_2$. Formally the overall melting time is then $\min(s_1(x),s_2(x))$, so $s$ is at best Lipschitz.

However, recent one-dimensional results in \cite{Mun25} suggest that the stronger claim $s\in C^1$ does hold for the supercooled problem. We prove here that this is the case in arbitrary dimensions.
\begin{thm}\label{thm:reg intro} Let $w$ be a bounded solution to \eqref{eq:obstacle intro} on $B_1 \times (-1,1)$, and assume that $(0,0)\in \partial\{w>0\} $. Then there exist $r_0\in (0,1)$ and $s:B_{r_0}\to (-1,1)$ such that \begin{equation*}
  (B_{r_0}\times(-1,1)) \cap \{w>0\}=\{t<s(x)\}, \quad  s\in C^1(B_{r_0}),\end{equation*}
and the singular set may be characterized as
\begin{equation} \label{eq:sigma char s}
    \Sigma \cap (B_{r_0}\times (-1,1))=\{(x,s(x)): x\in B_{r_0}, \; \grad s(x)=0\}.    
\end{equation}
In particular, $\partial \{w>0\}$ is a space-time $C^1$ hypersurface.

\end{thm}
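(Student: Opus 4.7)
My plan is to construct the freezing time $s$ from the structural monotonicity $\{w>0\}=\{w_t<0\}$, then use the blow-up machinery behind Theorem~\ref{thm: dim par} to upgrade a simple graph representation of the free boundary to full $C^1$ regularity of $s$ while identifying $\Sigma$ with the critical set of $s$.

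The starting point is to define $s(x):=\sup\{t\in(-1,1):w(x,t)>0\}$ and observe that, since $w\geq 0$, $w_t\leq 0$, and $\{w>0\}=\{w_t<0\}$, for each fixed $x$ the map $t\mapsto w(x,t)$ is strictly decreasing on its positivity set and identically zero afterward. Consequently $\{w>0\}$ meets each vertical fiber $\{x\}\times(-1,1)$ in an interval of the form $(-1,s(x))$, giving the set identity $\{w>0\}\cap(B_{r_0}\times(-1,1))=\{t<s(x)\}$ as soon as $s$ takes values in a compact subinterval of $(-1,1)$. The latter follows from nondegeneracy and continuity of $w$ at the free boundary point $(0,0)$.

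The first nontrivial analytic step is continuity of $s$. Lower semicontinuity is immediate from openness of $\{w>0\}$. For upper semicontinuity I will argue by contradiction: if $x_n\to x_0$ with $s(x_n)\to L>s(x_0)$, then the $C^{1,1}$ bound on $w$ together with nondegeneracy at $(x_n,s(x_n))$ produces, after a parabolic blow-up at $(x_0,L)$, a nontrivial $2$-homogeneous solution $w_\infty$ of \eqref{eq:obstacle intro} that vanishes identically on the $t$-axis, because $w(x_0,\cdot)\equiv 0$ on $[s(x_0),1)$. The supercooled blow-up classification developed for Theorem~\ref{thm: dim par} (which adds the pure $\max(-t,0)$ mode to the half-space and quadratic modes of the melting case) combined with uniqueness of the blow-up at the adjacent free boundary point $(x_0,s(x_0))$ should then force a compatibility between the two blow-ups that the monotonicity $w_t\leq 0$ rules out, delivering the contradiction. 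This step is the main obstacle and is exactly where the supercooled case diverges qualitatively from the melting case, since it is the extra time-dependent mode that encodes the possibility of ``nucleation'' pieces detached from the freezing graph.

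Given continuity, the $C^1$ regularity and the characterization of $\Sigma$ come within reach. At a regular free boundary point, Caffarelli's parabolic theory presents $\partial\{w>0\}$ as a $C^\infty$ space-time hypersurface moving with finite nonzero speed, so $s$ is locally $C^\infty$ there with $\nabla s\neq 0$. At a singular point $(x_0,s(x_0))\in\Sigma$, every blow-up is either a time-independent quadratic $\tfrac12 x^\top A x$ with $\operatorname{tr} A=1$ or a supercooled mode built on $\max(-t,0)$; unwinding such a blow-up at parabolic scale $r$ yields $|s(x_0+r\xi)-s(x_0)|\lesssim r^2$ uniformly in $|\xi|\leq 1$, so $s$ is differentiable at $x_0$ with $\nabla s(x_0)=0$. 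Combined with $\nabla s\neq 0$ at regular points, this gives the characterization $\Sigma\cap(B_{r_0}\times(-1,1))=\{(x,s(x)):\nabla s(x)=0\}$. Continuity of $\nabla s$ across $\Sigma$ is then obtained by quantifying the blow-up convergence through a Weiss- or Monneau-type monotonicity formula, which forces the regular-point front speed $1/|\nabla s|$ to diverge at a controlled rate as one approaches $\Sigma$, so that $\nabla s\to 0$.
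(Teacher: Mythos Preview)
There is a genuine gap at the key step. Your claim that ``unwinding the blow-up at parabolic scale $r$ yields $|s(x_0+r\xi)-s(x_0)|\lesssim r^2$'' at every singular point does not follow from blow-up convergence alone. When the profile is time-independent, $p_2(x)=\tfrac12 x^\top Ax$, its zero set in $\{t<0\}$ is the vertical cylinder $\ker A\times(-\infty,0)$, not a slice near $\{t=0\}$; uniform convergence $r^{-2}w_r\to p_2$ gives no information on $w$ along directions in $\ker A$ and hence no bound on $s(r\xi)$ for $\xi$ near $\ker A$. Weiss/Monneau formulas control the rate at which $w_r$ approaches $p_2$, not the location of $\partial\{w>0\}$ in the region where $p_2$ itself vanishes, so your final paragraph does not close this either. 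Your upper-semicontinuity argument is also unsound as stated: the point $(x_0,L)$ need not lie on $\partial\{w>0\}$, and the appeal to ``compatibility of the two blow-ups'' is doing no real work.

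The paper takes a different route that bypasses these issues. A barrier argument exploiting the supercooled identity $\{w>0\}=\{w_t<0\}$ gives the pointwise estimate $|\nabla w|\leq C|w_t|$ (Lemma~\ref{lem:eta grad w}), from which Lipschitz continuity of $s$ follows directly by an ODE along space-time segments (Proposition~\ref{prop lipschitz}). Near a singular point this ratio is then upgraded to $|\nabla w|\leq r^{1/2}|w_t|$ in $Q_r$: for points of $\Sigma_{\mathrm{dyn}}$ via the second half of Lemma~\ref{lem:eta grad w} fed by the superquadratic cleaning of Lemma~\ref{lem: freeze non-stationary}; for points of $\Sigma_{\mathrm{stat}}$ via a refined barrier built from a positive caloric function on the cone $\{p_2>0\}$ (Lemma~\ref{lem: eta nondegeneracy space blow-ups}), which in turn requires the nontrivial input $\Sigma_{\mathrm{stat},d-1}=\emptyset$ (Theorem~\ref{t:nofullkernel}, proved by second blow-up analysis). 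The implicit function theorem on the smooth level sets $\{w=\varepsilon\}$ converts $|\nabla w|/|w_t|\to 0$ into $|\nabla s_\varepsilon|\to 0$, and letting $\varepsilon\downarrow 0$ gives both $\nabla s(x_0)=0$ and continuity of $\nabla s$ across $\Sigma$ in one stroke, without ever estimating $|s(x_0+r\xi)-s(x_0)|$ directly.
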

This result provides a precise description of the singular set in terms of the critical points of the \emph{freezing time} $s$. Since the quantity $|\grad s(x)|^{-1}$ is the speed of propagation of the interface at the point $(x,s(x))$, the improved regularity of $s$ compared with the melting problem, paired with \eqref{eq:sigma char s}, may be understood as a rigorous statement of the fact that singularities in supercooled freezing always occur with infinite speed. 
One may verify (e.g. from Proposition \ref{p:sigma2example}) that the freezing time is, in general, not $C^{2,\alpha}$ regular for any $\alpha>0$. It is, however, an interesting open question whether the result of Theorem \ref{thm:reg intro}  can be upgraded, for instance, to $C^{1,\alpha}$ regularity. Such a result would give a finer description of the free boundary as its speed approaches infinity.

The theory of the parabolic obstacle problem \cite{CaPeSh,Bla06} shows that every singular point $(x_0,t_0)$ has a unique blow-up profile of the form
\begin{equation} \label{eq:blow-up intro}
    \lim_{r\downarrow 0}\frac{w(x_0+rx,t_0+r^2t)}{r^2}=-mt+\frac12Ax\cdot x, \quad m\in [0,1], \;A:=A_{x_0,t_0}\geq 0, \quad \operatorname{tr}(A)=1-m,
\end{equation}
where $(x,t)\in \R^d \times (-\infty,0].$ The singular set $\Sigma$ can then be stratified as
\begin{equation} \label{eq:stratification}
    \Sigma=\bigcup_{k=1}^{d}\Sigma_{k}, \quad \Sigma_k=\{(x_0,t_0)\in \Sigma: \dim(\ker (A_{x_0,t_0}))=k\}.
\end{equation}
As we show in the proof of Theorem \ref{thm: dim par}, one actually has the finer estimate for each stratum,
\begin{equation} \label{eq:dim par strata}
    \dim_{\operatorname{par}}(\Sigma_k)\leq k. 
\end{equation}
The top stratum $\Sigma_d$ therefore corresponds to the case $A=0,\;m=1$, which has as a blow-up profile precisely the example \eqref{eq:-t example intro}. In particular, a key feature that distinguishes the supercooled case from the classical melting problem is the fact that it may have time-dependent blow-up profiles in all strata and a nonempty top stratum $\Sigma_d$. These singularities model the possibility of nucleation and advancing ice ``jumping''.  By \emph{nucleation} we mean the appearance of ice `out of nowhere'; see Definition \ref{def:nucleation} for a precise formulation. We refer to ``jumps'' as the phenomenon where at a given time the liquid--ice interface has nonempty interior, considered as a subset of $\mathbb R^d$ (i.e. $\mathrm{int}(\Sigma\cap \{t= t_0\}) \neq \emptyset$).

It is then natural to ask whether a finer analysis of the singular set is possible by studying the structure of $\Sigma_d$, which is the stratum where the bound provided by \eqref{eq:dim par strata} is largest. Indeed, in light of \eqref{eq:dim par strata} if $\Sigma\cap \{t = t_0\}$ has interior or develops fractal structure with dimension $> d-1$, it must be due a preponderance of points in $\Sigma_d$. This is the content of the next result.

\begin{thm}\label{thm:sigma} Let $w$ be a bounded solution to \eqref{eq:obstacle intro} on $\Omega \times (0,T)$, and let $\Sigma$ be the set of singular points of $w$. There exists a closed set $\Sigma_{d}^{\infty}\subset \Sigma$ such that
\begin{equation} \label{eq:dimpar finite freq}
    \dim_{\operatorname{par}}(\Sigma \backslash \Sigma_{d}^{\infty})\leq d-1,  
\end{equation}
\begin{equation}\label{eq:dim jump times}
    \dim_{\mathcal{H}}(\{t\in(0,T): (x,t)\in \Sigma_{d}^{\infty} \;\;\;\text{for some }\; x\in \Omega\})=0.
\end{equation}
Additionally, the set $\Sigma_{d}^{\infty} \subset \Omega \times (0,T)$ is contained in the graph $\{t=h(x)\}$ of a smooth function $h\in C^{\infty}(\Omega)$. Furthermore, if $\Omega=\R^d$, then one has the dichotomy
\begin{equation}\label{dichotomy}\Sigma_{d}^{\infty}=\emptyset \quad \text{ or } \quad w\equiv (t_0-t)^+ \quad \text{for some}\quad  t_0\in [0,T].
\end{equation}
\end{thm}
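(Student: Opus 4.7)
The plan is to distinguish points of the top stratum $\Sigma_d$ according to a second blow-up analysis. Since every $(x_0,t_0)\in\Sigma_d$ has leading blow-up profile $(t_0-t)_+$ by \eqref{eq:blow-up intro}, I define $\Sigma_d^\infty$ as the subset of $\Sigma_d$ at which the discrepancy $w-(t_0-t)_+$ vanishes to infinite parabolic order: for every $k$, the rescalings $r^{-k}\bigl(w(x_0+rx,t_0+r^2t)-r^2(-t)_+\bigr)$ converge to $0$ in $L^2_{\mathrm{loc}}$ as $r\downarrow 0$. Points of $\Sigma_d\setminus \Sigma_d^\infty$ then admit a finite order of contact, and a compactness/epiperimetric argument produces a nontrivial homogeneous second blow-up that solves the heat equation on its support and inherits a sign condition from $w\geq 0$. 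Such a caloric polynomial must vanish on a proper affine subspace of $\R^d$, producing spatial symmetries beyond the trivial one in $(t_0-t)_+$, and iterating a Federer--Almgren dimension reduction along these enhanced symmetries (in the spirit of \cite{figalli}) yields $\dim_{\operatorname{par}}(\Sigma_d\setminus \Sigma_d^\infty)\leq d-1$. Combined with \eqref{eq:dim par strata} applied to $\Sigma_1,\dots,\Sigma_{d-1}$, this gives \eqref{eq:dimpar finite freq}.

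For the containment in a smooth graph, Theorem~\ref{thm:reg intro} supplies a $C^1$ freezing time $s$ with $s(x_0)=t_0$ at every $(x_0,t_0)\in \Sigma_d^\infty$, and the infinite-order agreement of $w$ with $(t_0-t)_+$ translates into the statement that $s-t_0$ vanishes to infinite order at $x_0$ in the Taylor sense, since the free-boundary graph $t=s(x)$ must track the flat free boundary $\{t=t_0\}$ of the blow-up to arbitrary order. Consequently $\pi_x(\Sigma_d^\infty)$ is contained in the flat set of $s$, and Whitney's extension theorem produces $h\in C^\infty(\Omega)$ whose $\infty$-jet at each $x_0\in\pi_x(\Sigma_d^\infty)$ agrees with the constant $s(x_0)$, so that $\Sigma_d^\infty\subset \{t=h(x)\}$. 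Each point of $\pi_x(\Sigma_d^\infty)$ is then a flat point of $h$, and Bates' strengthening of the Morse--Sard theorem implies $\dim_{\mathcal H}(h(\pi_x(\Sigma_d^\infty)))=0$, yielding \eqref{eq:dim jump times}.

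For the dichotomy on $\Omega=\R^d$, I fix $(x_0,t_0)\in \Sigma_d^\infty$ and set $v:=w-(t_0-t)_+$. On $\{w>0\}\cap\{t<t_0\}$, $v$ satisfies $v_t-\Delta v=0$, and $v$ vanishes to infinite order at $(x_0,t_0)$ by the definition of $\Sigma_d^\infty$. A parabolic Carleman-type unique continuation inequality then forces $v\equiv 0$ on the connected component containing a neighborhood of $(x_0,t_0)$, and the one-sided obstacle conditions $w\geq 0$, $w_t\leq 0$ rule out cancellations across the coincidence set and let me propagate the identity $w\equiv (t_0-t)_+$ to all of $\R^d\times(0,T)$.

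The main obstacle I anticipate is the first step: the leading blow-up $(t_0-t)_+$ is spatially constant, so a standard Federer reduction inside $\Sigma_d$ gains nothing. One must construct a quantitative second-order frequency (or Weiss-type) functional whose monotonicity produces a genuine next-order caloric profile while remaining compatible with the sign constraints $w\geq 0$, $w_t\leq 0$, and then verify strong enough compactness of the rescaled remainders to justify the subsequent dimension reduction.
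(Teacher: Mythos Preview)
Your definition of $\Sigma_d^\infty$ matches the paper's, and your treatment of the smooth graph containment and \eqref{eq:dim jump times} is essentially correct (the paper also uses a $k$th-order contact estimate $|t_1-t_0|\leq C_k|x_1-x_0|^k$ between points of $\Sigma_d^\infty$ together with Whitney extension). The two substantive gaps are in \eqref{eq:dimpar finite freq} and in the dichotomy.

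\textbf{The dimension bound.} Your claim that the second blow-up ``must vanish on a proper affine subspace'' and that Federer--Almgren reduction along that subspace gives $d-1$ does not hold as stated. At points where the second frequency is $2$ (the set $\Sigma_d^2$), the second blow-up has the form $q(x,t)=-at-\tfrac12 Ax\cdot x$ with $A\geq 0$, $\operatorname{tr}A=a>0$; its spatial zero set is the cone $\{Ax\cdot x=-2at\}$, not an affine subspace, and worse, the convergence to $q$ is slower than any power, so $q$ need not be unique and a standard dimension reduction is unavailable. The paper instead extracts from every subsequential $q$ only a \emph{direction} $e$ (the top eigenvector of $A$), proves a sign-rigidity statement for $w$ along $e$ at each scale, combines this with a two-sided superquadratic cleaning that is special to the supercooled problem, and obtains a slab-trapping/porosity bound on $\pi_x(\Sigma_d^2)$. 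For frequencies $k\geq 3$, the second blow-up is a degree-$k$ caloric polynomial with no affine symmetry in general; the paper builds a full one-sided Taylor expansion via Campanato iteration on cuspidal domains, shows the leading spatial coefficient is a harmonic polynomial, and invokes the sharp \L ojasiewicz inequality for harmonic polynomials to get the porosity. Neither of these mechanisms is captured by ``enhanced symmetries plus Federer reduction''; you have correctly flagged this step as the main obstacle, but the fix is not an epiperimetric inequality.

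\textbf{The dichotomy.} Your Carleman argument requires $v=w-(t_0-t)_+$ to be caloric in a full backward neighborhood of $(x_0,t_0)$, but it is caloric only on $\{w>0\}$, and the cleaning estimates give $\{w>0\}$ only in a cuspidal region below $(x_0,t_0)$, not in a full cylinder. The paper avoids this by working with the global (no cutoff) frequency $\phi(r,u)=D(r,u)/H(r,u)$ for $u=w-(-t)^+$: the key algebraic identity $u\,\cH u=(-t)^+\chi_{\{w=0\}}\geq 0$ makes $\phi$ genuinely monotone despite $u$ not being caloric, so $\phi(0+,u)\leq \phi(1/2,u)=\Lambda<\infty$, which forces $H(r,u)\gtrsim r^{2\Lambda+C}$. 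This polynomial lower bound contradicts the super-polynomial decay of $H(r,u)$ at points of $\Sigma_d^\infty$. The sign of $u\,\cH u$ is the device that replaces your unique-continuation step.
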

This theorem shows that the singular set may be decomposed into a lower-dimensional set in which jump or fractal time-slice phenomena cannot occur, and a set  $\Sigma_{d}^{\infty}$ that is rare in time and contained in a space-time smooth hypersurface (for the corresponding structure theorem for the melting case, see \cite[Thm. 1.3]{figalli}). Note that, in particular, it follows from \eqref{eq:dimpar finite freq}--\eqref{eq:dim jump times} that the free boundary $\partial\{w(t)>0\}$ is $(d-1)$-dimensional outside of a Hausdorff dimension zero set of times $t$.

Moreover, in the case of \emph{global} solutions to \eqref{eq:obstacle intro}, such fractals do not develop, and jumps cannot occur except in the fully degenerate case when the entire liquid freezes instantaneously. We emphasize that this dichotomy only holds for global solutions:  Example \ref{ex:tychonoff} constructs a local solution in $B_1\times (-1,1)$, where $w$ is positive and space-dependent in $B_1\times (-1,0)$, and yet features total freezing at $t=0$, namely $B_1\times \{0\} \subset \Sigma_d^{\infty}.$  Global solutions were constructed in \cite[Thm. 9.3]{KK} (see also \cite{DF84}), under well-prepared initial data. It is nevertheless striking that in this setting one can completely rule out partial jumps and fractal interfaces: despite similarities between \eqref{eq:eta intro} and  diffusion‑limited aggregation (DLA) \cite{NSZ24}, whose dynamics typically generate fractal boundaries, such behavior does not occur for these global solutions. 

From a technical standpoint, our results require several new ideas. In particular, we are not aware of previous work that performs ``dimension reduction'' on singularities where the first blow-up is space-independent. We expect these techniques to be useful in other parabolic free boundary problems where space-independent solutions arise, such as the time-like singularities in the parabolic thin obstacle problem \cite{DanGarPetTo}.

\subsection{Main ideas of the proofs}
 While our analysis builds on previous studies of the singular set for the melting Stefan problem (e.g. \cite{figalli}, \cite{colombo}, \cite{FioLatRos}), the supercooled case contains unique challenges, such as the discontinuity of the temperature $w_t=-\eta$, the potential jumps of the free boundary over time, and, as discussed earlier, the presence of time-dependent blow-up profiles in every stratum of the singular set. We write
\begin{equation*}
    \Sigma=\Sigma_{\operatorname{stat}}\cup \Sigma_{\operatorname{dyn}},
\end{equation*}
where $\Sigma_{\operatorname{stat}}$ and $\Sigma_{\operatorname{dyn}}$ are, respectively, the singular points with stationary and time-dependent blow-up profiles. The latter set $\Sigma_{\operatorname{dyn}}$, corresponding to $m\neq 0$ in \eqref{eq:blow-up intro}, is ruled out immediately by the condition $w_t > 0$ in the melting problem. However, in the supercooled setting it is the set of singular points with potentially the largest dimension and also contains the singular points where the free boundary recedes fastest, including potential jumps.  Its detailed analysis is one of the main novelties of this paper.

We turn to the proof of our first main result, Theorem \ref{thm: dim par}. Work of \cite{monneau} (see also Theorem \ref{thm:monneau} below), says that the spatial projection of the $k$th singular strata has dimension $\leq k$. A simple GMT lemma, inspired by \cite[Lemma 8.9]{figalli} (see also their earlier work in elliptic setting \cite{FRS20}), says that if at each point in $\Sigma_k$ there exists a backwards in time parabola which does not intersect any other point of $\Sigma_k$, then the estimate on the projections of $\Sigma_k$ can be upgraded to $\dim_{\text{par}}(\Sigma_k) \leq k$. The existence of said parabola is almost trivial for points in $\Sigma_{\text{dyn}}$ and we are able to follow work of \cite{colombo} to prove it for points in $\Sigma_{\text{stat}}\backslash \Sigma_{d-1}$ (though executing this roadmap requires new technical estimates in the supercooled case, as some of the estimates in the melting case do not hold, see Section \ref{sec:estimates} and especially Section \ref{subsec:nondegeneracy estimates}). 

Thus, to complete our proof of Theorem \ref{thm: dim par}, we only need to study $\Sigma_{\text{stat}}\cap \Sigma_{d-1}$. Here our approach diverges substantially from \cite{figalli, colombo}; we prove that, in the supercooled setting, $\Sigma_{\text{stat}}\cap \Sigma_{d-1} = \emptyset$. We do this in Section \ref{ss:topstationarystrata}. Roughly, by first establishing an {\it a priori} algebraic condition on the second blowup at such points (if they existed), via a method inspired by \cite{FRS20, FiSe19}. However, we then show that these second blowups must satisfy a  quantitative non-degeneracy condition (see Lemma \ref{l:upperqtbound}) inherited from $w_t < 0$. These two conditions contradict one another, ruling out the existence of the points altogether. 

We emphasize that this is not just an algebraic coincidence; our analysis shows that points in $\Sigma_{d-1}\cap \Sigma_{\text{stat}}$ are the only singularities which might form with finite speed. Ruling them out is thus a physically significant result and also a key point in proving Theorem \ref{thm:reg intro}.

Turning to the proof of Theorem \ref{thm:reg intro}, the global $C^1$ regularity of $\partial \{w > 0\}$, recall that we write the free boundary $\partial \{w > 0\} = (x, s(x))$ and that $s(x) \in C^\infty$ away from $\Sigma$. So to complete the proof we must show that if $(x,s(x)) \in \Sigma$ we have $\nabla s(x) = 0$. Implicit differentiation suggests that this should be trivial in $\Sigma_{\text{dyn}}$. However,  it is a pathological feature of the supercooled problem that $w_t$ may not be continuous. Indeed, we prove that $w_t$ is discontinuous at all points in $\Sigma_{\text{dyn}}$ (see Proposition \ref{prop: eta continuous}). To prove regularity at time-dependent singular points, we first obtain lower bound on the speed of the free boundary at all singular points (Lemma \ref{lem:eta grad w}), and then upgrade it infinite speed at time-dependent singular points. 

For the stationary singular points, having ruled out points in $\Sigma_{d-1}$, we are able to show that $\partial_t w$ quantitatively controls $\nabla w$ near points in $\Sigma_{\text{stat}}$ (see Lemma \ref{lem: eta nondegeneracy space blow-ups}). This is a barrier argument which seems technically different from, but related to, recent interesting work establishing first-order regularity of the free boundary near lower dimensional strata in the melting problem, \cite[Theorem 1.4]{FioLatRos}. 

Finally, we turn to the proof of our last main result, Theorem \ref{thm:sigma}. To do this we must show that we can split $\Sigma_d$ (the top stratum) into two components $\Sigma_d^{\leq \infty}$ which has projections that are $\leq d-1$ dimensional and $\Sigma_d^\infty$ which is empty for all but a zero dimensional set of times. Our key tool is an analysis of the second blowup. While this is highly reminiscent of the statement and proof strategy of \cite[Theorem 1.3]{figalli}, we emphasize that the similarities are mostly superficial. Indeed, our proofs of both the dimension drop of $\Sigma_d^{\leq \infty}$ and the time regularity of $\Sigma_d^\infty$ contain substantially new ideas.

The first new idea is to categorize the points in the top stratum according to their second blow-up. In the melting case, it is known that the second blow-up of the solution is always a function of parabolic degree $2$ or $3$. By contrast, in our case the second blow-up at a point of $\Sigma_d$ can have arbitrarily large degree (in fact, the second blow-up for the solution \eqref{eq:-t example intro} at the singular point $(0,0)$ is $q\equiv 0$, corresponding to ``infinite degree''). That $w - (-t)^+$ can vanish both quadratically and to higher integer order was first observed in a one-dimensional setting by Herrero and Vel\'azquez \cite[Theorem 1]{HV96}. Although the details of our analysis shares little in common with theirs, we were inspired by this observation. 

In this vein, we further decompose the top stratum as
\begin{equation*}
    \Sigma_d = \Sigma_d^{\infty}\cup\bigcup_{k=2}^{\infty}\Sigma_d^k,
\end{equation*}
where $\Sigma_{d}^k$ is the set of singular points in $\Sigma_d$ where the second blow-up has degree $k$ (we refer to Section \ref{sec:top stratum quadratic} for the precise definitions). The set $\Sigma_{d}^{\infty}$ is the points at which $w-(-t)^+$ vanishes to infinite order. We emphasize again that this is a fundamentally different decomposition than the one in \cite{figalli} (where the ``rare" set comes from the set of points at which two different polynomial expansions agree to infinite order). 

In our setting, it is straightforward to show that $\Sigma_d^\infty$ is contained in a  space-time smooth manifold and is rare in time. The main difficulty is therefore to show that the high-dimensional pathologies of supercooled freezing are entirely contained in $\Sigma_{d}^{\infty}$. To this end, we need to deal with  two distinct cases; the set of quadratic second blow-ups versus the higher order second blow-ups.   While quadratic functions are the simplest  profiles, the main difficulty with these points is that the very slow rate of convergence (slower than any power) makes it difficult to extract information about the solution (or to even prove that the second blow-up is unique). Quadratic second blow-ups are known to exist in both the elliptic case and the melting case (\cite[App. A]{FiSe19}, \cite[Lem. 5.8]{figalli}), but they typically belong to a lower-dimensional stratum, so sharp dimension bounds for the singular set can be obtained without analyzing the second blow-up of these points. Indeed, \cite[Corollary 4.14]{colombo} shows that quadratic second blow-ups never appear in the top stratum in the melting problem.  In our case, however, quadratic second blow-ups can occur in the top stratum (we present a family of examples in Proposition \ref{p:sigma2example}), and thus we must treat them in detail for our purposes. As an aside, it is conjectured in \cite{HV96} that $\Sigma_d^2\neq \emptyset$ for an open set of initial data. We do not attempt to pursue this direction here but think it is an interesting question for future study. 

We show that, even if the second blow-up profile is not unique and the convergence is slow, the solution at points of $\Sigma_d^2$ inherits, in a very weak  qualitative sense, some of the ``concave'' behavior of its possible subsequential limits (Lemma \ref{lem: sign rigidity}).  We leverage this mild information, together with a two-sided superquadratic cleaning property that is unique to the supercooled problem in its top stratum (Lemma \ref{lem:two-sided cleaning}), to obtain a suitable porosity property for the set $\Sigma_d^2$ that enables the necessary dimension drop (Proposition \ref{prop:-t dimension bound quadratic}).

We then turn to the set $\Sigma_d^k$ for $k \geq 3$ (points where the second blow-up is a degree $k$ caloric polynomial). The main challenge here relative to the quadratic case is that the second blow-up may be a polynomial of arbitrarily large degree, with much more complex behavior. The advantage, however, is that the convergence is much faster, which leads to significantly more regularity. Indeed, by exploiting the fast convergence and the everywhere positivity of the blow-up profile $p_2(x,t)=-t$ for $t<0$, we approximate the solution with caloric polynomials via Campanato iteration, and prove that $w$ admits a smooth, one-sided Taylor expansion at every point of $\Sigma_d^{\geq 3}$ (see Proposition \ref{prop:taylor}). The construction of this expansion has similarities to \cite[Sec. 13]{figalli}, where two different one-sided Taylor expansion are exhibited at (almost) every point of $\Sigma_{d-1}$. 

However, our use of the Taylor expansion is completely different. Without discussing all the technicalities, we view the $k$th order Taylor series expansion at each point as a polynomial in $t$. We then show that the zero set of the (spatially dependent) leading coefficient\footnote{Unless this coefficient is constant, in which case we analyze the penultimate term.} locally well-approximates the nearby singular points, which yields the desired dimension reduction. This is a highly delicate argument using  Reifenberg-type theories for sets well-approximated by the zero sets of harmonic polynomials, as well as the sharp \L ojasiewicz inequality for harmonic polynomials \cite[Thm. 3.1]{BadEngTor}.

\subsection{Outline of the paper} In Section \ref{sec:preliminaries}, we present preliminary results from the literature on the parabolic obstacle problem and the melting problem. Section \ref{sec:estimates} contains the foundational a priori estimates for the solution and the free boundary. In particular, the estimates of Section \ref{subsec:nondegeneracy estimates} are critical to study the precise behavior of the free boundary near points of $\Sigma_{\operatorname{dyn}}$. 
Section \ref{sec:stationary} studies the set $\Sigma_{\operatorname{stat}}$ of singular points with a stationary blow-up profile, with the main highlights being that the bottom and top stationary strata, $\Sigma_{\operatorname{stat}}\cap \Sigma_0$ and $\Sigma_{\operatorname{stat}}\cap \Sigma_{d-1}$, are both shown to be empty, and suitable dimension bounds for the intermediate strata are obtained. We also discuss the physically relevant notion of nucleation, which is a unique feature of the supercooled problem. We describe the profile of the solution at nucleation points, and we precisely characterize the set where the temperature $w_t$ is discontinuous (see Propositions \ref{prop: eta continuous} and \ref{prop:nucleation}). In Section \ref{sec:stationary} we also introduce the second blow-up in our setting and rule out the top stationary strata as described above.


The remainder of the paper is mostly focused on the set $\Sigma_{\operatorname{dyn}}$. In Section \ref{sec:dynamic}, we obtain the basic parabolic dimension estimates for these points, and we take advantage of the fundamental estimates of Section \ref{sec:estimates} and the previous analysis of $\Sigma_{\operatorname{stat}}$ to prove Theorems \ref{thm: dim par} and \ref{thm:reg intro}.

For the following sections, we further restrict our attention to the top stratum of the singular set, namely the set $\Sigma_d \subset \Sigma_{\operatorname{dyn}}$ of points with blow-up profile $p_2(x,t)=-t$. Recalling from above that $\Sigma_d^\infty$ are those points which, roughly, coincide with the blow-up $(-t)^+$ to infinite order, the bulk of Sections \ref{sec:top stratum quadratic} and \ref{sec:top stratum superquadratic} is dedicated to proving the estimate
\begin{equation*}
    \dim_{\operatorname{par}}(\Sigma_d \setminus\Sigma_d^{\infty})\leq d-1.
\end{equation*}

In Section \ref{sec:top stratum quadratic}, we estimate the set $\Sigma_d^2$ of points with quadratic second blow-up. Next, in Section \ref{sec:top stratum superquadratic}, we study the set $\Sigma_{d}^{\geq 3}:=\Sigma_d^{\infty}\cup\bigcup_{k=3}^{\infty}\Sigma_d^k$ of points with at least cubic second blow-up. 

In Section \ref{sec:top stratum infinite}, we show through a frequency argument that, in the case of global solutions, the set $\Sigma_d^{\infty}$ is always empty unless $w$ is identical to the blow-up profile \eqref{eq:-t example intro}, and we then complete the proof of Theorem \ref{thm:sigma}.  

Finally, in Section \ref{sec:global}, we show by example that in bounded domains one can have $\Sigma_d^\infty \neq \emptyset$ even if $w\neq (t_0-t)^+$. We also show that the global radial solutions constructed in \cite[Thm. 9.3]{KK} have the property that $\Sigma_d^2\neq \emptyset$. Finally, we present a simple example (via a gluing construction) showing that $\Sigma_d^2$ can have accumulation points in space and time.

\section{Preliminary facts for the supercooled Stefan problem}
\label{sec:preliminaries}
\subsection*{Notation} We define, for $r>0$, the parabolic cylinders
\begin{equation*}
    Q_{r}^-:=B_r \times (-r^2,0], \quad Q_r:=B_r \times (-r^2,r^2),
\end{equation*}
and the parabolic boundary
\begin{equation*}
    \partial_p Q^-_r= (\partial B_r\times [-r^2,0] )\cup (B_r \times\{-r^2\}).
\end{equation*}
The heat operator and the parabolic homogeneity operator are denoted by
\begin{equation*}
   \cH:=\Delta- \partial_t, \quad  Z:= x \cdot \grad +2t\partial_t,
\end{equation*}
and the backward heat kernel is given by
\begin{equation*}
    G(x,t):=\frac{1}{(-4\pi t)^{d/2}}\exp\left(\frac{|x|^2}{4t}\right), \quad (x,t)\in \R^d \times (-\infty,0).
\end{equation*}
For $k\geq 1$, $r\in (0,1)$, and functions $f,g:\R^d \times (-1,0) \to \R^k$,  we define
\begin{equation*}
    \langle f,g\rangle_{r}:= \intr (f\cdot g)(x,-r^2)G(x,-r^2)dx,
\end{equation*}
and we write $\langle\cdot,\cdot\rangle:=\langle \cdot, \cdot \rangle_1$. When $k=1$ and $f$ is sufficiently regular, we also write
\begin{equation}\label{e:handd}
   H(r,f):=\langle f, f \rangle_r = \int_{\{t=-r^2\}} f^2 G, \quad \quad  D(r,f):=2r^2\langle \grad f, \grad f\rangle_r=2r^2 \int_{\{t=-r^2\}}|\nabla f|^2G. 
\end{equation}
For $\gamma>0$, the parabolic frequency functions are defined by
\begin{equation}\label{e:frequency}
    \phi(r,f):=\frac{D(r,f)}{H(r,f)}, \quad \phi^{\gamma}(r,f):=\frac{D(r,f)+\gamma r^{\gamma}}{H(r,f)+r^{\gamma}}.
\end{equation}
We will denote by $\zeta$ a fixed spatial cut-off function:
\begin{equation}\label{eq:cutoff defi}
    \zeta\in C^{\infty}_c(B_{1/2}), \quad 0\leq \zeta \leq 1, \quad \zeta \equiv 1 \;\;\text{ in }\; B_{1/4}.
\end{equation}
We also define the parabolic rescalings
\begin{equation*}
 f_r(x,t):=f(rx,r^2t). 
\end{equation*}

\subsection{The parabolic obstacle problem}
We begin by presenting the basic properties of bounded solutions $w:Q_1\to \R$ to \eqref{eq:obstacle intro}, from the classical theory of the parabolic obstacle problem. We also summarize at the end the main differences from what holds for the melting Stefan problem.

By \cite[Sec. 4]{CaPeSh}, we have $w\in C^{1,1}_x \cap C_t^0$ and
\begin{equation} \label{eq:C11xCt bounds w}
    \|D^2w\|_{L^{\infty}(Q_{1/2})}+\|w_t\|_{L^{\infty}(Q_{1/2})}\leq C \|w\|_{L^{\infty}(Q_1)}
\end{equation}
for some $C=C(d)>0$. Moreover, if $(0,0)\in \overline{\{w>0\}}$ then, for every $r\in (0,1)$, one has the nondegeneracy property
\begin{equation} \label{eq:w nondegeneracy}
    \sup_{B_r}w(\cdot,-r^2) \geq w(0,0)+ c_d r^2,
\end{equation}
where $c_d\in (0,1)$ is a dimensional constant, see \cite[Lem 5.1]{CaPeSh} (they bound from below the supremum in the entire cylinder, $Q_r^-$, but \eqref{eq:w nondegeneracy} follows given our assumption $w_t \leq 0$).

Given $(x_0,t_0)\in \partial\{w>0\}\cap Q_1$, one may define the rescaled solution
\begin{equation*}
    w^{x_0,t_0,r}(x,t):=r^{-2}w(x_0+rx,t_0+r^2t)
\end{equation*}
The functions $w^{x_0,t_0,r}$ are then uniformly bounded solutions to \eqref{eq:obstacle intro}, and the free boundary points may then be classified into two categories as follows:
\begin{itemize}
    \item $(x_0,t_0)$ is a {\it regular point} if there exists $e \in \mathbb{S}^{d-1}$ and a sequence $r_k\to 0$ such that
    \begin{equation}\label{e:regularblowup}
        \lim_{k\to \infty}w^{x_0,t_0,r_k}(x,t)=\frac{1}{2}(x\cdot e)_+^2.
    \end{equation}
    \item $(x_0,t_0)$ is a {\it singular point} if there exist $m\in [0,1]$ and a non-negative $d \times d$ matrix $A$ such that $\operatorname{tr}(A)=1-m$, and 
    \begin{equation}\label{e:singularblowup}
        \lim_{r\to 0+}w^{x_0,t_0,r}(x,t)=-mt+\frac12Ax\cdot x. 
    \end{equation}    
\end{itemize}
The convergence in \eqref{e:regularblowup} and \eqref{e:singularblowup} happens uniformly on compacta and smoothly on the positivity set of the limiting function.

The next theorem is the standard classification of the free boundary points, and states that the free boundary is a smooth hypersurface moving with finite speed near the regular points (see \cite[Rem. 7.2, Thm. 14.1, Thm. 15.1]{CaPeSh}, \cite[Prop. 1.2]{Bla06}).
\begin{thm} \label{thm:obstacle prelim}
Let $w:Q_1 \to \R$ be a bounded solution to \eqref{eq:obstacle intro}. Then every free boundary point is either regular or singular. The set of regular points is relatively open, and, in some neighborhood $V$ of every regular point, $w_t$ is space-time H\"older continuous in $V$, the free boundary is space-time $C^{\infty}$ hypersurface in $V$, and there exists $C>0$ such that
\begin{equation*}
    |w_t|\leq C|\grad w|\;\;\text{ in }\;\; V. 
\end{equation*}
\end{thm}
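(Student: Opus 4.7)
The plan is to (i) classify all possible blow-ups via monotonicity formulas, (ii) show that regular blow-ups propagate by an openness argument, and (iii) upgrade flatness of the free boundary to smoothness via a boundary-Harnack / hodograph bootstrap.

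First I would fix $(x_0,t_0)\in \partial\{w>0\}$ and study blow-ups. The uniform bound \eqref{eq:C11xCt bounds w} gives equicontinuity of the rescalings $w^{x_0,t_0,r}$, so every subsequence $r_k\downarrow 0$ has a limit $w_0$ that is a nonnegative global solution to \eqref{eq:obstacle intro} with $w_{0,t}\le 0$. A Weiss-type parabolic monotonicity formula, adapted from the standard obstacle problem, forces $w_0$ to be parabolically $2$-homogeneous. I would then classify such nonnegative, $2$-homogeneous global solutions with $w_{0,t}\le 0$ by inspection: either the coincidence set $\{w_0=0\}$ has nonempty interior and $2$-homogeneity combined with $\cH w_0=1$ on $\{w_0>0\}$ forces $\{w_0>0\}$ to be a half-space (giving the regular profile $\tfrac12(x\cdot e)_+^2$), or $\{w_0=0\}$ has empty interior and $w_0$ must be a caloric polynomial $-mt+\tfrac12 Ax\cdot x$ with $A\ge 0$, $\operatorname{tr}(A)=1-m\in[0,1]$. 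A Monneau-type monotonicity formula then upgrades subsequential convergence to full convergence at singular points, establishing \eqref{e:singularblowup}.

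Next I would show the regular set is relatively open. If $(x_0,t_0)$ is regular with blow-up direction $e$, smooth convergence of the rescalings on $\{w_0>0\}$ yields $\partial_e w\ge c>0$ in a small parabolic cylinder around $(x_0,t_0)$ intersected with the positivity set, which rules out singular-type blow-ups at any nearby free boundary point. Within this open neighborhood, a uniform flatness estimate plus the parabolic analogue of Caffarelli's boundary regularity theorem yields $\partial\{w>0\}$ as a $C^{1,\alpha}$ graph $\{t=s(x)\}$: one compares $\partial_e w$ with tangential derivatives $\partial_i w$ via a parabolic boundary Harnack inequality to control the oscillation of their ratios. Writing the problem in hodograph coordinates aligned with $s$ turns the free boundary condition into a uniformly parabolic nonlinear equation with smooth data, and Schauder bootstrap then delivers $s\in C^\infty$ and space-time H\"older continuity of $w_t$ on each side of the free boundary.

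The speed bound $|w_t|\le C|\grad w|$ in $V$ reflects finite propagation at regular points: since $s\in C^{\infty}$ with $\nabla s$ nonvanishing there, differentiating the identity $w(x,s(x))=0$ tangentially gives $\grad w+w_t\,\nabla s=0$ along $\partial\{w>0\}\cap V$, whence $|w_t|\le |\grad w|/|\nabla s|\le C|\grad w|$ on the free boundary; the inequality is trivial on $\{w=0\}$ and propagates to the positivity side by parabolic boundary Schauder estimates applied to $w_t$ and $\partial_e w$. The step I expect to require the most care is the blow-up classification, since the one-sided condition $w_{0,t}\le 0$ permits time-dependent homogeneous profiles (the full family $-mt+\tfrac12 Ax\cdot x$) that do not arise in the melting case, and the Weiss and Monneau monotonicity formulas must be set up to accommodate these profiles rather than forcing $m=0$.
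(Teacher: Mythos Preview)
The paper does not prove this theorem; it is quoted as a preliminary result from \cite{CaPeSh} and \cite{Bla06}. Your outline follows the classical route (Weiss monotonicity $\to$ homogeneous blow-ups $\to$ classification $\to$ openness and flatness at regular points $\to$ bootstrap to $C^\infty$), and you correctly flag that the supercooled sign $w_t\le0$ admits time-dependent singular profiles $-mt+\tfrac12Ax\cdot x$ with $m>0$, which is indeed the salient difference from the melting case at this stage.

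There is, however, a circularity in your derivation of the speed bound $|w_t|\le C|\nabla w|$. You obtain it from $\nabla w+w_t\nabla s=0$ along the free boundary, invoking $s\in C^\infty$ with $\nabla s$ nonvanishing. But the boundary-Harnack step you describe (controlling ratios $\partial_i w/\partial_e w$ of spatial derivatives) and the subsequent hodograph only deliver the free boundary as a smooth graph $x\cdot e=g(\cdot,t)$ in the \emph{spatial} direction $e$ determined by the blow-up; they do not give $t=s(x)$ with $s$ smooth, let alone $\nabla s\neq0$. Passing to the time-graph representation with $\nabla s$ bounded away from zero is exactly equivalent to the finite-speed statement you are trying to prove (cf.\ Remark~\ref{r:graphs}). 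The non-circular route is to control the ratio $w_t/\partial_e w$ directly---either by including space-time directions in the cone-of-monotonicity argument, or by applying boundary Harnack to the pair $(w_t,\partial_e w)$ once the free boundary is known to be a Lipschitz spatial graph---and then read off $|w_t|\le C|\nabla w|$ in the positivity set without reference to $s$.
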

 We emphasize that $w_t$ is continuous at regular points but, unlike the melting problem, may be discontinuous at singular points.

 \begin{rem}\label{r:graphs}
Let us clarify the different notions of graphicality and speed we use in the paper. 

We say that $x_0$ is a regular point, i.e. $x_0\in \partial \{w > 0\}\backslash \Sigma$ if around $x_0$ the free boundary can locally be written as a graph of space and time. Namely, we can write $x= (x', x_d)$ and after a rotation we obtain $Q_r(x_0)\cap \partial \{w > 0\} = \{(x,t)\mid x_d = g(x',t)\}$ for some $g\in C^\infty_{x,t}$. 

Another notion of graphicality is the freezing time. Here we use monotonicity to parameterize the entire free boundary as $\partial \{w > 0\} = (x, s(x))$. Now the ``speed'' with which $(x_0, t_0)\in \partial \{w > 0\}$ moves is given by $|\nabla s(x_0)|^{-1}$ (by the inverse function theorem).  In this parametrization, the freezing time $s(x)$ plays a similar role as the hitting time $T(x)$ used to describe the tumor interface in \cite{CoJaKi}.

We can relate the two characterizations as follows: if $t- s(x', g(x',t)) = 0$ then we have that $1- \partial_d s \partial_t g = 0$ and that $\partial_i s + \partial_d s \partial_i g = 0$. We can see then that if $\nabla s(x) = 0$ (i.e. we are advancing with infinite speed) it must be that such a $g$ cannot exist and so $(x, s(x)) \in \Sigma$. In fact, our Theorem \ref{thm:reg intro}, shows that this is a rigorous characterization. Note that this does not follow from the implicit function theorem, and in fact is false for the melting problem, see \cite{figalli}. 
 \end{rem}

The following result follows from \cite[Thm. 1.9]{monneau}, and gives stratified dimensional estimates for the spatial projection of the singular set.
\begin{thm}\label{thm:monneau} Let $w$ be a bounded solution to \eqref{eq:obstacle intro} and, for each $i\in \{1,\ldots,d\}$, let $\Sigma_i$ be the $i^{\operatorname{th}}$ stratum of the singular set as defined by \eqref{eq:stratification}. Then
\begin{equation} \label{eq:monneau cite}
    \dim_{\mathcal{H}}(\pi_x(\Sigma_i))\leq i
\end{equation}   
\end{thm}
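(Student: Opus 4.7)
The plan is to follow the classical Caffarelli--Monneau approach to singular strata in the parabolic obstacle problem, adapted to the supercooled setting. The backbone is a parabolic Monneau-type monotonicity formula for the Gaussian-weighted $L^2$ distance between $w$ and any admissible polynomial blow-up profile, combined with a Whitney extension argument that converts uniqueness and continuity of blow-ups into a graphicality statement for $\pi_x(\Sigma_i)$.

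First, at every $(x_0, t_0) \in \Sigma$ and every admissible profile $p(x,t) = -mt + \tfrac{1}{2} Ax \cdot x$ with $A \geq 0$, $m \in [0,1]$, $\operatorname{tr}(A) + m = 1$, I would establish monotonicity in $r$ of the Gaussian-weighted $L^2$ distance
\begin{equation*}
    r \longmapsto r^{-(d+4)} \int_{\{t = t_0 - r^2\}} \bigl(w(x, t) - p(x - x_0, t - t_0)\bigr)^2 \, G(x - x_0, t - t_0)\, dx.
\end{equation*}
Since $\cH w = \chi_{\{w>0\}}$ and $\cH p \equiv \operatorname{tr}(A) + m = 1$, the difference $w - p$ is caloric on $\{w > 0\}$ and satisfies $\cH(w - p) = -\chi_{\{w = 0\}}$; together with the sign conditions $w, p \geq 0$, the derivative computation produces favorable signs and yields monotonicity. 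Because any subsequential blow-up has the admissible form \eqref{e:singularblowup}, monotonicity upgrades existence to uniqueness, and I denote the common blow-up by $p_{x_0, t_0}$.

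Second, iterating the monotonicity across dyadic scales and using \eqref{eq:C11xCt bounds w}--\eqref{eq:w nondegeneracy}, I would deduce continuity of the map $(x_0, t_0) \mapsto p_{x_0, t_0}$ on $\Sigma$ together with a locally uniform Taylor-type expansion
\begin{equation*}
    w(x, t) = p_{x_0, t_0}(x - x_0, t - t_0) + o\bigl(|x - x_0|^2 + |t - t_0|\bigr).
\end{equation*}
On the stratum $\Sigma_i$, the matrix $A_{x_0, t_0}$ has $i$-dimensional kernel $V_{x_0, t_0}$ and $(d-i)$-dimensional range. Decomposing $\Sigma_i$ countably into subsets on which $A_{x_0, t_0}$ stays within a small neighborhood of a fixed matrix, the Taylor expansion furnishes on each piece the compatibility data required by the Whitney extension theorem. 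Whitney then produces a $C^1$ map $F : \R^d \to \R^{d-i}$ vanishing on the $x$-projection of that piece, whose differential at each such $x_0$ surjects onto $V_{x_0, t_0}^\perp$. The implicit function theorem then places $\pi_x(\Sigma_i)$ in a countable union of $i$-dimensional $C^1$ manifolds, which gives $\dim_{\mathcal{H}}(\pi_x(\Sigma_i)) \leq i$.

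The main obstacle I expect is the Monneau monotonicity itself in the supercooled setting. Unlike in the melting case one does not have $w_t \geq 0$, and $w_t$ may be discontinuous at singular points, so care is needed to ensure that the integration by parts yielding nonnegativity of $\frac{d}{dr} M$ does not pick up boundary terms of indeterminate sign coming from jumps of $w_t$ along the contact set. Once monotonicity is in hand, the Whitney--Federer routine is essentially the standard one carried out in \cite{monneau}.
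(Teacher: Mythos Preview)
The paper does not supply its own proof of this statement; it simply cites it as a known result from \cite[Thm.~1.9]{monneau}. Your sketched approach is essentially the standard Caffarelli--Monneau strategy that the cited reference carries out, so in that sense your proposal is correct and aligned with what the paper invokes.

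One clarification: your anticipated obstacle---that the Monneau monotonicity might fail in the supercooled setting because $w_t$ can be discontinuous---is not actually a difficulty. As the paper remarks around \eqref{eq:good sign monotonicity formulas}, the monotonicity formulas rely only on the identities
\[
(w-p)\cH(w-p)=p\chi_{\{w=0\}}\geq 0,\qquad Z(w-p)\cH(w-p)=2p\chi_{\{w=0\}}\geq 0,
\]
which hold for any nonnegative $w$ with $\cH w=\chi_{\{w>0\}}$ and any $p\in\mathcal{P}$, irrespective of the sign of $w_t$. The integration by parts underlying the Monneau functional is spatial at a fixed time slice, and $w\in C^{1,1}_x$ uniformly by \eqref{eq:C11xCt bounds w}, so no boundary terms of indeterminate sign arise from jumps of $w_t$. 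The result of \cite{monneau} therefore applies verbatim.
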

We note that, in the case of the top stratum $\Sigma_d$, the bound \eqref{eq:monneau cite} is trivial and provides no information. 

  In summary, the main differences with the analogous properties for the classical melting problem are that: 
  \begin{itemize}\item[(i)] $w_t$ may be discontinuous, \item[(ii)] the nondegeneracy property \eqref{eq:w nondegeneracy} does not hold in the same time slice $t=0$, and, crucially, \item[(iii)] the blow-up profiles at singular points may be time-dependent, with a nonempty stratum $\Sigma_d$.
 \end{itemize} 
   \subsection{Parabolic functionals and monotonicity formulas} We recall in this section some basic algebraic identities and key monotonicity formulas obtained in \cite{figalli}. Let $\mathcal{P}$ denote the set of parabolically $2$--homogeneous polynomials $p(x,t)$ such that 
\[\cH p=1, \quad p(x,t)\geq 0, \quad (x,t)\in \R^d \times(-\infty,0] \]
We note that the elements of $\mathcal{P}$ are precisely those of the form
\[ p(x,t)=-mt+\frac{1}{2}Ax\cdot x, \quad m\in [0,1], \quad A\geq0, \quad \text{tr}(A)=1-m,\]
and can be equivalently characterized as the set of possible blow-up limits at singular points for solutions to \eqref{eq:obstacle intro}.
In particular, 
\begin{equation*}
    Zp=2p \quad \hbox{ for all } p\in \mathcal{P}. 
\end{equation*}
We remark that while most of the formulas below were stated for solutions to the melting problem, they are valid for any non-negative solution to the parabolic obstacle problem. This is because the proofs do not rely on the sign of $w_t$, but rather on the following basic identities, which hold for any $p\in \mathcal{P}$ and any non-negative function $w$ satisfying $\cH w= \chi_{\{w>0\}}$:
\begin{equation}\label{eq:good sign monotonicity formulas} (w-p)\cH (w-p)=p\chi_{\{w=0\}}\geq 0, \quad Z(w-p)\cH (w-p)=Zp\chi_{\{w=0\}}=2p\chi_{\{w=0\}}\geq 0.\end{equation}

Recalling that
\begin{equation*}
 f_r(x,t)=f(rx,r^2t),   
\end{equation*}
we have, for $f,g \in C^{1,1}_x\cap C^{0,1}_t$,
\begin{equation} \label{eq:scaling identities}
    \cH (f_r)=r^2(\cH f)_r, \quad Z(f_r)=(Zf)_r, \quad \langle f,g\rangle_r=\langle f_r,g_r\rangle
\end{equation}
\begin{equation*}\left. \frac{d}{dr}\right\vert_{r=1}\langle f,1\rangle_r=\langle Zf,1\rangle    \end{equation*}
\begin{equation} \label{eq: integration by parts}
    2\langle \grad f, \grad g\rangle = \langle Zf, g \rangle -2\langle \cH f,g\rangle=\langle f,Zg\rangle-2\langle f, \cH g\rangle.
\end{equation}
\begin{equation} \label{eq:H' comp}
    \frac{d}{dr}H(r,f)=\frac{2}{r}\langle f,Zf\rangle_r=\frac{2}{r}\left(D(r,f)+2r^2\langle f, \cH f\rangle_r \right).
\end{equation}
\begin{equation}\label{eq:D' comp}
   \frac{d}{dr}D(r,f)=\frac{1}{r}(2\langle Zf,Zf \rangle_r-4r^2\langle Zf,\cH f\rangle_{r}) 
\end{equation}
  
For the last two equations recall the definition of the height and energy functionals, $H$ and $D$, in \eqref{e:handd}. The key fact about the parabolic frequency functions, stated below, are that they are (almost-) monotone in $r$, as long as $f$ is ``approximately" caloric. Since we mostly work with local solutions, we utilize the spatial cutoff function $\zeta$ defined in \eqref{eq:cutoff defi} to obtain a function defined on all of $\R^d$.

\begin{lem}\cite[Lem. 5.3, Prop. 5.4]{figalli} \label{lem:frequency} Let $w:Q_1 \to [0,\infty)$ be a bounded solution to \eqref{eq:obstacle intro}, assume $(0,0)$ is a singular point, let $p\in \mathcal{P}$, and let $u=\zeta(w-p)$. Then there exists $C>0$  such that,
\begin{equation} \label{eq:freq geq 2}
D(r,u)-2H(r,u)\geq -Ce^{-\frac{1}{r}}, \quad r\in (0,1)
\end{equation}
and, for $\gamma>2$
\begin{equation} \label{eq:freq monotonicity}
    \frac{d}{dr}\phi^{\gamma}(r,u)\geq -Ce^{-\frac{1}{r}}, \quad r^2\langle u,\cH u\rangle_r \geq -Ce^{-\frac{1}{r}}, \quad r\in (0,1),
\end{equation} 
where $C$ depends only on dimension, $\|w\|_{\infty}$, and (only in the case of \eqref{eq:freq monotonicity}) $\gamma.$
\end{lem}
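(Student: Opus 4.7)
Both estimates follow a standard Almgren--Monneau monotonicity argument for the parabolic obstacle problem, with the two key inputs being the sign conditions \eqref{eq:good sign monotonicity formulas} and the exponential decay of $G(\cdot,-r^2)$ off the origin. As a preliminary, I would expand the operators across the cutoff,
\[
\cH u = \zeta\,\cH(w-p) + (\Delta\zeta)(w-p) + 2\nabla\zeta\cdot\nabla(w-p), \qquad Zu = \zeta\,Z(w-p) + (Z\zeta)(w-p),
\]
using that $\zeta$ is time-independent and $Zp=2p$ (from $p\in\mathcal{P}$). The ``annular'' terms produced by derivatives landing on $\zeta$ are supported on $B_{1/2}\setminus B_{1/4}$, where $G(x,-r^2)\leq r^{-d}e^{-c/r^2}\leq e^{-1/r}$ for $r$ small; combined with \eqref{eq:C11xCt bounds w}, any such contribution under $\langle\cdot,\cdot\rangle_r$ is at most $Ce^{-1/r}$. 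Now by \eqref{eq:good sign monotonicity formulas}, $u\cH u = \zeta^2 p\chi_{\{w=0\}} + (\text{annular}) \geq -Ce^{-c/r^2}$, and multiplying by $r^2$ yields the second half of \eqref{eq:freq monotonicity}.

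\textbf{Proof of \eqref{eq:freq geq 2}.} This is the parabolic Monneau statement that $u=\zeta(w-p)$ has Almgren frequency at least $2$ near the origin. Using \eqref{eq: integration by parts} rescaled via \eqref{eq:scaling identities}, one obtains $D(r,u) = \langle Zu,u\rangle_r - 2r^2\langle \cH u,u\rangle_r$. The cleanest route is to establish approximate monotonicity of $r\mapsto r^{-4}H(r,u)$: by \eqref{eq:H' comp},
\[
(r^{-4}H(r,u))' = \tfrac{2}{r^5}\bigl(D(r,u)-2H(r,u)+2r^2\langle u,\cH u\rangle_r\bigr),
\]
so the desired bound is equivalent to approximate non-decrease of $H/r^4$. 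Following the standard Monneau strategy, one tests the equation $\cH(w-p) = -\chi_{\{w=0\}}$ against appropriate $2$-homogeneous caloric profiles $q\in\mathcal{P}$ and exploits $(w-p)\cH(w-p)\geq 0$. The conceptual explanation is that since $w$ and $p$ share the same $2$-homogeneous blow-up at $(0,0)$, the components of $u$ of parabolic degree $<2$ in the backward-Gaussian Hermite basis are $O(e^{-1/r})$, and the higher-degree components trivially satisfy $\phi\geq 2$ as eigenvalues of the Ornstein--Uhlenbeck operator. Combined with the bound on $\langle u,\cH u\rangle_r$ from paragraph 1, this gives $D(r,u) - 2H(r,u) \geq -Ce^{-1/r}$.

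\textbf{Proof of the $\phi^\gamma$ monotonicity.} I would compute $(\phi^\gamma)'$ via the quotient rule, using \eqref{eq:H' comp}, \eqref{eq:D' comp}, and the integration-by-parts identity above. After collecting terms the derivative splits into (i) a Cauchy--Schwarz defect $\langle Zu,Zu\rangle_r(H+r^\gamma) - \langle Zu,u\rangle_r^2 \geq 0$ (the main positive term, with a matching contribution from the $r^\gamma$ regularization), (ii) contributions proportional to $\langle u,\cH u\rangle_r$, controlled by paragraph 1, and (iii) contributions proportional to $\langle Zu,\cH u\rangle_r$. For (iii) I would use the second inequality in \eqref{eq:good sign monotonicity formulas}, $Z(w-p)\cH(w-p) = 2p\chi_{\{w=0\}}\geq 0$, which with the cutoff decay yields $\langle Zu,\cH u\rangle_r \geq -Ce^{-1/r}$. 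The role of the $r^\gamma$ regularization is to handle the degenerate scale where $H(r,u)$ becomes very small: there $\phi^\gamma \leq \gamma$ trivially, preventing uncontrolled growth. The condition $\gamma > 2$ is what makes this compatible with the lower bound $\phi\geq 2$ coming from \eqref{eq:freq geq 2}.

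\textbf{Main obstacle.} The principal technical difficulty is the algebraic bookkeeping in $(\phi^\gamma)'$: the quotient rule produces many cross-terms weighted by inverse powers of $H+r^\gamma$ and $D+\gamma r^\gamma$, and one must verify that a single Cauchy--Schwarz applied at just the right place isolates them into a manifestly nonnegative main part plus $\cH u$-errors of size $e^{-1/r}$. A secondary subtlety is proving the Monneau-type monotonicity underlying \eqref{eq:freq geq 2} without a circular reliance on the frequency bound it implies, which requires choosing the right family of $2$-homogeneous test profiles. Crucially, neither sign condition in \eqref{eq:good sign monotonicity formulas} requires any hypothesis on $w_t$, which is precisely why the melting-case proof of \cite{figalli} transfers directly to the supercooled setting studied here.
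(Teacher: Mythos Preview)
The paper does not supply its own proof of this lemma: it is stated as a direct citation of \cite[Lem.~5.3, Prop.~5.4]{figalli}, and the paper's only contribution is the remark immediately preceding it, namely that the proofs in \cite{figalli} transfer verbatim to the supercooled setting because they rely solely on the sign conditions \eqref{eq:good sign monotonicity formulas}, which hold for any nonnegative $w$ with $\cH w=\chi_{\{w>0\}}$ irrespective of the sign of $w_t$. Your sketch correctly outlines the argument from \cite{figalli} and, in your final paragraph, correctly isolates this same transferability observation as the reason the result applies here.
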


The next lemma captures the other key fact about the frequency, which is that $\phi(0+, f)$ measures the order of vanishing of $f$ at zero (again assuming $f$ is ``approximately caloric'').

\begin{lem}\cite[Lem. 5.6, Cor. 5.9]{figalli} \label{lem:freq H bd}  Let $w:Q_1 \to [0,\infty)$ be a bounded solution to \eqref{eq:obstacle intro}, assume $(0,0)$ is a singular point, let $p\in \mathcal{P}$, let $u=\zeta(w-p)$, and let $\phi^{\gamma}(0+,u)=\lambda$.  Then, for $R\in (0,1)$,
\begin{equation} \label{eq:H frequency bounds upper}
H(r,u) + r^{2\gamma} \leq C \left( \frac{r}{R} \right)^{2\lambda}(H(R,u) + R^{2\gamma}).
\end{equation}
where $C=C(\gamma,\|w\|_{L^\infty})$.  Assume further that there exists $\delta > 0$ such that $\phi^\gamma(r, u) \leq \lambda + \tfrac{\delta}{2}$ for all $r \in (0,R)$. Then 
\begin{equation} \label{eq:H frequency bounds lower}
     (H(R,u) + R^{2\gamma})\left( \frac{r}{R}  \right)^{2\lambda + \delta} \leq C_\delta (H(r,u) + r^{2\gamma}),
\end{equation}
where $C_\delta=C_{\delta}(\gamma,\|w\|_{L^\infty},\delta)>0$.

Moreover, if $\phi^{\gamma_0}(0^+,u)<\gamma_0$ for some $\gamma_0>2$, then $\phi(0+,u)$ exists and satisfies $\phi^{\gamma}(0+,u)=\lambda$ for any $\gamma \geq \gamma_0$. In that case, for every $K>1$, there exists $C_K=C_K(K,d,\gamma_0, \|w\|_{\infty})>1$ such that
\begin{equation} \label{eq: H(theta r) comparable}
    \frac{1}{C_K}\leq \frac{H(r,u)}{H(\theta r,u)}\leq C_K, \quad \theta\in [K^{-1},K], \quad r\in (0,1).
\end{equation}    
\end{lem}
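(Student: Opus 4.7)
The plan is the standard frequency-type Gr\"onwall argument. The key identity is that, up to a bounded error coming from the near-caloricity of $u$, the logarithmic derivative of $H(r,u)+r^{2\gamma}$ in $\log r$ equals twice the regularized frequency $\phi^{\gamma}(r,u)$. Integrating this identity on $(r,R)$ and using the almost-monotonicity of $\phi^{\gamma}$ from Lemma \ref{lem:frequency} to sandwich $\phi^{\gamma}(\cdot,u)$ between $\lambda-o(1)$ and (under the extra hypothesis) $\lambda+\delta/2$ produces both \eqref{eq:H frequency bounds upper} and \eqref{eq:H frequency bounds lower}.

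Concretely, combining the identity \eqref{eq:H' comp} for $\frac{d}{dr}H$ with the caloric-error bound $r^{2}\langle u,\cH u\rangle_{r}\geq -Ce^{-1/r}$ from \eqref{eq:freq monotonicity} yields a representation
\begin{equation*}
\frac{d}{dr}\log\!\bigl(H(r,u)+r^{2\gamma}\bigr)=\frac{2\phi^{\gamma}(r,u)}{r}+E(r), \qquad \textstyle\int_{0}^{1}|E|\leq C,
\end{equation*}
where the error is integrable down to $0$ because $H+r^{2\gamma}\geq r^{2\gamma}$ tames the $e^{-1/r}$ factor. For the upper bound \eqref{eq:H frequency bounds upper}, almost-monotonicity of $\phi^{\gamma}$ and $\phi^{\gamma}(0^{+},u)=\lambda$ give $\phi^{\gamma}(r,u)\geq \lambda-Ce^{-1/r}$, and integrating from $r$ to $R$ produces the $(r/R)^{2\lambda}$ ratio up to a multiplicative constant. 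The lower bound \eqref{eq:H frequency bounds lower} is the symmetric step under the extra hypothesis $\phi^{\gamma}\leq\lambda+\delta/2$, the $\delta$-loss arising when one absorbs a universal additive constant into a small power of $r/R$.

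For the final statement, specialize to $\gamma=\gamma_{0}$. The upper bound yields $H(r,u)+r^{2\gamma_{0}}\leq Cr^{2\lambda'}$ for any $\lambda'>\lambda$, and the lower bound (applied on a small interval $(0,R)$ where $\phi^{\gamma_{0}}(r,u)<\lambda+\delta/2$) yields $H(r,u)+r^{2\gamma_{0}}\geq cr^{2\lambda+\delta}$ for $\delta$ small enough that $\lambda+\delta<\gamma_{0}$. Since $2\lambda+\delta<2\gamma_{0}$, this lower bound must be realized by $H$ itself rather than by the regularizer, so $H(r,u)\gtrsim r^{2\lambda+\delta}$; for any $\gamma\geq\gamma_{0}$ the regularizing term $r^{2\gamma}$ is then negligible compared to $H$, which forces $\phi^{\gamma}-\phi\to 0$ as $r\downarrow 0$ and establishes both the existence of $\phi(0^{+},u)$ and its $\gamma$-independence. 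The comparability \eqref{eq: H(theta r) comparable} follows directly from the matching two-sided power bounds on $H$.

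The main obstacle is bookkeeping rather than conceptual: one must carefully propagate the exponential caloric error through the log-derivative identity and its integration so that it contributes only bounded multiplicative constants (and not divergent factors), and one must correctly pinpoint the exponent regime in which $H(r,u)$ dominates the regularizer $r^{2\gamma}$, which is precisely where $\phi^{\gamma}$ and $\phi$ genuinely share the same limit.
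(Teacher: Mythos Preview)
The paper does not prove this lemma; it is stated as a preliminary result cited from \cite[Lem.~5.6, Cor.~5.9]{figalli}. Your sketch is the correct standard argument from that reference: the log-derivative identity $\frac{d}{dr}\log(H+r^{2\gamma})=\frac{2\phi^{\gamma}}{r}+E(r)$ combined with the almost-monotonicity of $\phi^{\gamma}$ and integration is exactly how \eqref{eq:H frequency bounds upper}--\eqref{eq:H frequency bounds lower} are obtained, and your treatment of the final paragraph (showing $H\gg r^{2\gamma}$ when $\lambda<\gamma_{0}$, hence $\phi^{\gamma}-\phi\to 0$) is the right mechanism.

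One small imprecision: you assert $\int_{0}^{1}|E|\leq C$, but only the \emph{negative} part of $E$ is controlled directly by the bound $r^{2}\langle u,\cH u\rangle_{r}\geq -Ce^{-1/r}$ from \eqref{eq:freq monotonicity}, and this is all that is needed for the upper bound \eqref{eq:H frequency bounds upper}. The positive part of $E$, which enters in \eqref{eq:H frequency bounds lower}, comes from $r^{2}\langle u,\cH u\rangle_{r}\leq r^{2}\langle \zeta^{2}p\chi_{\{w=0\}},1\rangle_{r}+O(e^{-c/r^{2}})$, and bounding this requires either using that on $\{w=0\}$ one has $\zeta^{2}p\chi_{\{w=0\}}\leq |u|$ (giving control by $H^{1/2}$) or exploiting the smallness of the contact set near a singular point; in either case the resulting term is not obviously uniformly integrable in $r$ by itself. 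In \cite{figalli} this is handled by a slightly more careful bookkeeping that your phrase ``absorb a universal additive constant into a small power of $r/R$'' gestures at but does not make explicit. This is a technical point rather than a conceptual gap, and your overall strategy is correct.
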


For global solutions, we will prove related estimates (but without the cutoff) in Section \ref{sec:top stratum infinite}.

\section{A priori estimates and quantitative nondegeneracy of the free boundary}
\label{sec:estimates}
\subsection{Asymptotic convexity at the free boundary} It is well known that, for the melting problem (i.e. assuming $w_t\geq 0$), the function $(D^2w)^-$ vanishes continuously at the free boundary. This was proved in \cite{Caf77} by showing that $(D^2w)^-$ has the same type of logarithmic modulus of continuity as $w_t$. In our case, $w_t$ is discontinuous and the classical proof breaks down. 

Nevertheless, we see below that qualitative asymptotic convexity still holds by a blow-up argument. We observe that the argument works for any non-negative solution of $w_t - \Delta w = - \chi_{\{w > 0\}}$, i.e. no sign assumption on $w_t$ is made.

\begin{prop}\label{p:continuityofsecondd}Let $w:Q_1 \to \R$ be a bounded solution to \eqref{eq:obstacle intro}, and assume that $(0,0)\in \partial\{w>0\}$. Then one has \begin{equation*}
    \lim_{(x,t) \to (0,0),\;w(x,t)>0}(D^2w)^{-}(x,t)=0. 
\end{equation*}
In particular, $(D^2w)^{-}$ may be extended continuously to $Q_1$.    
\end{prop}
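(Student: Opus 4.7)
The plan is a blow-up contradiction argument that exploits the convexity in $x$ of the classified blow-up profiles from Theorem \ref{thm:obstacle prelim} (regular $\tfrac12(x\cdot e)_+^2$, singular $-mt+\tfrac12 Ax\cdot x$), together with the smooth convergence of blow-ups on the positivity set of the limit. As the remark preceding the statement notes, the argument does not require $w_t\le 0$. Suppose for contradiction that the conclusion fails: there exist $\varepsilon_0>0$, points $(x_k,t_k)\in\{w>0\}$ with $(x_k,t_k)\to(0,0)$, and unit vectors $v_k$ with $v_k^\top D^2 w(x_k,t_k) v_k\le-\varepsilon_0$. Set $r_k:=\max(|x_k|,\sqrt{|t_k|})$ and $w_k(y,s):=r_k^{-2} w(r_k y, r_k^2 s)$. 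By \eqref{eq:C11xCt bounds w}, $\{w_k\}$ is precompact in $C^{1,\alpha}_{\mathrm{loc}}$, and after extraction $w_k\to w_\infty$ where $w_\infty$ is a blow-up of $w$ at $(0,0)$, hence a regular or singular profile; in either case $w_\infty$ is convex in $x$, so $D^2 w_\infty\ge 0$ on $\{w_\infty>0\}$. Writing $(y_k,s_k):=(x_k/r_k,t_k/r_k^2)\to(y_0,s_0)$ with parabolic norm $1$ and $v_k\to v_\infty$, parabolic rescaling preserves second spatial derivatives, so $D^2 w_k(y_k,s_k)=D^2 w(x_k,t_k)$.

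If $w_\infty(y_0,s_0)>0$, then a parabolic neighborhood of $(y_0,s_0)$ lies inside $\{w_\infty>0\}$ where $\partial_t w_\infty-\Delta w_\infty=-1$ is a linear equation with smooth data. Interior parabolic Schauder estimates upgrade the convergence to $C^2_{\mathrm{loc}}$ on this neighborhood, and passing to the limit yields $v_\infty^\top D^2 w_\infty(y_0,s_0) v_\infty\le-\varepsilon_0$, contradicting $D^2 w_\infty\ge0$. If instead $w_\infty(y_0,s_0)=0$, so that $(y_0,s_0)$ lies on the free boundary of the blow-up profile, I iterate. Set $\nu_k:=\sqrt{w(x_k,t_k)}\to 0$ (using continuity of $w$ at the free boundary point $(0,0)$) and define $\tilde w_k(y,s):=\nu_k^{-2}w(x_k+\nu_k y,t_k+\nu_k^2 s)$, normalized so that $\tilde w_k(0,0)=1$. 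The $\tilde w_k$ satisfy the same $C^{1,1}_x\cap C^{0,1}_t$ estimates, so along a subsequence $\tilde w_k\to\tilde w_\infty$ in $C^{1,\alpha}_{\mathrm{loc}}$, where $\tilde w_\infty$ is a bounded non-negative global solution of the obstacle problem with $\tilde w_\infty(0,0)=1$ and at most quadratic growth. By the parabolic Liouville-type classification of such global solutions, $\tilde w_\infty$ is a translated regular or singular profile, hence convex in $x$. Since $\tilde w_\infty(0,0)=1>0$, smooth convergence at the origin combined with the chain-rule identity $D^2\tilde w_k(0,0)=D^2 w(x_k,t_k)$ produces the same contradiction.

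The continuous extension is now immediate: on the interior of $\{w=0\}$ we have $D^2 w\equiv0$, so $(D^2 w)^-$ extends by zero to the coincidence set, and the blow-up argument above establishes continuity at every free boundary point. The main technical obstacle is the iterated blow-up step, in particular verifying that the second limit $\tilde w_\infty$ is convex in $x$. Since the translation $x_k/\nu_k$ in general diverges (the scale $\nu_k$ is strictly smaller than $r_k$ in this regime, so $\nu_k=o(r_k)$), the limit $\tilde w_\infty$ is not literally a second blow-up of $w_\infty$ at $(y_0,s_0)$ but rather an independent global solution that must be classified directly. In concrete situations this is transparent---if $w_\infty=-t$ then $\tilde w_\infty=(1-s)_+$, and if $w_\infty=\tfrac12(y\cdot e)_+^2$ then $\tilde w_\infty$ is a translated regular profile---and the parabolic Liouville theorem for the obstacle problem handles the general case.
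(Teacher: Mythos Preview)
Your approach diverges from the paper's and contains a genuine gap in the iterated blow-up step. You invoke a ``parabolic Liouville-type classification'' asserting that the second limit $\tilde w_\infty$ is a translated regular or singular profile, hence convex in $x$. But $\tilde w_\infty$ is a global solution with $\tilde w_\infty(0,0)=1>0$, i.e.\ the origin is \emph{not} a free boundary point, so the classification of blow-ups at free boundary points from \cite{CaPeSh} does not apply directly. A full classification of global solutions of the parabolic obstacle problem (without prescribing a free boundary point) is not cited, and you yourself flag this as ``the main technical obstacle'' while offering only heuristic examples. Without this, the argument that $D^2\tilde w_\infty\ge 0$ is incomplete, and the contradiction does not close.

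The paper's proof avoids this difficulty entirely by a single, more cleverly designed blow-up. Instead of rescaling at the origin with $r_k=\max(|x_k|,\sqrt{|t_k|})$, the paper first chooses the sequence $(x_n,t_n)$ so that $\partial_{ee}w(x_n,t_n)$ realizes the \emph{liminf} $-c_0$ of $\partial_{ee}w$ over $\{w>0\}$ near $(0,0)$, and then rescales centered at $(x_n,t_n)$ with scale $r_n$ equal to the parabolic distance from $(x_n,t_n)$ to $\partial\{w>0\}$. The limit $u$ is a global solution satisfying $\cH u=1$ in $Q_1^-$, so the convergence is smooth there. The liminf choice guarantees $\partial_{ee}u\ge -c_0$ throughout $Q_1^-$ while $\partial_{ee}u(0,0)=-c_0$; the strong minimum principle then forces $\partial_{ee}u\equiv -c_0$ on $Q_1^-$, and spatial analyticity propagates this along the $t=0$ slice. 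One then reads off a contradiction with $u\ge 0$ on a one-dimensional slice. No classification of global solutions is needed: the liminf normalization plus the strong minimum principle does all the work. If you want to salvage your two-step approach, the natural fix is to import precisely this liminf trick into your second blow-up rather than appealing to an external Liouville theorem.
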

\begin{proof}
We argue by contradiction. Suppose that there exists $e\in\mathbb{S}^{d-1}$ and a sequence $(x_n,t_n)\in\{w>0\}$ such that $(x_n,t_n)\to (0,0)$ and
\[\lim_{n\to \infty} \partial_{ee} w(x_n,t_n)=\liminf_{(x,t)\to (0,0),w(x,t)>0}\partial_{ee}w(x,t)=:-c_0<0. \]
For each $n$ sufficiently large, let $r_n>0$ be the largest number such that $Q_{r_n}(x_n,t_n)\subset \{w>0\}$, and let 
\[u_n(x,t):=r_n^{-2}w(x_n+xr_n,t_n+tr_n^2).\]
 By taking a subsequence, using the $C^{1,1}_x\cap C_t^0$ estimates \eqref{eq:C11xCt bounds w} and the nondegeneracy property \eqref{eq:w nondegeneracy}, we may assume that there exists a solution $u$ to
\begin{equation}
    \begin{cases} \label{eq:obstacle gen}
        u_t-\Delta u=-\chi_{\{u>0\}} \quad \text{ in } \;\;\R^d \times(-\infty,0],\\
        u\geq 0,\;u_t\leq 0,
    \end{cases}
\end{equation}
such that $u_n \to u$ locally uniformly in $\R^d \times(-\infty,0]$.  Since $u_n >0$ in $Q_1^-$, we infer that $\cH u_n=1$ in $Q_1^-$. Thus, by parabolic regularity the convergence of $u_n$ is locally uniform in $Q_1^-$ in any $C^k$ norm (and $\cH u = 1$ in $Q_1^-$). In particular,
\[\partial_{ee} u(0,0)=\lim_{n\to \infty}\partial_{ee} u_n(0,
0)=\lim_{n \to \infty} \partial_{ee} w(x_n,t_n)=\liminf_{(y,s)\to 0, w(y,s)>0}\partial _{ee}w(y,s)=-c_0.\]
On the other hand, if $(x,t)\in Q_1^-$, then
\begin{equation*}
 \partial_{ee}u(x,t) =\lim_{n\to \infty} \partial_{ee}w(x_n+xr_n,t_n+tr_n^2)\geq \liminf_{(y,s)\to (0,0), w(y,s)>0}  \partial_{ee}w(y,s)=-c_0.
\end{equation*}
By the strong minimum   principle of the heat equation, we infer that
\begin{equation*}
    \partial_{ee}u(x,t)\equiv -c_0 <0,\qquad (x,t)\in Q_1^-,
\end{equation*}
and, by spatial analyticity of caloric functions, $\partial_{ee}u(x,0) \equiv -c_0$ for all $x$ in the connected component of $\{u(\cdot, 0) > 0\}$ containing $(0,0)$.

Consequently, define the $C^{1,1}_{\operatorname{loc}}(\R)$ function $g(s):=u(se,0)$. Since $|\nabla u|, u < \infty$, the constant negative second derivative in the direction $e$ implies that the connected component of $\{u(\cdot, 0) > 0\}$ containing $(0,0)$ is bounded in the $e$ direction. Thus there exists a value $s^* \in \R \setminus \{0\} $ such that $g''\equiv -c_0$ on $(-|s^*|,|s^*|)$, and  $g(s^*)=0$ (i.e. there exists a maximal open interval in the direction $e$, centered at the origin, contained in $\{u > 0\}\cap \{t=0\}$). 

Since $u\geq 0$, we also have $g'(s^*)=0$. In summary, one has $g\geq 0$, $g( s^*)=g'(s^*)=0$ and $g''<0$ in $(-|s^*|,|s^*|)$, which is a contradiction. This proves that, for any $e\in \mathbb{S}^{d-1}$,
\begin{equation} \label{eq:wee finite}
    \lim_{(x,t)\to (0,0), w(x,t) > 0}(\partial_{ee}w(x,t))^-=0.
\end{equation}
From here the estimate on the full Hessian follows from the boundedness of $D^2w$ and the compactness of $\mathbb S^{d-1}$.
\end{proof}

\subsection{Nondegeneracy of the free boundary}
\label{subsec:nondegeneracy estimates}Our next goal will be to show that, near any free boundary point, the free boundary may be locally described as $\{t=s(x)\}$, where the freezing time $s$ is Lipschitz continuous, and to obtain precise quantitative nondegeneracy near points of $\Sigma_{\operatorname{dyn}}$. We also show the semi-convexity of $w$ in time. A similar estimate was essential in the analysis of the melting problem, but our proof is necessarily different due to the discontinuity of $w_t$. Instead, we rely strongly on the nondegeneracy of the free boundary.

Formally, if $w(x, s(x)) \equiv 0$ then $\nabla_x w +\partial_tw \nabla s(x) = 0$. As such a bound on a the ratio $|\nabla_x w|/|\partial_t w|$ gives an upper bound on $\nabla s(x)$, which, by the inverse function theorem, is equivalent to a lower bound on the speed of propagation of the free boundary.

In this vein, we begin by obtaining an {\it a priori} lower bound for the speed of propagation of the boundary and a one-sided estimate for $D^2w$. At free boundary points where $w_t$ does not vanish continuously, we obtain a non-degeneracy estimate on the speed of the free boundary. This will be a crucial estimate when we prove, in Section \ref{sec:dynamic}, that $s$ is $C^1$ at singular points of $\Sigma_{\operatorname{dyn}}$.

Here we strongly use the  condition $\{w_t<0\}=\{w>0\}$ given in \eqref{eq:obstacle intro}, a property inherited from the supercooled Stefan problem.
\begin{lem} \label{lem:eta grad w} Let $r>0$, let $w:Q_{r}\to \R$ be a solution to \eqref{eq:obstacle intro}, let $\eta=-w_t$, and assume that $Q_{r} \subset B_1 \times (-1,1)$. There exists a constant $C>0$ such that
\begin{equation} \label{nondegeneracy general}|\grad w(x,t)| + |(D^2w)^-(x,t)|\leq C\eta(x,t), \quad (x,t)\in Q_{r/2}.    
\end{equation}

In addition, for any $m>0$, there exist constants $\delta\in (0,\frac{1}{10})$ and $K>1$, depending only on $d$, $m$, and $\|w\|_{\infty}$, but not on $r$, such that the following holds. Assume that $w(0,0)=0$ and
\begin{equation} \label{eta large near fast blow-up}
    \eta(x,t)\geq \frac{m}{2}, \quad  (x,t)\in B_{r}\times (-r^2,-(\delta r)^2).
\end{equation}
Then
\begin{equation} \label{nondegeneracy fast blow-ups}
    |\grad w(x,t)| \leq Kr \eta(x,t), \quad (x,t)\in Q_{r\delta }.
\end{equation}
\end{lem}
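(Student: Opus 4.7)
The plan is a parabolic blow-up argument for the general bound in Part 1, and a direct Harnack-plus-interior-estimate argument for the fast nondegeneracy in Part 2. For Part 1, I would argue by contradiction. If $|\nabla w|+(D^2w)^-\le C\eta$ failed on $Q_{r/2}$ for every $C>0$, a sequence $(y_n,s_n)\in Q_{r/2}$ would make the ratio diverge. The $C^{1,1}_x\cap C^0_t$ estimate \eqref{eq:C11xCt bounds w} and Proposition \ref{p:continuityofsecondd} keep the numerator bounded, so $\eta(y_n,s_n)\to 0$. But inside $\{w>0\}$ the equation $w_t-\Delta w=-1$ yields that $\eta=-w_t$ is caloric, and the supercooled condition $\{w_t<0\}=\{w>0\}$ keeps $\eta>0$ strictly there; the parabolic strong maximum principle then bounds $\eta$ away from $0$ on any compact subset of $\{w>0\}$, so $(y_n,s_n)$ must approach the free boundary.

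Letting $(p_n,q_n)$ be closest boundary points and $\lambda_n$ the parabolic distance, I would rescale
\[
w_n(x,t):=\lambda_n^{-2}\,w(p_n+\lambda_n x,\,q_n+\lambda_n^{2} t).
\]
By \eqref{eq:C11xCt bounds w} together with the CPS nondegeneracy \eqref{eq:w nondegeneracy}, a subsequence converges locally uniformly to a nontrivial global solution $w_\infty$ of \eqref{eq:obstacle intro} with $(0,0)\in\partial\{w_\infty>0\}$, and the images of $(y_n,s_n)$ converge to a point $z_\infty$ at parabolic distance $1$. Smooth convergence in the positivity set transfers the ratio divergence to the limit: $\eta_\infty(z_\infty)=0$ while $|\nabla w_\infty(z_\infty)|+(D^2w_\infty)^-(z_\infty)>0$. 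The strong maximum principle for caloric functions then forces $\eta_\infty\equiv 0$ on the component of $z_\infty$, so $w_\infty$ becomes time-independent there, solving the stationary obstacle problem $\Delta w_\infty=1$, $w_\infty=|\nabla w_\infty|=0$ on the spatial boundary, with $0$ a boundary point. The contradiction is then extracted by balancing the $C^{1,1}$--vanishing of $\nabla w_\infty$ at $0$ against the nondegenerate blow-up at $z_\infty$; this final step, which I view as the heart of the argument, seems to require either a further sub-scale blow-up or the coupling between $(D^2w)^-$ and $\eta$ suggested by Proposition \ref{p:continuityofsecondd}.

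For Part 2, the hypothesis $\eta\ge m/2$ on $B_r\times(-r^2,-(\delta r)^2)$ combined with $\{w_t<0\}=\{w>0\}$ forces $w>0$ on this slab, hence $\eta$ is a nonnegative caloric function there. Parabolic Harnack propagates the lower bound forward to $\eta\ge c_0 m$ on $Q_{r\delta}\cap\{w>0\}$, with $\delta$ and $c_0$ depending only on $d$, $m$, $\|w\|_\infty$; on $Q_{r\delta}\cap\{w=0\}$ the desired inequality is trivial since both sides vanish. For the gradient, \eqref{eq:C11xCt bounds w} and $w(0,0)=0$ give $\|w\|_{L^\infty(Q_{r/2})}\le Cr^2$, and a scaled interior $C^{1,1}$ estimate yields $|\nabla w|\le C'r$ on $Q_{r\delta}$; dividing produces \eqref{nondegeneracy fast blow-ups} with $K=C'/(c_0 m)$. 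Thus the main new technical work lies in closing the contradiction in Part 1, where the stationary component of $w_\infty$ that can appear is the pathology peculiar to the supercooled problem and therefore the main obstacle.
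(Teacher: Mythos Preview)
Both parts of your proposal have genuine gaps, and the paper takes a direct barrier (comparison) argument instead.

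\textbf{Part 1.} You acknowledge the contradiction is not closed, and it cannot be along the lines you sketch: a stationary component of $\{w_\infty>0\}$ on which $\Delta w_\infty=1$, with $|\nabla w_\infty(z_\infty)|>0$ at an interior point, is a perfectly valid object (any nontrivial solution of the elliptic obstacle problem furnishes one). The supercooled constraint $\{w_t<0\}=\{w>0\}$---which is what makes \eqref{nondegeneracy general} true---need not survive the blow-up, so you have discarded the very hypothesis you need. The paper works at the original scale: for fixed $(x_1,t_1)$ and $e\in\mathbb S^{d-1}$ it sets $z=\eta-c_1\partial_e w-c_{11}(\partial_{ee}w)^-+c_2\bigl(\tfrac{1}{2d+1}(|x-x_1|^2+(t_1-t))-w\bigr)$, which is supercaloric in $\{w>0\}$ and nonnegative on $\partial\{w>0\}$ by continuity of $w,\nabla w,(D^2w)^-$. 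On $\partial_p Q_{r/4}^-(x_1,t_1)$ it splits into a $\rho$-neighborhood of $\{w=0\}$, where the quadratic term dominates, and its complement, where $\{w>0\}=\{\eta>0\}$ gives $\eta\ge\varepsilon(w,\rho)>0$; then $z(x_1,t_1)\ge0$ by the maximum principle. Note $C$ depends on $w$ through $\varepsilon$.

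\textbf{Part 2.} Your Harnack step is false as stated: $\eta\ge c_0m$ does \emph{not} hold on all of $Q_{r\delta}\cap\{w>0\}$. Harnack requires a cylinder of uniform size inside $\{w>0\}$, but points of $Q_{r\delta}\cap\{w>0\}$ arbitrarily close to the free boundary (above the slab, where positivity of $w$ is not assumed) carry no such cylinder, and there $\eta$ may be arbitrarily small. Since your gradient bound $|\nabla w|\le C'r$ does not sharpen near $\partial\{w>0\}$, the division fails exactly where it matters. The paper reruns the same barrier with $c_{11}=0$, $c_1=rc_2$, $c_2=\tfrac{k_0m}{2C_1r^2}$: on $\partial_pQ_{r/4}^-(x_1,t_1)$ at distance $\ge2\delta r$ from $\{w=0\}$ it does use Harnack (on a cylinder whose bottom slice lands in the slab) to get $\eta\ge k_0m$, but within distance $2\delta r$ the quadratic term, of size $\sim c_2 r^2\sim m$, dominates $c_2r|\partial_e w|+c_2 w=O(c_2\delta r^2)$ once $\delta$ is small. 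The barrier is precisely the device that bridges the near- and far-boundary regimes that your direct approach cannot.
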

\begin{proof}    
We fix $(x_1,t_1)\in Q_{r/2}$ and let $D=Q_{r/4}^-(x_1,t_1)$. Fix $e\in \mathbb{S}^{d-1}$, let $c_1, c_{11},c_2\in (0,1)$ be small constants, to be chosen later, and let
\begin{equation} \label{eq:z defi spd pf}z(x,t)=\eta(x,t) - c_1\partial_{e}w(x,t)-c_{11}(\partial_{ee} w(x,t))^-+c_2\left(\frac{1}{2d+1}(|x-x_1|^2+(t_1-t))-w(x,t)\right).\end{equation}
Note that $z$ is supercaloric on $D \cap \{w>0\}$ and, since $w$, $\grad w$ and $(D^2w)^-$ are continuous, satisfies \begin{equation} \label{partk1o1}\liminf_{(x,t)\in D,  \text{dist}((x,t),\partial\{w>0\} )\to 0}z(x,t)\geq 0.\end{equation} Moreover, requiring $(c_1+c_{11})/c_2$ small enough, we may fix $\rho>0$ small enough (depending on $r$ and the $C^{1,1}_x\cap C^{0,1}_t$ norm of $w$, but not on $c_1$, $c_{11}$, $c_2$, or $(x_1,t_1)$), such that
\begin{equation} \label{partk1o2}z\geq -C(c_1\rho+c_{11})-c_2C\rho^2 +c_2\frac{r^2}{16(2d+1)}>0 \;\; \text{ on } \;\; \partial_{p} D\cap(\{w=0\}+Q_{\rho}).\end{equation}
Here we use the notation $\{w =0\}+Q_\rho$ to mean the sum of these two sets, i.e. the set of all points which are contained in a cylinder of radius $\rho$ centered at a point in the zero set of $w$. On the other hand, if $(x,t)\in D\backslash(\{w=0\}+Q_{\rho})$, since $\eta$ is smooth in $\{w>0\}$, from the condition $\{w>0\}=\{\eta>0\}$ in \eqref{eq:obstacle intro}, we have $\eta(x,t) \geq \vep$ for some $\vep>0$ that depends on $\rho$ and $w$, but not on $(x_1,t_1)$. We therefore have
\begin{equation}\label{partk1o3}
    z(x,t)\geq \vep -C(c_1+c_{11}+c_2)>0, \quad (x,t)\in D\backslash(\{w=0\}+Q_{\rho}).
\end{equation}
if $c_1$, $c_{11}$ and $c_2$ are chosen sufficiently small, depending on $\vep$ and the $C^{1,1}_x$ norm of $w$. By the maximum principle, we infer from \eqref{partk1o1}, \eqref{partk1o2}, and \eqref{partk1o3} that 
\[z\geq 0 \text{ on } D .\]
Since $e$ was arbitrary, evaluating at $(x,t)=(x_1,t_1)$, we conclude that
\begin{equation} \label{etasamfa}\eta(x_1,t_1) \geq \frac12c_1|\grad w(x_1,t_1)|+\frac12c_{11}|(D^2w)^-(x_1,t_1)| +c_2 w(x_1,t_1)\geq \frac12c_1|\grad w(x_1,t_1)|+\frac12 c_{11}|(D^2w)^-(x_1,t_1)|  .\end{equation}
This proves \eqref{nondegeneracy general}. 

Assume now that $\delta\in (0,\frac{1}{10})$ is such that \eqref{eta large near fast blow-up} holds, and let $(x_1,t_1)\in Q_{\delta r}$. We now prove that \eqref{nondegeneracy fast blow-ups} holds if $\delta$ is sufficiently small, through a variant of the above barrier argument, where $z$ is defined again by \eqref{eq:z defi spd pf}, but we set $c_{11}=0$ and $c_1=rc_2$ from the start, so that, for $c_2\in (0,\infty)$ to be chosen later,

\begin{equation*}
    z(x,t):=\eta(x,t)-c_2r \partial_{e}w(x,t)+c_2\left(\frac{1}{2d+1}(|x-x_1|^2+(t_1-t))-w(x,t)\right).
\end{equation*}
We let $D= Q_{r/4}^-(x_1,t_1)$ as before, so that \eqref{partk1o1} holds for any choice of $c_2$. Choosing $\rho=2\delta r$, we have
\begin{equation}\label{singpf12w1}
z\geq -c_2Cr^2\delta+c_2\frac{r^2}{16(2d+1)}>0 \;\; \text{ on } \;\; \partial_{p} D\cap(\{w=0\}+Q_{\rho}),
\end{equation}
provided that $\delta$ is sufficiently small, chosen independently of $r$ and $c_2$.
Assume now that $(x,t)\in D\backslash(\{w=0\}+Q_{\rho})$. Since $\{\eta>0\}=\{w>0\}$, we have $\eta>0$ on $Q^-_{\rho}(x,t)=Q^-_{2\delta r}(x,t)$. Moreover, since $B_{2\delta r}(x) \times \{t-(2\delta r)^2\} \subset B_{r} \times (-r^2,-(\delta r)^2)$, we obtain from \eqref{eta large near fast blow-up} that
\[\eta \geq m/2 \quad \text{on} \quad B_{2\delta r}(x,t)\times \{t-(2\delta r)^2\}.\]
The Harnack inequality therefore implies that $\eta(x,t)\geq k_0m$, for some constant $k_0\in (0,1/2)$ depending only on dimension. Hence, for some constant $C_1$ depending only on the $C^{1,1}_x\cap C_t^{0,1}$ norm of $w$, using the fact that $w(0,0)=0$,
\begin{equation}\label{singpf12w2}
    z(x,t)\geq k_0m  -C_1c_2 r^2, \quad (x,t)\in D\backslash(\{w=0\}+Q_{\rho}).
\end{equation}
Choosing $c_2 =\frac{1}{2}\frac{k_0m}{C_1r^2}$, we obtain $z(x,t) > 0$ for $(x,t)\in D\backslash (\{w=0\}+Q_\rho)$ 
and the maximum principle yields
\begin{equation*}
    \eta(x_1,t_1)\geq c_2r |\nabla w(x_1, t_1)| = \frac{1}{2} \frac{k_0m}{C_1r}|\grad w(x_1,t_1)|.
\end{equation*}
\end{proof}

The next result is formally a consequence of the lower bound on the free boundary speed implied by \eqref{nondegeneracy fast blow-ups}, but one needs to be careful in its use due to the fact that $|\nabla w|$ degenerates at free boundary points, and the temperature $\eta$ is discontinuous. 
\begin{prop}[Lipschitz continuity of the freezing time]\label{prop lipschitz}Let $w: Q_1 \to \R$ be a bounded solution to \eqref{eq:obstacle intro}. There exists a constant $C>0$ such that for every $(x_1,t_1)\in  Q_{\frac12} \cap {\{w=0\}} $ and every $(x_2,t_2) \in Q_{\frac12} \cap \overline{\{w>0\}}$ with $t_1<t_2$, one has
\begin{equation*}
0\leq  t_2-t_1 \leq C|x_2-x_1|   
\end{equation*}
In particular if $(0,0)\in \partial \{w>0\}$, there exists a unique Lipschitz continuous function $s: B_{\frac{1}{2C}} \to (-1,1)$ such that \[\{w>0\}\cap (B_{\frac{1}{2C}}\times (-1,1))=\{t<s(x)\}.\]
\end{prop}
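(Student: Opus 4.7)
The content is driven entirely by Lemma \ref{lem:eta grad w}, which provides a constant $C=C(d,\|w\|_\infty)>0$ with
\begin{equation*}
|\nabla w(x,t)| \leq C\eta(x,t) = -Cw_t(x,t) \quad\text{on } Q_{1/2}.
\end{equation*}
I will show that this same $C$ realizes the Lipschitz bound in the statement. The argument reduces to a one-dimensional sign analysis of $w$ along the straight segment joining the two points.

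\emph{Main estimate by contradiction.} Fix $(x_1,t_1)\in Q_{1/2}\cap\{w=0\}$ and $(x_2,t_2)\in Q_{1/2}\cap\overline{\{w>0\}}$ with $t_1<t_2$, and suppose, for contradiction, that $t_2-t_1 > C|x_2-x_1|$. First treat the subcase $w(x_2,t_2)>0$. The straight segment $\gamma(\sigma):=((1-\sigma)x_1+\sigma x_2,\,(1-\sigma)t_1+\sigma t_2)$, $\sigma\in[0,1]$, lies in the convex cylinder $Q_{1/2}$, so $f(\sigma):=w(\gamma(\sigma))$ is continuous on $[0,1]$ with $f(0)=0$ and $f(1)>0$. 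On the open set $\{w>0\}$ the function $w$ satisfies the constant-coefficient equation $\cH w=1$, hence is smooth there; in particular, on the open set $\{f>0\}\subset[0,1]$,
\begin{equation*}
f'(\sigma) = (x_2-x_1)\cdot\nabla w + (t_2-t_1)w_t \leq w_t\bigl((t_2-t_1)-C|x_2-x_1|\bigr) < 0,
\end{equation*}
using $|\nabla w|\leq -Cw_t$ together with $w_t<0$ on $\{w>0\}$. Let $(a,1]$ be the connected component of $\{f>0\}$ containing $\sigma=1$; since $f(0)=0$, we have $a\geq 0$, and continuity yields $f(a^+)=0$ (either because $f(a)=0$ when $a>0$, or by continuity at $\sigma=0$). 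The strict decrease of $f$ on $(a,1]$ then forces $f(1)<0$, contradicting $f(1)>0$. For the remaining subcase $w(x_2,t_2)=0$ with $(x_2,t_2)\in\overline{\{w>0\}}$, approximate by $(x_2^n,t_2^n)\in\{w>0\}$ with $(x_2^n,t_2^n)\to(x_2,t_2)$: the strict inequalities $t_2^n>t_1$ and $t_2^n-t_1>C|x_2^n-x_1|$ persist for $n$ large, reducing to the previous subcase.

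\emph{Graph representation.} Assume $(0,0)\in\partial\{w>0\}$, and define $s(x):=\sup\{t\in(-1,1): w(x,t)>0\}$ (with the convention $s(x)=-1$ if the set is empty). Monotonicity of $w$ in $t$ yields $\{w(x,\cdot)>0\}\cap(-1,1) = (-1,s(x))$. The main estimate, applied with $t_1=s(x_1)+\epsilon$ (so $w(x_1,t_1)=0$) and $t_2=s(x_2)-\epsilon$ (so $(x_2,t_2)\in\{w>0\}$) and letting $\epsilon\downarrow 0$, gives $s(x_2)-s(x_1)\leq C|x_1-x_2|$ whenever $s(x_1)<s(x_2)$, and symmetry yields $|s(x_1)-s(x_2)|\leq C|x_1-x_2|$. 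Finally, $s(0)=0$ because monotonicity and $w(0,0)=0$ force $s(0)\leq 0$, while the Lipschitz bound applied to a sequence $(x_n,t_n)\in\{w>0\}$ with $(x_n,t_n)\to(0,0)$ (guaranteed by $(0,0)\in\overline{\{w>0\}}$) yields $s(0)\geq 0$. Hence $|s(x)|\leq C|x|$, so $s:B_{1/(2C)}\to(-1,1)$ is well defined, Lipschitz, and satisfies the claimed graph identity; uniqueness is automatic from the defining property.

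\emph{Main obstacle.} The computation is brief once Lemma \ref{lem:eta grad w} is in hand; the two points requiring care are (i) that $w_t$ may be discontinuous across the free boundary, which forces us to differentiate $f$ only on the open set $\{f>0\}$, where parabolic interior regularity for $\cH w=1$ takes over, and (ii) the closure case $w(x_2,t_2)=0$, handled by approximation using the openness of the strict inequality $t-t_1>C|x-x_1|$. All nontrivial PDE content is absorbed in the pointwise ratio estimate $|\nabla w|\leq C|w_t|$; the rest is elementary one-variable monotonicity.
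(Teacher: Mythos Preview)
Your proof is correct and follows essentially the same route as the paper's: both invoke Lemma \ref{lem:eta grad w} to obtain $|\nabla w|\leq C|w_t|$ on $Q_{1/2}$ and then run a one-variable sign argument along a segment joining the two points to derive the contradiction. The only cosmetic difference is the choice of segment---the paper moves from $(x_2,t_2)$ toward $x_1$ with fixed time-speed $\tilde C|x_2-x_1|$ (so the auxiliary function is nondecreasing on its positivity set), whereas you take the straight space-time segment from $(x_1,t_1)$ to $(x_2,t_2)$ and get strict decrease under the contradiction hypothesis; both yield the same conclusion and the subsequent graph representation is handled in the same way.
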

\begin{proof}

By Lemma~\ref{lem:eta grad w},  there exists $\tilde{C}>0$ such that
\begin{equation}\label{e:gradwest}
    |\grad w(x,t)|\leq \tilde{C}|w_t(x,t)|, \quad (x,t)\in Q_{\frac12}.
\end{equation}
Let $(x_1,t_1),(x_2, t_2)$ be as in the statement of the proposition. By approximation, we may assume that $w(x_2,t_2)=:\vep>0$, and by monotonicity we then have $x_1 \neq x_2$. Let
\begin{equation*}
    \phi(s)=w(x_2-s(x_2-x_1),t_2-s\tilde{C}|x_2-x_1|), \quad s\in [0,b], \quad b=\min((t_2-t_1)/(\tilde{C}|x_2-x_1|),1).
\end{equation*}

Since $w$ is smooth in $\{w>0\}$, $\phi$ is smooth near any $s\in \{\phi>0\}$, and in light of \eqref{e:gradwest},
\begin{equation*}
    \phi'(s)=\grad w\cdot (x_1-x_2)-\tilde{C}w_t|x_2-x_1|\geq 0. 
\end{equation*}
(Note that $(x_2 - s(x_2-x_1), t_2 - s\tilde{C}|x_2 - x_1|) \in Q_{1/2}$ for all $s\in [0,b]$). Assume, by contradiction, that $t_2-t_1 > \tilde{C}|x_1-x_2|$. Then $b = 1$, and, by monotonicity of $w$,
\begin{equation*}
0<\phi(1)=w(x_1,t_2-\tilde{C}|x_2-x_1|)\leq w(x_1,t_1),
\end{equation*}
contradicting the fact that $(x_1,t_1)\in \{w=0\}$.

\end{proof}

We now exploit the regularity of the free boundary to obtain a time semi-convexity estimate for $w$.  This estimate is the main ingredient required to obtain a ``quadratic cleaning'' result (see Proposition \ref{prop:quadratic cleaning sigma}), which is analogous to those obtained in the melting problem (see \cite[Lem. 6.7]{colombo} and \cite[Lem. 8.1]{figalli}).
\begin{prop} \label{prop: time semiconvexity local}Let $w:Q_1 \to \R $ be a bounded solution to \eqref{eq:obstacle intro}. There is a constant $C=C(d,\|w\|_{L^{\infty}})>0$ such that
\[w_{tt}\geq -C \quad\hbox{ in } Q_{\frac12}.\]
\end{prop}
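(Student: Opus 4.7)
The plan is to establish $w_{tt} \geq -C$ pointwise on the open set $\{w > 0\} \cap Q_{1/2}$, where $w$ is smooth (as a classical solution of $w_t - \Delta w = -1$) so that $w_{tt}$ is well defined; on the interior of the coincidence set, $w \equiv 0$ and the bound is trivial. The key observation is that $w_{tt}$ is caloric on $\{w > 0\}$: differentiating the equation twice in $t$ gives $\cH(w_{tt}) = 0$ there. By Proposition \ref{prop lipschitz}, the set $\Omega := \{w > 0\} \cap Q_{3/4}$ is locally a space-time Lipschitz domain whose ``top'' is the Lipschitz graph $\{t = s(x)\}$, and by the parabolic minimum principle the desired bound reduces to a lower bound on $w_{tt}$ on the parabolic boundary $\partial_p \Omega$.

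The parabolic boundary decomposes into three pieces: the lateral portion on $\partial B_{3/4}$, the initial slice $\{t = -9/16\}$, and the free boundary $\{t = s(x)\}$. On the first two, which lie at positive parabolic distance from $Q_{1/2}$, standard interior parabolic estimates applied to the caloric function $w_t$ (bounded by \eqref{eq:C11xCt bounds w}) yield $|w_{tt}| \leq C$. The substantive part of the argument is the free-boundary piece, where $w_t = -\eta$ may be discontinuous and the ``top'' boundary values of $w_{tt}$ cannot be read off continuously.

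To handle the free boundary, I would construct, near each free boundary point $(x_*, s(x_*))$, a local barrier $\Phi(x,t)$ with $\cH \Phi \leq 0$, satisfying $\Phi \leq w_{tt}$ on the parabolic boundary of a small neighborhood in $\Omega$ and $\Phi \geq -C$; the comparison principle then propagates the lower bound. This barrier exploits the Lipschitz estimate $|s(x) - s(y)| \leq L|x-y|$, the pointwise bound $0 \leq w(x,t) \leq M(s(x)-t)_+$ implied by $|w_t| \leq M$, and the nondegeneracy $|\nabla w| \leq C\eta$ of Lemma \ref{lem:eta grad w}. Should an explicit barrier prove intractable, a fallback is a blow-up/compactness argument: assuming $w_{tt}(x_n, t_n) \to -\infty$ along a sequence $(x_n,t_n) \in \{w>0\}$ converging to the free boundary, rescale parabolically at the critical rate and extract a limit solution whose blow-up profile, classified by Theorem \ref{thm:obstacle prelim}, contradicts the assumed rate of degeneration.

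The main obstacle is precisely this free-boundary step. In the melting problem one uses that $w_t \geq 0$ is continuous and itself solves an obstacle problem, from which classical regularity gives two-sided bounds on $w_{tt}$; in the supercooled setting this approach breaks down because $w_t$ may be genuinely discontinuous at points of $\Sigma_{\operatorname{dyn}}$. The Lipschitz graphicality of the freezing time in Proposition \ref{prop lipschitz} together with the sharp nondegeneracy of Lemma \ref{lem:eta grad w} are precisely the structural inputs designed to replace the missing continuity of $w_t$ when controlling $w_{tt}$ near the free boundary.
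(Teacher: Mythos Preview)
Your approach via the minimum principle for the caloric function $w_{tt}$ on $\{w>0\}\cap Q_{3/4}$ does not close. First, the decomposition of $\partial_p\Omega$ is misleading: the lateral and bottom portions of $\partial_p Q_{3/4}$ can themselves lie arbitrarily close to the free boundary (nothing prevents $s(x)$ from being near $-9/16$, or free boundary points from having $|x|$ near $3/4$), so interior estimates for the caloric function $w_t$ in $\{w>0\}$ do \emph{not} yield a uniform bound $|w_{tt}|\le C$ there---the cylinder on which you can apply them shrinks to zero. Second, and more fundamentally, the free-boundary piece is precisely where $w_t$ jumps, so no limiting boundary value of $w_{tt}$ is available, and neither of your proposed resolutions supplies one: the barrier sketch has no concrete construction, and the blow-up fallback only yields $|w_{tt}(x,t)|\lesssim \operatorname{dist}_{\operatorname{par}}((x,t),\partial\{w>0\})^{-2}$, which is what you are trying to improve.

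The paper sidesteps boundary values entirely by a different mechanism. For $h>0$ set $z=(\eta(\cdot,t+h)-\eta(\cdot,t))/h$ with $\eta=-w_t$, and show that $z^+$ is subcaloric on \emph{all} of $Q_1$, not merely on $\{w>0\}$. On $\{w>0\}$ this is immediate from the equation. On $\{w=0\}$, the key observation---using exactly the Lipschitz graphicality of Proposition~\ref{prop lipschitz} that you invoke---is that every point of $\{w=0\}$ has a full space-time neighborhood on which $w(\cdot,\cdot+h)\equiv0$, so $z^+\equiv0$ there. Since subcaloricity is a local property, $z^+$ is subcaloric throughout $Q_1$. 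The Krylov--Safonov $L^\varepsilon$ estimate together with parabolic Calder\'on--Zygmund then gives $\|z^+\|_{L^\infty(Q_{1/2})}\le C\|w\|_{L^1(Q_1)}$ uniformly in $h$, and sending $h\downarrow0$ yields $w_{tt}\ge -C$. The point is that working with difference quotients and establishing \emph{global} subcaloricity across the free boundary trades the intractable boundary-value problem for an interior $L^\varepsilon$-to-$L^\infty$ estimate.
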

\begin{proof}
As mentioned above, the main issue relative to the melting problem (see \cite{figalli}) is that $\eta=-w_t$ is, in general, discontinuous. To address this issue, we substitute continuity of $\eta$ by the continuity and nondegeneracy of the free boundary provided by Proposition \ref{prop lipschitz}.
    Let $h>0$ be a small number, and let
    \[z(x,t)=\frac{\eta(x,t+h)-\eta(x,t)}{h}.\]
    We will show below that $z^+$ is subcaloric in $Q_1$. If this is true, then following \cite[Prop. 3.4]{figalli}, one may then use the parabolic Krylov-Safonov $L^{\vep}$ estimate, the Calder\'on-Zygmund estimates, and the time monotonicity of the sets $\{w(\cdot,t)>0\}$ to obtain
    \[\|z^+\|_{L^{\infty}(Q_\frac{1}{2})} \leq C\|w\|_{L^1(Q_1)}.\]
   We can then conclude by letting $h \to 0$ (note that, distributionally, we get an estimate on $\eta_t^+ = (-w_{tt})^+ = (w_{tt})^-$).
    
    It remains then to show that $z^+$ is subcaloric in $Q_1$. Note that $z$ is subcaloric in $\{w>0\}$ since, in that set, $(\chi_{\{x\mid \eta(x,t) > 0\}})_t = 0$, and so, by the monotonicity of $w$,
    \[z_t-\Delta z=h^{-1}(\chi_{\{\eta(t+h)>0\}})_t \leq0.\]
     To consider $z^+$ in $\{w=0\}$, define    
  \begin{equation*}
      A:=\text{int}(\{(x,t)\in Q_1:w(x,t+h)=0\}).
  \end{equation*}
    We claim that $\{w=0\}\cap Q_1 \subset A$. Indeed, let $(x_0,t_0)\in Q_1$ with $w(x_0,t_0)=0$.  We may assume that $(x_0,t_0)\in \partial \{w>0\}$, since otherwise trivially $(x_0,t_0)\in A$ by monotonicity of $w$. Let $r<\sqrt{h/2}$ be small enough so that $Q_r(x_0,t_0)\subset Q_1$.  By Proposition \ref{prop lipschitz}, there exist $c>0$ and a Lipschitz function $s:B_{cr}\to (t_0-r^2,t_0+r^2)$ such that
    \[ \{w>0\}\cap (B_{cr}(x_0)\times (t_0-r^2,t_0+r^2))=\{(x,t):t<s(x)\}.\] In particular, we have 
    \[B_{cr}(x_0) \times [t_0+r^2,1) \subset \{w=0\}.\]
   If $(x,t)\in B_{cr}\times (t_0-r^2,t_0+r^2)$, then $t+h \geq t_0-r^2+h \geq t_0+r^2$, hence $w(x,t+h)=0$. This proves that $B_{cr}(x_0)\times (t_0-r^2, t_0 +r^2) \subset \{(x,t):w(x,t+h)=0\}$, and therefore $(x_0,t_0)\in A$. 

   We have shown that $A$ is an open neighborhood containing the set $\{w=0\}$. But 
   \begin{equation*}
       z^+(x,t)=h^{-1}(\eta(x,t+h)-\eta(x,t))^+=h^{-1}(-\eta(x,t))^+=0, \quad (x,t)\in A
   \end{equation*}
   and thus $z^+$ is trivially subcaloric in $A$. Moreover, since $z$ is subcaloric in $\{w>0\}$, $z^+$ is subcaloric in $B=\{w>0\}$. Noting that
   \begin{equation*}
       Q_1=A \cup B,
   \end{equation*}
   since $z^+$ is subcaloric in both open sets $A$ and $B$, and subcaloricity is a local property, we conclude that $z^+$ is subcaloric in $Q_1$. 
    
\end{proof}

\subsection{Energy estimates}
We now obtain the energy estimates that will be needed to justify the existence, under certain conditions, of a second blow-up for $w$. Since the blow-up profiles can be time-dependent, some arguments that may work in the melting case break down here in subtle ways, and some of the estimates that hold there are simply false for the supercooled problem. 

For example, \cite[Corollary 6.2]{figalli} obtains an $L^\infty$ bound on $u_t$. In the context of the freezing problem, such a bound fails. Indeed, in the case that $w$ blows-up to $p = (-t)^+$ at $(0,0)$, an $L^\infty$ bound on $u_t$ would imply (in the notation of the result below) that $u$ (appropriately rescaled, see Proposition \ref{prop:second blow-up}) converges uniformly to the second blow-up at $(0,0)$ on the time slice $\{t = 0\}$ (this follows from Arzela-Ascoli). This would contradict the algebraic structure of the second blow-up given by \eqref{eq:sigma_d^2 blow-up form}. Accordingly, we only prove below a bound on $u_t^-$.

Due to these subtleties, we derive the bounds below in full detail. 
 
\begin{lem}\label{lem energy} Let $w: Q_2^- \to \R$ be a bounded solution to \eqref{eq:obstacle intro}, and let $p\in \mathcal{P}$. Let $e\in \ker (D^2 p)$ and, given $r\in (0,1)$, let 
\begin{equation}\label{udefst} u(x,t):=r^{-2}(w-p)(rx,r^2t).\end{equation} Then there exists a constant $C=C(d)>0$ such that
\begin{equation} \label{eq:energy}
    \|u\|_{L^{\infty}(Q_1^-)}+\|u_t^-\|_{L^{\infty}(Q_1^-)}+\|u_t\|_{L^{1}(Q_1^-)}+\|\nabla u\|_{L^2(Q_{1}^-)}+\|\cH u\|_{L^1(Q_1^-)}+\|(\partial_{ee}u)^-\|_{L^{\infty}(Q_1^-)}\leq C \|u\|_{L^2(Q_{2}^-)}.
\end{equation}
Additionally, if either
\begin{enumerate}
    \item[(i)] $p$ is space-independent, or
    \item[(ii)] $p$ is time-independent and $\dim(\ker(D^2p))=d-1$,   
\end{enumerate}
then we also have
\begin{equation}\label{grad bd u}
  \|\nabla u\|_{L^{\infty}(Q_1^-)}\leq C\|u\|_{L^2(Q_2^-)}.  
\end{equation}
Moreover, the same statements hold if $Q_1^-$ and $Q_2^-$ are replaced by $Q_1$ and $Q_2$ throughout.
\end{lem}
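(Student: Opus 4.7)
The starting point is the scaling identity
\[\cH u = \cH(w_r - p) = \chi_{\{w_r > 0\}} - 1 = -\chi_{\{w_r = 0\}},\]
where $w_r(x,t):= r^{-2} w(rx, r^2 t)$ itself solves \eqref{eq:obstacle intro}. This shows that $u$ is supercaloric with $|\cH u| \leq 1$, and the obstacle constraint $w_r \geq 0$ becomes a pointwise lower bound $u \geq -p$, where $p$ is bounded on $Q_2^-$ by a dimensional constant. These two facts — bounded Laplacian and one-sided pointwise bound — place us in the setting of standard parabolic regularity. I would derive the $L^\infty$ bound by Moser iteration applied to the non-negative subcaloric function $w_r = u + p$, the $L^2$-gradient bound by a Caccioppoli inequality with a spatial-temporal cutoff, and the $L^1$ bound on $\cH u$ by testing the sign $\cH u \leq 0$ against a cutoff $\psi \equiv 1$ on $Q_1^-$ and integrating by parts (using the just-proved $\nabla u \in L^2$ estimate to control the resulting boundary terms). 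The $L^1$ bound on $u_t = \Delta u - \cH u$ follows by duality.

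The pointwise bounds on $u_t^-$ and on $(\partial_{ee} u)^-$ for $e \in \ker(D^2 p)$ exploit the preceding one-sided estimates together with the available one-sided second-derivative control. Rescaling Proposition \ref{prop: time semiconvexity local} yields $u_{tt} \geq -Cr^2$ on $Q_2^-$, and Taylor-expanding backwards in time from any $(x_0, t_0) \in Q_1^-$ together with the $L^\infty$ bound on $u$ gives the stated bound on $\|u_t^-\|_{L^\infty(Q_1^-)}$. Since $\partial_{ee} p = 0$ for $e \in \ker D^2 p$, we have $\partial_{ee} u = \partial_{ee} w_r$, and Proposition \ref{p:continuityofsecondd} together with the pointwise inequality $|(D^2 w)^-| \leq C\eta$ from Lemma \ref{lem:eta grad w} (applied to the rescaled solution $w_r$) yields the bound on $(\partial_{ee} u)^-$.

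Finally, for the $L^\infty$ gradient bound in the two special cases, I would specialize Lemma \ref{lem:eta grad w}. In case (i) necessarily $p = -t$, so $\nabla u = \nabla w_r$ and $|\nabla u| \leq C\eta_r = C(m - u_t) \leq C(m + \|u_t^-\|_{L^\infty})$, which we have just controlled. In case (ii) $p = \frac12 (x \cdot e)^2$ with $m = 0$; the barrier argument of Lemma \ref{lem:eta grad w} applied in the $(d-1)$-dimensional kernel direction $e^\perp$ controls $\nabla_{e^\perp} u$, while the explicit form of $p$ combined with the $C^{1,1}_x$ estimate \eqref{eq:C11xCt bounds w} and $u(0,0) = 0$ controls $\partial_e u$. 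The same arguments run verbatim on the two-sided cylinders $Q_1, Q_2$.

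\textbf{Main obstacle.} The principal difficulty, and what distinguishes the argument from its melting-problem analog, is that $u_t$ (equivalently the temperature $\eta = -w_t$) is discontinuous at singular free boundary points, so there is no hope for an $L^\infty$ bound on $u_t$ itself. One can only control the one-sided quantity $u_t^-$, and only via a combination of the obstacle structure $u \geq -p$ with the time semi-convexity of Proposition \ref{prop: time semiconvexity local}, which serves as a substitute for the missing $C^0_t$ regularity of $u_t$. The fact that the pointwise $\nabla u$ estimate is only available in cases (i) and (ii) reflects the same asymmetry: outside these cases, the free boundary can move with finite speed along directions not controlled by $\ker(D^2 p)$, which prevents a clean barrier argument.
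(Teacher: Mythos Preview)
Your plan has a structural gap: several of your reductions introduce additive constants that do not scale with $\|u\|_{L^2}$, so they cannot deliver the homogeneous bound $\leq C(d)\,\|u\|_{L^2(Q_2^-)}$ claimed in the lemma (note the constant depends only on dimension, and there is no assumption that $(0,0)$ is a free boundary point).

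For the $L^\infty$ bound, applying Moser iteration to $w_r = u+p$ controls $\|w_r\|_\infty$ by $\|w_r\|_{L^2}$, not $\|u\|_\infty$ by $\|u\|_{L^2}$; the difference is an $O(1)$ term coming from $p$. The paper instead observes that \emph{both} $u^+$ and $u^-$ are subcaloric---$u^-$ because $u$ is supercaloric, and $u^+$ because $u$ is caloric on $\{u^+>0\}\subset\{w_r>0\}$---and applies the local maximum principle directly to each. The same defect recurs in your treatment of $u_t^-$ and $(\partial_{ee}u)^-$. Rescaling Proposition~\ref{prop: time semiconvexity local} gives $u_{tt}\geq -C(d,\|w\|_\infty)\,r^2$, so your Taylor argument yields $u_t^-\leq C\|u\|_\infty + C(d,\|w\|_\infty)\,r^2$, and the second term need not be dominated by $\|u\|_{L^2}$. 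Likewise, routing $(\partial_{ee}u)^-$ or $|\nabla u|$ through Lemma~\ref{lem:eta grad w} picks up a constant depending on $w$ in a non-explicit way, and an additive $m$ from $\eta_r=m-u_t$.

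The paper avoids all of this by staying intrinsic to $u$. For $u_t^-$ it notes that $(-u_t)^+$ is subcaloric (since $\eta$ is, and $p_t$ is constant), bounds its $L^\infty$ norm by a weak-$L^1$ quantity via the Krylov--Safonov $L^\varepsilon$ estimate, and controls that weak norm by $\|\cH u\|_{L^1}+\|u\|_{L^1}$ through a Calder\'on--Zygmund weak-type bound. The bound on $(\partial_{ee}u)^-$ follows the same template, using that the second difference quotient $z^e$ is supercaloric on $\{w_r>0\}$ and nonnegative on $\{w_r=0\}$ precisely because $e\in\ker(D^2p)$. For $\|\nabla u\|_\infty$ in the two special cases, the paper uses the just-proved one-sided Hessian bounds directly: in case (i), $(\partial_{ee}u)^-\leq C\|u\|_{L^2}$ holds for \emph{every} direction $e$, so $u$ is uniformly spatially semiconvex with constant $C\|u\|_{L^2}$, and a bounded semiconvex function has bounded gradient; in case (ii), one additionally uses $\Delta u = u_t - \chi_{\{w_r=0\}} \leq 0$ (since $p_t=0$ forces $u_t=(w_r)_t\leq 0$) to obtain semiconcavity in the single direction orthogonal to $\ker(D^2p)$.
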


\begin{proof}
    Note that $u$ satisfies
    \[u_t-\Delta u = -\chi_{\{w_r>0\}}+1=\chi_{\{w_r=0\}}.\]
  In particular, $u$ is supercaloric, so $u^-$ is subcaloric. Moreover, $u^+$ is caloric in $\{u^+> 0\} \subset \{w_r>0\}$, so $u^+$ is subcaloric as well.  By the parabolic Harnack inequality, this means that
    \[\|u\|_{L^{\infty}(Q_{7/4}^-)}\leq C\|u\|_{L^{1}(Q_{2}^-)}\leq C\|u\|_{L^2(Q_{2}^-)}. \]
    The $L^2$ estimate on $\grad{u}$ follows by a standard energy computation, by observing that
    \[u_tu -u\Delta u = \chi_{\{w_r=0\}}u=-p\chi_{\{w_r=0\}}\leq0.\]
     It remains to estimate $u_t=(-\eta-(p)_t)(rx,r^2t)$. First note that $z=-u_t$ is subcaloric, because $(p)_t$ is constant and $\eta$ is subcaloric. Therefore, $z^+=(-u_t)^+$ is subcaloric, and, by \cite[Thm. 4.16]{wang} and interpolation
    \begin{equation} \label{20q-3edi0w221}\sup_{Q_1^-} (z^+) \leq C \left( \int_{Q_{5/4}^-} |z^+|^{\frac12}\right)^2\leq C\sup_{\theta>0}\theta|\{u_t^->\theta\}\cap Q_{5/4}^-|.\end{equation}
    By the parabolic Calder\'on-Zygmund estimate (see \cite[Thm. 3.5]{figalli}), 
    \begin{equation}\label{20q-3edi0w222}
        \sup_{\theta>0}\theta |\{|u_t|+|D^2u|>\theta\}\cap Q_{5/4}^-|\leq C(\|\cH u\|_{L^1(Q_{3/2}^-)}+\|u\|_{L^1(Q_{3/2}^-)}).
    \end{equation}
    Since $\cH u=-\chi_{\{w_r=0\}}\leq 0$, it readily follows by testing the identity $\cH u =\Delta u-u_t$ against a smooth, non-negative bump function that
    \begin{equation}\label{20q-3edi0w223}
        \|\cH u\|_{L^1(Q_{3/2}^-)}\leq C\|u\|_{L^{\infty}(Q_{7/4}^-)}\leq C \|u\|_{L^2(Q_2^-)}.
    \end{equation}
Thus, we conclude from \eqref{20q-3edi0w221}, \eqref{20q-3edi0w222}, and \eqref{20q-3edi0w223} that
\begin{equation*}
    \|u_t^{-}\|_{L^{\infty}(Q_1^-)} \leq C \|u\|_{L^2(Q_2^-)}.
\end{equation*}
In particular, we have
\[|u_t|=u_t+2u_t^- \leq u_t +C\|u\|_{L^{2}(Q_2^-)}.\]
Thus, the quantity $\|u_t\|_{L^{1}(Q_{1}^-)}$ can then be estimated in terms of $\|u\|_{L^2(Q_2^-)}$ in the same way as $\|\cH u\|_1$. That is, by testing against a bump function $\xi(x,t)$ supported in $Q_{3/2}^-$ such that $\xi \equiv 1$ on $Q_{1}^-$, we have
\begin{equation*}\begin{aligned}\int_{Q_{3/2}^-}\xi|u_t|&\leq  \int_{Q_{3/2}^-} u_t\xi +C\|u\|_{L^2(Q_2^-)}  \leq-\int_{Q_{3/2}^-}u\xi_t +C\|u\|_{L^2(Q_2^-)}\\ &\leq C\|u\|_{L^{\infty}(Q_{3/2})} +C\|u\|_{L^2(Q_2^-)} \leq C\|u\|_{L^2(Q_2^{-})}.
\end{aligned} \end{equation*}
To estimate $\partial_{ee}u^-$, we let $h\in (0,1)$ and let
\begin{equation*}
z^e(x,t):=\frac{u(x+eh,t)+u(x-eh,t)-2u(x,t)}{h^2}.
\end{equation*}
The map $z^e$ satisfies, in $\{w_r>0\}$,
\begin{equation*}
    z^e_t-\Delta z^e=h^{-2}(\chi_{\{w_r(\cdot+eh)=0\}}+\chi_{\{w_r(\cdot-eh)=0\}})\geq 0.
\end{equation*}
Moreover, since $e\in \ker (A)$, we have $z^e\geq 0$ in $\{w_r=0\}$, and thus
$(z^e)^-$ is subcaloric. One then obtains from \eqref{20q-3edi0w222} and \eqref{20q-3edi0w223}, as in \eqref{20q-3edi0w221}, that
\begin{equation*}
    \|(z^e)^-\|_{L^{\infty}(Q_1)}\leq C\|u\|_{L^{\infty}(Q_2)}.
\end{equation*}
Recalling that $e\in \ker (D^2p)$ and letting $h \downarrow 0$, we conclude
\begin{equation} \label{eq:uee-bd pf}
     \|(\partial_{ee}u)^-\|_{L^{\infty}(Q_1^-)}\leq C\|u\|_{L^{\infty}(Q_2^-)}.
\end{equation}
If $p$ is space-independent, then \eqref{eq:uee-bd pf} is a uniform semiconvexity bound for $u$, and thus \eqref{grad bd u} follows. On the other hand, if $p$ is time-independent and $\dim(\ker(D^2 p))=d-1$, then, since $w_t\leq0$, we get
\begin{equation*}
\Delta u=u_t-\chi_{\{w_r=0\}} =w_t - 1 \leq 0.
\end{equation*}
Together with \eqref{eq:uee-bd pf},  this implies $\|(\partial_{ee} u)^+\|_{L^\infty(Q_1^-)} \leq C\|u\|_{L^\infty(Q_2^-)}$ for $e\in \ker(D^2p)^{\perp}$. Combining this with the semiconvexity along the remaining directions given by \eqref{eq:uee-bd pf}, \eqref{grad bd u} follows once more. 

Finally, the same estimates with $Q_1^-$ and $Q_2^-$ replaced by $Q_1$ and $Q_2$, respectively, may be obtained with the same proofs, replacing for each $r\in \{1,3/2,5/4,7/4,2\}$, the parabolic cylinder $Q_{r}^-$ with $Q_r$, 
\end{proof}
Next, we remark that, at singular points where the  frequency defined in \eqref{e:frequency} is finite, the height function $H(r,\zeta(w-p))$ is comparable with the standard $L^2$ norm. 

\begin{lem} \label{lem: H L2 comp}
   
Let $w : \mathbb{R}^{d}\times(-1,1)\to[0,\infty)$ be a bounded solution of \eqref{eq:obstacle gen}, let $(0,0)$ be a singular point with blow-up profile $p_2$, and let $u:=\zeta(w-p_{2})$.  Assume that $\phi(0+,u)=\lambda\in (0,\infty)$ exists.
Then, for all $r\in(0,1)$,
\[
\frac{1}{C}\,H(r,u)^{1/2}\;\le\;\|u_{r}\|_{L^{2}(Q_{1}^{-})}\;\le\;C\,H(r,u)^{1/2},
\]
for some constant $C>1$ depending on $d$, $\lambda$, and $\|w\|_{L^{\infty}}$.
\end{lem}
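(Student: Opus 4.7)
The plan is to exploit the doubling estimate \eqref{eq: H(theta r) comparable} together with pointwise Gaussian comparisons on $B_r$. Since $\phi(0+,u) = \lambda \in (0,\infty)$ is finite, choosing any $\gamma_0 > \max(\lambda,2)$ gives $\phi^{\gamma_0}(0+,u) = \lambda < \gamma_0$, so the last assertion of Lemma~\ref{lem:freq H bd} yields $H(r,u) \asymp H(\theta r,u)$ uniformly for $\theta \in [K^{-1},K]$, with constants depending on $d$, $\lambda$, $\|w\|_\infty$, and $K$.

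After changing variables, the target quantity reads
\[
\|u_r\|_{L^2(Q_1^-)}^2 = r^{-d-2}\int_0^{r^2}\!\int_{B_r} u(y,-\tau)^2 \, dy\, d\tau.
\]
I would split the time integral at $\tau_0 := r^2/K^2$ for a sufficiently large $K$ to be chosen. On the \emph{long-time range} $\tau \in [\tau_0,r^2]$, both $\sqrt{\tau}$ and $|y|$ lie in $[r/K,r]$, so
\[
G(y,-\tau) = (4\pi\tau)^{-d/2}\exp\!\left(-\tfrac{|y|^2}{4\tau}\right)
\]
is pinched between $c_K r^{-d}$ and $C_K r^{-d}$ on $B_r$. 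This gives
\[
\int_{B_r} u^2(y,-\tau)\, dy \;\asymp\; r^d \!\int u^2(y,-\tau)\, G(y,-\tau)\, dy = r^d H(\sqrt{\tau},u) \;\asymp\; r^d H(r,u),
\]
by doubling, and integrating in $\tau$ produces a contribution comparable (both above and below) to $r^{d+2} H(r,u)$. This step already yields the full lower bound $H(r,u) \leq C \|u_r\|_{L^2(Q_1^-)}^2$.

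The main obstacle is the \emph{short-time range} $\tau \in (0,\tau_0)$, where $G(y,-\tau)$ concentrates at scale $\sqrt{\tau} \ll r$ and no longer controls $\int_{B_r}$ uniformly. To bound this remainder, I would apply the $L^\infty$ and $L^2$ bounds from Lemma~\ref{lem energy} to $v_R(x,t) = R^{-2}(w-p_2)(Rx,R^2t)$ at dyadic scales $R \asymp r/2^j$, combined with the subcaloricity of $u^2$ (which follows since $\cH(u^2) = 2p_2\chi_{\{w=0\}}+2|\nabla u|^2 \geq 0$ near the origin, using that $u = -p_2 \leq 0$ on $\{w=0\}$ and $\zeta \equiv 1$ there for small $r$), to show that the short-time contribution is bounded by $\tfrac{C}{K^2}\cdot r^{d+2}H(r,u)$; taking $K$ large then absorbs this into the long-time estimate and closes the upper bound. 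The delicate point, which distinguishes the supercooled case from the melting problem, is that $u_t$ is not $L^\infty$-bounded, so the clean Gronwall comparison on $s\mapsto \int u^2(\cdot,s)\, G(\cdot,-r^2)\, dy$ that works there is unavailable; one must argue using only the weaker one-sided $\|u_t^-\|_{L^\infty}$ and $\|u_t\|_{L^1}$ controls provided by Lemma~\ref{lem energy}.
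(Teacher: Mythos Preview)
Your strategy is correct and aligned with what the paper intends (it simply cites \cite[Lem.~6.3]{figalli} and notes that Lemma~\ref{lem energy} substitutes for \cite[Cor.~6.2]{figalli}): doubling of $H$ from Lemma~\ref{lem:freq H bd}, Gaussian comparison on the bulk time range, and the $L^\infty$--$L^2$ estimate from Lemma~\ref{lem energy} near $t=0$. The lower bound via the long-time range is complete as written.

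The short-time step, however, has a gap. Applying Lemma~\ref{lem energy} at scale $r$ gives
\[
\int_0^{\tau_0}\!\!\int_{B_r} u^2 \;\le\; \frac{r^2}{K^2}\,|B_r|\,\|u\|_{L^\infty(Q_r^-)}^2 \;\le\; \frac{C}{K^2}\,\|u\|_{L^2(Q_{2r}^-)}^2,
\]
and the right-hand side is precisely the upper bound you are trying to prove, at scale $2r$ --- so the argument is circular as stated. Passing to \emph{smaller} scales $R=r/2^j$ does not break the circle: you must still integrate over all of $B_r$ at times $\tau\sim R^2$, and neither the Gaussian at scale $R$ nor Lemma~\ref{lem energy} at scale $R$ controls the annulus $B_r\setminus B_R$. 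The fix is to iterate \emph{upward}: setting $a_k=\|u_{2^{-k}}\|_{L^2(Q_1^-)}^2$ and $h_k=H(2^{-k},u)$, the estimate reads $a_k\le C_K h_k + CK^{-2}a_{k-1}$; iterated doubling gives $h_{k-j}\le C_0^{\,j}h_k$, and the polynomial lower bound $h_k\ge c\,2^{-k(2\lambda+\delta)}$ from \eqref{eq:H frequency bounds lower} disposes of the tail term $(CK^{-2})^k a_0$ once $K$ is chosen large depending on $\lambda$.

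A smaller remark: the only input you actually need from Lemma~\ref{lem energy} here is the $L^\infty$--$L^2$ bound on $u$ itself. The subcaloricity of $u^2$ is a valid observation but not required, and the concern about the missing two-sided $\|u_t\|_{L^\infty}$ bound --- while a genuine issue elsewhere in the paper --- does not enter the proof of this particular lemma.
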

This is a trivial generalization of \cite[Lem. 6.3]{figalli}, and has the same proof, which we omit, only noting that Lemma \ref{lem energy} serves as a substitute for \cite[Cor. 6.2]{figalli}. However, it is important to emphasize that the constant $C$ depends on the frequency $\lambda$, and degenerates as $\lambda \to \infty$. While this is not an issue in the melting case, where one always has $\lambda < 4$, the supercooled problem may have singular points of infinite frequency where Lemma \ref{lem: H L2 comp} becomes inapplicable (see Sections \ref{sec:top stratum superquadratic} and \ref{sec:top stratum infinite}). For this purpose, we compute a one-sided bound that is valid at such points where $\phi(0+)=\infty$ (or, equivalently, points where $\phi^{\gamma}(0+)=\gamma$ for all $\gamma>2$).

\begin{lem} \label{lem:comparability sigma d} Let $w:Q_1^- \to \R$ be a bounded solution to \eqref{eq:obstacle intro}, let $\gamma>2$, and let $(0,0)$ be a singular point with blow-up profile $p_2$. Let $u=w-p_2$, $\zeta$ be the fixed spatial cutoff function defined in \eqref{eq:cutoff defi}, and assume that
\begin{equation*}
   \phi^{\gamma}(0+,\zeta u)=\gamma. 
\end{equation*}
 There exists a constant $K>0$, depending only on dimension, $\gamma$, and $\|w\|_{\infty}$, such that
\begin{equation*}
    \|u\|_{L^2(Q_r^-)}\leq K|Q_r|^{\frac12}r^{\gamma}, \quad r\in (0,1/4).
\end{equation*}    
\end{lem}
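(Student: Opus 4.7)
The strategy is to first convert the frequency hypothesis into a decay rate for $H(r,\zeta u)$, and then pass from this Gaussian-weighted time-slice bound to the desired cylindrical $L^2$ estimate via a one-sided analog of Lemma~\ref{lem: H L2 comp}.

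For the first step, apply the upper frequency bound \eqref{eq:H frequency bounds upper} of Lemma~\ref{lem:freq H bd} with $\lambda=\phi^\gamma(0+,\zeta u)=\gamma$ and a fixed reference scale such as $R=1/2$. Since $\|p_2\|_{L^\infty(Q_1)}\le C(d)\|w\|_\infty$ by the structure of the blow-up profiles in $\mathcal{P}$, the crude bound $H(R,\zeta u)+R^{2\gamma}\le C(d,\gamma,\|w\|_\infty)$ holds, whence
\[H(r,\zeta u)\le C_1\, r^{2\gamma},\qquad r\in(0,1/2),\]
for some $C_1=C_1(d,\gamma,\|w\|_\infty)$.

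For the second step, define the parabolic rescaling $u_r(x,t):=u(rx,r^2 t)$. Since $\zeta\equiv 1$ on $B_{1/4}$, for $r\le 1/4$ a change of variables gives $H(1,u_r)=H(r,u)$, which coincides with $H(r,\zeta u)$ up to exponentially small Gaussian tails (the integrand $u^2(1-\zeta^2)G(\cdot,-r^2)$ is supported outside $B_{1/4}$, where $G(\cdot,-r^2)\le e^{-c/r^2}$). Similarly $\|u\|_{L^2(Q_r^-)}^2=r^{d+2}\|u_r\|_{L^2(Q_1^-)}^2$. The lemma therefore reduces to the one-sided cylindrical bound $\|u_r\|_{L^2(Q_1^-)}^{2}\le C_2\,H(1,u_r)$, with $C_2=C_2(d,\|w\|_\infty)$. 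This is the forward direction of Lemma~\ref{lem: H L2 comp}; the finite-frequency hypothesis in that lemma was invoked only to derive the \emph{reverse} inequality. The forward direction follows the argument of \cite[Lem.~6.3]{figalli}, using two ingredients: (i) the subcaloricity of $u^2$, which here follows from $\cH u=-\chi_{\{w=0\}}\le 0$ combined with $u=-p_2\le 0$ on $\{w=0\}$, giving $u\,\cH u=p_2\chi_{\{w=0\}}\ge 0$ and hence $\cH(u^2)=2u\,\cH u+2|\nabla u|^2\ge 0$; and (ii) the uniform $L^\infty$ control on the rescaled function provided by Lemma~\ref{lem energy}, which serves as the supercooled analog of \cite[Cor.~6.2]{figalli}. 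The parabolic sub-mean-value property for $u_r^2$ propagates the Gaussian-weighted bound on the slice $t=-1$ to pointwise bounds throughout $Q_1^-$, the Gaussian spatial tails being absorbed using the uniform $L^\infty$ bound on $u_r$.

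Combining yields $\|u\|_{L^2(Q_r^-)}^2 \le C_2\, r^{d+2}\, H(1,u_r)\le C_1C_2\,|Q_r|\,r^{2\gamma}$, proving the claim with $K=(C_1C_2)^{1/2}$. The main technical point is to verify that the sub-mean-value argument of \cite[Lem.~6.3]{figalli} yields the forward inequality $\|u_r\|_{L^2(Q_1^-)}^2\le C_2 H(1,u_r)$ using only subcaloricity of $u_r^2$ and Lemma~\ref{lem energy}, and does not require finiteness of the frequency (which is used only in the reverse inequality of Lemma~\ref{lem: H L2 comp}).
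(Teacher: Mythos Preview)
Your Step~1 is correct, but Step~2 has a genuine gap. You claim that the forward inequality $\|u_r\|_{L^2(Q_1^-)}^2 \leq C_2\,H(1,u_r)$ with $C_2=C_2(d,\|w\|_\infty)$ follows from subcaloricity of $u_r^2$ together with the ``uniform $L^\infty$ control'' of Lemma~\ref{lem energy}. But Lemma~\ref{lem energy} does not give a bound in terms of $H(1,u_r)$: it gives $\|u_r\|_{L^\infty(Q_1^-)}\leq C\|u_r\|_{L^2(Q_2^-)}$, which is a bound in terms of exactly the quantity you are trying to control (at the doubled scale). Using this to ``absorb the Gaussian tails'' is circular. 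Concretely, the sub-mean-value inequality from $t=-1$ yields $\int_{Q_1^-}u_r^2\le \int_{\mathbb{R}^d}u_r^2(y,-1)\bigl[\int_{Q_1^-}K(x-y,t+1)\,dx\,dt\bigr]\,dy$, and the bracketed kernel decays only like $e^{-c|y|^2}$ with $c<1/4$, strictly weaker than the Gaussian $G(y,-1)\sim e^{-|y|^2/4}$ defining $H(1,u_r)$. The trivial $O(1)$ bound on $\|u_r\|_\infty$ contributes a tail term of order $e^{-cR^2}$, which is a fixed constant, not $O(r^{2\gamma})$; optimizing over $R$ forces $C_R$ to blow up polynomially in $1/r$, and the argument does not close.

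The paper circumvents exactly this circularity by a dichotomy: either $\|u\|_{L^2(Q_{2r}^-)}\le N\|u\|_{L^2(Q_r^-)}$ for a fixed $N>2^{\gamma+(d+2)/2}$, in which case Lemma~\ref{lem energy} gives $\|u\|_{L^\infty(Q_r^-)}\le CN|Q_r|^{-1/2}\|u\|_{L^2(Q_r^-)}$, allowing the problematic strip $B_r\times(-\epsilon r^2,0)$ to be cut off and its contribution absorbed into the left-hand side (choosing $\epsilon\sim(CN)^{-2}$); on the remaining region $G(\cdot,t)\ge c_\epsilon r^{-d}$ and one compares directly to $\int H(\sqrt{|t|},\zeta u)\,dt\le Cr^{2\gamma+2}$. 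Or the doubling fails, and iterating to scale $\sim 1$ contradicts the global $L^\infty$ bound on $u$. Your proposal contains neither ingredient. The paper's remark preceding the lemma --- that the constant in Lemma~\ref{lem: H L2 comp} degenerates as $\lambda\to\infty$ --- is precisely a warning that the forward direction of \cite[Lem.~6.3]{figalli} is not frequency-free in the way you assume.
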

\begin{proof}
Fix $\gamma$ and $10\cdot 2^{\gamma + \frac{d+2}{2}} \geq N>2^{\gamma+\frac{d+2}{2}}$. We claim that there exists $C_\gamma>1$ such that, for any $r\in (0,1/2)$, if
\begin{equation} \label{eq:freqinfregpf1dc}
        \|u\|_{L^2(Q_{2r}^-)}\leq  N \|u\|_{L^2(Q_r^-)},
\end{equation}
then
\begin{equation}\label{eq:freqinfregpf2dc}
    \|u\|_{L^2(Q_{r}^-)}\leq C_{\gamma}r^{\gamma+\frac{d+2}{2}}.
\end{equation}
Indeed, assume that \eqref{eq:freqinfregpf1dc} holds. By Lemma \ref{lem energy}, we have, for some constant $C_0>1$,
\begin{equation*}
    \|u\|_{L^{\infty}(Q_r^-)}\leq |Q_r|^{-1/2} C_0 \|u\|_{L^2(Q_{2r}^-)}\leq C_0N|Q_r|^{-1/2} \|u\|_{L^2(Q_r^-)}.
\end{equation*}
We then have, for $\vep \in (0,1)$,
\begin{equation*}
    \int_{B_r\times(-r^2,-\vep r^2)}u^2\geq \int_{Q_r^-}u^2-\vep |Q_r|\|u\|^2_{L^{\infty}(Q_r^-)}\geq (1-\vep C_0^2N^2) \int_{Q_r^-}u^2.
\end{equation*}
Thus, choosing $\vep=\frac12 C_0^{-2}N^{-2}$, we get
\begin{equation} \label{eq: u comparab pf1}
    \|u\|_{L^2(Q_r^-)}^2\leq 2\int_{B_r \times (-r^2,-\vep  r^2)}u^2\leq C_1r^{d}\int_{B_r \times (-r^2,-\vep r^2)}u^2G(x,t)dxdt\leq C_1 r^d\int_{-r^2}^{ -\vep r^2}H(|t|^{\frac12},\zeta u)dt,
\end{equation}
where $C_1>1$ depends only on $C_0$ and $N$.
On the other hand, by \eqref{eq:H frequency bounds upper}, we have, for some generic constant $C$ depending only on dimension, $\gamma$, and $\|w\|_{\infty}$,
\begin{equation*}
    H(\rho,\zeta u) \leq \frac{1}{c}(H(1/2,\zeta u)+(1/2)^{2\gamma})\rho ^{2\gamma}\leq C\rho ^{2\gamma}\leq Cr^{2\gamma}, \quad \rho \in (0,r).
\end{equation*}
Thus, \eqref{eq: u comparab pf1} yields
\begin{equation*}
   \|u\|_{L^2(Q_r^-)} \leq  \sqrt{C C_1} |Q_r|^{\frac12}r^{\gamma}:=C_{\gamma} r^{\gamma+\frac{d+2}{2}},
\end{equation*}
which proves that \eqref{eq:freqinfregpf1dc} implies \eqref{eq:freqinfregpf2dc}.

Assume, by contradiction, that
\begin{equation*}
    \|u\|_{L^2(Q_r^-)}>C_{\gamma}r^{\gamma+\frac{d+2}{2}}.
\end{equation*}
Then by the claim just proved,
\begin{equation*}
    \|u\|_{L^2(Q_{2r}^-)}>N\|u\|_{L^2(Q_r^-)}\geq C_{\gamma}2^{-(\gamma+\frac{d+2}{2})}N(2r)^{\gamma+\frac{d+2}{2}}.
\end{equation*}
Since $\frac{N}{2^{\gamma+\frac{d+2}{2}}}>1$, we may apply the same claim again with $r:=2r$. Iterating this process, we obtain
\begin{equation*}
    \|u\|_{L^2(Q_{2^{k}r}^-)}\geq C_\gamma\left(2^{-(\gamma+\frac{d+2}{2})}N\right)^k(2^kr)^{\gamma+\frac{d+2}{2}}.
\end{equation*}
Choosing $k\geq 1$ such that $\frac14\leq 2^kr \leq \frac12$, and using that $C_\gamma>1$, we infer that
\begin{equation*}
    \left(2^{-(\gamma+\frac{d+2}{2})}N\right)^k \leq C_d\|u\|_{\infty},
\end{equation*}
where $C_d>0$ is a dimensional constant. Since $N>2^{\gamma+\frac{d+2}{2}},$ this yields a contradiction for sufficiently large $k$. Therefore, we have
\begin{equation*}
    \|u\|_{L^2(Q_r^-)}\leq C_{\gamma}r^{\gamma+\frac{d+2}{2}}
\end{equation*}
for sufficiently small $r$. Up to increasing the value of $C_{\gamma}$, this yields the result for all $r\in (0,\frac14)$.
\end{proof}
 
\section{Stationary blow-up profiles, nucleation, and second blow-up}
\label{sec:stationary}The goal of this section is to analyze the set $\Sigma_{\operatorname{stat}}$ of singular free boundary points $(x_0,t_0)$ with a time-independent blow-up profile. In particular, we will show that the bottom and top strata, $\Sigma_{\operatorname{stat},0}$ and $\Sigma_{\operatorname{stat}, d-1}$, are both empty, where we denote
\begin{equation*}
\Sigma_{\operatorname{stat},k}:=\Sigma_k \cap \Sigma_{\operatorname{stat}}, \quad 0\leq k \leq d-1.
\end{equation*}

To rule out the bottom stratum, an important step of independent interest is to show that any nucleation event occurs at a singular point with a time-dependent blow-up profile. For ruling out $\Sigma_{\operatorname{stat}, d-1}$ we characterize the second blow-up profile $q$ at a given point in the set to lead to a contradiction. It is crucial here to show that the degree of $q$ is at most two,  in contrast to what is known for the melting problem. 

\subsection{Nucleation and the bottom stationary stratum}
We begin by characterizing the points where the temperature variable is continuous, which is of independent interest.

\begin{prop}[Points of continuous temperature] \label{prop: eta continuous} Let $w:Q_1 \to \R$ be a bounded solution to \eqref{eq:obstacle intro}, and assume that $(0,0)\in \partial\{w>0\}$. Then $\eta=-w_t$ is continuous at $(0,0)$ if and only if $(0,0)$ is a regular point or $(0,0)\in \Sigma_{\operatorname{stat}}$.
\end{prop}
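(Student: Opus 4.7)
The statement splits into two implications, of which the one asserting that $\eta$ is discontinuous when $(0,0)\in \Sigma_{\operatorname{dyn}}$ is essentially an immediate consequence of the blow-up. Namely, if the blow-up profile is $p(x,t)=-mt+\tfrac12 A x\cdot x$ with $m>0$, then $p(0,-1)=m>0$, so the rescaled solutions $w_{r_n}(y,s):=r_n^{-2}w(r_n y, r_n^2 s)$ converge to $p$ smoothly on a neighborhood of $(0,-1)$. Evaluating $-\partial_t w_{r_n}$ at $(0,-1)$ gives the sequence $\eta(0,-r_n^2)\to m>0$. On the other hand, the conditions $w_t\leq 0$ and $w(0,0)=0$ force $w(0,t)\equiv 0$ for all $t\geq 0$, so $\eta(0,1/n)=0$ on a second sequence converging to $(0,0)$. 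Two distinct limits show that $\eta$ is discontinuous at $(0,0)$.

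For the converse, the regular case is immediate from Theorem \ref{thm:obstacle prelim}, which provides H\"older continuity of $w_t$ in a neighborhood of $(0,0)$; the value of this continuous extension at $(0,0)$ is forced to be $0$ since $(0,0)\in \overline{\{w=0\}}$, where $\eta\equiv 0$. The case $(0,0)\in\Sigma_{\operatorname{stat}}$ requires more work. Here the blow-up $p(x,t)=\tfrac12 A x\cdot x$ is time-independent, and one would like to conclude $\eta\to 0$ from the smooth convergence $\partial_t w_{r}\to \partial_t p=0$; the difficulty is that this smooth convergence is only available on $\{p>0\}$. An approaching sequence $(x_n,t_n)\in \{w>0\}$, after rescaling by $r_n:=\max(|x_n|,\sqrt{|t_n|})$, may converge to a point $(y_*,s_*)$ with $p(y_*,s_*)=0$ (e.g.\ $y_*\in\ker A$), where the smooth argument breaks down.

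I would handle this by contradiction, together with the time semi-concavity $w_{tt}\geq -C$ from Proposition \ref{prop: time semiconvexity local}. Suppose there is a sequence $(x_n,t_n)\to (0,0)$ in $\{w>0\}$ with $\eta(x_n,t_n)\geq c>0$. Set $r_n$ and $w_n:=w_{r_n}$ as above, let $(y_n,s_n):=(x_n/r_n,t_n/r_n^2)\in \overline{B_1}\times[-1,1]$, and pass to a subsequence so that $(y_n,s_n)\to (y_*,s_*)$. Semi-concavity rescales to $\partial_{tt} w_n\geq -Cr_n^2\to 0$; combined with $-\partial_t w_n(y_n,s_n)\geq c$ this gives
\[
\partial_t w_n(y_n,s)\leq -c+Cr_n^2(s_n-s)\leq -c/2 \qquad \text{for }s\in [s_n-\delta,s_n],
\]
once $n$ is large depending on any fixed $\delta>0$. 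Integrating and using $w_n\geq 0$, $w_n(y_n,s_n-\delta)\geq c\delta/2$. Choosing $\delta$ large enough that $s_n-\delta<0$ for all large $n$, the uniform convergence $w_n\to p$ on compact subsets of $\R^d\times(-\infty,0]$ gives
\[
\tfrac12 A y_*\cdot y_* = p(y_*,s_*-\delta)\geq c\delta/2.
\]
Since $A\geq 0$, $\operatorname{tr}(A)=1-m=1$, and $|y_*|\leq 1$, the left-hand side is bounded by $1/2$, which contradicts $\delta>1/c$.

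The main obstacle in the proof is the stationary singular direction above. Smooth convergence on $\{p>0\}$ alone cannot control $\eta$ when one approaches the singular point through the kernel direction $\ker A$, and this degeneracy is precisely the new phenomenon that distinguishes the supercooled problem from the melting one. The time semi-concavity of $w$, which is itself a nontrivial input relying on the Lipschitz regularity of the freezing time (Proposition \ref{prop lipschitz}), is the mechanism that allows us to bypass this degenerate direction: it lets us transport a hypothetical lower bound on $\eta$ at $(x_n,t_n)$ backwards in time to a definite region where $p$ is unambiguously controlled, closing the contradiction.
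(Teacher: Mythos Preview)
Your proof is correct. For the $\Sigma_{\operatorname{dyn}}$ direction, you use smooth convergence of the rescalings near $(0,-1)$ (where $p>0$), whereas the paper invokes the time semi-convexity of Proposition~\ref{prop: time semiconvexity local} to ensure $\lim_{t\uparrow 0}\eta(0,t)$ exists and then identifies it via the difference quotient $(w(0,t)-p(0,t))/t$; both arrive at $\eta(0,-r^2)\to m$, and your route is if anything slightly more self-contained here.

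The genuine divergence is in the $\Sigma_{\operatorname{stat}}$ case. The paper argues directly: since $p$ is stationary, $u_t=(w_t)_r\le 0$, so $\|u_t\|_{L^\infty(Q_1)}=\|u_t^-\|_{L^\infty(Q_1)}$, and the energy estimate of Lemma~\ref{lem energy} gives $\|u_t^-\|_{L^\infty(Q_1)}\le C\|u\|_{L^2(Q_2)}\to 0$; this is short and delivers a quantitative decay rate $\sup_{Q_r}\eta\lesssim\|r^{-2}(w-p)_r\|_{L^2(Q_2)}$ for free. Your contradiction argument instead uses only the bound $w_{tt}\ge -C$, transporting a hypothetical lower bound on $\eta$ backward in time until it collides with the blow-up. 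This is more geometric and sidesteps the full Calder\'on--Zygmund/Krylov--Safonov package of Lemma~\ref{lem energy}, though of course the same machinery is buried inside Proposition~\ref{prop: time semiconvexity local}. One small point you should make explicit: the integration of $\partial_t w_n$ along $\{y_n\}\times[s_n-\delta,s_n]$ is legitimate because this ray sits entirely in $\{w_n>0\}$ (by monotonicity and $w_n(y_n,s_n)>0$), where $w_n$ is smooth and the distributional semi-convexity becomes pointwise.
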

\begin{proof}
Assume first that $(0,0)\in \Sigma_{\operatorname{dyn}}$ is a singular point with a time-dependent blow-up profile
\begin{equation*}
    p(x,t)=-mt+\frac12Ax\cdot x, \quad m\in (0,1].
\end{equation*}
In view of Proposition \ref{prop: time semiconvexity local}, $\eta_t$ is bounded above, and thus $\lim_{t\uparrow 0}\eta(0,t)$ exists. We then have
\begin{equation*}
    0=\lim_{t\uparrow 0} \frac{w(0,t)-p(0,t)}{t}=\lim_{t \uparrow 0}(-\eta(0,t)+m),
\end{equation*}
where the first equality is by the definition of blow-up. Thus
\begin{equation*}
    \lim_{t\uparrow 0}\eta(0,t)=m\neq 0=\eta(0,0),
\end{equation*}
so $\eta$ is discontinuous at $(0,0)$.

Conversely, assume that $(0,0)\notin \Sigma_{\operatorname{dyn}}$. If $(0,0)$ is a regular point, then $\eta$ is continuous at $(0,0)$ by Theorem \ref{thm:obstacle prelim}, so we may assume that $(0,0)\in \Sigma_{\operatorname{stat}}$ has a stationary blow-up profile $p(x)$. Let $r\in (0,1/4)$, and let $u:Q_1\to \R$ be as in \eqref{udefst}. Since $p$ is stationary, we have $u_t =w_t\leq 0$. Lemma \ref{lem energy} yields
\[\|u_t\|_{L^{\infty}(Q_1)}=\|u_t^-\|_{L^{\infty}(Q_1)}\leq C\|u\|_{L^{2}(Q_2)}.\]
By the definition of blow-up, $u \to 0$ uniformly in $Q_2$ as $r\to 0$.  We therefore have $u_t\to 0$ uniformly in $Q_{1}$ as $r\to  0$. Translating this statement back to $w$, and using once more the fact that $p$ is stationary, we infer that
\[\eta=-\partial_t w\to 0 \;\;\text{ as }\;\; |x|+|t|^{\frac12} \to 0,\]
which proves that $\eta$ is continuous at $(0,0)$.
\end{proof}

A distinct feature of the supercooled Stefan problem that is not present in the melting problem, is that solutions may exhibit spontaneous {\it nucleation } away from the boundary (see \cite{CKM25} for examples). This notion is made precise by the following definition.
\begin{defn}[Nucleation]\label{def:nucleation} Let $w:Q_1 \to \R$  be a bounded solution to \eqref{eq:obstacle intro}, and assume that $(0,0)\in \partial \{w>0\}$. We say that $(0,0)$ is a {\it nucleation point} of $w$ if there exists $r>0$ such that 
\begin{equation*}
(\overline{B_r}\times (-r^2,0])\cap\{w=0\} \subset B_r \times \{0\}.  
\end{equation*}    
\end{defn}
We begin by showing that when nucleation happens, it must occur at singular points where $\eta=-w_t$ is discontinuous and thus, by the above, the blow-up profile is time-dependent: namely, once nucleation happens, freezing near the point occurs faster than parabolically. This description of nucleation is of independent interest.
\begin{prop}[Discontinuity at nucleation points]\label{prop:nucleation}Let $w:Q_1 \to \R$ be a bounded solution to \eqref{eq:obstacle intro}, and assume that $(0,0)$ is a nucleation point of $w$. Then $\eta=-w_t$ is discontinuous at $(0,0)$. In particular, $(0,0)\in \Sigma_{\operatorname{dyn}}$. 
\end{prop}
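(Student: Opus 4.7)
I plan to argue by contradiction: assume $\eta$ is continuous at $(0,0)$ and derive a contradiction using the parabolic Harnack inequality applied to $\eta$ on the cylinder supplied by the nucleation assumption.

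The nucleation hypothesis gives $w > 0$ on $\overline{B_r}\times(-r^2,0)$. On this open set, interior parabolic regularity for $w_t-\Delta w = -1$ makes $w$ smooth, so differentiating the equation in $t$ yields that $\eta=-w_t$ is smooth and satisfies $\eta_t-\Delta\eta=0$ there. In addition, the obstacle condition $\{w>0\}=\{w_t<0\}$ from \eqref{eq:obstacle intro} gives $\eta>0$ on this cylinder. Now, if $\eta$ were continuous at $(0,0)$, Proposition \ref{prop: eta continuous} would force $(0,0)$ to be either a regular point or a point of $\Sigma_{\operatorname{stat}}$. In both situations, the arguments in the proof of that proposition actually show that the limit value is $0$: for a regular point this follows from Theorem \ref{thm:obstacle prelim} combined with $w(0,t)\equiv 0$ for small $t\geq 0$ (by time monotonicity and $w(0,0)=0$); for a stationary singular point it follows from $u_t(x,t)=w_t(rx,r^2t)$ together with the estimate $\|u_t\|_{L^\infty(Q_1)}\to 0$ coming from Lemma \ref{lem energy}. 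Hence $\eta(y,s)\to 0$ as $(y,s)\to (0,0)$.

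For the contradiction, set $c_0:=\eta(0,-r^2/2)>0$ and apply the parabolic Harnack inequality to the nonnegative caloric function $\eta$ on $B_r\times(-r^2,0)$. This yields a constant $c=c(d)>0$ such that $\eta(0,t)\geq c\, c_0$ for every $t\in (-r^2/4, 0)$, and in particular $\liminf_{t\to 0^-}\eta(0,t)\geq c\, c_0 >0$, contradicting the limit-zero conclusion of the previous paragraph. Thus $\eta$ is discontinuous at $(0,0)$, and Proposition \ref{prop: eta continuous} together with Theorem \ref{thm:obstacle prelim} give $(0,0)\in \Sigma_{\operatorname{dyn}}$. The main technical subtlety is that the Harnack constant must not blow up as the later time slice approaches $0$; this is the standard feature of Moser's parabolic Harnack that the ``infimum region'' may be taken to include times arbitrarily close to the top of the cylinder, without any dependence of the constant on the time gap to that top. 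A small auxiliary point is that the reference value $c_0$ is itself positive and finite because $(0,-r^2/2)\in B_r\times(-r^2,0)\subset\{w>0\}$, where $\eta$ is smooth and strictly positive.
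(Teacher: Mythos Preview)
Your proof is correct and follows essentially the same idea as the paper's: exploit that $\eta$ is a strictly positive caloric function on the open cylinder $B_r\times(-r^2,0)$ supplied by nucleation, and contradict continuity at $(0,0)$ where $\eta$ must vanish. The paper argues slightly more directly: since $\eta$ is bounded and caloric in $B_r\times(-r^2,0)$, it extends smoothly to $Q_r^-$; continuity of $\eta$ at $(0,0)$ together with $\eta(0,0)=0$ then forces $\eta\equiv 0$ by the strong minimum principle, contradicting $\eta>0$. Your Harnack argument is an equivalent substitute for the minimum principle. One simplification: your detour through Proposition~\ref{prop: eta continuous} to show the limit value is $0$ is unnecessary, since $\eta(0,0)=0$ follows immediately from $(0,0)\in\{w=0\}$ and the identification $\{w=0\}=\{\eta=0\}$ in \eqref{eq:obstacle intro}; continuity at $(0,0)$ then gives $\eta\to 0$ directly.
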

\begin{proof} 
Since $(0,0)$ is a nucleation point, there exists $r>0$ such that $w>0$ in $Q_r^-\backslash \{t=0\}$. Thus $\cH w=1$ in $Q_r^-$, and by parabolic regularity $\eta$ has a smooth, caloric extension to $Q_r^-$. If $\eta \geq 0$ were to be continuous at $(0,0)$, then the fact that $\eta(0,0)=0$ would contradict the strong minimum principle of the heat equation. Therefore, $\eta$ is discontinuous at $(0,0)$, and, by Proposition \ref{prop: eta continuous}, we have $(0,0)\in \Sigma_{\operatorname{dyn}}$. 
\end{proof}

This observation, plus monotonicity, implies that if $p$ is a blow-up with $\ker(D^2 p) = \{0\}$ then $p$ must be time dependent. 

\begin{prop}[Bottom stratum of stationary singular profiles]\label{prop:sigma stat 0} Let $w:Q_1 \to \R$ be a bounded solution to \eqref{eq:obstacle intro}. Then
\begin{equation*}
    \Sigma_{\operatorname{stat},0}=\emptyset.
\end{equation*}    
\end{prop}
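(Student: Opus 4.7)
The plan is to show that every point in $\Sigma_{\operatorname{stat},0}$ would have to be a nucleation point in the sense of Definition \ref{def:nucleation}, and then to invoke Proposition \ref{prop:nucleation} to conclude such a point lies in $\Sigma_{\operatorname{dyn}}$, contradicting $\Sigma_{\operatorname{stat},0}\subset \Sigma_{\operatorname{stat}}$. Suppose by contradiction that $(x_{0},t_{0})\in \Sigma_{\operatorname{stat},0}$; after translation assume $(x_{0},t_{0})=(0,0)$, so that the blow-up profile is $p(x)=\tfrac{1}{2}Ax\cdot x$ with $A>0$ and $\operatorname{tr}(A)=1$. Set $\lambda:=\tfrac{1}{2}\lambda_{\min}(A)>0$, so that $p(y)\geq \lambda|y|^{2}$.

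The first ingredient is a quantitative positivity statement at the final-time slice. Since the blow-up convergence $w^{0,0,r}\to p$ is uniform on compact subsets of $\R^{d}\times(-\infty,0]$ (in particular on $\overline{B_{1}}\times[-1,0]$) and $p$ is bounded below by $\lambda|y|^{2}$, a standard choice of threshold (e.g. $\delta=1/4$) gives that $w>0$ on the parabolic annulus $\{r/4\leq|y|\leq r\}\times[-r^{2},0]$ for every sufficiently small $r$. Covering a punctured neighborhood of the origin by such annuli (for each $y\neq 0$ small, picking $r\sim|y|$) produces a radius $r_{*}>0$ with $w(y,0)>0$ for every $y\in B_{r_{*}}\setminus\{0\}$.

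Next I check nucleation. If $(0,0)$ were not a nucleation point, then for every $r>0$ there would exist $(x_{r},t_{r})\in(\overline{B_{r}}\times(-r^{2},0])\setminus(B_{r}\times\{0\})$ with $w(x_{r},t_{r})=0$; the monotonicity $w_{t}\leq 0$ immediately forces $w(x_{r},0)=0$. Taking $r=r_{n}\downarrow 0$ and passing to a subsequence we may assume either $x_{r_{n}}\neq 0$ for all $n$, or $x_{r_{n}}\equiv 0$. The first alternative contradicts the first step, since then $w(x_{r_{n}},0)=0$ at points $x_{r_{n}}\in B_{r_{*}}\setminus\{0\}$ for $n$ large. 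For the second alternative, $w(0,t_{r_{n}})=0$ with $t_{r_{n}}<0$ (since $x_{r_{n}}=0\in B_{r_{n}}$ excludes $t_{r_{n}}=0$), so the Lipschitz freezing time $s$ provided by Proposition \ref{prop lipschitz} satisfies $s(0)\leq t_{r_{1}}<0$. The Lipschitz bound $|s(y)-s(0)|\leq C|y|$ then forces $s(y)<0$, and hence $w(y,0)=0$, throughout a small ball $B_{|s(0)|/C}$, again contradicting the positivity established in the first step.

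Both alternatives being impossible, $(0,0)$ must be a nucleation point, and Proposition \ref{prop:nucleation} yields $(0,0)\in\Sigma_{\operatorname{dyn}}$, contradicting $(0,0)\in\Sigma_{\operatorname{stat},0}\subset\Sigma_{\operatorname{stat}}$. The main subtlety to execute carefully is the second alternative above, where the contradiction does not come from the positivity of $A$ alone but from the interplay between the graph representation of the free boundary (through the Lipschitz freezing time) and the quantitative positivity of the final-time slice coming from $A>0$; by contrast the first alternative is a direct manifestation of the zero set of $p$ being only the time axis.
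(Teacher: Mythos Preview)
Your proof is correct and follows essentially the same approach as the paper: both arguments use the positive-definiteness of $A$ to show $w(\cdot,0)>0$ on a punctured ball, and then split into the dichotomy ``nucleation point'' (contradicting Proposition~\ref{prop:nucleation}) versus ``zero set lies on the time axis $\{0\}\times(\epsilon,0]$'' (contradicting the Lipschitz freezing time of Proposition~\ref{prop lipschitz}). Your treatment of the second alternative is slightly more elaborate than necessary---since $(0,0)\in\partial\{w>0\}$ already forces $s(0)=0$, the inequality $s(0)\leq t_{r_n}<0$ is an immediate contradiction without needing to propagate via the Lipschitz bound---but the argument is sound as written.
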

\begin{proof} Assume, by contradiction, that $(x_0,t_0)\in \Sigma_{\operatorname{stat},0}$ for some $(x_0,t_0)\in Q_1$. After a change of coordinates, we may assume that $(x_0,t_0)=(0,0)$. Let $p(x)$ be the blow-up profile of $w$ at $(0,0)$. By the definition of blow-up, we have
\begin{equation*}
    w(x,0) \geq p(x)+o(|x|^2)\; \text{ as }\; x\to 0.
\end{equation*}
Since $\ker (D^2 p_2)=\{0\}$ it follows that, for some $\lambda \in(0,1)$, and $r>0$ sufficiently small,
\begin{equation*}
    w(x,0)\geq \lambda|x|^2+o(|x|^2)\geq \frac{\lambda}{2}|x|^2>0, \quad x\in B_r\backslash \{0\}.
\end{equation*}
Therefore, recalling that $w_t \leq 0$, we see either that $(0,0)$ is a nucleation point (recall Definition \ref{def:nucleation}) or that $\{w = 0\}\cap Q_r^- = \{(0, t)\mid 0 \geq t \geq \epsilon\}$ for some $\epsilon \geq-r^2$. The former situation contradicts Proposition \ref{prop:nucleation}, whereas the latter situation contradicts the Lipschitz regularity of the freezing time (Proposition \ref{prop lipschitz}).

\end{proof}

\subsection{Second blow-up and the top stationary stratum}\label{ss:topstationarystrata}

Now we turn to our proof that $\Sigma_{\operatorname{stat}, d-1}$ is empty. We recall that this is a crucial step in our larger argument, and that the points in $\Sigma_{\operatorname{stat}}$ cause the most difficulty in \cite{figalli}. 

Our strategy is to characterize the second blow-up profiles at points in this set. Following the ideas in the melting problem \cite{figalli}, we first show an interesting pointwise estimate, which says that at such a point, $\eta = -w_t$ grows faster than linear rate in all but one direction. Then we will use this to show that the second blow-ups are at most cubic in $\Sigma_{\operatorname{stat},d-1}$. The rest of the argument, which is unique to the supercooled problem, shows that the cubic blow-ups are not possible.
\begin{lem}[Nondegeneracy away from the thin set] \label{lem: d-1 linear nondegeneracy} Let $w:Q_1 \to \R$ be a bounded solution to \eqref{eq:obstacle intro}, let $\eta=-w_t$, and assume that $(0,0)\in \Sigma_{\operatorname{stat},d-1}$ has the blow-up profile  $p_2(x)=\frac12 x_d^2$. Then there is a constant $c_0>0$ such that the following holds: for every $\delta>0$, there is $r_0=r_0(\delta) \in (0,1)$ such that 
\begin{equation*}
    \eta(x,t)\geq c_0|x_d|, \quad (x,t)\in Q_r^-, \quad |x_d|\geq \delta r \quad \hbox{ for all } r\leq r_0.
\end{equation*}
\end{lem}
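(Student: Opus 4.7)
The plan is to combine the smooth convergence of the rescalings $w^{0,0,r}$ to the blow-up profile $p_2$ on compact subsets of $\{p_2>0\}$ with the pointwise bound $|\grad w|\leq C\eta$ from Lemma~\ref{lem:eta grad w}. Since $p_2(x)=\tfrac{1}{2} x_d^2>0$ on $\{x_d\neq 0\}$, the discussion following \eqref{e:singularblowup} guarantees that $w^{0,0,r}\to p_2$ uniformly on $\overline{Q_1^-}$ and in $C^k$ on compact subsets of $\{p_2>0\}$.

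Fix $\delta>0$ and set $K_\delta:=\{(x,t)\in \overline{Q_1^-}: |x_d|\geq \delta\}\subset\{p_2>0\}$. Using the $C^1$ part of the convergence on $K_\delta$ together with $\partial_d p_2(x,t)=x_d$, I would choose $r_0=r_0(\delta)\in (0,1/2)$ so that, for all $r\leq r_0$,
\[
|\partial_d w^{0,0,r}(x,t) - x_d|\leq \frac{\delta}{2}\leq \frac{|x_d|}{2}, \quad (x,t)\in K_\delta,
\]
and therefore $|\partial_d w^{0,0,r}(x,t)|\geq |x_d|/2$ on $K_\delta$. Unscaling via $w^{0,0,r}(x,t)=r^{-2}w(rx,r^2 t)$ yields
\[
|\partial_d w(y,s)|\geq \frac{|y_d|}{2}, \quad (y,s)\in Q_r^-, \;|y_d|\geq \delta r.
\]

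Because $r_0\leq 1/2$, we have $Q_r^-\subset Q_{1/2}$, so Lemma~\ref{lem:eta grad w} gives $|\grad w|\leq C\eta$ throughout $Q_r^-$. Combined with the previous bound,
\[
\eta(y,s)\geq \frac{1}{C}|\partial_d w(y,s)|\geq \frac{|y_d|}{2C}
\]
on the desired set, so the conclusion holds with the uniform constant $c_0 := 1/(2C)$, independent of $\delta$.

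The only step requiring genuine care is ensuring smooth (not merely uniform) convergence on $K_\delta$. This follows from the strict positivity of $p_2$ on $K_\delta$: uniform convergence forces $w^{0,0,r}>0$ in a neighborhood of $K_\delta$ for all sufficiently small $r$, so each rescaled function satisfies $\cH w^{0,0,r}=1$ there, and standard parabolic interior estimates upgrade $L^\infty$ to $C^k$ convergence on $K_\delta$. This is where $r_0$ acquires its dependence on $\delta$; otherwise the argument is essentially a direct concatenation of the blow-up description with the a priori estimate of Lemma~\ref{lem:eta grad w}.
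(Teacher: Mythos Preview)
Your proof is correct and takes a genuinely different, more elementary route than the paper's. The paper proceeds indirectly: it first uses the blow-up only to conclude that $Q_{2r}^-\cap\{|x_d|\ge r\delta/2\}\subset\{w>0\}$, then invokes the nondegeneracy estimate \eqref{eq:w nondegeneracy} and the mean value theorem to locate, for each target point $(x,t)$, a nearby point $(y,s)\in Q_{|x_d|/4}^-(x,t)$ where $\rho|\nabla w|+\rho^2|w_t|\gtrsim \rho^2$; Lemma~\ref{lem:eta grad w} converts this into $\sup_{Q_\rho^-(x,t)}\eta\gtrsim\rho$, and the pointwise conclusion at $(x,t)$ is then recovered via the time semiconvexity of Proposition~\ref{prop: time semiconvexity local} followed by the Harnack inequality for $\eta$.

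By contrast, you exploit the full strength of the statement after \eqref{e:singularblowup} that the convergence is smooth on $\{p_2>0\}$: since $\partial_d p_2=x_d$ is bounded away from zero on $K_\delta$, $C^1$ convergence gives $|\partial_d w|\ge |x_d|/2$ directly at the point of interest, and a single application of $|\nabla w|\le C\eta$ finishes. This bypasses nondegeneracy, time semiconvexity, and Harnack entirely. The paper's argument is more robust in that it would transfer to situations where one only knows $p_2>0$ but not $|\nabla p_2|>0$ on the relevant region (so that $C^0$ positivity plus nondegeneracy substitutes for gradient information); here, however, the explicit profile $p_2=\tfrac12 x_d^2$ makes your shortcut available. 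Your justification of the $C^1$ convergence on $K_\delta$ (positivity in a neighborhood, then interior parabolic estimates for $\cH(w^{0,0,r}-p_2)=0$) is the only nontrivial step, and you handle it correctly, including at points with $t=0$ where the backward cylinder $Q_\rho^-$ still lies in the region of caloricity.
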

\begin{proof} By definition of blow-up, we have
\begin{equation*}
    w(x,t)=\frac12 x_d^2+o(|x|^2+|t|), \quad (x,t)\in Q_1^-.
\end{equation*}
Thus, in particular, there exists $r_0\in (0,1)$ such that 
\begin{equation*}
    Q_{2r}^-\cap \{|x_d|\geq r\delta/2\} \subset \{w>0\},\quad  r\leq r_0.
\end{equation*}
We fix $(x,t)\in Q_r^-$ such that $|x_d|\geq \delta r$. Letting $\rho:=|x_d|/4$, we then have
\begin{equation*}
    Q^-_{\rho}(x,t)\subset \{w>0\}\cap Q_{2r}^-.
\end{equation*}
Thus, by the nondegeneracy property \eqref{eq:w nondegeneracy} and the mean value theorem, there exists $(y,s)\in Q_{\rho}^-(x,t)$ such that
\begin{equation*}
 \rho|\nabla w(y,s)|+\rho^2|w_t(y,s)|\geq c\rho^2.
\end{equation*}
Here $c$ is a small positive constant that may decrease in the steps below, but is independent of $\rho,$ $\delta$, and $r_0$. Lemma \ref{lem:eta grad w} then implies that
\begin{equation*}
 \sup_{Q_{\rho}^{-}(x,t)}\eta\geq c\rho.    
\end{equation*}
 By Proposition \ref{prop: time semiconvexity local}, $\eta_t$ is bounded above, and thus, provided that $r_0$ is sufficiently small, we infer that
\begin{equation*}
 \sup_{y\in B_{\rho}(x)}\eta(y,t-\rho^2)\geq c\rho.    
\end{equation*}
Finally, the Harnack inequality applied to the cylinder $Q_{\rho}^-(x,t)$ yields
\begin{equation*}
    \eta(x,t)\geq c\rho = \frac{c}{4}|x_d|.
\end{equation*}
\end{proof}

The next proposition shows  the existence of, subsequential, second blow-ups, which will also be used later in Section 6. However, part (c) in particular characterizes the second blow-up profiles at $\Sigma_{\operatorname{stat}, d-1}$, crucially restricting its degree to $(2,3]$, with the help of Lemma~\ref{lem: d-1 linear nondegeneracy}.

 It will be helpful to recall from \eqref{eq:scaling identities} the scaling identities
\[H(\rho,f_{r})=H(\rho r,f), \quad f_r(x,t)=f(rx,r^2t),\quad  \rho,r>0.\]

\begin{prop}[Second blow-up at singular points with finite frequency]\label{prop:second blow-up} Let $w:Q_1 \to \R$ be a bounded solution to \eqref{eq:obstacle intro}. Assume that $(0,0)\in\Sigma$ with blow-up profile $p_2$, and that $\lambda:=\phi^{\gamma}(0+,\zeta(w-p_2))$ for some fixed $\gamma>4$. Let
$$u^{(r)}(x,t) := \frac{(w-p_2)_r}{H(r,(w-p_2) \zeta)^{\frac12}}.$$ Then, for every sequence $r_k \downarrow 0$ there exists a subsequence $\{r_{k_l}\}$ and a function $q$ such that
\begin{equation*}
    u^{(r_k)} \to q \quad \text{and}  \quad \grad u^{(r_k)}\rightharpoonup \nabla q \quad \text{ in }\;L^2_{\operatorname{loc}}(\R^d\times (-\infty,0]).
\end{equation*}
Moreover,
    \begin{enumerate}
    \item[(a)] If $(0,0)\in \Sigma_{\operatorname{dyn}}$ and $\lambda<\gamma$, then $\lambda\geq2$ is an integer, $q$ is a caloric polynomial of degree $\lambda$, and $u^{(r_{k_l})} \to q$ in $C^{\infty}_{\operatorname{loc}}(\R^d\times (-\infty,0))$. Moreover, if $(0,0)\in \Sigma_d$ and $\lambda=2$, then there exist $a>0$ and a $d \times d$ matrix $A\geq 0$ such that
    \begin{equation} \label{eq:sigma_d^2 blow-up form}
     q(x,t)=-at -\frac12 Ax\cdot x, \quad \operatorname{tr}( A)=a,
    \end{equation}
    where 
\begin{equation} \label{eq:sigma d^2 q nondeg}
    \frac{1}{C}\leq a \leq C, \quad C=C(d,\|w\|_{\infty}).
\end{equation}
\item[(b)] If $(0,0)\in \Sigma_{\operatorname{stat},i}$ for some $i\leq d-2$, then $\lambda=2$ and $q$ is a caloric polynomial of degree $2$.
\item[(c)] If $(0,0)\in \Sigma_{\operatorname{stat},d-1}$, then $\lambda=3$, $u^{(r_{k_l})} \to q$ in $C^0_{\operatorname{loc}}(\R^d\times (-\infty,0])$, $q\in W^{1,\infty}_{\operatorname{loc}}(\R^d \times (-\infty,0])$, $\langle q,1\rangle <\infty$, and $q$ is a parabolically $\lambda$--homogeneous solution to the parabolic thin obstacle problem
\begin{equation} \label{eq:signorini}
    \begin{cases}
        \cH q\leq 0,\; q\cH q=0 & \text{ in }\;\; \R^d\times (-\infty,0)\\
        \cH q=0  & \text{ in } \;\;\R^d\times(-\infty,0) \setminus \{p_2=0\}\\
        q \geq 0 & \text{on} \;\; \{p_2=0\}\\
        \partial_t q \leq 0 & \text{ in }\;\; \R^d \times (-\infty,0).
    \end{cases}
\end{equation}
    \end{enumerate}
\end{prop}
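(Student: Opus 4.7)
The plan proceeds in three stages: extract a subsequential limit $q$ of $u^{(r)}$ via uniform bounds, establish $\lambda$-homogeneity of $q$ via the frequency, and then identify the limit equation case by case. \textbf{Compactness and homogeneity.} Since $\phi^{\gamma}(0^{+},\zeta(w-p_{2}))=\lambda<\gamma$, the last part of Lemma~\ref{lem:freq H bd} yields that the unweighted frequency exists, $\phi(0^{+},\zeta(w-p_{2}))=\lambda$, together with $H(\theta r,\zeta(w-p_{2}))\asymp H(r,\zeta(w-p_{2}))$ for $\theta$ in compacta. Combining the scaling identity $H(\rho,u^{(r)})=H(r\rho,w-p_{2})/H(r,\zeta(w-p_{2}))$ (valid for $r\rho<1/4$, where $\zeta\equiv 1$) with Lemma~\ref{lem: H L2 comp} gives uniform control on $\|u^{(r)}\|_{L^{2}(Q_{R}^{-})}$. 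The energy estimates of Lemma~\ref{lem energy}, applied to the rescaled solution $r^{-2}(w-p_{2})_{r}$ (which differs from $u^{(r)}$ by the factor $r^{2}/H^{1/2}\asymp 1$), then yield uniform $L^{\infty}$ bounds on $u^{(r)}$, $L^{2}$ bounds on $\nabla u^{(r)}$, and $L^{\infty}$ bounds on $(\partial_{t}u^{(r)})^{-}$, along with $L^{1}$ control on $\cH u^{(r)}=-r^{2}\chi_{\{w_{r}=0\}}/H^{1/2}$. Parabolic compactness then extracts the required subsequential limit $q$ in $L^{2}_{\operatorname{loc}}$, with weak $L^{2}_{\operatorname{loc}}$ convergence of gradients. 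The scaling identity for the frequency gives $\phi(\rho,u^{(r)})=\phi(r\rho,w-p_{2})\to\lambda$ as $r\to 0$ for each fixed $\rho>0$, so $\phi(\cdot,q)\equiv\lambda$; the rigidity case of the frequency monotonicity then forces $Zq=\lambda q$, i.e., parabolic $\lambda$-homogeneity.

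\textbf{Cases (a) and (b): $q$ is a caloric polynomial.} In case (a) the dynamic profile $p_{2}$ is strictly positive on every compact subset of $\R^{d}\times(-\infty,0)$, so the first-blow-up convergence $w_{r}=r^{2}p_{2}+o_{r}(r^{2})$ implies $\{w_{r}=0\}\cap K=\emptyset$ for $r$ small and any compact $K\subset\R^{d}\times(-\infty,0)$; hence $\cH q=0$ distributionally. Combined with $\lambda$-homogeneity and polynomial growth, $q$ must be a caloric polynomial of degree $\lambda\in\mathbb{Z}_{\geq 2}$ (the lower bound from \eqref{eq:freq geq 2}), and parabolic regularity upgrades the convergence to $C^{\infty}_{\operatorname{loc}}(\{p_{2}>0\})$. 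The $\Sigma_{d}$, $\lambda=2$ subcase (where $p_{2}=-t$) reduces to classifying parabolically 2-homogeneous caloric polynomials satisfying $\cH q=0$, which gives the form in \eqref{eq:sigma_d^2 blow-up form}; the sign conditions $a>0$ and $A\geq 0$ come by passing the asymptotic convexity of Proposition~\ref{p:continuityofsecondd} and the pointwise identity $u^{(r)}=-r^{2}p_{2}/H^{1/2}$ on $\{w_{r}=0\}$ to the limit, while the quantitative bound $a\asymp 1$ follows from the normalization $H(1,q)\asymp 1$ written as a coercive quadratic form in the coefficients of $q$. Case (b) follows the same scheme: when $\dim\ker(D^{2}p_{2})=i\leq d-2$, the obstacle set $\{p_{2}=0\}$ has parabolic $2$-capacity zero, so the obstacle contribution vanishes in the distributional limit and $\cH q=0$; the value $\lambda=2$ is then pinned down by combining $\lambda\geq 2$ with the uniqueness of the first blow-up $p_{2}$, which precludes any $2$-homogeneous part of $q$ from being absorbed into an alternative choice of $p_{2}$ and hence forces $q$ itself to be 2-homogeneous.

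\textbf{Case (c) and main obstacle.} When $i=d-1$ one may take $p_{2}(x)=\tfrac12 x_{d}^{2}$, so $\{p_{2}=0\}=\{x_{d}=0\}$ is a hyperplane of positive parabolic capacity. Away from this hyperplane the argument of case (a) gives $\cH q=0$; on it, the obstacle conditions $w\geq 0$, $w_{t}\leq 0$, and $w\,\cH w=0$ pass to the limit to produce the parabolic thin obstacle system \eqref{eq:signorini}. The second case of Lemma~\ref{lem energy} (time-independent $p_{2}$ with $(d-1)$-dimensional kernel) yields uniform $W^{1,\infty}_{\operatorname{loc}}$ bounds on $u^{(r)}$, so Arzela--Ascoli upgrades convergence to $C^{0}_{\operatorname{loc}}$; the finiteness of $\langle q,1\rangle$ follows from the uniform bound on $H(1,u^{(r)})$ via Cauchy--Schwarz. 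The key identification $\lambda=3$ uses Lemma~\ref{lem: d-1 linear nondegeneracy}: integrating the lower bound $\eta\geq c|x_{d}|$ in time produces a cubic lower bound on $|w-p_{2}|$ away from the thin set that rules out $\lambda=2$, while the $\lambda$-homogeneity combined with $\partial_{t}q\leq 0$ and the thin obstacle structure prevents $\lambda>3$. The main obstacle throughout is the $\Sigma_{d}$, $\lambda=2$ subcase of (a): translating the information encoded in the degenerating coincidence set $\{w_{r}=0\}$, which shrinks to the codimension-zero slice $\{t=0\}$ in the blow-up, into the sign condition $A\geq 0$ and the non-degeneracy $a>0$ requires carefully combining the asymptotic convexity, the obstacle-induced boundary values of $u^{(r)}$, and the normalization of $H(1,\cdot)$.
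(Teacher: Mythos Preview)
Your compactness and homogeneity steps are essentially correct and match the paper. However, there are two genuine gaps where your approach diverges from the paper in ways that do not work.

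\textbf{The missing Monneau-type inequality.} The paper's key technical ingredient, which you do not mention, is the variational inequality
\[
\langle p_2-p,\,q\rangle \geq 0 \qquad\text{for all } p\in\mathcal{P},
\]
derived from the almost-monotonicity of $r^{-4}H(r,\zeta(w-p))$. This is what actually pins down $A\geq 0$ in the $\Sigma_d$, $\lambda=2$ subcase of (a): testing with $p=\tfrac12 Bx\cdot x$ for arbitrary $B\geq 0$, $\operatorname{tr}(B)=1$, gives $\operatorname{tr}(BA)\geq 0$, hence $A\geq 0$. Your proposed route via the asymptotic convexity of Proposition~\ref{p:continuityofsecondd} would, if anything, suggest $D^2q\geq 0$, i.e., $A\leq 0$, which is the wrong sign; and in any case it cannot be made rigorous because the normalization $r^2/H^{1/2}$ need not stay bounded when $\lambda=2$ (convergence to the first blow-up is slower than any power), so the qualitative decay of $(D^2w)^-$ is not enough. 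The same Monneau inequality is also what the paper uses in (c) to rule out $\lambda=2$: assuming $\lambda=2$, one invokes the classification of $2$-homogeneous Signorini solutions (they are caloric polynomials) and then the inequality forces $q\equiv 0$, a contradiction.

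\textbf{The logic in (c) is reversed.} Integrating the bound $\eta\geq c|x_d|$ from Lemma~\ref{lem: d-1 linear nondegeneracy} gives a cubic \emph{lower} bound on $H^{1/2}$, hence $\lambda\leq 3$; it does not rule out $\lambda=2$. The paper then eliminates $\lambda\in(2,3)$ by citing the frequency gap for the parabolic thin obstacle problem, and eliminates $\lambda=2$ via the Monneau inequality as above. Your claim that ``$\partial_t q\leq 0$ and the thin obstacle structure prevent $\lambda>3$'' has no content: there are $\lambda$-homogeneous Signorini solutions with $\partial_t q\leq 0$ for arbitrarily large $\lambda$. Similarly, in (b) your argument that uniqueness of $p_2$ ``forces $q$ to be $2$-homogeneous'' does not give an upper bound on $\lambda$; the paper instead refers to a barrier argument (as in \cite[Lem.~5.8]{figalli}, using the construction of Lemma~\ref{lem: OU barrier Sigma < d-1}) to show $\lambda\leq 2$.
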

\begin{proof} By Lemmas \ref{lem:freq H bd}, \ref{lem: H L2 comp}, and \ref{lem energy}, 
\begin{equation} \label{eq:energy urk blowup pf}\|u^{(r_k)}\|_{L^{\infty}(Q_R^-)}+R^2\|(u^{(r_k)})_t^-\|_{L^{\infty}(Q^-_R)}+ \left( \fint_{Q_R^-} R^2|\grad u^{(r_k)}|^2+R^2|\partial_tu^{(r_k)}|+R^2|\cH u^{(r_k)}| \right)^{\frac12} \leq C_{\delta,\gamma}R^{\lambda+\delta},    
\end{equation}
for $\delta>0$ and $1\leq R \ll r_k^{-1}$. Thus \cite[Thm. 2.4.2]{droniou}, up to a subsequence (which is not relabeled for simplicity), there exists $q$ such that \begin{equation*} u^{(r_k)} \to q\; \text{ in }\;L^2_{\text{loc}}(\R^d\times (-\infty,0)). \end{equation*} By the definition of blow-up, $r^{-2}w_r \to p_2$ locally uniformly in $\R^d \times (-\infty,0]$. Therefore, since $\cH u^{(r_k)}=0$ on $\{w_{r_k}>0\}$, we infer that $q$ is caloric in $\R^d\times (-\infty,0)\setminus \{p_2=0\}$. 

Assume first that $(0,0)\notin \Sigma_{\operatorname{stat},d-1}$. Then the set $\{p_2=0\}\cap \{-\infty < t < 0\}$ has zero capacity, which implies that $q$ is caloric. 

The proof of part (b), that is, the fact that $\lambda=2$ when $(0,0)\in \Sigma_{\operatorname{stat}}\backslash \Sigma_{\operatorname{stat},d-1},$ follows exactly as in \cite[Lem. 5.8]{figalli}, and so we omit it.

 We turn to the proof of part (a), and in particular assume that $\lambda < \gamma$. In view of \eqref{eq:scaling identities}, Lemma \ref{lem:freq H bd}, and the fact that  $\lambda<\gamma$, we have for $ r_k \ll r <1$,

\begin{equation}\label{120dkq0wkxq2s}
    H(r,\zeta_{r_k} u^{(r_k)})=\frac{H(r,(\zeta (w-p_2))_{r_k})}{H(r_k,\zeta (w-p_2))}=\frac{H(rr_k,\zeta (w-p_2))}{H(r_k,\zeta (w-p_2))}\leq C\frac{H(rr_k,u)+(rr_k)^{2\gamma }}{H(r_k,u)+r_k^{2\gamma}}\leq Cr^{2\lambda}.
\end{equation}
In the second to last inequality, we used the fact that, by Lemma \ref{lem:freq H bd}, $r^{2\gamma}\ll H(r,\zeta (w-p_2))$ for $r\ll1$.
Note also that,
\begin{equation} \label{eq:q energy pf blowup}
    H(1, q)=\lim_{k\to \infty}H(1,\zeta_{r_k}u^{(r_k)})=\lim_{k\to \infty}1=1.
\end{equation}
On the other hand,
\begin{equation*}
    \phi(r,\zeta_{r_k} u^{(r_k)})=\frac{D(r,\zeta_{r_k} u^{(r_k)})}{H(r,\zeta_{r_k} u^{(r_k)})}=\frac{D (r,(\zeta(w-p_2))_{r_k})}{H(r,(\zeta(w-p_2))_{r_k})}=\frac{D(rr_k,\zeta(w-p_2))}{H(rr_k,\zeta(w-p_2))}=\phi(rr_k,\zeta(w-p_2))
\end{equation*}
Letting $k\to \infty$, and using the polynomial $L^{\infty}_{\operatorname{loc}}$ bounds and the $L^2_{\text{loc}}$ convergence of $u^{(r_k)}$, as well as the lower semicontinuity of $u \mapsto D(r,u)$, we get
\begin{equation*}
    \phi(r,q) \leq \phi(0+,u)=\lambda.
\end{equation*}
We claim that $\phi(r,q) \equiv \lambda$. To see this, observe that $q$ is caloric in $\mathbb R^d\times (-\infty, 0)$ and as such $r\mapsto \phi(r, q)$ is monotone increasing, and is constant if and only if $q$ is a homogeneous caloric polynomial with homogeneity $\phi(1, q)$ (this is well known, see \cite[Page 525]{Poon}). Furthermore if at some $r_0 > 0$ we have $\phi(r_0, q) = \mu$ then $H(r,q) \geq r^{2\mu}(r_0^{-2\mu}H(r_0, q_0))$ for all $r < r_0$ (see \cite[Page 525]{Poon}). Using the strong $L^2$ convergence of $u^{(r_k)}$ to $q$ and the estimate \eqref{120dkq0wkxq2s},we see that $\mu \geq \lambda$ and thus we have that $\phi(r,q) \equiv \lambda$ and that $q$ is a parabolically $\lambda$-homogeneous caloric polynomial.

Let us now return to a more general situation, that is we assume that $p_2$ is the blowup at $(0,0)$ but make no assumption on what $p_2$ is except that we will assume there exists a $\gamma > 2$ such that if $u = (w-p_2)$ as above then $\phi^\gamma(0+,\zeta u) = \lambda < \gamma$.
We now fix $p\in\mathcal P$ and set $U^p:=\zeta\,(w-p)$. By \eqref{eq: integration by parts} and Lemma~\ref{lem:frequency}, we have that $r^{-4}H(r, U^p)$ is ``almost-monotone" in the sense that \[
\frac{d}{dr}\Bigl(r^{-4}H(r,U^p)\Bigr)
\ge\;-\frac{C}{r^5}e^{-1/r}.
\]
Most relevantly this factor, $e^{-1/r}r^{-5}$, is integrable at zero so for any $\rho < r \ll 1$ we have 

\begin{equation*}\
\rho^{-4}H\!\left(1,(U^p)_\rho\right) \;\le\; r^{-4}H\!\left(1,(U^p)_r\right) \;+\; \Psi(r),
\end{equation*} where $r^{-\alpha}\Psi(r) \downarrow 0$ as $r\downarrow 0$ for any $\alpha > 0$ (to see this, note that $e^{-1/r}r^{-\alpha}$ is integrable at zero for any $\alpha > 0$).

Using that $r^{-2}w_r \rightarrow p_2$ and the almost-monotonicity described above we get 

\begin{align}
\int_{\{t=-1\}} (p_2-p)^2 G
&\le \int_{\{t=-1\}}r_k^{-4}
\bigl(\zeta_{r_k}(H_k^{\frac12} u^{(r_k)}+r_k^2(p_2-p))\bigr)^2\,G \;+\;\Psi(r_k)\nonumber\\
&\leq\;\!\int  (p_2-p)^2 G
\;+\;2H_k^{\frac12} r_k^{-2}\!\int \zeta_{r_k}^{2} (p_2-p)u^{(r_k)}G
\;+\;r_k^{-4}H_k\!\int \zeta_{r_k}^2 (u^{(r_k)})^2 G
\;+\;\Psi(r_k),\label{eq:expanded-corr}
\end{align}
where $\zeta_{r_k}(x) = \zeta(r_kx)$.

Divide by $H_k^{\frac12} r_k^{-2}$ and let $k\to \infty,$. We note that $H_k^{\frac12}r_k^{-2} \to 0$ by the definition of blow-up. If we can show that $H^{-\frac{1}{2}}_k r_k^2\Psi(r_k)\rightarrow 0$ then we will have obtained the key inequality \begin{equation}\label{e:signedmonneau}
\int_{\{t=-1\}} (p_2-p)\,q\,G \;\ge\; 0, \;\; \forall p\in \mathcal P.
\end{equation}
But indeed, by Lemma \ref{lem:freq H bd}, as long as $\phi^\gamma(0+, \zeta u) = \lambda < \gamma$ for some $\gamma > 2$ (which was assumed above) we have that $H_k \gtrsim r^{2\lambda}$ for some $\lambda < \gamma$ and thus by our observation above we have $H^{-\frac{1}{2}}_k r_k^2\Psi(r_k)\rightarrow 0$. A more useful form of \eqref{e:signedmonneau}, which follows from a simple application of \eqref{eq: integration by parts}, is
\begin{equation} \label{eq:optimality pf}
 0\leq    \langle p_2-p,q\rangle = \frac1\lambda \langle p_2-p,Zq\rangle =\frac{2}{\lambda}\langle \nabla (p_2-p), \nabla q\rangle.
\end{equation}

To finish the proof of part (a), assume again that $(0,0)\in \Sigma_d$ and $\lambda=2$, we may write 
\begin{equation} \label{eq:q eq 2nd blowup pf}
 q=-at-\frac{1}{2}Ax\cdot x
\end{equation}    
 where $A$ is a symmetric matrix and $\text{tr}(A)=a$. Then \eqref{eq:optimality pf} implies that for any $p(x)=\frac12 Bx\cdot x$, with $B\geq 0$ and $\operatorname{tr}(B)=1$,
\begin{equation*}
    0\leq \int_{\{t=-1\}} Bx \cdot  Axdx.
\end{equation*}
Evaluating the integral, we infer that
\begin{equation*}
    \operatorname{tr}(BA)\geq0,
\end{equation*}
and thus $A\geq 0$. But, in view of \eqref{eq:q energy pf blowup}, since all norms are equivalent in the space of quadratic polynomials, we also have \eqref{eq:sigma d^2 q nondeg}.

It remains to treat the case where $(0,0)\in \Sigma_{\operatorname{stat},d-1}$, i.e. part (c). By Lemma \ref{lem: d-1 linear nondegeneracy}, we have, for sufficiently small $r$ and a constant $c>0$,
\begin{equation*}
\int_{B_r}|(w(x,-r^2)-p_2(x)-(w(x,-r^2/4)-p_2(x))|dx= \int_{-r^2}^{-r^2/4}\int_{B_r}\eta(x,t)dx\geq cr^3 |B_r|.
\end{equation*}
Thus, by Lemmas \ref{lem energy}, \ref{lem: H L2 comp} and \ref{lem:freq H bd},
\begin{equation*}
    cr^3\leq CH(r,(w-p_2)\zeta)^{\frac12}\leq Cr^{\lambda},
\end{equation*}
which implies that $\lambda\leq 3$.

We may assume with no loss of generality that $p_2=\frac{1}{2}x_d^2$.  Since $\partial_t p_2\equiv 0$ and $w_t\leq 0$, we have $\partial_t u^{(r_k)}\leq 0$ and thus $\partial_t q \leq 0$. By Lemma \ref{lem energy}, we also obtain
\begin{equation*}
    R\|\nabla u^{(r_k)}\|_{L^{\infty}(Q_R^-)}+R^2\|u^{(r_k)}_t\|_{L^{\infty}(Q_R^-)}\leq C_{\delta,\gamma}R^{\lambda+\delta},
\end{equation*}
for $\delta>0$ and $1\leq R \ll r_k^{-1}$, which implies that $u^{(r_k)}\to q$ in $C^0_{\operatorname{loc}}(\R^d\times (-\infty,0])$. In particular, by uniform convergence, since $u^{(r_k)}\geq0$ on $\{p_2=0\}$, we have  $q \geq 0$ on $\{p_2=0\}$. 
We also have
\begin{equation*}
 \cH u^{(r_k)}=  -\chi_{\{w_{r_k}=0\}} r_k^2H_k^{-1/2}\leq 0 \Longrightarrow \cH q \leq 0,
\end{equation*}
and 
\begin{equation*}
    u^{(r_k)}\cH u^{(r_k)}=r_k^2H_k^{-1/2}(p_2)_{r_k}\chi_{\{w_{r_k}=0\}}\geq 0 \;\Longrightarrow q\cH q \geq0.
\end{equation*}
Moreover, since $q$ is caloric outside of $\{p_2=0\}$, the measure $\cH q \leq 0$ is supported on $\{p_2=0\} \subset \{q\geq 0\}$, which implies that
\begin{equation}\label{eq:qHq=0}
    q\cH q=0.
\end{equation}
That is to say, $q$ satisfies a parabolic Signorini-type problem. 

The $\lambda$--homogeneity of $q$ can be shown similarly to before, using \eqref{eq:qHq=0} as a replacement for caloricity (see, e.g. \cite[Proof of Theorem 7.3]{DanGarPetTo}).
It remains to show that $\lambda=3$. By \cite[Thm. 3.1]{colombo}, it follows that $\lambda \notin(2,3)$.  Noting that $\lambda\geq2$ by \eqref{eq:freq geq 2}, and since we have already shown that $\lambda \leq 3$, we only need to rule out the case $\lambda=2$. Assuming by contradiction that $\lambda=2$, we see from \cite[Lem. 12.4]{DanGarPetTo} that $q$
 must be a non-zero caloric polynomial, and $q$ must have the form \eqref{eq:q eq 2nd blowup pf}, where $A$ is a symmetric matrix and $\operatorname{tr}(A)=a$.  Since the frequency is bounded, we have \eqref{eq:optimality pf} for any $p=\frac{1}{2}Bx\cdot x$, $B\geq 0$, $\operatorname{tr}(B)=1$, which yields
 \begin{equation*}
     0\leq \int_{\{t=-1\}} -(e_d \otimes e_d-B)x\cdot Ax G.
 \end{equation*}
 Evaluating the integral, we infer that
 \begin{equation} \label{eq:matrixeq2 2blowup pf}
     \operatorname{tr}(BA)\geq \operatorname{tr}(A(e_d\otimes e_d))=A_{dd}.
 \end{equation}
 Setting $B=e_i \otimes e_i$ for $i\in \{1,\ldots d-1\}$ yields
 \begin{equation*}
     A_{ii}\geq A_{dd}.
 \end{equation*} 
 Since  $q\geq 0$ on $\{p_2=0\}$, we have $A_{ii}\leq 0$, so that $A_{dd}\leq 0$, and $a=\operatorname{tr}(A)\leq 0$. On the other hand, since $q_t\leq 0$, we have $a\geq 0$, and thus $a=0$ and $A_{dd}=0$. But then \eqref{eq:matrixeq2 2blowup pf} implies that $A\geq 0$, which forces $A=0$, and thus $q=0$, a contradiction.
\end{proof}

Having shown that $q$ is cubic, we now show that $q$ is quantitatively time-dependent, a property it inherits from the fact that $w_t \leq 0$. We will subsequently use this observation to rule out cubic second blow-ups entirely.

\begin{lem}\label{l:upperqtbound} Let $w$, $q$, $\lambda$, and $p_2$ be as in Proposition \ref{prop:second blow-up}, and assume that $p_2(x)=\frac12 x_d^2$ and $\lambda=3$. Then there exists $c>0$ such that
\begin{equation}\label{e:upperqtbound}
    q_t(x,t)\leq -c|x_d|, \quad (x,t)\in Q_1^-.
\end{equation}
\end{lem}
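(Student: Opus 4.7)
The plan is to deduce \eqref{e:upperqtbound} by combining the pointwise non-degeneracy of $\eta = -w_t$ away from the thin set, provided by Lemma \ref{lem: d-1 linear nondegeneracy}, with the sharp height bound $H_k^{1/2}:=H(r_k,\zeta(w-p_2))^{1/2}\lesssim r_k^3$ that follows from $\lambda=3$ and Lemma \ref{lem:freq H bd}. Since $\partial_t p_2\equiv 0$, differentiating the definition of $u^{(r_k)}$ gives
\begin{equation*}
u^{(r_k)}_t(x,t) \;=\; -\,\frac{r_k^{2}\,\eta(r_kx,r_k^{2}t)}{H_k^{1/2}},
\end{equation*}
so the two $r_k^3$-scalings should match and produce a bound on $u^{(r_k)}_t$ uniform in $k$.

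More precisely, fix $\delta>0$ and $(x,t)\in Q_1^-$ with $|x_d|\ge \delta$. For $k$ large enough that $r_k\le r_0(\delta)$, the point $(r_kx,r_k^2 t)$ lies in $Q_{r_k}^-$ with $|r_kx_d|\ge \delta r_k$, so Lemma \ref{lem: d-1 linear nondegeneracy} yields $\eta(r_kx,r_k^2 t)\ge c_0 r_k|x_d|$. Since $\lambda=3<\gamma$, estimate \eqref{eq:H frequency bounds upper} gives $H_k\le C_1 r_k^{6}$ for $k$ large. Combining these estimates,
\begin{equation*}
u^{(r_k)}_t(x,t)\;\le\;-\frac{c_0\,r_k^{3}\,|x_d|}{H_k^{1/2}}\;\le\;-\frac{c_0}{\sqrt{C_1}}\,|x_d|\;=:\;-c\,|x_d|,
\end{equation*}
with $c>0$ independent of $\delta$, $k$, and $(x,t)$.

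To pass this bound to $q$, observe that $\cH u^{(r_k)}=-r_k^{2}H_k^{-1/2}\chi_{\{w_{r_k}=0\}}$ vanishes on the positivity set $\{w_{r_k}>0\}$. Since $r_k^{-2}w_{r_k}\to p_2=\tfrac12 x_d^{2}$ uniformly on compacta and $p_2>0$ on $\{x_d\neq 0\}$, for every $\delta>0$ the open set $\{|x_d|>\delta\}\cap Q_1^-$ is contained in $\{w_{r_k}>0\}$ for all large $k$, and $u^{(r_k)}$ is caloric there. Together with the $C^0_{\operatorname{loc}}$ convergence $u^{(r_k)}\to q$ from Proposition \ref{prop:second blow-up}(c), interior parabolic regularity upgrades this to $C^\infty_{\operatorname{loc}}$ convergence on $\{x_d\neq 0\}$, so $u^{(r_k)}_t\to q_t$ pointwise off the thin set. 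The bound above therefore passes to the limit: $q_t(x,t)\le -c|x_d|$ for every $(x,t)\in Q_1^-$ with $|x_d|\ge \delta$. Since $c$ is independent of $\delta$, letting $\delta\downarrow 0$ extends the inequality to $\{x_d\ne 0\}\cap Q_1^-$, and on $\{x_d=0\}$ it reduces to $q_t\le 0$, which is already known from $w_t\le 0$.

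The main obstacle is the matching of scales that makes the previous calculation nontrivial: both $\eta$ and $H_k^{1/2}$ decay precisely like $r_k^3$, so the quotient $r_k^3/H_k^{1/2}$ stays bounded away from zero only because the second-blow-up frequency equals exactly $\lambda=3$ at points of $\Sigma_{\operatorname{stat},d-1}$, as identified in Proposition \ref{prop:second blow-up}(c). The sharp value of the frequency is therefore used in an essential way; any mismatch would either inflate the right-hand side past the finite bound or force it to zero. A secondary technical point is that only $C^0_{\operatorname{loc}}$ convergence of $u^{(r_k)}$ was established in the statement of Proposition \ref{prop:second blow-up}(c), but this is fully compensated by the fact that $q$, and each $u^{(r_k)}$ for $k$ large, are caloric on $\{x_d\neq 0\}$, which is precisely the region where Lemma \ref{lem: d-1 linear nondegeneracy} is applicable.
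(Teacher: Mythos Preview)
Your proof is correct and follows essentially the same route as the paper: use Lemma~\ref{lem: d-1 linear nondegeneracy} to get $\eta(r_kx,r_k^2t)\ge c_0 r_k|x_d|$, combine with the height upper bound $H_k^{1/2}\lesssim r_k^3$ from \eqref{eq:H frequency bounds upper} at $\lambda=3$, and pass to the limit using smooth convergence off the thin set. Your write-up is in fact more explicit than the paper's about why the convergence upgrades to $C^\infty_{\operatorname{loc}}$ on $\{x_d\neq 0\}$ and about the trivial case $x_d=0$.
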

\begin{proof}
By Lemma \ref{lem: d-1 linear nondegeneracy}, for any fixed $(x,t)\in Q_1^-$ with $x_d \neq 0$ and sufficiently small $r$, we have  
\begin{equation*}
    \partial_t(w-p_2)_r \leq -cr^3|x_d|.
\end{equation*}
Thus, dividing by $H^{\frac12}$, using \eqref{eq:H frequency bounds upper}, and letting $r\to 0$, the claim follows (recall that, away from $|x_d| = 0$, we have $w_r > 0$ for $r > 0$ small enough and thus the convergence to the second blow-up is smooth).
\end{proof}

Our next lemma is an algebraic observation, that odd reflections of solutions to the Signorini problem must be caloric. 

\begin{lem}\label{l:oddreflection} Let $q$ be a locally Lipschitz continuous solution to the Signorini problem \eqref{eq:signorini}, where $p_2=\frac12x_d^2$. If $q\equiv0$ on $\{x_d=0\}$, then \begin{equation}\label{e:qtilde}\tilde{q}(x', x_d, t):= \begin{cases} q(x', x_d ,t), \qquad x_d > 0\\
        -q(x', -x_d, t), \qquad x_d \leq 0.
    \end{cases}\end{equation}
    is a caloric function.
\end{lem}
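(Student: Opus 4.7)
The plan is to show that $\tilde q$ is caloric first in the distributional sense across $\{x_d=0\}$, and then to promote this to classical caloricity via parabolic regularity. There are essentially three steps.

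First, I would record the properties of $\tilde q$ that follow trivially from the hypotheses. Because $q$ is locally Lipschitz and vanishes on $\{x_d=0\}$, the odd reflection $\tilde q$ is also locally Lipschitz on all of $\mathbb R^d\times(-\infty,0)$; in particular $\tilde q$ is continuous across $\{x_d=0\}$. Moreover, $\tilde q$ is classically caloric in each open half space: in $\{x_d>0\}$ we have $\tilde q=q$, and by the Signorini system \eqref{eq:signorini} the function $q$ is caloric in $\mathbb R^d\times(-\infty,0)\setminus\{x_d=0\}$; in $\{x_d<0\}$ we have $\tilde q(x',x_d,t)=-q(x',-x_d,t)$, which is caloric since the heat operator is invariant under $x_d\mapsto -x_d$.

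The heart of the argument is therefore verifying that no singular contribution concentrated on $\{x_d=0\}$ arises when one applies $\mathcal H$ to $\tilde q$ in the sense of distributions. I would fix a smooth compactly supported test function $\varphi$ and compute $\langle \mathcal H\tilde q,\varphi\rangle=\int \tilde q\,\mathcal H^\ast\varphi$, where $\mathcal H^\ast=\Delta+\partial_t$. Splitting the integral into the two half spaces $\{x_d>0\}$ and $\{x_d<0\}$ and integrating by parts (using the classical caloricity of $\tilde q$ in each piece and the compact support of $\varphi$ in $t$), the only surviving contributions come from boundary integrals over $\{x_d=0\}$, of the form
\begin{equation*}
\int_{\{x_d=0\}}\bigl(\tilde q\,\partial_d\varphi-\varphi\,\partial_d\tilde q\bigr)\,dx'\,dt,
\end{equation*}
one for each side, with opposite outward-normal signs. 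Since $q\equiv 0$ on $\{x_d=0\}$, both $\tilde q\,\partial_d\varphi$ terms vanish, and the one-sided limits of $\partial_d\tilde q$ satisfy
\begin{equation*}
\partial_d\tilde q(x',0^-,t)=\partial_d\bigl[-q(x',-x_d,t)\bigr]\big|_{x_d=0^-}=\partial_d q(x',0^+,t),
\end{equation*}
so the two boundary terms cancel exactly due to the opposing orientations. This gives $\int\tilde q\,\mathcal H^\ast\varphi=0$ for every test $\varphi$, i.e.\ $\mathcal H\tilde q=0$ in $\mathcal D'(\mathbb R^d\times(-\infty,0))$.

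Finally, since $\tilde q\in L^\infty_{\mathrm{loc}}$ is a distributional solution of $\mathcal H\tilde q=0$, standard parabolic hypoellipticity (Weyl's lemma for the heat equation) shows that $\tilde q$ is smooth and classically caloric. The only step requiring any care is the sign-and-orientation bookkeeping in the integration by parts across $\{x_d=0\}$; once that is written out, the cancellation is forced by the definition of odd reflection together with $q|_{\{x_d=0\}}\equiv 0$.
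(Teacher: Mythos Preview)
Your approach---integration by parts across $\{x_d=0\}$ followed by parabolic hypoellipticity---is essentially the paper's, and the cancellation you identify is the right mechanism. One point to tighten: you assert the one-sided limits $\partial_d q(x',0^\pm,t)$ exist, but local Lipschitz continuity alone does not guarantee this (a Lipschitz function smooth in $\{x_d>0\}$ need not have a boundary trace for its normal derivative). The paper resolves this by invoking the optimal regularity for the parabolic Signorini problem \cite[Lem.~3.3]{DanGarPetTo}, which gives that $\nabla q$ is H\"older continuous up to $\{x_d=0\}$ from each side, so the traces exist and the boundary terms are honest.

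Alternatively, your argument goes through without that citation if you integrate by parts over $\{|x_d|>\varepsilon\}$ instead of the open half-spaces: the combined boundary contribution at $x_d=\pm\varepsilon$ is
\[
\int \Big\{ -q(x',\varepsilon,t)\big[\partial_d\varphi(x',\varepsilon,t)+\partial_d\varphi(x',-\varepsilon,t)\big] + \partial_d q(x',\varepsilon,t)\big[\varphi(x',\varepsilon,t)-\varphi(x',-\varepsilon,t)\big]\Big\}\,dx'\,dt,
\]
and each bracket is $O(\varepsilon)$ (the first since $q$ is Lipschitz and vanishes on $\{x_d=0\}$, the second since $\varphi$ is smooth), while the prefactors are bounded by the Lipschitz constant and $\|\partial_d\varphi\|_\infty$. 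So the whole expression is $O(\varepsilon)\to 0$, and you never need the pointwise trace. Either fix is easy; just be explicit about which route you take.
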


\begin{proof} Since $q$ solves the Signorini problem, $\nabla q$ is locally H\"older continuous in $\overline{\{x_d>0\}}$ \cite[Lem. 3.3]{DanGarPetTo}, and thus $\tilde{q}$ and $\nabla \tilde{q}$ are continuous across the thin set $\{x_d=0\}$. The fact that $\tilde{q}$ is caloric then follows from integration by parts. 
\end{proof}

Finally, we are able to rule out the existence of degree-three homogeneous second blow-ups. 

\begin{lem}\label{l:notimedeptcubics}
    There exists no $3$--homogeneous solution to \eqref{eq:signorini} satisfying the condition \eqref{e:upperqtbound}.
\end{lem}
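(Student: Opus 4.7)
I would argue by contradiction: assume such a $q$ exists, extract from the $3$-homogeneity and the lower bound \eqref{e:upperqtbound} a rigid polynomial structure for $q$ on each half-space, and then obtain a contradiction to $\cH q \leq 0$ via the jump of $\partial_{x_d} q$ across the thin space $\{x_d = 0\}$.

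First, since $q$ is parabolically $3$-homogeneous and caloric on each half-space $\{\pm x_d > 0\}$, the standard classification of homogeneous caloric functions up to the boundary (via parabolic Liouville-type theorems combined with the $C^{1,\alpha}$ boundary regularity from \cite{DanGarPetTo}) gives
\[
q(x,t) = R^{\pm}(x) + t\,S^{\pm}(x) \quad \text{on } \{\pm x_d > 0\},
\]
with $R^{\pm}$ a homogeneous cubic polynomial in $x$ and $S^{\pm} = \Delta R^{\pm}$ a linear form with no constant term. The identity $q_t = S^{\pm}$, combined with $q_t \leq -c|x_d|$ and the observation that a linear form bounded above by a multiple of $x_d$ on a half-space must itself be a multiple of $x_d$, forces $S^+(x) = -a^+ x_d$ and $S^-(x) = a^- x_d$ with constants $a^{\pm} \geq c > 0$. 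The Signorini boundary condition $q \geq 0$ on $\{x_d = 0\}$ then reduces (using $S^{\pm}(x', 0) = 0$) to $R^{\pm}(x', 0) \geq 0$; since any non-negative homogeneous cubic polynomial on $\R^{d-1}$ must vanish identically, $R^{\pm}(x) = x_d\,\widetilde R^{\pm}(x)$ for some homogeneous quadratic polynomials $\widetilde R^{\pm}$.

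Next, since $q$ is globally continuous across $\{x_d = 0\}$ and piecewise smooth, $\cH q$ is, as a distribution, a measure supported on $\{x_d = 0\}$ whose density equals the jump $\partial_{x_d} q^+|_{x_d = 0^+} - \partial_{x_d} q^-|_{x_d = 0^-}$. A direct computation using the formulas above gives
\[
[\partial_{x_d} q](x', t) = \widetilde R^{+}(x', 0) - \widetilde R^{-}(x', 0) - (a^{+} + a^{-})\,t,
\]
and $\cH q \leq 0$ forces this to be $\leq 0$ for all $(x', t)$ with $t \leq 0$. Evaluating at $x' = 0$, the quadratic homogeneous polynomials $\widetilde R^{\pm}(\cdot, 0)$ vanish at the origin, so the condition reduces to $-(a^{+} + a^{-})\,t \leq 0$ for every $t \leq 0$. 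Since $a^{+} + a^{-} \geq 2c > 0$, taking any $t < 0$ yields the desired contradiction. The main technical step is the polynomial rigidity of parabolically $3$-homogeneous caloric functions on each half-space up to the thin boundary invoked in the first paragraph; the remainder is a direct algebraic computation whose key feature is that the coefficient of $t$ in the jump is forced to be the strictly positive quantity $-(a^{+}+a^{-})$, inherited from the sign condition $q_t \leq -c|x_d|$.
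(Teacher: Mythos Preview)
Your argument is correct and follows essentially the same route as the paper: establish that $q$ is a cubic caloric polynomial on each half-space vanishing on $\{x_d=0\}$, read off the sign of the $t$-coefficients from \eqref{e:upperqtbound}, and derive a contradiction from the jump condition $\cH q\le 0$. The one place where the paper is more explicit is your ``polynomial rigidity'' step: rather than invoking a Liouville-type classification with boundary regularity, the paper first cites \cite[Lem.~9.2]{figalli} to get $q\equiv 0$ on $\{x_d=0\}$ directly for any $3$-homogeneous Signorini solution, then applies the odd-reflection Lemma~\ref{l:oddreflection} to obtain a global $3$-homogeneous caloric function, which is automatically a polynomial---this sidesteps the need to classify half-space caloric homogeneous functions up to the boundary.
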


\begin{proof}
    By \cite[Lemma 9.2]{figalli} (which does not use the sign of $\partial_t q$), we have that $q\equiv 0$ on $\{x_d = 0\}$ for any $3$--homogeneous solution to the Signorini problem \eqref{eq:signorini}. Using Lemma \ref{l:oddreflection}, since $\tilde{q}$ is 3--homogeneous, it must equal a cubic caloric polynomial on each side of $\{x_d=0\}$. Namely, there exist constants $a^L, a^R, b^L,b^R$ and matrices $A^L, A^R$ such that 
    $$
    \begin{aligned} q|_{\{x_d < 0\}} =& x_d(a^Lx_d^2 + b^Lt + x'A^Lx'),\\
    q|_{\{x_d > 0\}} =& x_d(a^Rx_d^2 + b^Rt + x'A^Rx').\end{aligned}$$
    Additionally, since $\Delta q - \partial_t q \leq 0$, integration by parts yields 
    \[\partial^+_n q|_{\{x_d = 0\}} - \partial^-_nq|_{\{x_d =0\}} \leq 0,\]
    that is,
    \[(b^R-b^L)t \leq 0.\]
    Since $t<0$, we infer that $b^R \geq b^L$. But Lemma \ref{l:upperqtbound} implies that $b^L > 0$ and $b^R <0$, which is a contradiction.
    
\end{proof}

We may now rule out the top stationary stratum altogether.

\begin{thm}\label{t:nofullkernel} Let $w:Q_1 \to \R$ be a bounded solution to \eqref{eq:obstacle intro}, and let $\Sigma$ be its set of singular points. Then
\begin{equation*}
    \Sigma_{\operatorname{stat},d-1}=\emptyset.
\end{equation*}
\end{thm}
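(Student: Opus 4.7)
The plan is to proceed by contradiction and collect the preceding three lemmas into a tight chain. Suppose toward contradiction that some $(x_0,t_0)\in \Sigma_{\operatorname{stat},d-1}$. After parabolic translation to place this point at the origin and a spatial rotation, we may assume the first blow-up profile at $(0,0)$ is $p_2(x)=\tfrac12 x_d^2$, since $\dim\ker(D^2p_2)=d-1$ and $p_2\in \mathcal{P}$ is time-independent forces this normal form.

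First, I would fix any $\gamma>4$ and note that $\lambda:=\phi^{\gamma}(0^+,\zeta(w-p_2))$ is well-defined by the almost-monotonicity in Lemma \ref{lem:frequency}. This puts me in position to apply Proposition \ref{prop:second blow-up}(c), which provides a subsequence $r_k\downarrow 0$ along which the normalized rescalings $u^{(r_k)}$ converge in $C^0_{\operatorname{loc}}(\mathbb{R}^d\times(-\infty,0])$ to a function $q$, with the strong structural information that $\lambda=3$, $q$ is parabolically $3$-homogeneous, and $q$ solves the parabolic thin obstacle problem \eqref{eq:signorini} with $\partial_t q\leq 0$.

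Second, I would upgrade the qualitative time-monotonicity $q_t\leq 0$ to a quantitative estimate via Lemma \ref{l:upperqtbound}: the nondegeneracy inherited from $w_t\leq 0$ (via Lemma \ref{lem: d-1 linear nondegeneracy}) yields $q_t(x,t)\leq -c|x_d|$ for some $c>0$ on $Q_1^-$. In particular $q$ is quantitatively time-dependent off the thin set $\{x_d=0\}$.

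Finally, this is precisely the hypothesis of Lemma \ref{l:notimedeptcubics}, which asserts that no $3$-homogeneous solution of \eqref{eq:signorini} can satisfy such a pointwise upper bound on $q_t$. The resulting contradiction shows the initial supposition is impossible, so $\Sigma_{\operatorname{stat},d-1}=\emptyset$. The \emph{conceptual} main obstacle here is not in the assembly but in the fact that all three ingredients must be available simultaneously, and these ingredients are particular to the supercooled setting: the upper bound $\lambda\leq 3$ in the second blow-up analysis uses Lemma \ref{lem: d-1 linear nondegeneracy}; the lower bound $\lambda\geq 3$ rules out a quadratic second blow-up through the \eqref{eq:optimality pf}--style signed Monneau inequality together with the sign conditions $q\geq 0$ on $\{p_2=0\}$ and $q_t\leq 0$; and the final algebraic contradiction hinges on the odd-reflection of Lemma \ref{l:oddreflection}, converting the Signorini profile into a cubic caloric polynomial for which the jump of the normal derivative across $\{x_d=0\}$ has a forced sign that clashes with $q_t\leq -c|x_d|$.
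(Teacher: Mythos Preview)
Your proof is correct and follows essentially the same route as the paper: argue by contradiction, invoke Proposition~\ref{prop:second blow-up}(c) to obtain a $3$-homogeneous Signorini solution $q$, apply Lemma~\ref{l:upperqtbound} for the quantitative bound $q_t\leq -c|x_d|$, and conclude via the nonexistence result of Lemma~\ref{l:notimedeptcubics}. The additional commentary you give on how the ingredients fit together is accurate and matches the paper's discussion of the underlying mechanism.
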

\begin{proof}
    Assume, by contradiction (and up to translation and rescaling), that $(0,0)\in \Sigma_{\operatorname{stat},d-1}$, and let $q$ be as in Proposition \ref{prop:second blow-up}. Such $q$ must satisfy the nondegeneracy condition \eqref{e:upperqtbound}, which is a contradiction to Lemma \ref{l:notimedeptcubics}.
\end{proof}

\subsection{The intermediate stationary strata}
Having shown that the top and bottom strata of the set $\Sigma_{\operatorname{stat}}$ are both empty, we now focus on the remaining singular points with stationary blow-up. We begin by briefly explaining why our present results are already sufficient to obtain the sharp parabolic Hausdorff dimension estimate for the set $\Sigma_{\operatorname{stat}}$, when combined with the same tools that work for the melting problem. Then, we finish this section by proving a non-degeneracy estimate near the points of $\Sigma_{\operatorname{stat}}$ that, much like its dynamic counterpart \eqref{nondegeneracy fast blow-ups}, will be needed later to obtain the space-time $C^1$ regularity of the free boundary.

We will make use of the following basic GMT lemma (see e.g. \cite[Lem. 8.9]{figalli}).
\begin{lem} \label{lem: FRS cleaning} Let $E \subset \R^d \times (-1,1)$ with
\[\dim_{\mathcal{H}}(\pi_x(E)) \leq \beta.\]
Assume that for any $(x_0,t_0)\in E$, there exists $C=C(x_0,t_0)>0$ and $\rho=\rho(x_0,t_0)>0$ such that
\begin{equation} \label{pardim condition}
    \{(x,t) \in B_{\rho}(x_0)\times (-1,1): t-t_0 <- C|x-x_0|^{2}\} \cap E =\emptyset.
\end{equation}
Then $\dim_{\operatorname{par}}(E)\leq \beta.$
    
\end{lem}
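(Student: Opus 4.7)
The plan is to reduce to a situation with uniform constants and then cover $E$ efficiently via its spatial projection.

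First, I would decompose $E = \bigcup_{n,k} E_{n,k}$, where
\[ E_{n,k} := \{(x_0,t_0) \in E : C(x_0,t_0) \leq n, \ \rho(x_0,t_0) \geq 1/k\}. \]
Since Hausdorff dimension (parabolic or Euclidean) is stable under countable unions, and since $\dim_{\mathcal H}(\pi_x(E_{n,k})) \leq \beta$, it suffices to show $\dim_{\operatorname{par}}(E_{n,k}) \leq \beta$ for each $n,k$.

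The key observation is that on $E_{n,k}$, the time coordinate is a Lipschitz function of the \emph{squared} spatial coordinate at small scales. Indeed, if $(x_1,t_1),(x_2,t_2) \in E_{n,k}$ with $|x_1-x_2| < 1/k$, then applying the emptiness condition with basepoint $(x_1,t_1)$ forbids $t_2 - t_1 < -n|x_1-x_2|^2$, and symmetrically with basepoint $(x_2,t_2)$, giving
\[ |t_1-t_2| \leq n|x_1-x_2|^2. \]
In particular, $\pi_x$ is injective on $E_{n,k}$ at small scales, and its inverse is a graph in the parabolic sense.

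Next, I would run the covering argument. Fix $\epsilon > 0$ and $\delta \in (0, 1/(2k))$. Since $\dim_{\mathcal{H}}(\pi_x(E_{n,k})) \leq \beta$, we can cover $\pi_x(E_{n,k})$ by spatial balls $\{B_{r_i}(y_i)\}$ with $r_i < \delta$ and $\sum_i r_i^{\beta+\epsilon}$ arbitrarily small. For each $i$, if $E_{n,k} \cap (B_{r_i}(y_i) \times \mathbb{R})$ is nonempty, pick a basepoint $(x_0^i,t_0^i)$ in it; then by the bound above, every point of $E_{n,k}$ projecting into $B_{r_i}(y_i)$ lies within spatial distance $2r_i$ and time distance $4nr_i^2$ of $(x_0^i,t_0^i)$. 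Hence this piece has parabolic diameter at most
\[ 2r_i + \sqrt{4nr_i^2} = (2+2\sqrt{n})\, r_i. \]
Summing over $i$ gives $\mathcal{H}^{\beta+\epsilon}_{\operatorname{par},\delta'}(E_{n,k}) \leq (2+2\sqrt{n})^{\beta+\epsilon}\sum_i r_i^{\beta+\epsilon}$ with $\delta' = (2+2\sqrt{n})\delta$. Letting $\delta \to 0$ and then $\epsilon \to 0$ yields $\dim_{\operatorname{par}}(E_{n,k}) \leq \beta$, completing the proof.

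There is no serious obstacle here; the only point requiring care is the initial reduction to uniform $(C,\rho)$ via countable decomposition (so that the spatial locality condition $|x_1-x_2|<\rho$ can be enforced once the cover is fine enough), and remembering that the parabolic diameter combines the spatial diameter linearly with the square root of the time diameter, which is precisely what makes the constant $(2+2\sqrt{n})$ harmless for the $\beta$-dimensional estimate.
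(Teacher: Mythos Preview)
Your argument is correct and is the standard proof of this GMT lemma: countable decomposition to uniformize the constants, then the two-sided application of \eqref{pardim condition} to obtain $|t_1-t_2|\le n|x_1-x_2|^2$ on each piece, and finally lifting a fine spatial cover of $\pi_x(E_{n,k})$ to a parabolic cover of $E_{n,k}$ with comparable radii. The paper does not give its own proof of this lemma; it merely cites \cite[Lem.~8.9]{figalli}, whose argument is essentially the one you wrote. One cosmetic remark: the quantity $(2+2\sqrt{n})r_i$ you compute is the parabolic \emph{radius} of a ball centered at $(x_0^i,t_0^i)$ containing the piece, not its diameter, but this has no effect on the dimension conclusion.
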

The following quadratic cleaning result is proved in exactly the same way as \cite[Lem. 6.6, Lem. 6.7]{colombo}, in view of Proposition \ref{prop:second blow-up}, the spatial semiconvexity estimates of Lemmas \ref{lem energy} and \ref{lem:eta grad w}, the time semiconvexity estimate of Proposition \ref{prop: time semiconvexity local},  and the fact that $\Sigma_{\operatorname{stat},d-1}=\emptyset$.

\begin{prop} \label{prop:quadratic cleaning sigma} Let $w:Q_1 \to \R$ be a bounded solution to \eqref{eq:obstacle intro}, with $(0,0) \in \Sigma_{\operatorname{stat}}$. Let $p_2=p_2(x)$ be the blow-up profile for $w$ at $(0,0)$, and assume that $e_d:=(0,\ldots,0,1)$ is an eigenvector corresponding to the maximal eigenvalue for $D^2p_2$. Then there exist constants $c>0$ and $C>0$ such that
\begin{equation*}
    \fint _{B_r \cap \{|x_d|\geq\frac{r}{10}\}}|w_t(\cdot,-r^2)|\geq cr^{-2}H^{\frac12}(r,w-p_2), \quad r\in (0,1),
\end{equation*}
and
\begin{equation*}
    (B_{r/2}\times (-1,-Cr^2]) \cap \{w=0\}=\emptyset.
\end{equation*}    
\end{prop}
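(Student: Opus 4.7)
The plan is to adapt Colombo's argument \cite[Lem.~6.6, 6.7]{colombo} to the supercooled setting by substituting the melting-specific estimates with their supercooled analogues established in Sections~\ref{sec:estimates}--\ref{sec:stationary}.

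For the first inequality, I argue by contradiction and extract a second blow-up. Suppose there exist sequences $r_k\downarrow 0$ and $\varepsilon_k\to 0$ violating the bound, and let $u^{(r_k)}$ be the rescaled correction of Proposition~\ref{prop:second blow-up}. Since $\Sigma_{\operatorname{stat},d-1}=\emptyset$ (Theorem~\ref{t:nofullkernel}) and $\Sigma_{\operatorname{stat},0}=\emptyset$ (Proposition~\ref{prop:sigma stat 0}), we have $(0,0)\in\Sigma_{\operatorname{stat},i}$ with $1\leq i\leq d-2$, so part~(b) of Proposition~\ref{prop:second blow-up} produces a subsequential caloric polynomial limit of parabolic degree $2$,
\[
q(x,t) = bt + \tfrac12 x^\top B x, \qquad \operatorname{tr}(B) = b, \qquad H(1,q) = 1.
\]
Since $p_2(y)\gtrsim 1$ on $B_1\cap\{|y_d|\geq 1/10\}$ (from the maximal-eigenvector hypothesis and $p_2\geq 0$), the convergence $u^{(r_k)}\to q$ is smooth on a neighborhood of this set, and the contradiction hypothesis forces the constant $b=\partial_t q$ to vanish, so $q$ is harmonic. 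The signed Monneau identity $\langle p_2-p,q\rangle\geq 0$ for $p\in\mathcal{P}$, derived inside the proof of Proposition~\ref{prop:second blow-up}, evaluates via Gaussian moments to $\operatorname{tr}((D^2 p_2 - A)B)\geq 0$ for every $A\geq 0$ with $\operatorname{tr}(A)\leq 1$, so $D^2 p_2$ maximises $A\mapsto \operatorname{tr}(AB)$ on that convex set; standard convex analysis then forces the range of $D^2 p_2$ into the top eigenspace of $B$, and in particular $Be_d=\mu_{\max}(B)e_d$. Combining this with $q\geq 0$ on $\ker D^2 p_2$ (inherited from $w\geq 0$ and $p_2\equiv 0$ there) and $\operatorname{tr}(B)=0$, the codimension bound $\dim\ker D^2 p_2\leq d-2$ forces $\mu_{\max}(B)\leq 0$, hence $B\leq 0$, and finally $B\equiv 0$, contradicting $H(1,q)=1$.

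For the cleaning inequality, I bootstrap the first estimate using the time semi-convexity of Proposition~\ref{prop: time semiconvexity local} and the spatial nondegeneracy of Lemma~\ref{lem:eta grad w}. Since $H^{1/2}(r,w-p_2)\sim r^2$ for stationary points with $\lambda=2$, the first inequality yields a point $x^\ast\in B_r\cap\{|x_d|\geq r/10\}$ with $|w_t(x^\ast,-r^2)|\gtrsim 1$; the bound $w_{tt}\geq -C$ preserves this lower bound on a time interval of fixed length, and integrating produces a pointwise value $w(x^\ast,-r^2-\tau)\gtrsim 1$ for some fixed $\tau>0$. Combining with Lemma~\ref{lem:eta grad w} to spread the positivity to a ball of size comparable to $r$, the $C^{1,1}_x$ bound for $w$, and the fact that $p_2\gtrsim r^2$ on $B_r\cap\{|x_d|\geq r/10\}$, one first gets $w(\cdot,-Cr^2)\geq c r^2$ throughout this annular region; the directional semiconvexity from Lemma~\ref{lem energy} along $e\in\ker D^2 p_2$ then propagates the positivity through the kernel directions to give $w>0$ on $B_{r/2}\times\{-Cr^2\}$. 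The full claim follows by the monotonicity $w_t\leq 0$.

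The main obstacle is the algebraic step at the end of the first paragraph. In the melting case $w_t\geq 0$ permits a more direct strong-maximum-principle obstruction to a time-independent harmonic $q$; in our setting the sign is reversed, and one must carefully combine the Gaussian-moment form of the Monneau inequality with the maximal-eigenvector hypothesis on $D^2 p_2$, the nonnegativity $q\geq 0$ on $\ker D^2 p_2$, and the codimension bound coming from $\Sigma_{\operatorname{stat},d-1}=\emptyset$ in order to force $B=0$ and close the contradiction.
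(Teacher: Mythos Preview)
Your approach to the first inequality is correct and matches the paper (which defers to \cite[Lem.~6.6]{colombo}): argue by contradiction, extract a second blow-up $q$ via Proposition~\ref{prop:second blow-up}(b), observe that smooth convergence on $\{|y_d|\geq 1/10\}$ forces $\partial_t q\equiv 0$, and then use the Monneau-type inequality \eqref{e:signedmonneau} together with $q\geq 0$ on $\ker D^2 p_2$ and $\operatorname{tr}(B)=0$ to force $B=0$. One small remark: once you have $B\leq 0$, the conclusion $B=0$ follows immediately from $\operatorname{tr}(B)=0$; the real role of $\Sigma_{\operatorname{stat},d-1}=\emptyset$ is upstream---it guarantees that part~(b), not part~(c), of Proposition~\ref{prop:second blow-up} applies, so that $q$ is a quadratic caloric polynomial rather than a cubic Signorini solution.

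Your argument for the cleaning inequality has a genuine gap at the very first step. The claim $H^{1/2}(r,w-p_2)\sim r^2$ is false: by the definition of blow-up, $r^{-2}(w-p_2)_r\to 0$ uniformly, so $H^{1/2}(r,w-p_2)=o(r^2)$. (Frequency $\lambda=2$ here refers to the \emph{second} blow-up and yields only $r^{4+\delta}\lesssim H(r)\lesssim r^4$, never $H^{1/2}\gtrsim r^2$.) Consequently the first inequality gives $\fint|w_t(\cdot,-r^2)|\gtrsim r^{-2}H^{1/2}(r)=o(1)$, not $\gtrsim 1$, and your chain---extract $x^\ast$ with $|w_t(x^\ast,-r^2)|\gtrsim 1$, integrate $w_{tt}\geq -C$ over a fixed time interval to get $w(x^\ast,-r^2-\tau)\gtrsim 1$, then spread---collapses at the outset. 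The argument in \cite[Lem.~6.7]{colombo} does use the ingredients you list (time semiconvexity, the first inequality, the directional semiconvexity $(\partial_{ee}w)^-\lesssim r^{-2}H^{1/2}(r)$ from Lemma~\ref{lem energy}), but it never produces a uniform lower bound $\eta\gtrsim 1$; all quantities involved are of size $o(1)$ at scale $r$, and the proof works by comparing them against each other and against $p_2(x)\gtrsim r^2$ on $\{|x_d|\gtrsim r\}$.
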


\begin{cor}\label{cor:stat_par} We have
\begin{equation*}
    \dim_{\operatorname{par}}(\Sigma_{\operatorname{stat}})\leq d-2.
\end{equation*}
\end{cor}
\begin{proof} By Theorem \ref{t:nofullkernel}, $\Sigma_{\operatorname{stat},d-1}=\emptyset$. Thus, by Theorem \ref{thm:monneau}, $\dim_{\mathcal{H}}(\pi_{x}(\Sigma_{\operatorname{stat}}))\leq d-2$. The result then follows by combining  Lemma \ref{lem: FRS cleaning} and Proposition \ref{prop:quadratic cleaning sigma}.
\end{proof}

For the last estimate of this section, we will require the following barrier construction from \cite[Lem. 5.8]{figalli}.
\begin{lem} \label{lem: OU barrier Sigma < d-1} Let $p_2=p_2(x)$ be a quadratic polynomial in $\R^d$, and let $L=\operatorname{ker}(D^2 p_2)$. Assume that $\dim(L)\leq d-2$, and for $\delta>0$, let
\begin{equation*}
    \mathcal{C}_{\delta}:=\{(x,t):x\in \R^d, \;t<0,\;\operatorname{dist}(x,L)\geq \delta(|x|+|t|^{\frac12})\}.
\end{equation*}
Then for every $\vep>0$ there exist $\delta,\alpha\in (0,\vep)$ and a positive, parabolically $\alpha$-homogeneous  $\Phi:\mathcal{C}_{\delta} \to(0,\infty)$ such that 
\begin{equation*}
    \cH \Phi =0 \text{ in } \mathcal{C}_{\delta}, \quad \Phi =0 \text{ on } \partial \mathcal{C}_{\delta },\quad \int_{\R^d} \Phi (y,-1)^2e^{-|y|^2/4}dy=1.
\end{equation*}
\end{lem}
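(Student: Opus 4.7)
The plan is to use the parabolic self-similarity of $\mathcal{C}_\delta$ to reduce the construction to a Dirichlet eigenvalue problem for the Ornstein--Uhlenbeck operator on a fixed subset of $\R^d$, and then to leverage the codimension hypothesis $\dim L\leq d-2$ to show the principal eigenvalue vanishes as $\delta\to 0$.

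Since $L$ is a linear subspace, $(x,t)\in \mathcal{C}_\delta$ iff $(\lambda x,\lambda^2 t)\in \mathcal{C}_\delta$ for every $\lambda>0$, so $\mathcal{C}_\delta$ is parabolically self-similar. We therefore look for $\Phi$ of the form
\begin{equation*}
\Phi(x,t):=(-t)^{\alpha/2}\,\psi\!\left(x/\sqrt{-t}\right).
\end{equation*}
Writing $y:=x/\sqrt{-t}$, a direct computation yields
\begin{equation*}
\cH\Phi(x,t)=(-t)^{\alpha/2-1}\left(\Delta\psi(y)-\tfrac{1}{2}y\cdot\nabla\psi(y)+\tfrac{\alpha}{2}\psi(y)\right),
\end{equation*}
while the identity $\operatorname{dist}(\sqrt{-t}\,y,L)=\sqrt{-t}\operatorname{dist}(y,L)$ converts the slice condition defining $\mathcal{C}_\delta$ into the $t$-independent domain
\begin{equation*}
\mathcal{K}_\delta:=\{y\in\R^d:\operatorname{dist}(y,L)\geq\delta(|y|+1)\}.
\end{equation*}
With $G(y):=e^{-|y|^2/4}$ and $\mathcal{L}\psi:=\Delta\psi-\tfrac{1}{2}y\cdot\nabla\psi=G^{-1}\operatorname{div}(G\nabla\psi)$, the problem reduces to finding a positive $\psi\in H^1_0(\mathcal{K}_\delta,G\,dy)$ with $-\mathcal{L}\psi=\tfrac{\alpha}{2}\psi$, normalized by $\int_{\R^d}\psi^2 G\,dy=1$.

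Since $\mathcal{L}$ is self-adjoint on $L^2(G\,dy)$ with compact resolvent, for each $\delta>0$ the variational principle
\begin{equation*}
\mu(\delta):=\inf\left\{\frac{\int_{\mathcal{K}_\delta}|\nabla v|^2 G\,dy}{\int_{\mathcal{K}_\delta}v^2 G\,dy}:v\in H^1_0(\mathcal{K}_\delta,G\,dy)\setminus\{0\}\right\}
\end{equation*}
is attained by a unique (up to sign) positive eigenfunction $\psi_\delta$, and setting $\alpha(\delta):=2\mu(\delta)>0$ produces a $\Phi$ satisfying every conclusion except possibly the bound $\alpha<\vep$.

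The main obstacle is precisely this last bound, which is where the hypothesis $\dim L\leq d-2$ enters decisively: one must show $\mu(\delta)\to 0$ as $\delta\to 0$. After rotating so that $L=\{0\}^k\times\R^{d-k}$ with codimension $k:=d-\dim L\geq 2$, write $y=(y',y'')$ so that $\operatorname{dist}(y,L)=|y'|$. Fix $R\geq 1$, let $\eta\in C_c^\infty(B_{2R})$ with $\eta\equiv 1$ on $B_R$, and test $\mu(\delta)$ against $v_{\delta,R}(y):=\chi_\delta(|y'|)\,\eta(y)$, where $\chi_\delta$ is the standard capacitary cutoff vanishing on $\{|y'|\leq 2\delta(R+1)\}$ and equal to $1$ on $\{|y'|\geq\sqrt{\delta}\}$: concretely $\chi_\delta=\bigl(1-(2\delta(R+1)/|y'|)^{k-2}\bigr)_+$ when $k\geq 3$, and a logarithmic truncation when $k=2$. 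In both cases a direct computation gives $\int_{B_{2R}}|\nabla\chi_\delta|^2 dy\to 0$ as $\delta\to 0$ (for $R$ fixed), while $\int_{B_R} v_{\delta,R}^2 G\,dy$ stays bounded below by a positive constant independent of $\delta$. Since $G$ is bounded on $B_{2R}$, the Rayleigh quotient of $v_{\delta,R}$ tends to $0$ when first $\delta\to 0$ and then $R\to\infty$, yielding $\mu(\delta)\to 0$. The codimension hypothesis is essential here: when $\dim L=d-1$, subsets of $\R^d$ containing $L$ have strictly positive $2$-capacity and this construction fails --- mirroring why Theorem~\ref{t:nofullkernel} is needed to eliminate $\Sigma_{\operatorname{stat},d-1}$ by a separate argument. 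Choosing $\delta$ small enough that both $\delta<\vep$ and $\alpha(\delta)<\vep$ completes the proof.
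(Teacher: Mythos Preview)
The paper does not prove this lemma itself but cites it directly from \cite[Lem.~5.8]{figalli}. Your self-similar reduction to the Ornstein--Uhlenbeck Dirichlet eigenvalue problem on $\mathcal K_\delta$, followed by the capacitary test-function argument exploiting $\operatorname{codim} L\geq 2$ to drive $\mu(\delta)\to 0$, is precisely the standard construction and matches the proof in \cite{figalli}.
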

We now show a non-degeneracy estimate  that is tailored for stationary singular points, where there is no lower bound on $\eta$ given from the blow-up profile. The proof is based on the above Lemma as well as revisiting the proof of Lemma~\ref{lem:eta grad w}. 
\begin{lem} \label{lem: eta nondegeneracy space blow-ups}Let $w:Q_1 \to \R$ be a bounded solution to \eqref{eq:obstacle intro}  with $(0,0)\in \Sigma_{\operatorname{stat}} $. For every $\vep>0$, there exists a constant $r_0\in (0,1)$ such that
\begin{equation} \label{eq: eta nondegeneracy space blow-ups}
    |\grad w(x,t)|\leq r^{1-\vep}\eta(x,t), \quad (x,t)\in Q_{r}, \quad  0<r<r_0.
\end{equation}
\end{lem}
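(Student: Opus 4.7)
The plan is to execute a barrier argument in the spirit of the second half of the proof of Lemma \ref{lem:eta grad w}, but with the pointwise hypothesis $\eta \ge m/2$ used there replaced by a pointwise lower bound on $\eta$ extracted from the cleaning estimate of Proposition \ref{prop:quadratic cleaning sigma} and the Ornstein--Uhlenbeck caloric barrier $\Phi$ from Lemma \ref{lem: OU barrier Sigma < d-1}. The latter is applicable precisely because $\Sigma_{\operatorname{stat},0}=\emptyset$ and $\Sigma_{\operatorname{stat},d-1}=\emptyset$ (Proposition \ref{prop:sigma stat 0} and Theorem \ref{t:nofullkernel}), which forces $L := \ker(D^2 p_2)$ to satisfy $1 \le \dim L \le d-2$. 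Given $\vep > 0$, I would first fix $\delta, \alpha \in (0, \vep)$ and the positive, parabolically $\alpha$-homogeneous caloric $\Phi$ on $\mathcal{C}_\delta$ vanishing on $\partial \mathcal{C}_\delta$.

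Next, Proposition \ref{prop:quadratic cleaning sigma} guarantees that $w > 0$ on $B_{r/2}\times(-1, -C_0 r^2)$ and gives an average lower bound $\fint_{B_r \cap \{|x_d|\ge r/10\}}|w_t(\cdot,-r^2)| \gtrsim r^{-2}H^{1/2}(r, w-p_2)$, where by Proposition \ref{prop:second blow-up}(b) the frequency at $(0,0)$ equals $2$, so Lemma \ref{lem:freq H bd} (with $\gamma$ just above $2$) yields $H^{1/2}(r, \zeta(w-p_2)) \gtrsim r^{2+\alpha}$ for $r$ small. Combining this with the parabolic Harnack inequality I would pin down a reference point $(x_*,t_*) \in \mathcal{C}_\delta \cap \{w>0\} \cap Q_r^-$ at which $\eta(x_*,t_*) \gtrsim r^\alpha$. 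Since $\eta$ is caloric in $\{w>0\}$, a maximum-principle comparison of $\eta$ against $\eta(x_*,t_*)\Phi/\Phi(x_*,t_*)$ on $\mathcal{C}_\delta \cap \{w>0\} \cap Q_r^-$ would then propagate this to the pointwise inequality
\[
\eta(x,t) \;\gtrsim\; r^\alpha\,\Phi(x,t) \qquad \text{on } \mathcal{C}_\delta \cap \{w>0\} \cap Q_r^-.
\]

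With this lower bound in hand, I would mimic the barrier from the second part of Lemma \ref{lem:eta grad w}. Fixing $(x_1,t_1) \in Q_r \cap \overline{\{w>0\}}$ and $e \in \mathbb{S}^{d-1}$, set
\[
z(x,t) := \eta(x,t) - c_1 r^{\vep-1} \partial_e w(x,t) + c_2\Bigl(\tfrac{1}{2d+1}\bigl(|x-x_1|^2 + (t_1-t)\bigr) - w(x,t)\Bigr)
\]
on $D := Q^-_\rho(x_1,t_1)$ with $\rho$ of order $r$. As before, $z$ is supercaloric on $D \cap \{w>0\}$ and $\liminf z \ge 0$ at the free boundary (using $|\nabla w|$, $(D^2w)^-$ continuous up to $\partial\{w>0\}$ and the Lipschitz graph property from Proposition \ref{prop lipschitz}). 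On the remaining parabolic boundary inside $\mathcal{C}_\delta$, the pointwise bound $\eta \gtrsim r^\alpha \Phi$ together with $|\partial_e w| \lesssim r$ beats the term $c_1 r^{\vep-1}\partial_e w$ since $\alpha < \vep$ and $\Phi$ is non-degenerate well inside $\mathcal{C}_\delta$. The maximum principle then yields $z(x_1,t_1) \ge 0$, hence $|\partial_e w(x_1,t_1)| \le c_1^{-1} r^{1-\vep} \eta(x_1,t_1)$; taking supremum over $e$ gives \eqref{eq: eta nondegeneracy space blow-ups}.

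The main obstacle I anticipate is Step 2: turning the averaged cleaning estimate of Proposition \ref{prop:quadratic cleaning sigma} into the pointwise bound $\eta \gtrsim r^\alpha \Phi$ throughout a full neighborhood of $L$. Both $\eta$ and $\Phi$ degenerate near $\partial \mathcal{C}_\delta$, so the comparison must be made on a carefully chosen subdomain of $\mathcal{C}_\delta$ on which $\{w>0\}$ and the quantitative positivity of $\Phi$ are both available, and the resulting estimates must be patched across the thin set $L$. The OU barrier's role is precisely to accommodate this degeneration while losing only the factor $r^\alpha$ with $\alpha < \vep$, which is what preserves the improvement $r^{1-\vep}$ in the final estimate.
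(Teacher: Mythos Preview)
Your general strategy is right: use the Ornstein--Uhlenbeck barrier $\Phi$ to get a lower bound on $\eta$ away from $L = \ker(D^2 p_2)$, then run a barrier argument in the style of Lemma~\ref{lem:eta grad w}. But your Step~2 is much more complicated than necessary, and the obstacle you anticipate there can be avoided entirely.

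The paper compares $\eta$ with $c\Phi$ on the \emph{fixed} domain $Q_{r_0}^- \cap \mathcal{C}_\delta$, not on a domain shrinking with $r$. On $\partial\mathcal{C}_\delta$ one has $\Phi=0$; on the remaining parabolic boundary, which sits at the fixed scale $r_0$, both $\eta$ and $\Phi$ are bounded and positive, so the constant $c$ may be chosen once and for all depending only on $r_0$. The crucial point is then the $\alpha$-homogeneity of $\Phi$: once $\eta \ge c\Phi$ holds on all of $Q_{r_0}^-\cap\mathcal{C}_\delta$, at scale $r\ll r_0$ one simply reads off $\eta(x,t) \gtrsim r^\alpha$ for $(x,t)\in Q_{2r}^-$ with $\operatorname{dist}(x,L)\gtrsim \delta r$, and Harnack pushes this to $Q_{2r}$. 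This completely bypasses Proposition~\ref{prop:quadratic cleaning sigma}, the frequency computation $\lambda=2$, and the reference-point comparison you propose; your anticipated difficulty (promoting an averaged lower bound to a pointwise one) never arises.

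Your barrier step also has a gap: you do not say what happens on the portion of $\partial_p D$ that lies near $L$ (i.e.\ outside $\mathcal{C}_\delta$) but \emph{not} near the free boundary. There no lower bound on $\eta$ is available. The paper handles this by using the blow-up convergence $r^{-2}w_r\to p_2$ in $C^1_x$ to get $w(x,t)\le \tfrac12\operatorname{dist}(x,L)^2+o(r^2)$ and $|\nabla w(x,t)|\le\operatorname{dist}(x,L)+o(r)$; on $\{\operatorname{dist}(x,L)<9\delta r\}\cap \partial_p D$ the bad terms $r^{\vep-1}|\partial_e w|$ and $r^{\vep-2}w$ in $z$ are then $O(\delta r^\vep)$, dominated by the $\tfrac{1}{2d+1}r^\vep$ gain provided $\delta$ is small. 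Thus the case split in the barrier argument is ``near $L$'' versus ``away from $L$'', not ``near the free boundary'' versus ``away from it'' as in Lemma~\ref{lem:eta grad w}.
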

\begin{proof}Assume that $(0,0)\in \Sigma_{\operatorname{stat}}$, and let $p_2$ be the blow-up profile for $w$. By Theorem \ref{t:nofullkernel}, $p_2$ satisfies the assumptions of Lemma \ref{lem: OU barrier Sigma < d-1}. Let $\vep>0$, and let $\delta \in (0,\vep/2)$, $\alpha \in (0,\vep/2)$, and $\Phi:\mathcal{C}_{\delta}\to (0,\infty)$ be  given by Lemma \ref{lem: OU barrier Sigma < d-1}, taken with $\vep :=\vep/2$.  By uniform convergence to the blow-up, for $r_0$ sufficiently small, 
\begin{equation*}
\mathcal{C}_{\delta/2}\cap  Q_{r_0} \subset \{w>0\}.
\end{equation*}
Observing that it suffices to prove \eqref{eq: eta nondegeneracy space blow-ups} for $\vep$ sufficiently small, we may assume that  $\vep$, $\alpha$ and $\delta$ are as small as needed. Since $\Phi$ is caloric on $\mathcal{C}_{\delta}$ and vanishes continuously on $\partial \mathcal{C}_{\delta}$, $\Phi$ is subcaloric on $\R^d \times (-\infty,0)$. Thus, by comparison with the heat flow on $\R^d$,
\begin{multline}
    \|\Phi\|_{L^{\infty}(Q_{1/2}^-)}\leq \sup_{(x,t)\in Q_{1/2}^-}\int_{\R^d} \Phi(y,-1)e^{-|x-y|^2/4(1-|t|)}\leq  \sup_{|x|<1/2}\int_{\R^d} \Phi(y,-1)e^{-|x-y|^2/4}dy\\
    \leq \sup_{|x|<1/2} e^{-|x|^2/4}\left(\int_{\R^d}e^{x\cdot y}e^{-|y|^2/4}dy\right)^{\frac12}\left(\int_{\R^d}\Phi(y,-1)^2e^{-|y|^2/4}dy\right)^{\frac12}\leq C.
\end{multline}
Since $\eta$ is positive and caloric in $\{w>0\}$, the comparison principle yields, for some generic constant $c>0$,
\begin{equation*}
    \eta \geq c\Phi, \quad \text{in}\quad  Q_{r_0}^-\cap \mathcal{C}_{\delta}.
\end{equation*}
In particular, this implies that
\begin{equation*}
    \eta(x,t)\geq cr^{\vep/2} \quad\hbox{ for }  (x,t)\in  Q_{2r}^-,\;\; \operatorname{dist}(x,L)\geq 8\delta r,  \quad r\in (0,r_0/2). 
\end{equation*}
 By the Harnack inequality, we then have
\begin{equation*}
  \eta(x,t)\geq cr^{\vep/2} \quad \hbox{ for } (x,t)\in  Q_{2r},\;\;\operatorname{dist}(x,L)\geq 9\delta r,  \quad r\in (0,r_0/2).
\end{equation*}
We now show \eqref{eq: eta nondegeneracy space blow-ups} by a variant of the proof of Lemma \ref{lem:eta grad w}. We let, for $(x_0,t_0)\in Q_{r}$ and $e\in \mathbb{S}^{d-1}$,
\begin{equation*}
    z(x,t):=\eta(x,t) - r^{\vep-1}\partial_e w+r^{\vep-2}\left( \frac{(|x-x_1|^2+(t_1-t))}{2d+1}-w\right), \quad (x,t)\in D:=Q_{r}(x_0,t_0),
\end{equation*}
Note that, by uniform convergence, since $\Delta p_2(x)=1$, we have, for any $(x,t) \in Q_{2r}$, 
\begin{equation*}
    w(x,t)\leq p_2(x) + o(1)r^2 \leq \frac12\operatorname{dist}(x,L)^2+o(1)r^2,
\end{equation*}
and
\begin{equation*}
    |\partial_e w(x,t)| \leq |\nabla p_2(x)| +o(1)r\leq \operatorname{dist}
(x,L)+o(1)r,\end{equation*}
where $o(1) \to0$ as $r\downarrow 0$.
We thus have, for any $\rho>0$ and any $(x,t)\in \partial_pQ_r(x_0,t_0)$ such that $\operatorname{dist}(x,L)< 9\delta r$,
\begin{equation}
  z(x,t)\geq  r^{\vep}\left(- ( 9\delta +o(1))- (81\delta^2/2+o(1)) +\frac{1}{2d+1}\right) >0
\end{equation}
provided that $r_0$ and $\delta$ are sufficiently small. On the other hand, for any $(x,t)\in \partial_p Q_r(x_0,t_0)$ such that $\operatorname{dist}(x,L)\geq9\delta r$, we have
\begin{equation*}
    z(x,t)\geq cr^{\vep/2}- C r^{\vep} >0
\end{equation*}
if $r_0=r_0(\vep)$ is sufficiently small. We then conclude by the maximum principle that
$z\geq0$ in $D$ and, in particular,
\begin{equation*}
   \eta(x_0,t_0)\geq r^{\vep-1}|\partial_e w(x_0,t_0)|. 
\end{equation*}
This holds for all $(x_0,t_0)\in Q_r$ and all $r<r_0/2$, which implies the result.
\end{proof}

\section{Time-dependent blow-up profiles and regularity of the freezing time}\label{sec:dynamic}
 In this section, we begin the study of the set $\Sigma_{\operatorname{dyn}}$ of singular points with a time-dependent blow-up profile, and prove two of our main results. First, we complete the remaining parabolic dimension estimates necessary to prove Theorem \ref{thm: dim par}. Next, we exploit the non-degeneracy results of Lemma \ref{lem:eta grad w} and Lemma \ref{lem: eta nondegeneracy space blow-ups} to obtain the $C^1$ regularity of the freezing time and prove Theorem \ref{thm:reg intro}. 

 We start by noting that, near points of $\Sigma_{\operatorname{dyn}}$,  the lower bound of $\eta$ necessary for Lemma \ref{lem:eta grad w} is satisfied.

\begin{lem}[Superquadratic cleaning of $\Sigma_{\operatorname{dyn}}$]\label{lem: freeze non-stationary}
Let $w:Q_1\to \R$ be a bounded solution to \eqref{eq:obstacle intro} with $(0,0)\in \Sigma_{\operatorname{dyn}}$, and let $p_2=-mt+\frac12Ax\cdot x$ be the corresponding blow-up profile. Then, for every $\delta>0$, there exists $r_0\in (0,1)$ such that $\eta:=-w_t$ satisfies
\begin{equation} \label{eq:eta > m/2 proved}
    \eta(x,t)\geq \frac{m}{2}, \quad (x,t)\in B_r \times (-r^2,-(\delta r)^2), \quad r\in (0,r_0). 
\end{equation}
In particular, there exists a modulus of continuity $\omega$ and a radius $r_\ast>0$
such that
\begin{equation*}
  \{(x,t): |x|<r,\ t<-\omega(r)\,r^2\}
\cap \{w=0\}=\emptyset
\quad\text{for all }0<r<r_\ast.  
\end{equation*}
\end{lem}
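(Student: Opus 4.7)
My plan is to rescale and exploit the smooth convergence $w^r\to p_2$ on compact subsets of the positivity set of $p_2$, which contains the entire past time half-space $\{t<0\}$ as soon as $m>0$.

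Since $(0,0)\in \Sigma_{\operatorname{dyn}}$, we have $m>0$ by definition. I would set $w^r(x,t):=r^{-2}w(rx,r^2t)$, so that $(w^r)_t(x,t)=-\eta(rx,r^2t)$, and recall from the blow-up discussion in Section~\ref{sec:preliminaries} that $w^r\to p_2$ uniformly on compacta of $\R^d\times(-\infty,0]$ and smoothly on compact subsets of $\{p_2>0\}$. For fixed $\delta>0$, the compact set $K:=\overline{B_1}\times[-1,-\delta^2]$ satisfies $p_2\geq m\delta^2>0$ on $K$, so $K\subset\{p_2>0\}$ and therefore $(w^r)_t\to (p_2)_t\equiv -m$ uniformly on $K$. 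Choosing $r_0=r_0(\delta)$ small enough forces $-(w^r)_t\geq m/2$ on $K$ for all $r<r_0$, which rescales to the desired inequality $\eta\geq m/2$ on $B_r\times(-r^2,-(\delta r)^2)$.

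For the ``in particular'' statement, the identity $\{w>0\}=\{\eta>0\}$ from \eqref{eq:obstacle intro} immediately promotes the previous bound to $w>0$ on $B_r\times(-r^2,-(\delta r)^2)$, and the monotonicity $w_t\leq 0$ (so $w$ is non-increasing in $t$) extends $w>0$ to all smaller $t$ still in the domain. To extract a modulus, I would fix a decreasing null sequence $\delta_n\downarrow 0$, set $r_n:=r_0(\delta_n)$ (taken monotone decreasing to zero without loss of generality), and define $\omega(r):=\delta_n^2$ on $(r_{n+1},r_n]$ (which can be mollified into a continuous increasing function if required). Then $\omega(r)\downarrow 0$ as $r\downarrow 0$, and setting $r_\ast:=r_1$ yields $\{w=0\}\cap\{|x|<r,\ t<-\omega(r)r^2\}=\emptyset$ for every $0<r<r_\ast$.

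The only real input is the smooth convergence of the blow-up on the positivity set of $p_2$, which is standard and quoted in the preliminaries; beyond that the argument is a bare rescaling combined with time-monotonicity, so no serious technical obstacle is anticipated.
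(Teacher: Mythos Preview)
Your proposal is correct and follows essentially the same approach as the paper: uniform convergence of $w^r\to p_2$, positivity of $p_2$ on $\overline{B_1}\times[-1,-\delta^2]$ to upgrade to smooth convergence (hence $\eta\to m$), then the piecewise-constant construction of $\omega$ from a null sequence $\delta_n\downarrow 0$. The only cosmetic difference is that you invoke the smooth-convergence-on-the-positivity-set statement from Section~\ref{sec:preliminaries} directly, whereas the paper re-derives it on the spot via parabolic regularity of the caloric function $r^{-2}w_r-p_2$.
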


\begin{proof}
By the definition of blow-up, we have $r^{-2}w_r \to p_2$ uniformly in $Q_1^-$. Since $p_2(x,t)>0$ for $t<0$, this implies that $r^{-2}w_r>0$ on $B_1 \times [-1,-\delta^2]$ for $r$ sufficiently small and thus $r^{-2}w_r-p_2$ is caloric. By parabolic regularity, $\eta(rx,r^2t)=-\partial_tr^{-2}w_r\to m$ uniformly in $B_1 \times [-1,-\delta^2]$, which implies \eqref{eq:eta > m/2 proved}. For the second assertion, fix a decreasing sequence $(\delta_j)_{j\ge1}$ with
$\delta_j \downarrow 0$. For each $j$,
let $r_j \in (0,1)$ be the radius given by \eqref{eq:eta > m/2 proved} with $\delta=\delta_j$,
so that
\[
\eta(x,t)\ge \frac{m}{2}
\qquad\text{for all }(x,t)\in B_r\times(-r^2,-(\delta_j r)^2)
\quad\text{whenever }0<r\le r_j.
\]
Shrinking $r_j$ if necessary, we may assume that $r_{j+1}\le r_j/2$ for all $j$,
and we set $r_\ast := r_1$. Since $\{w>0\} = \{\eta>0\}$, the inequality above, together with the fact that $w_t\leq 0$, implies
\[
B_r \times (-1,-(\delta_j r)^2) \subset \{w>0\}
\qquad\text{for every }0<r\le r_j.
\]
Now define $\omega:(0,r_\ast)\to(0,1)$ by
\[
\omega(r) := \delta_j^2 \qquad\text{whenever } r\in(r_{j+1},r_j],\ j\ge1,
\]
with the convention $r_{j+1}<r\le r_j$ and $r_1=r_\ast$. By construction,
$\omega$ is nondecreasing and $\omega(r)\to0$ as $r\downarrow0$, so $\omega$
is a modulus of continuity. For any $0<r<r_\ast$, choose $j$ such that
$r\in(r_{j+1},r_j]$; then $r\le r_j$ and the inclusion above with $\delta_j$
gives
\[
B_r\times(-1,-\omega(r) r^2)
=
B_r\times(-1,-(\delta_j r)^2)
\subset\{w>0\}.
\]

\end{proof}

In view of the results on $\Sigma_{\operatorname{stat}}$ from the previous section, we may now show that the singular set is low-dimensional for almost all times.
\begin{prop} \label{prop: dim slices a.e. d-2}  Let $w:Q_1 \to \mathbb{R}$ be a bounded solution to \eqref{eq:obstacle intro}. Then, for almost every $t$,
\begin{equation*}
    \mathcal{H}^{d-2}(\{x\in B_1: (x,t)\in\Sigma\} )=0.
\end{equation*}
\end{prop}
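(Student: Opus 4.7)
The plan is to stratify $\Sigma=\Sigma_{\operatorname{stat}}\cup\bigcup_{k=1}^{d}\Sigma_{\operatorname{dyn},k}$ and apply a parabolic slicing inequality to each piece, handling the top stratum $\Sigma_d$ separately.

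First I would derive an elementary parabolic coarea-type inequality: for any Borel $E\subset\R^{d}\times\R$,
\[
\int \mathcal{H}^{d-2}\!\big(E\cap\{t=\tau\}\big)\,d\tau\;\leq\;C_d\,\mathcal{H}^{d}_{\operatorname{par}}(E).
\]
This follows from a cover-and-count argument: any parabolic cover of $E$ by balls $B_{\operatorname{par}}(p_k,r_k)$ induces, for each fixed $\tau$, a Euclidean cover of the spatial slice by balls of radius $r_k$ (using only the $k$ with $|t_k-\tau|<r_k^{2}$). Integrating in $\tau$ and summing $(2r_k)^{d-2}\cdot 2r_k^{2}\sim r_k^{d}$ yields the stated bound.

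For the lower strata, Corollary~\ref{cor:stat_par} gives $\dim_{\operatorname{par}}(\Sigma_{\operatorname{stat}})\leq d-2$. For each $1\leq k\leq d-1$, combining Theorem~\ref{thm:monneau} (which yields $\dim_{\mathcal{H}}(\pi_x\Sigma_k)\leq k$), the superquadratic cleaning of Lemma~\ref{lem: freeze non-stationary}, and the GMT upgrade Lemma~\ref{lem: FRS cleaning}, one obtains $\dim_{\operatorname{par}}(\Sigma_{\operatorname{dyn},k})\leq k\leq d-1$. All these sets therefore satisfy $\mathcal{H}^{d}_{\operatorname{par}}=0$, so the slicing inequality above immediately gives $\mathcal{H}^{d-2}$-zero slices for a.e.\ $\tau$.

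The main obstacle is the top stratum $\Sigma_d$, where $\dim_{\operatorname{par}}(\Sigma_d)\leq d$ can be sharp (as in \eqref{eq:-t example intro}) and the slicing inequality alone is insufficient. The plan here is to prove that the time projection $\pi_t(\Sigma_d)$ has zero Lebesgue measure, from which $\Sigma_d\cap\{t=\tau\}=\emptyset$ for a.e.\ $\tau$. At each $(x_0,t_0)\in\Sigma_d$, Lemma~\ref{lem: freeze non-stationary} with $m=1$ produces a modulus $\omega(r)\downarrow 0$ such that $\{w=0\}\cap B_r(x_0)\times[t_0-r^2,\,t_0-\omega(r)r^2]=\emptyset$. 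Combined with the Lipschitz-graph structure of the freezing time from Proposition~\ref{prop lipschitz} (so that any other $\Sigma_d$ point $(x',t')$ with $t'<t_0$ satisfies $|x'-x_0|\geq(t_0-t')/C$), a Vitali-type covering of any compact $K\subset\pi_t(\Sigma_d)$ should show that $\pi_t(\Sigma_d)$ has zero upper Lebesgue density at each of its points, whence $|\pi_t(\Sigma_d)|=0$ by Lebesgue differentiation. The delicate point, which is the main difficulty of the whole proposition, is passing from the purely local (spatial-ball-based) cleaning at each individual singular point to a global density-zero statement for $\pi_t(\Sigma_d)$; this requires carefully organizing the cover using the Lipschitz graph geometry of $\partial\{w>0\}$.
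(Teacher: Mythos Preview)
Your treatment of $\Sigma_{\operatorname{stat}}$ and of the lower dynamic strata $\Sigma_{\operatorname{dyn},k}$ for $k\leq d-1$ via the parabolic slicing inequality is correct and is a somewhat different route from the paper. The paper instead handles $\Sigma_{\operatorname{stat}}$ by observing that its spatial projection lies in a $(d-2)$--dimensional manifold \emph{and} that $\Sigma_{\operatorname{stat}}$ is contained in a graph (by monotonicity of $w$), which gives the slice statement for all but countably many $t$. Your coarea argument is more uniform and avoids the manifold structure, at the cost of invoking Corollary~\ref{cor:stat_par}.

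The real divergence, and the gap in your proposal, is at $\Sigma_d$. You aim for the stronger conclusion $|\pi_t(\Sigma_d)|=0$ and sketch a Vitali/density argument, but the passage from ``each point has a clean superquadratic backward paraboloid'' to ``the time projection has density zero'' is exactly the nontrivial GMT step you flag as delicate, and your sketch does not carry it out: the combination of Lipschitz graphicality and local cleaning does not obviously yield a density statement for $\pi_t$ without a more careful covering argument along the lines of \cite[Prop.~7.7]{FRS20}. The paper sidesteps this entirely by \emph{not} singling out $\Sigma_d$. It applies \cite[Prop.~7.7]{FRS20} to all of $\Sigma_{\operatorname{dyn}}$ at once: the superquadratic cleaning of Lemma~\ref{lem: freeze non-stationary} holds at every point of $\Sigma_{\operatorname{dyn}}$, and since trivially $\mathcal{H}^d(\pi_x(\Sigma_{\operatorname{dyn}}))<\infty$, that proposition directly yields $\mathcal{H}^{d-2}(\Sigma_{\operatorname{dyn}}\cap\{t=\tau\})=0$ for a.e.\ $\tau$. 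In other words, you were essentially trying to reprove the content of \cite[Prop.~7.7]{FRS20} for the special case of $\Sigma_d$, when that black box already handles all of $\Sigma_{\operatorname{dyn}}$ in one stroke without any stratification.
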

\begin{proof}By Theorem \ref{t:nofullkernel} and Theorem \ref{thm:monneau}, the spatial projection of $\Sigma_{\operatorname{stat}}$  is locally contained in a $(d-2)$--dimensional manifold. Since (by monotonicity of $w$) $\Sigma_{\operatorname{stat}}$ is also contained in the graph of a function, this readily implies that, for all but countably many $t$,
\begin{equation*}
    \mathcal{H}^{d-2}(\{x: (x,t)\in \Sigma_{\operatorname{stat}}\})=0.
\end{equation*}
On the other hand, Lemma \ref{lem: freeze non-stationary} implies that, for every $(x_0,t_0)\in \Sigma_{\operatorname{dyn}}$, there exists a modulus of continuity $\omega_{x_0,t_0}$ such that
\begin{equation*}
    \{(x,t):t< t_0-\omega_{x_0,t_0}(|x-x_0|)|x-x_0|^2\} \cap \Sigma_{\operatorname{dyn}} = \emptyset.
\end{equation*}
Since we trivially have $\mathcal{H}^{d}(\pi_x(\Sigma_{\operatorname{dyn}}))<\infty$, \cite[Prop. 7.7]{FRS20} implies that, for a.e. $t$,
\begin{equation*}
  \mathcal{H}^{d-2}(\{x: (x,t)\in \Sigma_{\operatorname{dyn}}\})=0,  
\end{equation*}
which proves the claim.
\end{proof}
We estimate below the parabolic Hausdorff dimension of each dynamic stratum $\Sigma_{\operatorname{dyn},k}$, where
\begin{equation*}
    \Sigma_{\operatorname{dyn},k}:=\Sigma_{\operatorname{dyn}}\cap \Sigma_k, \quad 0\leq k \leq d.
\end{equation*}
\begin{prop}[Parabolic Hausdorff dimension of dynamic singular points]\label{prop: Hausdorff time dependent} Let $w:Q_1 \to \mathbb{R}$ be a bounded solution to \eqref{eq:obstacle intro}. Then \begin{equation} \label{eq:dimpar k}
    \dim_{\operatorname{par}}(\Sigma_{\operatorname{dyn},k})\leq k.
\end{equation} In particular, one has
\begin{equation}\label{eq:dimpar general}
    \dim_{\operatorname{par}}(\Sigma_{\operatorname{dyn}})\leq d
\end{equation}
and, if $\eta:=-w_t<1$, then $\dim_{\operatorname{par}}(\Sigma_{\operatorname{dyn}})\leq d-1$.
\end{prop}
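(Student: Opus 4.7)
The plan is to combine Monneau's spatial projection bound (Theorem \ref{thm:monneau}) with the superquadratic cleaning near $\Sigma_{\operatorname{dyn}}$ provided by Lemma \ref{lem: freeze non-stationary}, fed into the GMT criterion of Lemma \ref{lem: FRS cleaning}. This immediately yields the stratified bound \eqref{eq:dimpar k}, from which \eqref{eq:dimpar general} and the $\eta < 1$ refinement follow.

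Concretely, Theorem \ref{thm:monneau} gives $\dim_{\mathcal{H}}(\pi_x(\Sigma_{\operatorname{dyn},k})) \leq \dim_{\mathcal{H}}(\pi_x(\Sigma_k)) \leq k$; I lift this to a parabolic Hausdorff dimension bound via Lemma \ref{lem: FRS cleaning}. To verify its hypothesis \eqref{pardim condition} at each $(x_0, t_0) \in \Sigma_{\operatorname{dyn},k}$, I translate to the origin and invoke the second conclusion of Lemma \ref{lem: freeze non-stationary}: there exist $r_\ast > 0$ and a modulus $\omega$ with $\omega(r) \downarrow 0$ such that $B_r \times (-1, -\omega(r) r^2) \cap \{w = 0\} = \emptyset$ for all $0 < r < r_\ast$. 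Since $\Sigma_{\operatorname{dyn},k} \subset \{w = 0\}$ and $\omega(r) \to 0$, for any $C > 0$ there exists $\rho = \rho(x_0, t_0, C) > 0$ with $\omega(r) < C$ on $(0, \rho)$, so that
\begin{equation*}
\{(x, t) \in B_\rho \times (-1, 1) : t < -C|x|^2\} \cap \Sigma_{\operatorname{dyn},k} = \emptyset.
\end{equation*}
This is exactly \eqref{pardim condition} (after a routine translation/covering to reduce to a time interval of the form $(-1,1)$), so Lemma \ref{lem: FRS cleaning} yields \eqref{eq:dimpar k}, and the stratification $\Sigma_{\operatorname{dyn}} = \bigcup_{k=0}^{d} \Sigma_{\operatorname{dyn},k}$ gives \eqref{eq:dimpar general}.

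For the refinement under $\eta < 1$, I claim $\Sigma_{\operatorname{dyn},d} = \emptyset$. If instead $(x_0, t_0) \in \Sigma_{\operatorname{dyn},d}$, its blow-up has the form $-mt$ with $m = 1 - \operatorname{tr}(A) = 1$. But the argument in the proof of Proposition \ref{prop: eta continuous} (using the time semiconvexity of Proposition \ref{prop: time semiconvexity local} to guarantee that the left limit in $t$ exists) gives $\lim_{t \uparrow t_0} \eta(x_0, t) = m = 1$, contradicting $\eta < 1$. Hence $\Sigma_{\operatorname{dyn}} = \bigcup_{k \leq d - 1} \Sigma_{\operatorname{dyn},k}$ and \eqref{eq:dimpar k} produces $\dim_{\operatorname{par}}(\Sigma_{\operatorname{dyn}}) \leq d - 1$. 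The substantive technical input is already encoded in Lemma \ref{lem: freeze non-stationary}, which itself rests on the smooth convergence of $r^{-2} w_r$ to the time-dependent blow-up in the positivity region $\{t < 0\}$; no serious obstacle is expected beyond the routine chaining of these three ingredients.
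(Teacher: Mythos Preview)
Your proposal is correct and follows essentially the same route as the paper: Monneau's projection bound (Theorem~\ref{thm:monneau}), the cleaning from Lemma~\ref{lem: freeze non-stationary}, and the GMT criterion (Lemma~\ref{lem: FRS cleaning}) are exactly the three ingredients the paper combines. The only cosmetic difference is that the paper invokes the first conclusion of Lemma~\ref{lem: freeze non-stationary} with a fixed $\delta$ (yielding directly $\{t-t_0<-\tfrac12|x-x_0|^2\}\cap\Sigma_{\operatorname{dyn}}=\emptyset$ for $|x-x_0|<\rho$), whereas you route through the modulus $\omega$; both verifications of \eqref{pardim condition} are equivalent, though the fixed-$\delta$ version is slightly cleaner to write out.
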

\begin{proof} Let $(x_0,t_0)\in \Sigma_{\operatorname{dyn}}$, let $p_2$ be its blow-up profile. By Lemma \ref{lem: freeze non-stationary}, there exists $\rho=\rho(x_0,t_0)$ such that if $r\leq\rho(x_0,t_0)$, then $w>0$ on $B_r(x_0)\times [t_0-r^2,t_0-r^2/2]$. Recalling the time monotonicity of $w$, that means
\begin{equation}\label{e:quadraticclearing}
    \left\{(x,t)\in B_{\rho}(x_0) \times (-1,1): t-t_0 < -\frac12|x-x_0|^2\right\}\cap \Sigma_{\operatorname{dyn}}=\emptyset.
\end{equation}
By Theorem \ref{thm:monneau}, we have $\dim_{\mathcal{H}}(\pi_x(\Sigma_{\operatorname{dyn,k}}))\leq k$ for each $k\in \{0,\ldots,d\}$. Thus, we infer from Lemma \ref{lem: FRS cleaning} that $\dim_{\text{par}}(\Sigma_{\operatorname{dyn},k})\leq k$. Finally, note that if $(x_0,t_0)\in \Sigma_d$, then $p_2=-t$. But this is impossible if $\eta=-w_{t}<1$. Thus $\Sigma_d=\emptyset$ and $\dim_{\operatorname{par}}(\Sigma_{\operatorname{dyn}})\leq d-1$.
\end{proof}

We may now prove the first two main results of the paper.
\begin{proof}[Proof of Theorem \ref{thm: dim par}] We decompose
\begin{equation*}
    \Sigma=\Sigma_{\operatorname{stat}}\cup \Sigma_{\operatorname{dyn}},
\end{equation*}
and conclude by applying Proposition \ref{prop: Hausdorff time dependent} and Corollary \ref{cor:stat_par}.    
\end{proof}
\begin{proof}[Proof of Theorem \ref{thm:reg intro}] Since $\partial \{w>0\}$ is smooth near any regular point, it suffices to assume that $(0,0)$ is a singular point, and to prove that the freezing time $s$ given by Proposition \ref{prop lipschitz} is differentiable in a neighborhood of $x=0$, with $\lim_{x\to 0} \grad s(x)=0$. If $(0,0)\in \Sigma_{\operatorname{stat}}$, then we have by Lemma \ref{lem: eta nondegeneracy space blow-ups} that, for $r$ sufficiently small,
\begin{equation} \label{eqsqrtc1spko}
    |\nabla w(x,t)|\leq r^{\frac{1}{2}}|w_t|, \quad (x,t)\in Q_r.\quad 
\end{equation}
On the other hand, if $(0,0)\in \Sigma_{\operatorname{dyn}}$, we let $p_2(x,t)=-mt+\frac12 Ax \cdot x$ be the blow-up profile of $(0,0)$, and let $\delta=\delta(m)\in (0,1)$ and $K=K(m)>1$ be as in Lemma \ref{lem:eta grad w}. By Lemma \ref{lem: freeze non-stationary}, there exists $r_0>0$ such that 
\[\eta(x,t)\geq m/2, \quad (x,t)\in   B_r\times [-r^2,-(\delta r)^2], \quad 0<r\leq r_0.\]
We therefore obtain, by Lemma \ref{lem:eta grad w} applied at scale $r:=r/\delta$, shrinking $r_0$ if necessary, that
\begin{equation*}  |\nabla w(x,t)|\leq \frac{K}{\delta}r\eta(x,t), \quad (x,t)\in Q_{r}, \quad r\leq r_0.\end{equation*}
In particular, we infer that \eqref{eqsqrtc1spko} holds for $r$ sufficiently small, regardless of whether $(0,0)\in \Sigma_{\operatorname{stat}}$ or $(0,0)\in \Sigma_{\operatorname{dyn}}$.  Thus,  since $|w_t|=\eta > 0$ on $\{w>0\}$, we have, choosing  $r_0$ sufficiently small, by Proposition \ref{prop lipschitz} and the implicit function theorem, that
$\{w=\vep\}\cap B_{r^2}\times (-r^2,r^2)$ is a smooth graph $\{t=s_{\vep}(x)\}$, with $|\nabla s_{\vep}(x)|\leq r^{\frac12}$, for all $\vep>0$ sufficiently small. Letting $\vep \downarrow 0$, the Arzel\'a-Ascoli theorem implies that $s_{\vep}\to s$ uniformly in $B_{r^2}$, and 
\begin{equation*}
    |\nabla s(x)|\leq Cr^{\frac12} \quad \text{for a.e. } x\in B_{r^2}.
\end{equation*}
Letting $r \downarrow 0$, this implies that $s$ is differentiable at $x=0$, with $\grad s(x)=0$. A posteriori, repeating this argument centered at any other singular points near $(0,0)$, we infer that $s$  is everywhere differentiable near $x=0$, with $\lim_{x\to 0}\grad s(x)=0$.   
\end{proof}

\section{Quadratic second blow-up in the top stratum: the set $\Sigma_d^2$}\label{sec:top stratum quadratic}
For the remaining sections, we will focus exclusively on the top stratum $\Sigma_{d} \subset \Sigma_{\operatorname{dyn}}$ of singular points with blow-up profile $p_2(x,t)=-t$. A key feature of singular points with a time-dependent blow-up profile is that $w-p_2$ may have infinite frequency. For each integer $k\geq 2$, we define the points of frequency $k$ by
\begin{equation*}
    \Sigma_{d}^k:=\{(x_0,t_0)\in \Sigma_d: \phi^{\gamma}(0+,\zeta(w(x_0+\cdot,t_0+\cdot)+t))=k \;\;\text{for some $\gamma>k$}\}.
\end{equation*}
By Lemma \ref{lem:freq H bd}, the sets $\Sigma_d^k$ are disjoint, and one has $\phi(0+,\zeta(w(x_0+\cdot,t_0+\cdot)+t))=k$ for any $(x_0,t_0)\in \Sigma_d^k$. Letting
\begin{equation}\label{e:defsigmak}
    \Sigma_{d}^{\geq k}:=\{(x_0,t_0)\in \Sigma_d: \phi^{k}(0+,\zeta(w(x_0+\cdot,t_0+\cdot)+t))=k\},
\end{equation}
(equivalently, $\phi^{\gamma}(0+)=\gamma$ for all $\gamma \in (2,k]$) we then define the points of infinite frequency by
\begin{equation*}    \Sigma_d^{\infty}:=\bigcap_{k=2}^{\infty}\Sigma^{\geq k}_d.
\end{equation*}
By Proposition \ref{prop:second blow-up}, one then has the disjoint decomposition
\begin{equation*}    \Sigma_d=\left(\dot\bigcup_{k=2}^{\infty}\Sigma^k_{d}\right)\dot \cup \Sigma_{d}^{\infty}. 
\end{equation*}

In this section, we show that the set $\Sigma_d^2$ of points with frequency $\lambda=2$, where one expects to have a quadratic second blow-up $q,$ is dimensionally no larger than the penultimate stratum $\Sigma_{d-1}$. That is, we obtain the estimate
\begin{equation*}
    \dim_{\operatorname{par}}(\Sigma_d^{2})\leq d-1,
\end{equation*}
thereby proving that jump-type phenomena does not occur in $\Sigma_d^2$.
 The main difficulty in analyzing these points  is the slow rate of convergence of $r^{-2}w_r(x,t)\xrightarrow{r\downarrow 0} -t$ (slower than any power $r^{\alpha}$, see Lemma \ref{lem:freq H bd}). In particular, the second blow-up obtained in Proposition \ref{prop:second blow-up} may be only subsequential, so that the caloric polynomial $q$ may not be unique.

We begin with a simple lemma which expresses a unique feature of the supercooled problem at its top stratum, which is that superquadratic cleaning occurs both forward and backward in time. The faster-than-parabolic backward cleaning simply follows from the fact that the blow-up profile is time-dependent (see Lemma \ref{lem: freeze non-stationary}). The forward cleaning, on the other hand, is explained by the interaction between obstacle problem non-degeneracy, the fact that $p_2(x,0)\equiv 0$, and the fact that $w$ decreases in time (see \eqref{eq:forward cleaning pf} below. The fact that $w_t\leq0$ is already implicit in \eqref{eq:w nondegeneracy}). 
\begin{lem}[Two-sided superquadratic cleaning] \label{lem:two-sided cleaning} Let $w:Q_2 \to \R$ be a bounded solution to \eqref{eq:obstacle intro}, and for $c_d\in (0,1)$ as given in \eqref{eq:w nondegeneracy}, let $\delta\in (0,\frac{c_d}{10})$. Suppose that $(0,0)\in \Sigma_d$ and that for some $r\in (0,1)$
\begin{equation} \label{eq:supquadcl linft cond}
    \|w-(-t)\|_{L^{\infty}(Q_{2r}^-)}\leq \delta r^2.
\end{equation}
Then
\begin{equation} \label{eq:quad cleaning sigma_d^2}
 B_r \times [-r^2,-c_d^{-1}\delta r^2] \subset \{w>0\},\quad  B_r\times [c_d^{-1}\delta r^2,r^2]\subset \{w=0\}.
\end{equation} 
\end{lem}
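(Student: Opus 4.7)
The plan is to establish each inclusion separately, with the backward one being essentially immediate from the sup-norm hypothesis and the forward one coming from obstacle-problem nondegeneracy applied so that the backward parabola exactly reaches the time slice $t=0$.

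For the backward cleaning, I would simply note that \eqref{eq:supquadcl linft cond} gives the pointwise lower bound
\[
w(x,t)\;\ge\;-t-\delta r^2\qquad\text{for all }(x,t)\in Q_{2r}^-.
\]
When $-r^2\le t\le -c_d^{-1}\delta r^2$, this yields $w(x,t)\ge (c_d^{-1}-1)\delta r^2>0$, using only that $c_d<1$. This establishes $B_r\times[-r^2,-c_d^{-1}\delta r^2]\subset\{w>0\}$.

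For the forward cleaning, I would argue by contradiction. Suppose $w(x_0,t_0)>0$ for some $(x_0,t_0)\in B_r\times[c_d^{-1}\delta r^2,r^2]$. By the time monotonicity of $w$, I may reduce to $t_0=c_d^{-1}\delta r^2$. Then I apply the obstacle-problem nondegeneracy estimate \eqref{eq:w nondegeneracy} at the point $(x_0,t_0)$ with radius $\rho:=\sqrt{t_0}=\sqrt{c_d^{-1}\delta}\,r$. Two things are crucial about this choice: first, the backward parabola $Q_{\rho}^-(x_0,t_0)$ reaches exactly the slice $t=0$, so that its top boundary lies in $B_{r+\rho}\times\{0\}\subset B_{2r}\times\{0\}$ (using $\delta<c_d/10$ to ensure $\rho<r$); and second, the resulting lower bound $c_d\rho^2$ coincides exactly with $\delta r^2$. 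Nondegeneracy then gives
\[
\sup_{B_\rho(x_0)}w(\cdot,0)\;\ge\;w(x_0,t_0)+c_d\rho^2\;=\;w(x_0,t_0)+\delta r^2.
\]
On the other hand, \eqref{eq:supquadcl linft cond} restricted to $t=0$ reads $\|w(\cdot,0)\|_{L^\infty(B_{2r})}\le\delta r^2$, forcing $w(x_0,t_0)\le 0$, a contradiction. This gives $w(\cdot,c_d^{-1}\delta r^2)\equiv 0$ on $B_r$, and then monotonicity upgrades this to $B_r\times[c_d^{-1}\delta r^2,r^2]\subset\{w=0\}$.

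There is no serious technical obstacle here; the content of the lemma is the clever matching of the parameters. The only point deserving attention is verifying that $Q_\rho^-(x_0,t_0)$ actually fits inside the domain $Q_{2r}$ where the sup-norm hypothesis is available, which is what the smallness assumption $\delta<c_d/10$ ensures (any $\delta<c_d$ would in fact work for the geometric inclusion, but the stated range gives a comfortable margin and matches how the lemma will later be invoked).
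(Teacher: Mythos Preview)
Your proof is correct and follows essentially the same approach as the paper: the backward inclusion is immediate from the pointwise lower bound $w(x,t)\ge -t-\delta r^2$, and the forward inclusion uses the nondegeneracy estimate \eqref{eq:w nondegeneracy} centered at a putative positivity point with radius $\sqrt{t_0}$ so that the backward cylinder lands exactly on the slice $\{t=0\}$, where the sup-norm hypothesis bounds $w$ by $\delta r^2$. The paper phrases the forward step as showing directly that $w(x_1,t_1)>0$ forces $t_1\le c_d^{-1}\delta r^2$, whereas you reduce via monotonicity to the endpoint $t_0=c_d^{-1}\delta r^2$ and derive a contradiction; the two arguments are interchangeable.
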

\begin{proof}
  If $(x_1,t_1)\in B_r \times [-r^2,-c_d^{-1}\delta r^2]$, then, by \eqref{eq:supquadcl linft cond},
 \begin{equation*}
     w(x_1,t_1) \geq (-t_1)-\|w-(-t)\|_{L^{\infty}(Q_{2r}^-)} \geq (-t_1)-\delta r^2\geq c_{d}^{-1}(1-c_d)\delta r^2>0.
 \end{equation*}
On the other hand, if $w(x_1,t_1)>0$ for some $(x_1,t_1)\in B_r \times [0,r^2]$, then applying \eqref{eq:w nondegeneracy} (at scale $\sqrt{t}$), we infer that
 \begin{equation} \label{eq:forward cleaning pf}
   c_dt_1 \leq \sup_{B_{\sqrt{t_1}}(x)}w(\cdot,0)\leq \|w+t\|_{L^{\infty}(Q_{2r}^-)}\leq \delta r^2 \implies t_1\leq   c_d^{-1}\delta r^2.
 \end{equation}    
\end{proof}

We now show that, even if $q$ is not unique, the function $w$ qualitatively inherits, away from $t=0$, some of the concave behavior of its subsequential limits. This appears as a sign rigidity at small scales, albeit with a possibly scale-dependent direction of ``concavity''. 
\begin{lem}[Sign rigidity of $\Sigma_d^2$] \label{lem: sign rigidity} Let $w:Q_1 \to \R$ be a bounded solution of \eqref{eq:obstacle intro} with $(x_0,t_0)\in \Sigma_{d}^2\cap Q_1$, and let $\delta>0$. Then there exists $n=n(x_0,t_0,\delta)\in \mathbb{N}$ such that, for every $r\in(0,1/n)$ such that $Q_{2r}(x_0,t_0) \subset Q_1$,
\begin{equation} \label{eq:rfnb linft cond}
\|w-(t_0-t)\|_{L^{\infty}(Q_{2r}^-(x_0,t_0))}\leq c_d\frac{\delta r^2}{10} \quad \hbox{ and } 
\end{equation}
\begin{equation} \label{eq: por ineq 1}
    w(x_0,t_0+r^2t)>-r^2t,\quad t\in [-1,0),
\end{equation}
where $c_d\in (0,1)$ is the dimensional constant in \eqref{eq:w nondegeneracy}. Furthermore,  there exists $e=e(r)\in \mathbb{S}^{d-1}$ such that
\begin{equation} \label{eq: por ineq 2}
w(x_0+rx,t_0+r^2t)<-r^2t , \quad (x,t)\in \overline{B_1}\times[-1,-\delta/2], \quad |x\cdot e|^2\geq 4d|t|.
\end{equation}  
\end{lem}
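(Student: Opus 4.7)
The plan is to translate so that $(x_0, t_0) = (0, 0)$, set $u := w - p_2 = w + t$ and $u^{(r)} := r^{-2} u_r / H(r, \zeta u)^{1/2}$, and handle the three assertions in turn. The key input is Proposition~\ref{prop:second blow-up}(a): since $(0, 0) \in \Sigma_d^2$, every sequence $r_k \downarrow 0$ admits a subsequence along which $u^{(r_k)} \to q$ in $C^{\infty}_{\operatorname{loc}}(\R^d \times (-\infty, 0))$, where $q(x, t) = -at - \tfrac{1}{2} A x \cdot x$, $A \geq 0$, $\operatorname{tr}(A) = a$, and the coefficient $a$ satisfies the subsequence-independent bound $1/C \leq a \leq C$ for some $C = C(d, \|w\|_\infty)$. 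This universal lower bound on $a$ is the lever for both \eqref{eq: por ineq 1} and \eqref{eq: por ineq 2}. Estimate \eqref{eq:rfnb linft cond} is then immediate from the definition of blow-up, which gives $r^{-2} w(rx, r^2 t) \to -t$ uniformly on $Q_2^-$.

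For \eqref{eq: por ineq 1}, I would argue by contradiction: if it fails, there exist $r_k \to 0$ and $t_k \in [-1, 0)$ with $u(0, r_k^2 t_k) \leq 0$. The delicate case is $t_k \to 0$, since $C^{\infty}_{\operatorname{loc}}$ convergence of $u^{(r_k)}$ is only guaranteed on $t < 0$. To bypass this, rescale at scale $s_k := r_k \sqrt{|t_k|} \to 0$, so $r_k^2 t_k = -s_k^2$ and
\[
u^{(s_k)}(0, -1) \;=\; \frac{s_k^{-2}\, u(0, -s_k^2)}{H(s_k, \zeta u)^{1/2}} \;\leq\; 0.
\]
Any subsequential limit of $u^{(s_k)}$ is a $q$ as above, satisfying $q(0, -1) = a \leq 0$, contradicting $a \geq 1/C > 0$.

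For \eqref{eq: por ineq 2}, suppose towards contradiction that there exist $r_k \to 0$ for which no unit vector $e$ satisfies the claim at scale $r_k$. Passing to a subsequence along which $u^{(r_k)} \to q = -at - \tfrac{1}{2} A x \cdot x$, choose $e_A$ to be a unit eigenvector of $A$ corresponding to its largest eigenvalue $\lambda_{\max}$. Since $A \geq 0$ and $\operatorname{tr}(A) = a$, one has $\lambda_{\max} \geq a/d$. On the compact set $K := \{(x, t) \in \overline{B_1} \times [-1, -\delta/2] : |x \cdot e_A|^2 \geq 4d |t|\}$,
\[
q(x, t) \;\leq\; a|t| - \tfrac{\lambda_{\max}}{2} |x \cdot e_A|^2 \;\leq\; a|t| - 2 d \lambda_{\max} |t| \;\leq\; -a|t| \;\leq\; -\frac{a \delta}{2}.
\]
Uniform convergence of $u^{(r_k)}$ to $q$ on the compact set $K \subset \R^d \times (-\infty, 0)$ then yields $u^{(r_k)} < 0$ on $K$ for $k$ large, contradicting the choice of $r_k$. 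Hence at each sufficiently small scale $r$, the eigenvector $e_A$ of the matrix $A$ in any subsequential second blow-up produced along that scale serves as a valid $e(r)$.

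The principal obstacle is that the second blow-up is merely subsequential and \emph{a priori} smooth only for $t < 0$. The scale change $s_k = r_k \sqrt{|t_k|}$ neutralizes the problematic time $t_k \to 0$ in \eqref{eq: por ineq 1} by moving any bad point to the interior time $t = -1$ after rescaling, while the universal lower bound $a \geq 1/C$ (independent of which subsequence produces $q$) ensures that non-uniqueness of the second blow-up does not affect either contradiction argument.
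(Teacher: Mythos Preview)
Your proof is correct and follows essentially the same approach as the paper. The only cosmetic differences are that the paper first observes that \eqref{eq: por ineq 1} for all small $r$ and all $t\in[-1,0)$ is equivalent to the single inequality $w(x_0,t_0-r^2)>r^2$ for small $r$ (which is your rescaling $s_k=r_k\sqrt{|t_k|}$ unwound), and then runs a single combined contradiction argument for both \eqref{eq: por ineq 1} and \eqref{eq: por ineq 2} rather than two separate ones.
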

\begin{proof}
The fact that \eqref{eq:rfnb linft cond} holds for sufficiently large $n$  follows simply from the definition of blow-up. We note next that  showing \eqref{eq: por ineq 1} for all $t\in [-1,0)$ and all sufficiently small scales $r$ is equivalent to showing that, for $r$ sufficiently small,
\begin{equation} \label{eq: por ineq 11}
    w(x_0,t_0-r^2)>r^2.
\end{equation}
 Assume, by contradiction, that there exists a sequence $r_k \to 0$ such that for each $k$, either \eqref{eq: por ineq 11} fails to hold for $r=r_k$, or \eqref{eq: por ineq 2} fails to hold for $r=r_k$ and every $e\in \mathbb{S}^{d-1}$. After translating $(x_0,t_0)$ to the origin, let $p_2=-t$ and let $u^{(r)}$ be as in Proposition \ref{prop:second blow-up}. Up to a subsequence, we may assume that $u^{(r_k)}\to q$ in $L^2_{\text{loc}}(\R^d\times (-\infty,0])$ for some caloric polynomial satisfying \eqref{eq:sigma_d^2 blow-up form} and \eqref{eq:sigma d^2 q nondeg}.
By Lemma \ref{lem: freeze non-stationary}, we have, for $k$ sufficiently large, 
\begin{equation*}
B_{2r}(x_0)\times [t_0-(2r)^2,t_0-(\delta/10)r^2] \subset \{\eta>0\}=\{w>0\},
\end{equation*}
and $u^{(r_k)}$ is caloric in  $B_2\times [-4,-\delta/10]$. Since $u^{(r_k)}\to q$ in $L^2_{\text{loc}}$, parabolic regularity implies that $u^{(r_k)} \to q$ uniformly in $B_1 \times [-1,-\delta/2]$. Recalling, from \eqref{eq:sigma_d^2 blow-up form}, that we can write $q(x,t) = -at - \frac{1}{2}Ax\cdot x$, 
 let $\lambda_{\max}$  be the largest eigenvalue of $A$ and $e\in \mathbb{S}^{d-1}$ be a corresponding eigenvector.  
Then, for $|x\cdot e|^2\geq 4d|t|$ and $t\in (-1,-\delta/2]$, using the inequality $\lambda_{\max}\geq \frac{1}{d}\operatorname{tr}(A)=\frac{a}{d}$, we get from \eqref{eq:sigma d^2 q nondeg} that
\begin{equation*}
 q(x,t)=-at-\frac12Ax\cdot x\leq a|t| -\frac12\lambda_{\max}(x\cdot e)^2\leq a|t| -\frac{1}{2d}a(x\cdot e)^2\leq-a|t|\leq -\frac12a\delta <0.   
\end{equation*}
Hence, by uniform convergence, if $k$ is sufficiently large, $u^{(r_k)}(0,-1)>0$ and $u^{(r_k)}(x,t)<0$ for $(x,t)\in \overline{B_1}\times [-1,-\delta/2]$. But this says, respectively, for $r=r_k$, that \eqref{eq: por ineq 11} holds and \eqref{eq: por ineq 2} holds for the particular eigenvector $e$, which contradicts the assumption about the sequence $\{r_k\}$.
\end{proof}
We note that the slack condition $\delta>0$ in \eqref{eq: por ineq 2} is inevitable due to the slow convergence near $t=0$. Indeed, if one could take $\delta=0$, then \eqref{eq: por ineq 2}  would trivially imply that $\Sigma_d^2=\emptyset$, but this is false in general (see Proposition \ref{p:sigma2example}).

 Next, we use the rigidity result of Lemma \ref{lem: sign rigidity} and the forward cleaning of Lemma \ref{lem:two-sided cleaning} to show that, after stratifying $\Sigma_d^2$ at countably many scales, one can obtain a one-sided-in-time Reifenberg--type condition. 
\begin{lem}[Countable slab trapping for $\Sigma_d^2$]\label{lem:Sn cleaning}Let $\delta\in (0,\frac12)$, and, for each $n\in \mathbb{N}$, under the notation of Lemma \ref{lem: sign rigidity}, let
 \begin{equation}\label{eq:Sn defi}S_n=\{(x_0,t_0)\in \Sigma_{d}^2\cap \overline{Q_{1-1/n}}: n(x_0,t_0,\delta)\leq n\}.\end{equation} 
Then, for every $(x_1,t_1)\in S_n$ and every $r\in(0,\frac{1}{n})$, there exists $e=e(r, x_1,t_1)\in \mathbb{S}^{d-1}$ such that
\begin{equation*}
S_n\cap (B_r (x_1)\times [t_1,1]) \subset \{(x,t):|(x-x_1)\cdot e|\leq 2(d \delta )^{\frac12}r\}.  
\end{equation*}
\end{lem}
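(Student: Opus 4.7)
The plan is to combine the two-sided cleaning of Lemma \ref{lem:two-sided cleaning} with two applications of Lemma \ref{lem: sign rigidity}, one at the base point $(x_1,t_1)$ and one at an arbitrary second point $(x_2,t_2)\in S_n\cap(B_r(x_1)\times[t_1,1])$. I will first apply Lemma \ref{lem: sign rigidity} at $(x_1,t_1)\in S_n$ at scale $r$ (admissible since $n(x_1,t_1,\delta)\leq n$ gives $r<1/n\leq 1/n(x_1,t_1,\delta)$), producing a unit vector $e=e(r,x_1,t_1)$ and the three estimates \eqref{eq:rfnb linft cond}, \eqref{eq: por ineq 1}, \eqref{eq: por ineq 2}. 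Because \eqref{eq:rfnb linft cond} is exactly the hypothesis \eqref{eq:supquadcl linft cond} of Lemma \ref{lem:two-sided cleaning} with effective constant $c_d\delta/10<c_d/10$, the forward-cleaning conclusion \eqref{eq:quad cleaning sigma_d^2} gives
\[
B_r(x_1)\times [t_1+\delta r^2/10,\, t_1+r^2]\subset\{w=0\},
\]
and the monotonicity $w_t\leq 0$ extends this to $B_r(x_1)\times [t_1+\delta r^2/10, 1]$. Any $(x_2,t_2)\in S_n\cap(B_r(x_1)\times[t_1,1])$ with $t_2>t_1+\delta r^2/10$ would then lie in the spacetime interior of $\{w=0\}$, contradicting $(x_2,t_2)\in\partial\{w>0\}$; hence every such $(x_2,t_2)$ must satisfy $t_2-t_1\in[0,\delta r^2/10]$.

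To extract the slab bound, I would argue by contradiction, assuming $|(x_2-x_1)\cdot e|>2\sqrt{d\delta}\,r$. Setting $x:=(x_2-x_1)/r\in\overline{B_1}$ and $t:=-\delta/2$, the inequality $|x\cdot e|^2>4d\delta\geq 2d\delta=4d|t|$ is immediate, so \eqref{eq: por ineq 2} at $(x_1,t_1)$ yields the upper bound
\[
w(x_2,\,t_1-r^2\delta/2)\;<\;r^2\delta/2.
\]
For the matching lower bound, I will apply Lemma \ref{lem: sign rigidity} a second time, now at $(x_2,t_2)\in S_n$, at the tuned scale
\[
\rho^2\;:=\;t_2-t_1+r^2\delta/2\;\in\;\bigl[\,r^2\delta/2,\;\tfrac{6}{10}r^2\delta\,\bigr];
\]
since $\rho<r<1/n$ this scale is admissible, and estimate \eqref{eq: por ineq 1} with $t=-1$ delivers
\[
w(x_2,\,t_2-\rho^2)\;=\;w(x_2,\,t_1-r^2\delta/2)\;>\;\rho^2\;\geq\;r^2\delta/2,
\]
which contradicts the upper bound and forces $|(x_2-x_1)\cdot e|\leq 2(d\delta)^{1/2}r$, as claimed.

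The main obstacle I anticipate is the compatibility of scales. The a priori time separation $t_2-t_1$ could range over all of $[0,1-t_1]$, and one must exploit the forward cleaning to compress it down to $O(\delta r^2)$; only then is the tuned scale $\rho$ guaranteed to lie below $r$ (hence below $1/n$), so that the second application of Lemma \ref{lem: sign rigidity} at $(x_2,t_2)$ is admissible and the two backward-in-time estimates at $(x_1,t_1)$ and $(x_2,t_2)$ can be evaluated at the \emph{same} space-time point $(x_2,\,t_1-r^2\delta/2)$. The specific height $t=-\delta/2$ in the first invocation of \eqref{eq: por ineq 2} is chosen as the latest admissible time, so that the slack between the hypothesized $|x\cdot e|^2>4d\delta$ and the threshold $4d|t|=2d\delta$ is exactly the quantitative room the contradiction needs.
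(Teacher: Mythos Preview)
Your proof is correct and follows essentially the same strategy as the paper: use forward cleaning from Lemma~\ref{lem:two-sided cleaning} to compress $t_2-t_1$ into $[0,\delta r^2/10]$, then pit the upper bound from \eqref{eq: por ineq 2} at $(x_1,t_1)$ against the lower bound from \eqref{eq: por ineq 1} at $(x_2,t_2)$, evaluated at a common space--time point. The only difference is bookkeeping: the paper keeps the scale $r$ fixed in both applications and shifts the time parameter in \eqref{eq: por ineq 2} to $t=-\delta+(t_2-t_1)/r^2$, comparing at $(x_2,\,t_2-r^2\delta)$, whereas you fix $t=-\delta/2$ in \eqref{eq: por ineq 2} and instead tune the scale $\rho^2=t_2-t_1+r^2\delta/2$ in \eqref{eq: por ineq 1}, comparing at $(x_2,\,t_1-r^2\delta/2)$. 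Both alignments work and yield the same contradiction.
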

\begin{proof} 
For $(x_1,t_1) \in S_n$ and $0<r<\frac{1}{n}$, let $e=e(r,x_1,t_1)\in \mathbb{S}^{d-1}$ be a vector such that \eqref{eq: por ineq 2} holds with $x_0:=x_1$. Assume, by contradiction, that $(x_2,t_2)\in S_n \cap (B_{r}(x_1)\times[t_1,1])$ and $|(x_2-x_1)\cdot e|> 2(d\delta)^{\frac12} r$. Then, by \eqref{eq:rfnb linft cond}, applying Lemma \ref{lem:two-sided cleaning} with $\delta:=\frac{c_d\delta}{10}$, we have
\begin{equation*}
    B_r(x_1)\times [t_1+(\delta/10)r^2,t_1+r^2] \subset \{w=0\}.
\end{equation*}
Thus, since $w_t\leq 0$ and $(x_2,t_2)\in \partial\{w>0\}$, we must have 
\begin{equation*}
   t_2 \in [t_1,t_1+\delta r^2/10].
\end{equation*}
Taking $(x_0,t_0):=(x_1,t_1)$, $x:=(x_2-x_1)/r$, and $t:=-\delta+(t_2-t_1)/r^2\in [-1,-\delta/2]$ in \eqref{eq: por ineq 2}, we obtain
\begin{equation*}
 w(x_2,t_2-r^2\delta)<r^2\delta-(t_2-t_1)\leq r^2\delta. 
\end{equation*}
But taking $(x_0,t_0):=(x_2,t_2)$ and $t=-\delta$ in \eqref{eq: por ineq 1}, we obtain
\begin{equation*}
  w(x_2,t_2-r^2 \delta)> r^2\delta,  
\end{equation*}
a contradiction.
\end{proof}
We may now use the ``future only'' Reifenberg-type condition to obtain a local (non-centered) slab-trapping result for the entire spatial projection of $\Sigma_d^2$.
\begin{lem}[Countable slab trapping for $\pi_x(\Sigma_d^2)$]\label{lem:pi_xSn cleaning}Let $\delta\in (0,\frac12)$ and let $S_n$ be given by \eqref{eq:Sn defi}. Then, for every $x_0\in \pi_x(S_n)$, and every $r\in(0,\frac{1}{2n})$, there exist $x_*\in \overline{B_{r}(x_0)}$ and $e=e(r,x_0)\in \mathbb{S}^{d-1}$ such that
\begin{equation} \label{eq:reifenberg projection sigma_d^2.}
    \pi_x(S_n) \cap B_r(x_0) \subset \{x\in \R^d: |(x-x_*)\cdot e|\leq 4(d\delta)^{\frac12}r\}.
\end{equation}
\end{lem}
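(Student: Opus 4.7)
The strategy is to reduce the uncentered slab-trapping statement to the centered one of Lemma \ref{lem:Sn cleaning} by choosing a base point inside $\overline{B_r(x_0)}$ whose time coordinate is as small as possible, so that every other point of $\pi_x(S_n)\cap B_r(x_0)$ sits in the forward-time region where Lemma \ref{lem:Sn cleaning} gives trapping. Time monotonicity of $w$ makes $\pi_x$ injective on $\Sigma_d^2$: if $(x_0,t_1),(x_0,t_2)\in\Sigma_d^2$ with $t_1<t_2$, the blow-up profile $-(t-t_1)$ at $(x_0,t_1)$ forces $w(x_0,t_1)=0$, and time monotonicity then gives $w(x_0,\cdot)\equiv 0$ on $[t_1,\infty)$, contradicting the blow-up at $(x_0,t_2)$. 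Hence there is a well-defined freezing time $t:\pi_x(\Sigma_d^2)\to\mathbb{R}$, and we may set $t_*:=\inf\{t(y):y\in\pi_x(S_n)\cap\overline{B_r(x_0)}\}\le t(x_0)$.

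If the infimum is attained, pick $(x_*,t_*)\in S_n$ with $x_*\in\overline{B_r(x_0)}$, and apply Lemma \ref{lem:Sn cleaning} at $(x_*,t_*)$ at scale $2r<1/n$. For any $y\in\pi_x(S_n)\cap B_r(x_0)$, the triangle inequality gives $|y-x_*|<2r$, hence $y\in B_{2r}(x_*)$, and $t(y)\ge t_*$; thus $(y,t(y))\in S_n\cap(B_{2r}(x_*)\times[t_*,1])$, and Lemma \ref{lem:Sn cleaning} (with scale parameter $2r$) yields $|(y-x_*)\cdot e|\le 2(d\delta)^{1/2}(2r)=4(d\delta)^{1/2}r$. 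If the infimum is not attained, pick a minimizing sequence $(x_k,t_k)\in S_n$ with $x_k\in\overline{B_r(x_0)}$ and $t_k\to t_*$, and let $e_k\in\mathbb{S}^{d-1}$ be the direction produced by Lemma \ref{lem:Sn cleaning} at $(x_k,t_k)$ at scale $2r$. By compactness of $\overline{B_r(x_0)}\times\mathbb{S}^{d-1}$, passing to a common subsequence we obtain $x_k\to x_*\in\overline{B_r(x_0)}$ and $e_k\to e\in\mathbb{S}^{d-1}$. For any $y\in\pi_x(S_n)\cap B_r(x_0)$, the fact that $t_*$ is not attained forces $t(y)>t_*$, so $t(y)\ge t_k$ for all sufficiently large $k$; then $y\in B_{2r}(x_k)$ and $(y,t(y))\in S_n\cap(B_{2r}(x_k)\times[t_k,1])$, giving $|(y-x_k)\cdot e_k|\le 4(d\delta)^{1/2}r$, and passing to the limit as $k\to\infty$ yields $|(y-x_*)\cdot e|\le 4(d\delta)^{1/2}r$.

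The main obstacle is the non-attained case, since the base point and slab direction must be produced by a limiting procedure rather than read off a single element of $S_n$, and nothing in the hypotheses guarantees that $S_n$ is closed or that the freezing time is continuous at $t_*$. The key observation that unlocks the limit argument is that in this case every $y\in\pi_x(S_n)\cap\overline{B_r(x_0)}$ automatically satisfies the strict inequality $t(y)>t_*$, so the forward-time hypothesis $t(y)\ge t_k$ of Lemma \ref{lem:Sn cleaning} is eventually met for each fixed $y$; together with Bolzano--Weierstrass compactness on $\overline{B_r(x_0)}\times\mathbb{S}^{d-1}$, this suffices to pass to the limit and conclude.
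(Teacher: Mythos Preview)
Your proof is correct and follows essentially the same approach as the paper's: define the infimum of time coordinates over $S_n$ points lying above $\overline{B_r(x_0)}$, apply Lemma~\ref{lem:Sn cleaning} at scale $2r$ at a (near-)minimizer, and use compactness of $\overline{B_r(x_0)}\times\mathbb{S}^{d-1}$ to pass to the limit in the non-attained case. The only cosmetic difference is that you explicitly invoke injectivity of $\pi_x$ on $\Sigma_d^2$ to phrase the infimum via a freezing-time function, whereas the paper works directly with the space-time set $S_n\cap(\overline{B_r}(x_0)\times(-1,1))$; and in the non-attained case you pass to the limit pointwise for each fixed $y$, while the paper passes to the limit in the set inclusion---but these are equivalent.
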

\begin{proof} Let $x_0\in S_n$ and $r\in (0,\frac{1}{2n})$ be fixed throughout the proof, and consider the quantity
\begin{equation} \label{eq:mini problem sigmad2 pf}
    t_*=\operatorname{inf}_{(x,t)\in S_n\cap( \overline{B_r}(x_0)\times(-1,1))}t.
\end{equation}
Assume first that \eqref{eq:mini problem sigmad2 pf} is attained at  $(x_*,t_*)\in S_n$. Then by Lemma \ref{lem:Sn cleaning} there exists $e\in \mathbb{S}^{d-1}$ such that
\begin{equation*}
    S_n \cap (B_{2r}(x_*)\times [t_*,1))\subset \{(x,t):|(x-x_*)\cdot e|\leq 4(d\delta)^{\frac12}r\}.
\end{equation*}
By the minimality of $t_*$, we have
\begin{equation} \label{eq:cleanbelowpf12}
    S_n \cap (B_{r}(x_0)\times (-1,t_*))=\emptyset.
\end{equation}
On the other hand, since $B_r(x_0)\subset B_{2r}(x_*)$, we deduce
\begin{equation*}
    S_n\cap (B_{r}(x_0)\times (-1,1))=S_n\cap (B_{r}(x_0)\times [t_*,1))\subset S_n\cap (B_{2r}(x_*)\times [t_*,1)) \subset \{x:|(x-x_*)\cdot e|\leq4(d\delta)^{\frac12}r\},
\end{equation*}
which is precisely \eqref{eq:reifenberg projection sigma_d^2.}.

Now, assume instead that the infimum \eqref{eq:mini problem sigmad2 pf} is not attained, and let $\{(x_k,t_k)\}\in S_n\cap (\overline{B_r}\times(-1,1))$ be a minimizing sequence. Up to extracting a subsequence, and recalling \eqref{eq:Sn defi}, we may assume that
\[(x_k,t_k)\xrightarrow{k\to \infty}(x_*,t_*)\in \overline{B_r}(x_0)\times \left[-(1-1/n)^2,(1-1/n)^2\right].\] 
By Lemma \ref{lem:Sn cleaning}, for each $k\in \mathbb{N}$ there exists $e_k\in \mathbb{S}^{d-1}$ such that
\begin{equation*}
    S_n \cap (B_{2r}(x_k)\times [t_k,1))\subset \{(x,t):|(x-x_k)\cdot e_k|\leq 4(d\delta)^{\frac12}r\}.
\end{equation*}
Up to extracting a further subsequence, we may assume that $e_k\to e$ for some $e\in \mathbb{S}^{d-1}$. Letting $k\to \infty$, it follows that
\begin{equation*}
    S_n \cap (B_{2r}(x_*)\times (t_*,1))\subset \{(x,t):|(x-x_*)\cdot e|\leq 4(d\delta)^{\frac12}r\}.
\end{equation*}
Moreover, since the infimum \eqref{eq:mini problem sigmad2 pf} is not attained, we must have
\begin{equation*}
    S_n \cap (B_{r}(x_0)\times \{t_*\})=\emptyset,
\end{equation*}
and by the infimality of $t_*$, we again have \eqref{eq:cleanbelowpf12}. Therefore, we conclude similarly to the first case, that
\begin{equation*}
    S_n\cap (B_{r}(x_0)\times (-1,1))=S_n\cap (B_{r}(x_0)\times (t_*,1))\subset S_n\cap (B_{2r}(x_*)\times (t_*,1)) \subset \{x:|(x-x_*)\cdot e|\leq4(d\delta)^{\frac12}r\}.
\end{equation*}
\end{proof}

We now recall some basic facts about porosity and Hausdorff dimension.
\begin{defn}Given $\vep>0$  we say that $S\subset \R^d$ is {\it $(1-\vep)$--porous} if, for every $x\in S$, there exists $r_0>0$ such that, for every $r<r_0$ there exists $y\in B_r(x)$ that satisfies 
\begin{equation*}
    B_{(1-\vep)r/2}(y) \subset B_r(x) \setminus S.
\end{equation*}    
\end{defn}
The following is a classical estimate for porous sets \cite[Thm. 3.8.1, Rem. 3.8.2]{salli}.
\begin{lem}[Porosity bound] \label{lem:porosity} Let $S \subset \R^d$ be a $(1-\vep)$--porous set for some $\vep>0$. Then there is $A=A(d)>0$ such that
\begin{equation*}
    \dim_{\mathcal{H}}(S)\leq d-1+\frac{A}{\ln(\vep^{-1})}
\end{equation*}
\end{lem}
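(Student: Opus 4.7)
The plan is to bound the upper box-counting dimension of $S$ (which dominates the Hausdorff dimension) by an iterated covering argument built on porosity. Let $N(S\cap B_R,r)$ denote the minimal number of balls of radius $r$ needed to cover $S \cap B_R$. The goal is to show a scale-invariant decay $N(S\cap B_R,r) \leq C\,(R/r)^{d-1+A/\ln(\vep^{-1})}$, from which $\dim_{\mathcal H}(S) \le d-1 + A/\ln(\vep^{-1})$ follows immediately by covering $S$ by countably many balls centered at its points.

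The core of the argument is a one-step estimate relating the covering number at a scale $R$ to the covering number at a smaller scale $\tau R$. Starting from a ball $B_R(x_0)$ with $x_0\in S$, porosity produces a hole $B_{(1-\vep)R/2}(y_1)\subset B_R(x_0)\setminus S$. For any $x_1\in S\cap B_R(x_0)$ outside this hole one applies porosity again at an appropriate radius to obtain another hole $B_{(1-\vep)r_1/2}(y_2)$ disjoint from $S$, and one iterates this scheme inside $B_R(x_0)$ until the union of holes exhausts the ball up to a residual region of small volume. After $m$ such iterations at a geometric rate determined by $\vep$, the residual region where $S$ can sit has volume at most $(1-c_d)^m|B_R|$, and is concentrated near $(d-1)$-dimensional ``seams'' between holes. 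Converting this volumetric estimate into a covering count at scale $\tau R$ yields, for the right choice of $\tau=\tau(\vep)$, an estimate of the form $N(S\cap B_R,\tau R)\le \tau^{-(d-1)}\cdot\tau^{-\alpha(\vep)}$ with $\alpha(\vep)\to 0$ as $\vep\to 0$.

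Iterating this one-step inequality across dyadic scales and taking logarithms gives
\[
\dim_{\mathcal H}(S) \;\le\; d-1 + \alpha(\vep),
\]
and a careful bookkeeping of the rates reveals $\alpha(\vep)\sim 1/\ln(\vep^{-1})$, which is exactly the form required. The scale ratio $\tau$ should be chosen comparable to $\vep$ or $(1-\vep)$ so that $\ln(\tau^{-1})$ enters the denominator and produces the $\ln(\vep^{-1})$ factor.

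The main obstacle in this plan is Step~2: extracting the sharp $(d-1)$-dimensional behavior as $\vep\to 0$ from what is, a priori, only a volumetric ``large hole'' statement. A naive covering of the complement of a single hole only loses a dimensional constant rather than a full dimension, yielding a bound of the form $d-c(d)(1-\vep)^d$ that degenerates to $d-c(d)$ as $\vep \downarrow 0$. To recover the sharp $d-1$ one must iterate porosity many times within the same scale and use the geometry of the residual ``flat'' set rather than just its measure. This is exactly the content of the classical theorem of Salli cited by the authors, whose proof we follow in outline.
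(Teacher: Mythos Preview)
The paper does not actually prove this lemma: it is stated as a classical fact and simply attributed to Salli \cite[Thm.~3.8.1, Rem.~3.8.2]{salli}. So there is no paper proof to compare against.

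Your proposal is a reasonable high-level sketch of the classical argument, and you are honest about where the real work lies: as you correctly point out, a single application of porosity only gives a crude volumetric bound of the form $d - c(d)(1-\vep)^d$, and extracting the sharp $d-1$ asymptotic as $\vep\to 0$ requires the iterated-hole construction and the geometric analysis of the residual set that constitute Salli's actual proof. Since you explicitly defer to Salli for this step, your proposal is not a self-contained proof either --- it is essentially a summary of the strategy of the cited reference, which is fine, but you should be aware that what you have written is an outline rather than an argument that can stand on its own. In the context of this paper, simply citing the result (as the authors do) is the appropriate choice.
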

With this result in hand, we may finally prove our main estimate for $\Sigma_d^2$.

\begin{prop} \label{prop:-t dimension bound quadratic} Let $w:Q_1 \to \R$ be a bounded solution to \eqref{eq:obstacle intro} with singular set $\Sigma$. Then
\begin{equation*}\operatorname{dim}_{\operatorname{par}}(\Sigma_{d}^2)\leq d-1.\end{equation*}    
\end{prop}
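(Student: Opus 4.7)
The plan is to combine the countable slab trapping of Lemma \ref{lem:pi_xSn cleaning} with the classical porosity bound of Lemma \ref{lem:porosity} to control $\dim_{\mathcal H}(\pi_x(\Sigma_d^2))$, and then upgrade to the parabolic Hausdorff dimension via the GMT Lemma \ref{lem: FRS cleaning}, exploiting the fact that $\Sigma_d^2\subset \Sigma_{\operatorname{dyn}}$ inherits a quadratic backward cleaning from Lemma \ref{lem: freeze non-stationary}.

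First, I would fix $\delta>0$ small (to be sent to zero at the end) and use \eqref{eq:Sn defi} to write $\Sigma_d^2=\bigcup_n S_n$ with $S_n=S_n(\delta)$. Setting $\tau:=4(d\delta)^{1/2}$ and picking any $x_0\in \pi_x(S_n)$ with $r<\tfrac{1}{2n}$, Lemma \ref{lem:pi_xSn cleaning} supplies $e\in \mathbb{S}^{d-1}$ and $x_*\in \overline{B_r(x_0)}$ such that $\pi_x(S_n)\cap B_r(x_0)$ sits inside the slab $\{|(x-x_*)\cdot e|\leq \tau r\}$. A short one-dimensional optimization, choosing the side of the slab opposite to $x_*$, shows that one can fit a ball $B_{(1-\tau)r/2}(y)\subset B_r(x_0)\setminus \pi_x(S_n)$ with $y\in B_r(x_0)$, so that $\pi_x(S_n)$ is $(1-\tau)$--porous. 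Lemma \ref{lem:porosity} then yields
\[
\dim_{\mathcal H}(\pi_x(S_n))\leq d-1+\frac{A}{\ln(\tau^{-1})}.
\]
Taking the countable union over $n$ and then sending $\delta\downarrow 0$ erases the logarithmic defect and produces $\dim_{\mathcal H}(\pi_x(\Sigma_d^2))\leq d-1$.

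The parabolic upgrade is then almost automatic: since $\Sigma_d^2\subset \Sigma_{\operatorname{dyn}}$, the cleaning \eqref{e:quadraticclearing} established in the proof of Proposition \ref{prop: Hausdorff time dependent}, via Lemma \ref{lem: freeze non-stationary}, gives, at every $(x_0,t_0)\in \Sigma_d^2$, a radius $\rho=\rho(x_0,t_0)>0$ with
\[
\{(x,t)\in B_\rho(x_0)\times(-1,1):\; t-t_0<-\tfrac12|x-x_0|^2\}\cap \Sigma_d^2=\emptyset,
\]
which is exactly the hypothesis of Lemma \ref{lem: FRS cleaning} with $\beta=d-1$. I expect the main technical hurdle to be the porosity step: Lemma \ref{lem:pi_xSn cleaning} only traps $\pi_x(S_n)\cap B_r(x_0)$ in a slab whose center $x_*$ may lie anywhere in $\overline{B_r(x_0)}$, so one has to check carefully (in particular when $x_*$ is near $\partial B_r(x_0)$) that a ball of radius close to $r/2$ can still be placed in the complement of the slab while staying inside $B_r(x_0)$. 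Once this elementary but unavoidable geometric fact is settled, the stratification over $n$ combined with the freedom to take $\delta$ arbitrarily small kills the porosity defect, and Lemma \ref{lem: FRS cleaning} delivers the conclusion.
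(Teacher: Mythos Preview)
Your proposal is correct and follows essentially the same approach as the paper: slab trapping from Lemma \ref{lem:pi_xSn cleaning} gives porosity of $\pi_x(S_n)$, Lemma \ref{lem:porosity} bounds the Hausdorff dimension with a defect that vanishes as $\delta\downarrow 0$, and Lemma \ref{lem: FRS cleaning} combined with the backward cleaning of $\Sigma_{\operatorname{dyn}}$ upgrades to the parabolic dimension. The geometric concern you flag about placing the porosity ball when $x_*\in\partial B_r(x_0)$ is real but resolves exactly as you indicate (the larger of the two caps cut out by the slab always has height $\geq (1-\tau)r$, hence contains a ball of radius $(1-\tau)r/2$); the paper simply absorbs this into the constant by taking $\vep=8(d\delta)^{1/2}$ rather than $4(d\delta)^{1/2}$.
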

\begin{proof}
By Lemma \ref{lem:pi_xSn cleaning}, letting $\delta\in (0,\frac12)$ be small enough that $\vep=8(d\delta)^{\frac12}<1$, it follows that the set $\pi_x(S_n)$ is $(1-\vep)-$porous. By Lemma \ref{lem:porosity}, we have
\begin{equation*}
\dim_{\mathcal{H}}(\pi_x(S_n))\leq d-1+\frac{A}{\ln(\vep^{-1})}.    
\end{equation*}
By Lemma \ref{lem: sign rigidity}, we have 
\begin{equation*}
 \Sigma_{d}^2=\cup_{n=1}^{\infty}S_n.   
\end{equation*}
Therefore, we have
\begin{equation*}
    \dim_{\mathcal{H}}(\pi_x(\Sigma_{d}^2))\leq d-1+\frac{A}{\ln(\vep^{-1})}.
\end{equation*}
Letting $\delta \to 0$, we get
\begin{equation*}
    \dim_{\mathcal{H}}(\pi_x(\Sigma_{d}^2))\leq d-1,
\end{equation*}
and we conclude as in Proposition \ref{prop: Hausdorff time dependent} by an application of Lemma \ref{lem: FRS cleaning}.
\end{proof}

\section{Superquadratic second blow-up in the top stratum: the set $\Sigma_d^{\geq 3}$}\label{sec:top stratum superquadratic}
 In this section, we turn our attention to the set $\Sigma_{d}^{\geq3}$  of points with at least a cubic rate of convergence to the blow-up profile $p_2=-t$.  Since $p_2(x,t)>0$ for all $t<0$ and $(r^{-2}w_r- p_2)\to 0$, the set $\{w_r>0\}$ converges, in the limit, to the full space $\R^d \times (-\infty,0)$. The key observation is that, if the convergence is sufficiently fast, we expect $(r^{-2}w_r- p_2)$ to behave like a solution to the heat equation, since \begin{equation*}
    \cH (r^{-2}w_r-p_2)=\chi_{\{w_r=0\}}.
\end{equation*}
 As it turns out, the cubic rate of convergence is sufficient to disrupt the parabolic scaling in the blow-up limit and obtain a full Taylor expansion by a Campanato-type iteration. In particular, we will be able to uniquely identify the second blow-up profile. Our Lemmas \ref{lem: compactness caloric} and \ref{lem poly d} below should be compared to \cite[Lem. 13.6, Prop. 13.10]{figalli}, with caloric polynomials here playing a similar role to the Ansatz manifold of approximate profiles that was used to study a certain subset of $\Sigma_{d-1}$ in the melting case.
  
  We will then study the Taylor expansion systematically, and use it to obtain an upgraded polynomial cleaning estimate, which will lead to the parabolic dimension bound through a careful application of the sharp \L ojasiewicz inequality for harmonic polynomials.

We begin by recording the fact that, at points of $\Sigma_d^{\geq3}$, the convergence rate to the blow-up profile is at least cubic.
\begin{lem}\label{lem:cubic convergence} Let $w:Q_2 \to \R$ be a bounded solution to \eqref{eq:obstacle intro}. Then there is a constant $C=C(d,\|w\|_{\infty})>0$ such that, for every $(x_0,t_0)\in Q_1 \cap \Sigma_{d}^{\geq 3}$ and every $r\in (0,1)$,
\begin{equation*}
 \|w-(t_0-t)\|_{L^{\infty}(Q_r^-(x_0,t_0))}\leq Cr^3.   
\end{equation*}    
\end{lem}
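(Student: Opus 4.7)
The plan is to chain Lemma~\ref{lem:comparability sigma d}, which upgrades the qualitative ``cubic frequency'' information encoded in the definition of $\Sigma_d^{\geq 3}$ into a quantitative cubic $L^2$ decay, with Lemma~\ref{lem energy}, which converts $L^2$ bounds on rescalings into $L^\infty$ bounds via the subcaloric mean-value property of $u^\pm$. After translating $(x_0,t_0)$ to the origin and setting $u := w - p_2$ with $p_2(x,t) := -t$, the goal becomes $\|u\|_{L^\infty(Q_r^-)} \le Cr^3$ for all $r \in (0,1)$.

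\textbf{Step 1.} First I would unpack the hypothesis: by \eqref{e:defsigmak}, the assumption $(0,0) \in \Sigma_d^{\geq 3}$ means exactly that $\phi^3(0+, \zeta u) = 3$, which is the hypothesis of Lemma~\ref{lem:comparability sigma d} with $\gamma = 3$. Applying that lemma to the translate $w(x_0 + \cdot,\, t_0 + \cdot)$ produces
\[ \|u\|_{L^2(Q_\rho^-)} \le K\, |Q_\rho|^{1/2}\, \rho^3, \qquad \rho \in (0, 1/4), \]
with $K$ depending only on $d$ and $\|w\|_{L^\infty(Q_2)}$, uniformly in the base point $(x_0, t_0)$.

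\textbf{Step 2.} For $r \in (0, 1/8)$ I would parabolically rescale and set $u_r(x,t) := r^{-2} u(rx, r^2 t)$, which is defined on $Q_{1/r}^-$. Since $p_2$ is space-independent, the case (i) hypothesis of Lemma~\ref{lem energy} is satisfied; applying the lemma (in the form giving the $L^\infty$--$L^2$ comparison on nested parabolic cylinders contained in the domain) gives $\|u_r\|_{L^\infty(Q_1^-)} \le C \|u_r\|_{L^2(Q_2^-)}$. A direct change of variables yields $\|u_r\|_{L^2(Q_2^-)} = r^{-(d+6)/2} \|u\|_{L^2(Q_{2r}^-)}$, and inserting Step~1 at scale $\rho = 2r$ makes the exponents balance to produce $\|u_r\|_{L^2(Q_2^-)} \le C_1 r$. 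Un-scaling gives $\|u\|_{L^\infty(Q_r^-)} = r^2 \|u_r\|_{L^\infty(Q_1^-)} \le C_2 r^3$. For $r \in [1/8, 1)$ the estimate is immediate from the uniform bound $|u| \le \|w\|_{L^\infty(Q_2)} + 1$ on $Q_1^-$, at the cost of absorbing $8^3$ into the final constant.

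\textbf{Main difficulty.} This lemma is essentially routine given the tools already developed, so I do not expect a serious obstacle. The only genuine bookkeeping point is confirming that the constants from Lemmas~\ref{lem:comparability sigma d} and~\ref{lem energy} are uniform in the base point $(x_0, t_0)$, which is automatic since they depend only on $d$ and $\|w\|_{L^\infty(Q_2)}$. The deeper conceptual observation is that the threshold $\gamma = 3$ in the definition of $\Sigma_d^{\geq 3}$ is calibrated exactly so that the $L^2$ decay rate $\rho^{(d+8)/2}$ produced by Lemma~\ref{lem:comparability sigma d} beats the $L^\infty$--$L^2$ comparison's parabolic volume factor $|Q_r|^{1/2} \sim r^{(d+2)/2}$ by a factor of precisely $r^3$, which is exactly the rate we are trying to prove.
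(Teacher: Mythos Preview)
Your proposal is correct and takes essentially the same approach as the paper: the paper's proof also chains Lemma~\ref{lem:comparability sigma d} (with $\gamma=3$) to get cubic $L^2$ decay, then invokes Lemma~\ref{lem energy} to pass from $L^2$ to $L^\infty$. Your write-up is simply more explicit about the rescaling computation and the handling of the large-$r$ range.
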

\begin{proof}
 By Lemma \ref{lem energy}, we have, for small $r$,
\begin{equation*}
    \|w-(t_0-t)\|_{L^{\infty}(Q_r^-(x_0,t_0))}\leq C|Q_{2r}^-|^{-1/2}\|w-(t_0-t)\|_{L^{2}(Q_{2r}^-(x_0,t_0))}.
\end{equation*}
On the other hand, applying Lemma \ref{lem:comparability sigma d} with $\gamma=3$, we obtain
\begin{equation*}
\|w-(t_0-t)\|_{L^{2}(Q_{2r}^-(x_0,t_0))}\leq C|Q_r^-|^{\frac12}r^3, 
\end{equation*}
which yields the claim.
\end{proof}

Given $\beta>0$, $M_0>0$, we define
\begin{equation*}
    \Omega^{\beta}_{M_0}:=\{(x,t)\in \R^d \times (-\infty,0]: |x|<M_0|t|^{\frac{1}{2+\beta}}\}.
\end{equation*}
We now obtain an elementary compactness estimate for caloric functions on the cuspidal domains $\Omega^{\beta}_{M_0}$, which will later be the basic iterative tool to approximate $w+t$ by caloric polynomials.
\begin{lem}\label{lem: compactness caloric} For given $k\in \mathbb{N}$, $ \alpha, \beta\in (0,1)$, and $\vep_0 >0$, there exists a positive constant $M_0=M_0(d,\alpha,\beta, k,\vep_0)$ such that the following holds. Suppose that $u: Q_{2^{M_0}}^-\to \R$ satisfies
\begin{equation} \label{wq-dlo0spkc}\left(\int_{\Omega_{M_0}^{\beta}\cap Q_{1}^-}|u_{2^{m}}|^2+|\grad u_{2^{m}}|^2+|\partial_t(u_{2^m})|^2dxdt\right)^{\frac12}\leq (2^m)^{k+\alpha}, \quad 0\leq m \leq M_0, 
\end{equation}
where $u_{2^m}(x,t):=u(2^{m}x,4^{m}t)$. Assume also that
\begin{equation} \label{poasxkas,p13}
    \cH u=0 \; \text{ in } \;\Omega^{\beta}_{M_0}\cap Q_{2^{M_0}}^-,
\end{equation}
and for any caloric polynomial $p$ of degree $\leq k$,\begin{equation}\label{orth cond}    \int_{-2}^{-1}\int_{B_{M_0}}(up G)(x,t) dxdt=0. 
\end{equation}
 Then we have
\begin{equation*}
    \left( \int_{\Omega^{\beta}_{M_0}\cap Q_1^-}u^2\right)^{\frac12}\leq \vep_0.
\end{equation*}
\end{lem}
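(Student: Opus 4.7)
I would argue by contradiction. Suppose the conclusion fails; then there exist fixed $k, \alpha, \beta, \vep_0$ and a sequence $M_j \uparrow \infty$, together with functions $u_j : Q_{2^{M_j}}^- \to \R$ satisfying the three hypotheses with $M_0 := M_j$, yet with $\|u_j\|_{L^2(\Omega^\beta_{M_j} \cap Q_1^-)} > \vep_0$ for every $j$. The strategy is to extract a subsequential limit $u_\infty$ on $\R^d \times (-\infty, 0)$, show via the scale-$m$ bounds and parabolic Liouville that $u_\infty$ must be a caloric polynomial of parabolic degree $\leq k$, and then use the orthogonality hypothesis to force $u_\infty \equiv 0$, contradicting the lower bound.

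The first technical step is to convert the scale-$m$ $H^1$ hypothesis into a uniform-in-$j$ pointwise growth bound. The change of variables $(y,s) = (2^m x, 4^m t)$ turns the $H^1$ bound on $u_{2^m}$ over $\Omega^\beta_{M_j}\cap Q_1^-$ into an $L^2$ bound of order $(2^m)^{k+\alpha + (d+2)/2}$ for $u_j$ on the cusp region $\{|y| < M_j\, 2^{m\beta/(2+\beta)}\,|s|^{1/(2+\beta)},\ |s| < 4^m\}$. Since $u_j$ is caloric on this cusp, the parabolic mean-value property applied on a slightly shrunken sub-cusp converts the $L^2$ bound into the pointwise estimate $|u_j(y,s)| \leq C(1+|y|+|s|^{1/2})^{k+\alpha}$, uniformly in $j$ on $\Omega^\beta_{M_j} \cap Q_{2^{M_j}}^-$. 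The exponent $k+\alpha$ in the hypothesis is calibrated precisely so that the dimensional and scaling factors cancel. As $M_j \to \infty$, these cusps exhaust $\R^d \times (-\infty, 0)$.

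Second, using interior caloric estimates on sub-cusps together with the pointwise bound, I would apply Arzel\`a--Ascoli with a diagonal extraction to obtain a subsequence of $u_j$ converging in $C^\infty_{\mathrm{loc}}(\R^d \times (-\infty, 0))$ (and in $L^2_{\mathrm{loc}}$) to a function $u_\infty$, which is caloric on $\R^d \times (-\infty, 0)$ and inherits the growth rate $k+\alpha$. By the parabolic Liouville theorem (a caloric function on $\R^d \times (-\infty, 0)$ with parabolic growth strictly less than $k+1$ is a caloric polynomial of parabolic degree $\leq k$), $u_\infty$ is such a polynomial. The orthogonality hypothesis passes to the limit because the Gaussian weight $G$ decays faster than any polynomial and the integrands $u_j\, p\, G$ are dominated by an integrable function uniformly in $j$, so dominated convergence yields $\int_{-2}^{-1} \int_{\R^d} u_\infty\, p\, G\, dx\, dt = 0$ for every caloric polynomial $p$ of degree $\leq k$. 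Choosing $p = u_\infty$ (itself such a polynomial) gives $u_\infty \equiv 0$ on the time slab $[-2,-1]$, and hence $u_\infty \equiv 0$ everywhere by real-analyticity of caloric functions.

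The final contradiction comes from the lower bound: since $\Omega^\beta_{M_j} \cap Q_1^-$ exhausts $Q_1^-$ (up to the null slice $t=0$) and the convergence is strong in $L^2_{\mathrm{loc}}$, Fatou's lemma yields $\|u_\infty\|_{L^2(Q_1^-)} \geq \vep_0 > 0$, absurd. The main obstacle is the bookkeeping in Step 1: one must convert $H^1$ control on cusp-shaped (not parabolic-cylindrical) domains of $m$-dependent aperture into a clean pointwise polynomial growth bound of the precise order $k+\alpha$, choosing sub-cusps on which parabolic mean-value or Moser estimates apply while verifying that the dimensional and scaling factors combine correctly. Once the pointwise bound is in hand, the Liouville-plus-orthogonality endgame is a standard compactness finale.
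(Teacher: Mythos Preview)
Your proposal is correct and follows essentially the same contradiction--compactness--Liouville--orthogonality scheme as the paper. The only cosmetic difference is that the paper invokes $L^2_{\mathrm{loc}}$ compactness directly from the $H^1$ hypothesis (the bounds on $u$, $\nabla u$, $\partial_t u$ in $L^2$ suffice via an Aubin--Lions type argument), whereas you take the extra step of upgrading to a pointwise polynomial bound via the mean-value property before extracting a limit; both routes land on the same caloric limit, and your final ``Fatou'' step is really the splitting $\{t<-\delta\}\cup\{t\ge -\delta\}$ combined with your uniform $L^\infty$ bound on $Q_1^-$, which is exactly what is needed.
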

\begin{proof} Proceeding by contradiction, assume that there is a sequence $\{u_n\}$ such that
\[\left( \int_{\Omega^{\beta}_{n}\cap Q_1^-}u_n^2\right)^{\frac12}> \vep_0,\]
    \begin{equation*}\left(\int_{\Omega_{n}^{\beta}\cap Q_{1}^-}|(u_{n})_{2^{m}}|^2+|\grad (u_{n})_{2^{m}} |^2+|\partial_t(u_{n})_{2^{m}}|^2dxdt\right)^{\frac12}\leq (2^m)^{k+\alpha}, \quad 0\leq m \leq n,    
\end{equation*}
\[\cH u_n=0\;\; \text{ in }\; \Omega^{\beta}_{n},  \quad\int_{-2}^{-1}\int_{B_{n}}u_npG(x,t)dxdt=0\]
for any caloric polynomial $p$ of degree $\leq k$. Then, noting that $\Omega^{\beta}_{n} \uparrow \R^d \times (-\infty,0)$ as $n\to \infty$, along a subsequence $u_n\to u$ in $L^{2}_{\text{loc}}(\R^{d}\times (-\infty,0])$, where $u$ is caloric with
\begin{equation} \label{lvasd,psax}\left(\int_{Q_{1}^-}|u_{2^m}|^2+|\grad (u_{2^m}) |^2+|\partial_t(u_{2^m})|^2dxdt\right)^{\frac12}\leq (2^m)^{k+\alpha}, \quad m\geq0, \end{equation}
 and 
 \begin{equation} \label{ctraoedco5321as}\left( \int_{ Q_1^-}u^2\right)^{\frac12}\geq \vep_0, \quad \int_{-2}^{-1}\int_{\R^d}upG(x,t)dxdt=0.      
 \end{equation}
 for every caloric polynomial $p$ of degree $\leq k$. By Liouville's theorem, \eqref{lvasd,psax} implies that $u$ is a caloric polynomial of degree $\leq k$. But then \eqref{ctraoedco5321as} yields a contradiction by taking  $p=u$.
\end{proof}
The following energy estimate is a slight variation of \cite[Lem. 13.7]{figalli}, and has the same proof, which consists of explicitly constructing a suitable cutoff function that is supported on $\Omega^{\beta}_R$. It is important to observe that this is only possible because $\beta>0$, so that the set $\Omega^{\beta}_R$ approximates the full space $\R^d \times (-\infty,0)$ at small parabolic scales. 
\begin{lem} \label{lem: energy cone} Let $\beta \in (0,1)$ and $\theta\in (1/2,1)$. There exist constants $R_0>0$, $C>0$, depending on $d$, $\beta$, and $\theta$, such that, for any caloric function $u$ on $\Omega^{\beta}_R\cap Q^-_2$, we have
\[\int_{\Omega^{\beta}_{\theta R}\cap Q_1^-}|\nabla u|^2+|u_t|^2 \leq C \int_{\Omega^{\beta}_{R}\cap Q_2^-}u^2, \quad R\geq R_0.\]
\end{lem}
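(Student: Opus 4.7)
The plan is to follow the Caccioppoli strategy of \cite[Lem. 13.7]{figalli}: construct a space-time cutoff function adapted to the cuspidal geometry of $\Omega_R^\beta$ and multiply the caloric equation $u_t=\Delta u$ by $u\eta^2$. The crucial fact being exploited is that the cusp exponent $\tfrac{1}{2+\beta}<\tfrac12$ (because $\beta>0$) makes $\Omega_R^\beta$ asymptotically \emph{wider} than parabolic scaling as $|t|\downarrow 0$, so for $R\geq R_0(\beta,\theta)$ sufficiently large one can fit parabolic cylinders at every dyadic scale inside the cusp, and this compensates for the thinness of the cusp near $t=0$.

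Concretely, I would fix $\theta':=(\theta+1)/2\in(\theta,1)$ and pick smooth cutoffs $\phi_1:[0,\infty)\to[0,1]$ with $\phi_1\equiv 1$ on $[0,\theta]$ and $\phi_1\equiv 0$ on $[\theta',\infty)$, together with $\phi_2\in C^\infty_c((-2,\infty))$ satisfying $\phi_2\equiv 1$ on $[-1,0]$, and set
\[
\eta(x,t):=\phi_1\!\left(\tfrac{|x|}{R|t|^{1/(2+\beta)}}\right)\phi_2(t).
\]
Then $\eta\equiv 1$ on $\Omega_{\theta R}^\beta\cap Q_1^-$ and $\operatorname{supp}(\eta)\subset \Omega_R^\beta\cap Q_2^-$, provided $R\geq R_0$ is large enough that the support remains in $B_2$. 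Direct differentiation gives, on the support,
\[
|\nabla_x\eta|^2\leq \tfrac{C}{R^2|t|^{2/(2+\beta)}},\qquad |\partial_t\eta|\leq \tfrac{C}{|t|}+C.
\]

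The standard Caccioppoli computation, i.e.\ multiplying $\Delta u=u_t$ by $u\eta^2$ and integrating by parts on $\R^d\times(-2,0)$ (the boundary term at $t\uparrow 0$ has a favorable sign, and the one at $t=-2$ vanishes), yields
\[
\int_{\Omega_{\theta R}^\beta\cap Q_1^-}|\nabla u|^2\leq C\!\int u^2\bigl(|\nabla\eta|^2+|\partial_t\eta|\bigr).
\]
The weight $|\nabla\eta|^2+|\partial_t\eta|$ is not pointwise bounded (it blows up like $|t|^{-1}$ near $t=0$), so a naive estimate fails; I would control the right-hand side by $C\int_{\Omega_R^\beta\cap Q_2^-}u^2$ via a dyadic decomposition in $|t|$. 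On the slab $|t|\simeq 2^{-k}$, the cusp cross-section has spatial width $\simeq R\cdot 2^{-k/(2+\beta)}$, and choosing $R_0$ large (depending on $\beta,\theta$) makes this exceed the parabolic scale $2^{-k/2}$ for every $k\geq 0$. Thus every point in $\operatorname{supp}(\nabla_{x,t}\eta)$ at this scale lies inside a parabolic cylinder $Q_{c\cdot 2^{-k/2}}^-$ contained in $\Omega_R^\beta\cap Q_2^-$, and the standard interior caloric $L^2$ mean-value estimate on each such cylinder absorbs the $|t|^{-1}$ weight against the shell volume factor, producing the required uniform bound in $R\geq R_0$.

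For the $|u_t|^2$ estimate I would iterate the argument. Since each $\partial_i u$ is caloric on $\Omega_R^\beta\cap Q_2^-$, the same Caccioppoli estimate applied on a slightly tighter nested cusp (take $\theta'':=(\theta+\theta')/2$ and $Q_{1+\varepsilon}^-$ as the outer domain for the first iteration, for small $\varepsilon>0$) gives $\int|\nabla^2 u|^2\leq C\int|\nabla u|^2\leq C\int u^2$; combining with $u_t=\Delta u$ yields $\int|u_t|^2\leq d\int|\nabla^2 u|^2\leq C\int u^2$. The main technical obstacle is the dyadic cylinder-absorption step in the preceding paragraph: it is precisely here that the assumption $\beta>0$ and the choice $R\geq R_0$ are essential, since without them the cusp would be too thin near $t=0$ to support interior caloric estimates at the relevant scales.
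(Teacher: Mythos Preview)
Your overall framework (cusp-adapted cutoff, Caccioppoli, then iterate for $u_t$) matches the paper's description, but the execution has a genuine gap at the step you flag as ``the main technical obstacle''.

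The Caccioppoli identity, after multiplying $u_t=\Delta u$ by $u\eta^2$ and integrating by parts, actually gives
\[
\tfrac12\int |\nabla u|^2\eta^2 \;\le\; \int u^2\bigl(2|\nabla\eta|^2+\eta\,\eta_t\bigr)\;-\;(\text{nonneg.\ boundary term at }t\uparrow 0),
\]
so the time-derivative weight is the \emph{signed} quantity $\eta\eta_t$, not $|\eta_t|$. With your cutoff, the argument $|x|/(R|t|^{1/(2+\beta)})$ is increasing as $t\uparrow 0$, hence the singular part of $\eta_t$ satisfies $\eta_t\le 0$ near $t=0$ and the $|t|^{-1}$ contribution to $\eta\eta_t$ is \emph{nonpositive}. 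Only the bounded $\phi_2'$ piece survives. No dyadic argument is needed for this term.

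Your proposed dyadic absorption of the $|t|^{-1}$ weight in fact fails. On the $k$-th shell $|t|\sim 2^{-k}$, the weight contributes $\sim 2^k$, the shell volume is $\sim R^d 2^{-k(1+d/(2+\beta))}$, and the mean-value inequality on cylinders of radius $\sim 2^{-k/2}$ bounds $u^2\lesssim 2^{k(d+2)/2}\|u\|_{L^2}^2$. Multiplying these gives an exponent $\tfrac{d+2}{2}-\tfrac{d}{2+\beta}>0$ in $2^k$, so the sum over $k$ diverges. The ``shell volume factor'' does not compensate for the weight.

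After using the sign of $\eta\eta_t$, one is still left with $|\nabla\eta|^2\sim \rho^{-2}=R^{-2}|t|^{-2/(2+\beta)}$. This is handled not by a dyadic argument but by exploiting that the negative term $\eta\eta_t\sim -|t|^{-1}$ dominates $|\nabla\eta|^2\sim\rho^{-2}$ by a factor $\rho^2/|t|=R^2|t|^{-\beta/(2+\beta)}\to\infty$. The combined weight $2|\nabla\eta|^2+\eta\eta_t=\tfrac12(\partial_t+\Delta)(\eta^2)$ is therefore $\le 0$ except on a thin layer near $\{|x|=\theta'\rho\}$ where $\phi_1$ is small; choosing $\phi_1$ to vanish at $\theta'$ to order $N\ge 1+1/\beta$ makes the residual positive part uniformly bounded. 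This is precisely the ``suitable cutoff'' alluded to in the paper, and it is exactly here that $\beta>0$ is essential (otherwise no finite $N$ suffices). Once $|\nabla u|^2$ is controlled, your iteration for $|u_t|^2$ is correct.

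A minor point: your cutoff as written is not supported in $B_2$ for $|t|$ of order one; you need an additional spatial factor $\phi_3(|x|)$, which contributes only bounded terms.
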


Combining the localization and energy estimates above, we can now implement a Campanato-type iteration which approximates $w+t$ near points of $\Sigma_d^3$ by caloric polynomials of arbitrarily high degree in a one-sided time region.

\medskip

\begin{lem}[Campanato iteration] \label{lem poly d} Assume that $(0,0)\in \Sigma_{d}^{\geq3}$. Then for  every $k\geq 3$, $\alpha \in (0,1)$ and $\beta\in (\frac12,1)$, there is a caloric polynomial $p$ of parabolic degree $\leq k$ and $r_0=r_0( d, \|w\|_{L^{\infty}}, k, \alpha,\beta)\in (0,1)$ such that
\begin{equation} \label{poly approx -t}\|w+t- p\|_{L^{\infty}(B_r\times (-r^2,- r^{2+\beta}))}\leq r^{k+\alpha},  \quad r\leq r_0.\end{equation}

\end{lem}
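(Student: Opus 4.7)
The proof is a Campanato-type iteration. Set $u := w + t$. From $\cH w = \chi_{\{w > 0\}}$ and $\cH t = -1$, we obtain $\cH u = -\chi_{\{w = 0\}}$, so $u$ is caloric wherever $w > 0$. The plan is to iteratively approximate $u$ on a cuspidal region by caloric polynomials of parabolic degree $\leq k$, using Lemma \ref{lem: compactness caloric} for the one-step improvement and Lemma \ref{lem: energy cone} for the requisite $H^1$ energy bound.

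\textbf{Step 1 (Cuspidal region lies in $\{w>0\}$).} For every $M>0$, I would first show there exists $\delta=\delta(M,\beta)>0$ such that $\Omega^\beta_M \cap Q_\delta^- \subset \{w>0\}$, so that $u$ is caloric there. Indeed, for $(y,s)$ in this set one has $|s|\geq |y|^{2+\beta}/M^{2+\beta}$, while Lemma \ref{lem:cubic convergence} gives $w(y,s)\geq |s| - C\max(|y|,|s|^{1/2})^3$. Since $2+\beta<3$, the lower bound on $|s|$ dominates $C|y|^3$ for $|y|$ small (depending on $M,\beta$), forcing $w(y,s)\geq |s|/2>0$.

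\textbf{Step 2 (Campanato iteration).} Choose $\theta\in(0,1)$ small and set dyadic scales $r_j:=\theta^j r_*$. I would inductively construct caloric polynomials $p_j$ of parabolic degree $\leq k$ satisfying $\|u-p_j\|_{L^2(\Omega^\beta_{M_0}\cap Q_{r_j}^-)}\leq C_*|\Omega^\beta_{M_0}\cap Q_{r_j}^-|^{1/2}r_j^{k+\alpha}$ for a fixed constant $C_*$. For the inductive step, rescale $\tilde v_j(x,t):=r_j^{-(k+\alpha)}(u-p_j)(r_jx,r_j^2t)$; provided $r_*\cdot 2^{M_0}\leq \delta(M_0)$, by Step 1 the function $\tilde v_j$ is caloric on $\Omega^\beta_{M_0}\cap Q_{2^{M_0}}^-$. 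Subtracting off the Gaussian-weighted $L^2$-projection of $\tilde v_j$ onto caloric polynomials of degree $\leq k$ enforces the orthogonality \eqref{orth cond}, and Lemma \ref{lem: energy cone} combined with the inductive $L^2$ bound verifies \eqref{wq-dlo0spkc}. Applying Lemma \ref{lem: compactness caloric} with $\vep_0=\vep_0(\theta,k,\alpha)$ small produces a caloric polynomial $\tilde q_j$ of degree $\leq k$ with $\|\tilde v_j-\tilde q_j\|_{L^2(\Omega^\beta_{M_0}\cap Q_1^-)}\leq \vep_0$, and interior parabolic regularity upgrades this to the corresponding $L^\infty$ bound on $\Omega^\beta_1\cap Q_\theta^-$. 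Setting $p_{j+1}:=p_j+r_j^{k+\alpha}\tilde q_j((\cdot)/r_j,(\cdot)/r_j^2)$ closes the induction. The base case at $j=0$ is treated by running the iteration with a slightly larger exponent $\alpha''\in(\alpha,1)$, so that the $O(r_*^3)$ bound from Lemma \ref{lem:cubic convergence} is absorbed by a factor $r_0^{\alpha''-\alpha}$ chosen small at the end.

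\textbf{Step 3 (Limit and conclusion).} Since the increments $p_{j+1}-p_j$ are caloric polynomials of degree $\leq k$ of size $O(r_j^{k+\alpha})$, the sequence $\{p_j\}$ converges in the finite-dimensional space of such polynomials to a caloric polynomial $p$ of parabolic degree $\leq k$, with $\|p-p_j\|_{L^\infty(B_{r_j})}\lesssim r_j^{k+\alpha}$. For any $r\in(0,r_0)$ I pick $j$ with $r_{j+1}\leq r\leq r_j$, and the inclusion $B_r\times(-r^2,-r^{2+\beta})\subset \Omega^\beta_1\cap Q_r^-$ together with the triangle inequality yields the desired $L^\infty$ estimate.

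\textbf{Main obstacle.} The key technical difficulty is coordinating the parameters so the iteration closes. Lemma \ref{lem: compactness caloric} requires the energy hypothesis at all scales $2^m$ for $m\leq M_0$, forcing $\tilde v_j$ to be caloric on $\Omega^\beta_{M_0}\cap Q_{2^{M_0}}^-$, which at the original scale demands $r_j\cdot 2^{M_0}\leq \delta(M_0)$ uniformly in $j$. Since $M_0=M_0(\vep_0)$ and $\vep_0=\vep_0(\theta,k,\alpha)$, the parameters must be selected in the order $\alpha\to\alpha''\to\theta\to\vep_0\to M_0\to r_*$, with $r_*$ ultimately chosen small enough to verify all preceding constraints. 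The restriction $\beta>1/2$ provides the interior separation between $B_r\times(-r^2,-r^{2+\beta})$ and $\partial(\Omega^\beta_1\cap Q_r^-)$ needed for the interior parabolic regularity step.
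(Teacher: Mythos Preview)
Your overall strategy---a Campanato iteration driven by Lemma~\ref{lem: compactness caloric} and Lemma~\ref{lem: energy cone}---is exactly the paper's approach, but there is a genuine gap in the inductive step. The hypothesis \eqref{wq-dlo0spkc} of Lemma~\ref{lem: compactness caloric} demands bounds on $\tilde v_j$ at \emph{all} dyadic scales $2^m$ for $0\leq m\leq M_0$, not just at the single scale $r_j$. Your inductive hypothesis is stated only at scale $r_j$, and the sentence ``Lemma~\ref{lem: energy cone} combined with the inductive $L^2$ bound verifies \eqref{wq-dlo0spkc}'' does not bridge this: Lemma~\ref{lem: energy cone} upgrades $L^2$ to $H^1$ at the \emph{same} scale, it cannot manufacture control at the larger scales $2r_j,4r_j,\ldots,2^{M_0}r_j$. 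Interior parabolic estimates go the wrong direction (outer to inner), and the crude cubic bound from Lemma~\ref{lem:cubic convergence} is far too weak when $k+\alpha>3$. The paper resolves this by carrying a \emph{multi-scale} inductive hypothesis---\eqref{induction hyp s201-dk} for all $0\leq m\leq j$ simultaneously---and then showing that the successive polynomials $p_m$ differ only by terms of size $O((2^{-m})^{k+\alpha})$ (equations \eqref{coefqewosdmo}--\eqref{coefqewosdmo2}), so that one may replace each $p_m$ by the latest $p_j$ and still retain \eqref{wq-dlo0spkc}. This coefficient-tracking argument is the missing mechanism in your sketch.

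Your treatment of the base case is also problematic. With $p_0=0$, Lemma~\ref{lem:cubic convergence} gives $\|u\|_{L^\infty(Q_{r_*}^-)}\leq Cr_*^3$, which is \emph{weaker} than $C_*r_*^{k+\alpha}$ whenever $k+\alpha>3$, so the induction cannot start at a single scale; invoking a ``slightly larger exponent $\alpha''$'' only makes the target harder. The paper's fix is an initial rescaling: define the working function as $r_1^{-2}(w+t)(r_1\cdot,r_1^2\cdot)$ for $r_1$ small depending on $M_0$, so that the cubic bound produces an extra factor $r_1$ which, by choosing $r_1\lesssim 2^{-(M_0+1)(k+\alpha-3)}$, absorbs the gap between exponents $3$ and $k+\alpha$ across all $M_0+1$ base scales simultaneously (see \eqref{basecasedq0-eodq1}). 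Once you carry the multi-scale hypothesis and perform this initial rescaling, the rest of your outline (projection to enforce orthogonality, summing increments to obtain $p_\infty$, and the final $L^\infty$ upgrade using $\hat\beta>\beta$ for interior room) matches the paper and goes through.
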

\begin{proof}
 We fix $k\geq 3$, $\alpha$ and $\beta$, and let $M_0>10$ be a large constant to be chosen later. By definition, we have
\begin{equation} \label{ombkoqd2w1k}
    (rx,r^2t) \in \Omega_{K}^{\beta} \iff |x|< Kr^{-\frac{\beta}{2+\beta}}|t|^{\frac{1}{2+\beta}} \iff (x,t)\in  \Omega_{r^{-\beta/(2+\beta)}K}^{\beta}, \quad r>0, \;\;K>0.
\end{equation} 
Thus, if we let
\[\theta =2^\frac{-\beta}{2+\beta}<1,\]
then
\begin{equation} \label{doubling2qk3wqdx}
 (x,t)\in  \Omega_{M_0}^{\beta} \iff \left(x/2,t/4\right) \in \Omega_{\theta M_0}^{\beta}. 
\end{equation}
By Lemma \ref{lem:cubic convergence}, there exists a small $r_1\in (0, 1)$, depending on $d$ and $M_0$, such that the function
\begin{equation} \label{udefomwe21ee}u(x,t):=\frac{(w+t)(r_1x,r_1^2t)}{r_1^2}\end{equation}
satisfies
\begin{equation} \label{basecasedq0-eodq1} \left(\int_{ Q_{1}^-}|u(2^{-m}x,4^{-m}t)|^2dxdt\right)^{\frac12}\leq (2^{-m})^{k+\alpha}, \quad 0\leq m \leq M_0+1.  \end{equation}
By Lemma \ref{lem:cubic convergence}, there exists a constant $c_0>0$ such that
\begin{equation} \label{supwdqwxo0k}
   \Omega_{c_0}^{\beta} \cap Q_1^- \subset \{w>0\}.
\end{equation}
Thus, by requiring $r_1^{\frac{\beta}{2+\beta}} \leq \frac{c_0}{ M_0} $ so that $\Omega_{M_0}^{\beta}\subset   \Omega_{r_1^{-\beta/(2+\beta)}c_0}^{\beta} $, it follows from \eqref{ombkoqd2w1k}, \eqref{udefomwe21ee},  and \eqref{supwdqwxo0k} that
\begin{equation*}
    \cH u=0, \quad (x,t)\in \Omega^{\beta}_{M_0} \cap Q_1^-.
\end{equation*}
A posteriori, using \eqref{ombkoqd2w1k} once more with $r=2^{-m}$, we have $(2^{-m}x,4^{-m}t)\in \Omega^{\beta}_{M_0}$ whenever $(x,t)\in \Omega^{\beta}_{M_0} \subset \Omega^{\beta}_{r^{-\beta/(2+\beta)} M_0} $, and thus
\begin{equation} \label{caloricscalingsasxopa}
    \cH (u(2^{-m}x,4^{-m}t))=0, \quad (x,t)\in \Omega^{\beta}_{M_0} \cap Q_{2^m}^-, \quad m\geq 0.
\end{equation}
We now prove that there exists a sequence of caloric polynomials $\{p_m\}_{m=0}^{\infty}$ of degree $\leq k$ such that, for any $m\in\mathbb{N}$,
\begin{equation} \label{induction hyp s201-dk}\left(\int_{ \Omega^{\beta}_{M_0}\cap Q_{1}^-}|(u-p_m)(2^{-m}x,4^{-m}t)|^2dxdt\right)^{\frac12}\leq (2^{-m})^{k+\alpha}. \end{equation}
We proceed by induction, where we will show that \eqref{induction hyp s201-dk} holds for $1\leq m \leq j$ for every $j\in\mathbb{N}$. Observe that, by \eqref{basecasedq0-eodq1}, \eqref{induction hyp s201-dk} holds up to $j =  \lceil  M_0 \rceil$ with $p_m:=0$ for $m\leq \lceil M_0 \rceil$. Assume that \eqref{induction hyp s201-dk} holds for some $j\geq \lceil M_0\rceil$. Note that $K_0=B_{1/2} \times (-1,-1/2)\subset \Omega_{M_0}^{\beta}$, so for $0\leq m\leq j-1$, letting $2^{-(m+1)}K_0$ $=\{(2^{-(m+1)}x,4^{-(m+1)}t):(x,t)\in K_0\}$, we have from \eqref{induction hyp s201-dk} that
\[  \left(\fint_{2^{-(m+1)}K_0}|u-p_m|^2dxdt\right)^{\frac12}+ \left(\fint_{2^{-(m+1)}K_0}|u-p_{m+1}|^2dxdt\right)^{\frac12}\leq C(2^{-m})^{k+\alpha},\]
so that
\[\left(\fint_{2^{-(m+1)}K_0}|p_{m+1}-p_m|^2dxdt\right)^{\frac12}\leq C(2^{-m})^{k+\alpha},\]
namely
\[\left(\int_{K_0}|(p_{m+1}-p_m)(2^{-(m+1)}x,4^{-(m+1)}t)|^2dxdt\right)^{\frac12}\leq C(2^{-m})^{k+\alpha}. \]
Since the space of polynomials of degree $\leq k$ is finite dimensional, this implies that the coefficient vectors $a_m^{i}$ of $p_{m}$ of parabolic degree $i$ satisfy
\begin{equation}\label{coefqewosdmo}|a_m^i-a_{m+1}^i|\leq C(2^{-m})^{k+\alpha-i}, \quad 0\leq m\leq j-1,\quad 0\leq i \leq k.\end{equation}
We define $p_{j+1}$ through $L^2_{G}$--orthogonal projection onto the space of caloric polynomials of degree $\leq k$:
\begin{equation} \label{leastsquares}p_{j+1}:=\argmin_{p\text{ caloric, }\deg(p)\leq k}\int_{-2}^{-1}\int_{B_{M_0}}(u-p)^2(2^{-j}x,4^{-j}t)G(x,t)dxdt. \end{equation}
We will now show that \eqref{induction hyp s201-dk} for $m=j+1$ holds with $p_{j+1}$.

Summing \eqref{coefqewosdmo} from $m$ to $j-1$, we obtain
\begin{equation} \label{coefqewosdmo2}|a_m^i-a_{j}^i|\leq C (2^{-m})^{k+\alpha-i}, \quad 0\leq m\leq j-1, \quad 0\leq i \leq k.\end{equation}
Thus, in particular, \[\left(\int_{\Omega^{\beta}_{M_0}\cap Q_1^-}|(p_m-p_j)(2^{-m}x,4^{-m}t)|^2dxdt\right)^{\frac12}\leq C(2^{-m})^{k+\alpha}, \quad 0\leq m\leq j. \]
 Since \eqref{induction hyp s201-dk} holds for $0\leq m\leq j$, we have
\begin{equation}\label{12-deklwxoqk3e0}    \int_{\Omega^{\beta}_{M_0}\cap Q_1^-}|(u-p_j)(2^{-m}x,4^{-m}t)|^2dxdt\leq C(2^{-m})^{2(k+\alpha)}, \quad 0\leq m\leq j,
\end{equation}
or, equivalently, recalling \eqref{ombkoqd2w1k}, 
\begin{equation*}
   \int_{\Omega^{\beta}_{\theta^{-l}M_0}\cap Q_{2^{l}}^-}|(u-p_j)(2^{-j}x,4^{-j}t)|^2dxdt\leq C2^{l\gamma}(2^{-j})^{2(k+\alpha)}, \quad 0\leq l\leq j, 
\end{equation*}
where $\gamma=\gamma(d,k)$. Now we decompose above integral to estimate with the weight $G(x,t)$. Observing that  $B_{M_0} \times (-2,-1) \subset \Omega^{\beta}_{M_0}$,  we get 
\[\int_{B_{M_0}\times (-2,-1)\cap (Q_{2^l}^-\backslash Q_{2^{l-1}}^-)}|(u-p_j)(2^{-j}x,4^{-j}t)|^2dxdt\leq C2^{l\gamma}(2^{-j})^{2(k+\alpha)} ,\quad 0\leq l\leq j, \]
so, since $G(x,t)\leq Ce^{-2^{l}}$ on  $(Q_{2^l}^-\backslash Q_{2^{l-1}}^-)\cap\{t\in (-2,-1)\},$ and $2^{j}\geq 2^{M_0}>M_0,$ summing up the integrals over $0\leq l\leq j$, we have 
\[\left(\int_{B_{M_0}\times (-2,-1)}|(u-p_j)(2^{-j}x,4^{-j}t)|^2G(x,t)dxdt\right)^{\frac12}\leq C(2^{-j})^{k+\alpha}, \]
where $C$ remains independent of $M_0$. By minimality of $p_{j+1}$,
\[\left(\int_{B_{M_0}\times (-2,-1)}|(u-p_{j+1})(2^{-j}x,4^{-j}t)|^2G(x,t)dxdt\right)^{\frac12}\leq C(2^{-j})^{k+\alpha}. \]
Applying the triangle inequality to the last two inequalities,
\[\left(\int_{B_{M_0}\times (-2,-1)}|(p_j-p_{j+1})(2^{-j}x,4^{-j}t)|^2dxdt\right)^{\frac12}\leq C(2^{-j})^{k+\alpha}. \]
Therefore, using again the finite dimensionality,
\[|a^i_{j}-a^i_{j+1}|\leq C  (2^{-j})^{k+\alpha-i}, \quad 0\leq i \leq k. \]
which together with \eqref{coefqewosdmo2} implies
\[|a^i_{j+1}-a^i_{m}|\leq C  (2^{-m})^{k+\alpha-i}, \quad 0\leq m \leq j, \quad  0\leq i \leq k.\]
Thus, we infer from \eqref{12-deklwxoqk3e0} that
\begin{equation} \label{-1qldo-wskqpax}\left(\int_{\Omega^{\beta}_{M_0}\cap Q_1^-}|(u-p_{j+1})(2^{-m}x,4^{-m}t)|^2dxdt\right)^{\frac12}\leq C_1(2^{-m})^{k+\alpha}, \quad 0\leq m\leq j. \end{equation}
In view of \eqref{-1qldo-wskqpax}, Lemma \ref{lem: energy cone} yields
\begin{multline}\left(
\int_{\Omega^{\beta}_{\theta M_0}\cap Q_1^-}(4^{-m}|\grad (u-p_{j+1})(2^{-m}x,4^{-m}t)|^2 +16^{-m}|\partial_t(u-p_{j+1})(2^{-m}x,4^{-m}t)|^2)dxdt\right)^{\frac12} \\
\leq C_2  (2^{-m})^{k+\alpha}, \quad 0\leq m\leq j,
\end{multline}
where $C_1, C_2$ are both independent of $M_0$ and $j$. Letting
\[\tilde{u}(x,t)=\frac{(u-p_{j+1})(2^{-j}x,4^{-j}t)}{(C_1+C_2)(2^{-j})^{k+\alpha}},\]
we obtain
\begin{equation*}\left(
\int_{\Omega^{\beta}_{\theta M_0}\cap Q_{1}^-}(|\tilde u_{2^l}|^2+|\grad \tilde{u}_{2^{l}}|^2 +|\partial_t\tilde{u}_{2^{l}}|^2)\right)^{\frac12} \leq 2^{l(k+\alpha)}, \quad 0\leq l \leq j.
 \end{equation*}
Since $j\geq  M_0 $, it follows that $\tilde{u}$ satisfies \eqref{wq-dlo0spkc}. Moreover, \eqref{caloricscalingsasxopa} shows that $\tilde{u}$ satisfies \eqref{poasxkas,p13}, whereas \eqref{orth cond} is just the optimality condition of \eqref{leastsquares}. Thus, taking $\vep_0=2^{-(k+\alpha+(d+2)/2)}/(C_1+C_2)$ in Lemma \ref{lem: compactness caloric}, if $M_0$ is chosen sufficiently large, we have
\begin{equation*}\left(
\int_{\Omega^{\beta}_{\theta M_0}\cap Q_{1}^-}(u-p_{j+1})^2(2^{-j}x,4^{-j}t)dxdt\right)^{\frac12} 
\leq 2^{-(d+2)/2}(2^{-(j+1)})^{k+\alpha}.
\end{equation*}
Changing the variables and recalling \eqref{doubling2qk3wqdx}, we obtain
\begin{equation*}\left(
\int_{\Omega^{\beta}_{M_0}\cap Q_{1}^-}(u-p_{j+1})^2(2^{-(j+1)}x,4^{-(j+1)}t)dxdt\right)^{\frac12} 
\leq(2^{-(j+1)})^{k+\alpha}.
\end{equation*}
This completes our induction and shows that \eqref{induction hyp s201-dk} holds for all $m\in \mathbb{N}$. 

\medskip 

Observe that, in view of \eqref{coefqewosdmo2}, the polynomials $p_{j}$ converge locally uniformly, as $j\to \infty$ to a caloric polynomial $p_{\infty}$ of degree $\leq k$. Letting $j\to \infty$ in \eqref{-1qldo-wskqpax}, we obtain
\begin{equation*}\left(\int_{\Omega^{\beta}_{M_0}\cap Q_1^-}|(u-p_{\infty})(2^{-m}x,4^{-m}t)|^2dxdt\right)^{\frac12}\leq C(2^{-m})^{(k+\alpha)}, \quad m\geq 0.\end{equation*}
Thus, 
\begin{equation} \label{L2 approx 2e-1qo}\int_{\Omega^{\beta}_{M_0}\cap Q_1^-}|(u-p_{\infty})(rx,r^2t)|^2dxdt\leq Cr^{2(k+\alpha)}, \quad r\in [0,1].\end{equation}
Finally, let $\hat p_{\infty}$ and $\hat M_0$ be such that \eqref{L2 approx 2e-1qo} holds for $\hat \beta:=\frac{1}{2}(1+\beta)>\beta$. Then, if $r$ is sufficiently small, depending only on $\beta$, we have $B_{2r}\times [-(2r)^2,-r^{2+\beta}]\subset \Omega_{\hat M_0}^{\hat \beta}$. Therefore, the map $z(x,t)=(u-\hat{p}_{\infty})(rx,r^2t)$ is caloric in $D=B_2\times(-4,-r^{\beta}]$ and satisfies $\|z\|_{L^{2}(D)}\leq Cr^{k+\alpha}$, so \eqref{poly approx -t} follows by interior parabolic regularity.
\end{proof} 

We now show that at points of $\Sigma_{d}^{\geq 3}$, the solution admits a genuine one-sided Taylor expansion in space–time with caloric polynomial coefficients. This yields a uniquely defined higher-order profile at each such point, which will be crucial in the analysis to follow.
\begin{prop}[One-sided Taylor expansion at $\Sigma_d^{\geq 3}$]\label{prop:taylor} Let $w: Q_2 \to \R$ be a bounded solution to \eqref{eq:obstacle intro}. Then, for every $X=(x_0,t_0)\in \Sigma_d^{\geq3}$, there exists a unique sequence $\{p^{(j)}_X\}_{j=3}^{\infty}$ of caloric polynomials such that each $p_j$ is parabolically homogeneous of degree $j$, and one has, for any $\beta\in (\frac12,1)$ and $k\geq 3$, the expansion
\begin{equation} \label{eq:taylor}
  w(x-x_0,t-t_0)=t_0-t+\sum_{j=3}^{k}p^{(j)}_X(x,t) + E^{(k)}_{X}(x,t),  \quad (x,t)\in Q_1,\quad t<t_0, \quad |t_0-t|\geq |x-x_0|^{2+\beta},
\end{equation}
where, for some constant $C=C(d,k,\beta,\|w\|_{\infty})>0$,
\begin{equation*}
|E^{(k)}_X(x,t)|\leq C_k(|x-x_0|^{k+1}+|t-t_0|^{\frac{k+1}{2}}).
\end{equation*}  
Equivalently, we have
\begin{equation} \label{eq:taylor refble}
     \left\|w-(t_0-t)-q_{X}^{(k)}\right\|_{L^{\infty}(B_r(x_0) \times [t_0-r^2,t_0-r^{2+\beta}])}
     \leq C_kr^{k+1}, \quad q^k_X:=\sum_{j=3}^{k}p_{X}^{(j)}, \quad r\in (0,1).
\end{equation}
\end{prop}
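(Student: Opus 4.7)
The plan is to build the polynomials $p_X^{(j)}$ by iterating the Campanato approximation of Lemma~\ref{lem poly d} across $k$, and to identify the parabolically homogeneous pieces through a uniqueness/rescaling argument on the cuspidal region $B_r \times (-r^2, -r^{2+\beta})$. Without loss of generality, take $X = (0,0)$. Fix $\alpha := 1/2$ and, for each $k \geq 3$, apply Lemma~\ref{lem poly d} to produce a caloric polynomial $P_k$ of parabolic degree $\leq k$ satisfying $\|w + t - P_k\|_{L^\infty(B_r \times (-r^2, -r^{2+\beta}))} \leq r^{k + \alpha}$ for $r \leq r_0(k)$.

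The technical core is the following \emph{uniqueness lemma}: if $Q$ is a caloric polynomial of parabolic degree $\leq k$ with $\|Q\|_{L^\infty(B_r \times (-r^2, -r^{2+\beta}))} \leq C r^{k+\alpha}$ for all sufficiently small $r$, then $Q \equiv 0$. To prove it, decompose $Q = \sum_{j=0}^{k} Q_j$ into parabolically $j$-homogeneous caloric pieces, and rescale: $Q(rx, r^2t) = \sum_{j=0}^{k} r^j Q_j(x,t)$ has $L^\infty$-norm on $B_1 \times (-1, -r^\beta)$ bounded by $C r^{k+\alpha}$. Letting $j_0$ be the lowest index with $Q_{j_0} \not\equiv 0$ and isolating its leading contribution, one obtains $\|Q_{j_0}\|_{L^\infty(B_1 \times (-1, -1/2))} \leq C' r^{k+\alpha - j_0} \to 0$ as $r \downarrow 0$ (using $B_1 \times (-1,-1/2) \subset B_1 \times (-1,-r^\beta)$ for $r$ small). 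Since nonzero caloric polynomials cannot vanish on any open set by space-time analyticity, $Q_{j_0} \equiv 0$, a contradiction.

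Applying the uniqueness lemma to $P_k - \widetilde P_k$, where $\widetilde P_k$ is the truncation of $P_{k+1}$ to parabolic degree $\leq k$ (which still approximates $w + t$ to order $r^{(k+1)+\alpha} + Cr^{k+1} \leq r^{k+\alpha}$ for small $r$), shows $P_k = \widetilde P_k$. Hence the sequence is consistent, and writing $P_k = \sum_{j=0}^{k} p^{(j)}$ with each $p^{(j)}$ parabolically $j$-homogeneous and caloric, each $p^{(j)}$ is independent of $k$ (for $k \geq j$). The vanishing $p^{(0)} = p^{(1)} = p^{(2)} = 0$ follows from Lemma~\ref{lem:cubic convergence}: since $(0,0) \in \Sigma_d^{\geq 3}$, $|w + t| \leq Cr^3$ on $Q_r^-$, hence $|P_k| \leq C'r^3$ on the cuspidal region, and the same rescaling argument with threshold $r^3$ rules out any $p^{(j)}$ with $j < 3$.

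For the final error estimate, writing $\sum_{j=3}^{k} p^{(j)} = P_{k+1} - p^{(k+1)}$ gives $\|E^{(k)}\|_{L^\infty(B_r \times (-r^2, -r^{2+\beta}))} \leq r^{(k+1)+\alpha} + \|p^{(k+1)}\|_{L^\infty} \leq C_k r^{k+1}$. Given $(x,t)$ with $|t| \geq |x|^{2+\beta}$, set $r := \max(|x|, |t|^{1/2}) \leq |x| + |t|^{1/2}$; the cuspidal condition is exactly what ensures $(x,t) \in B_r \times (-r^2, -r^{2+\beta})$, and $r^{k+1} \leq 2^{k+1}(|x|^{k+1} + |t|^{(k+1)/2})$ yields \eqref{eq:taylor}. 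Uniqueness of the sequence $\{p^{(j)}\}$ is immediate from the uniqueness lemma. The \emph{main obstacle} is the uniqueness lemma itself: one must ensure that parabolically homogeneous caloric polynomials retain quantitative size on the thin cuspidal region (rather than on a full parabolic cylinder), which reduces to the space-time analyticity of caloric polynomials and the fact that $B_1 \times (-1, -r^\beta)$ contains the fixed open set $B_1 \times (-1, -1/2)$ for $r$ small.
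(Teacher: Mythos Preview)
Your proof is correct and follows essentially the same route as the paper: translate to the origin, invoke Lemma~\ref{lem poly d} with $\alpha=1/2$ at each level $k$, use a rescaling argument to show that the polynomials $P_k$ are consistent (differing only in the top homogeneous piece), kill the degrees $0,1,2$ via Lemma~\ref{lem:cubic convergence}, and upgrade the error from $r^{k+1/2}$ to $r^{k+1}$ by comparing with $P_{k+1}$. The paper carries out the consistency step directly via the triangle inequality on $\|q_X^{(k)}-q_X^{(k+1)}\|$, whereas you isolate it as a standalone ``uniqueness lemma''; the content is the same. One small omission: to bound $\|p^{(k+1)}\|_{L^\infty(B_r\times(-r^2,-r^{2+\beta}))}\leq C_k r^{k+1}$ you need the coefficients of $P_{k+1}$ to be bounded independently of $r$, which the paper obtains by evaluating \eqref{eq:poly pk pf} at the fixed scale $r=r_{0,k+1}$; this is easy but should be stated.
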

\begin{proof}Up to translation and rescaling, we may assume that $X=(x_0,t_0)=(0,0)$. For each $k\geq3$,  Lemma \ref{lem poly d} applied with $\alpha=\frac12$ implies that there exists a polynomial $q_{X}^{(k)}$ of parabolic degree $\leq k$ such that
\begin{equation} \label{eq:poly pk pf}
    \|w+t-q_{X}^{(k)}\|_{L^{\infty}(B_r \times [-r^2,-r^{2+\beta}])}\leq r^{k+\frac12}, \quad r\leq r_{0,k},
\end{equation}
where $r_{0,k}\in(0,1)$ depends on $k$, $d$, $\|w\|_{\infty}$, and $\beta$. With no loss of generality, we may also assume that the sequence $\{r_{0,k}\}_{k=1}^{\infty}$ is decreasing. Letting $q_X^{(2)}:=0$, we see by Lemma \ref{lem:cubic convergence} that \eqref{eq:poly pk pf} also holds for $k=2$ and an appropriate choice of $r_{0,2}$. Setting $r=r_{0,k}$ in \eqref{eq:poly pk pf}, since all norms in the space of polynomials of parabolic degree $\leq k$ are comparable, we infer that 
\begin{equation} \label{eq:pk bdd pf}
    \|q_{X}^{(k)}\|_{L^{\infty}(Q_1)}\leq C_k, 
\end{equation}
where $C_k$ depends on $k$, $d$, $\|w\|_{\infty}$ and $r_{0,k}$. Using the triangle inequality in \eqref{eq:poly pk pf}, we get
\begin{equation*}
 \|q_{X}^{(k)}-q_{X}^{(k+1)}\|_{L^{\infty}(B_r \times [-r^2,-r^{2+\beta}])}\leq r^{k+\frac12}, \quad r\leq \min(r_{0,k},r_{0,k+1})=r_{0,k+1}. 
\end{equation*}
This implies that $q_{X}^{(k)}$ and $q_{X}^{(k+1)}$ must differ by a homogeneous term of degree $k+1$, that is,
\begin{equation} \label{eq:recursive pk pf}
    q_{X}^{(k+1)}(x,t)=q_{X}^{(k)}(x,t)+p^{(k+1)}_{X}(x,t),
\end{equation}
where each $p_X^{j}$ is a (possibly zero) parabolically homogeneous polynomial of degree $j$. We therefore have
\begin{equation*}
    q_{X}^{(k)}=\sum_{j=3}^{k}p_X^{(j)},
\end{equation*}
and, by \eqref{eq:poly pk pf}, \eqref{eq:pk bdd pf}, and \eqref{eq:recursive pk pf},
\begin{multline} \label{eq:taylor pf refble}
     \|w+t-q_{X}^{(k)}\|_{L^{\infty}(B_r \times [-r^2,-r^{2+\beta}])}\\
     \leq \|w+t-q_{X}^{(k+1)}\|_{L^{\infty}(B_r \times [-r^2,-r^{2+\beta}])}+\|p^{(k+1)}_X\|_{{L^{\infty}(B_r \times [-r^2,-r^{2+\beta}])}}\\
    \leq r^{k+1+\frac12}+Cr^{k+1} \leq C'r^{k+1} \quad 0<r\leq r_{0,k+1}.
\end{multline}
Up to increasing the value of $C$, depending only on $\|w\|_{\infty}$, we see that \eqref{eq:taylor pf refble} holds for $r\in (0,1)$, which shows \eqref{eq:taylor refble}.  Finally, uniqueness of the sequence $\{q^{(k)}_X\}_{k\geq 3}$ (and thus of $\{p^{(k)}_X\}_{k\geq 3}$) follows directly from \eqref{eq:taylor refble} and the triangle inequality.
\end{proof}

The next proposition shows that the polynomial $q$ obtained as a second blow-up in Proposition \ref{prop:second blow-up} coincides, up to normalization, with the first nontrivial term in the expansion of Proposition \ref{prop:taylor}, and that the points of $\Sigma_d^{\infty}$ are precisely those where all such higher-order coefficients vanish.
\begin{prop}[Characterization of the second blow-up and $\Sigma_d^{\infty}$]\label{prop:second blow-up char taylor} Let $w:Q_2\to \R$ be a bounded solution to \eqref{eq:obstacle intro}, assume that $(0,0)\in \Sigma_d^{\geq3}$, and let $\{p_{(0,0)}^{(j)}\}_{j=3}^{\infty}$ be the polynomials given by Proposition \ref{prop:taylor}. Then the following holds:
\begin{itemize}
    \item[(i)] If $(0,0)\in \Sigma_d^{k}$ for some $k\in \{3,4,\ldots\}$, then for any $\gamma>k$, the polynomial $q$ of Proposition \ref{prop:second blow-up} is independent of the subsequence and of $\gamma$, the full sequence converges to $q$, and
    \begin{equation*}
    q=\frac{1}{H^{\frac12}(1,p^{(k)}_{(0,0)})}p_{(0,0)}^{(k)},  
    \end{equation*} where $p^{(k)}_{(0,0)}$ is the first non-zero element of the sequence $\{p_{(0,0)}^{(j)}\}_{j=3}^{\infty}$. 
    \item[(ii)] Moreover, we have
\begin{equation} \label{eq:sigma infty char}
    (0,0)\in \Sigma_d^{\infty} \quad \text{if and only if } \quad p^{(j)}_{(0,0)}\equiv 0 \text{ for all } j\geq 3.    
\end{equation}
\end{itemize}    
\end{prop}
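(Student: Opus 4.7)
\emph{Proof plan.} Translate so that $(x_0,t_0)=(0,0)$, write $u:=w+t$, and use Proposition~\ref{prop:taylor}: on the cuspidal slab $\{t<0,\ |t|\geq |x|^{2+\beta}\}$ one has $u = q^{(k)} + E^{(k)}$, where $q^{(k)} = \sum_{j=3}^{k} p^{(j)}_{(0,0)}$ is a caloric polynomial and $|E^{(k)}|\leq C_k(|x|^{k+1}+|t|^{(k+1)/2})$. Let $m\geq 3$ be the smallest index with $p^{(m)}_{(0,0)}\neq 0$, setting $m=\infty$ if no such index exists. The plan is to first compute the precise leading-order asymptotic of $H(r,\zeta u)$ using this expansion, then match it against the frequency information.

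Substituting $x=ry$ in $H(r,\zeta u) = \int \zeta(x)^2\, u(x,-r^2)^2\, G(x,-r^2)\,dx$ converts the integral into $\int \zeta(ry)^2\, u(ry,-r^2)^2\, G(y,-1)\,dy$. At time $t=-r^2$ the slab condition becomes $|y|\leq r^{-\beta/(2+\beta)}$, which exhausts $\R^d$ as $r\downarrow 0$, and the complementary Gaussian tail contributes $O(r^N)$ for every $N$ thanks to the $L^\infty$ bound on $u$. On the bulk, parabolic homogeneity gives $p^{(m)}(ry,-r^2)=r^m p^{(m)}(y,-1)$ and $|E^{(m)}(ry,-r^2)|\leq Cr^{m+1}(|y|^{m+1}+1)$, which is $G$-integrable. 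Squaring yields
\[
H(r,\zeta u) \;=\; r^{2m}\,H\!\bigl(1,p^{(m)}_{(0,0)}\bigr) \;+\; O(r^{2m+1}), \qquad r\downarrow 0,
\]
with the convention $H(r,\zeta u)=O(r^N)$ for all $N$ when $m=\infty$. A parallel computation, together with Cauchy-type interior parabolic estimates for $\nabla u$ on a slightly narrower slab (where $u$ is caloric for small $r$ by Lemma~\ref{lem: freeze non-stationary}), delivers an analogous asymptotic $D(r,\zeta u) = r^{2m}\cdot \mathrm{const} + O(r^{2m+1})$.

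For part (i): if $(0,0)\in\Sigma_d^k$, Lemma~\ref{lem:freq H bd} forces $cr^{2k+\delta}\leq H(r,\zeta u)\leq C r^{2k}$, and comparison with the expansion above pins down $m=k$ and $p^{(k)}_{(0,0)}\neq 0$. For each fixed $(x,t)$ with $t<0$, the point $(rx,r^2t)$ lies in the slab for all sufficiently small $r$, so
\[
u^{(r)}(x,t) \;=\; \frac{r^k p^{(k)}_{(0,0)}(x,t)+O(r^{k+1})}{r^k H\!\bigl(1,p^{(k)}_{(0,0)}\bigr)^{1/2}+o(r^k)} \;\longrightarrow\; \frac{p^{(k)}_{(0,0)}(x,t)}{H\!\bigl(1,p^{(k)}_{(0,0)}\bigr)^{1/2}},
\]
and this pointwise convergence, upgraded to $L^2_{\mathrm{loc}}$ by the uniform bounds of Lemma~\ref{lem energy} and dominated convergence, holds for the full family rather than a subsequence, giving uniqueness of $q$. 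For part (ii): if every $p^{(j)}_{(0,0)}$ vanishes, then $H(r,\zeta u),\,D(r,\zeta u) = O(r^N)$ for all $N$, so $\phi^\gamma(0+,\zeta u)=\gamma$ for every $\gamma>2$, i.e.\ $(0,0)\in\Sigma_d^\infty$. Conversely, a nonzero $p^{(m)}_{(0,0)}$ forces $H(r,\zeta u)\gtrsim r^{2m}$, hence $\phi^\gamma(0+)\leq m<\gamma$ for $\gamma>2m$ via Lemma~\ref{lem:freq H bd}, so $(0,0)\notin\Sigma_d^{\geq \gamma}$ and thus $(0,0)\notin\Sigma_d^\infty$. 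The main technical subtlety is the mismatch between the cuspidal region where the Taylor expansion is valid and the Gaussian-weighted integrals defining $H$ and $D$ over full time slices; this is resolved by the super-polynomial decay of $G$ outside the slab.
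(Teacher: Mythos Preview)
Your proposal is correct and follows the same essential strategy as the paper: exploit the Taylor expansion of Proposition~\ref{prop:taylor} to pin down the leading-order behavior of $u=w+t$ and match it against the frequency.

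The execution differs in where the matching takes place. You compute the Gaussian-weighted integral $H(r,\zeta u)$ directly from the expansion on the cuspidal slab, controlling the complement by super-polynomial Gaussian decay to obtain the sharp asymptotic $H(r,\zeta u)=r^{2m}H(1,p^{(m)})+O(r^{2m+1})$, and then compare with the two-sided bounds on $H$ coming from Lemma~\ref{lem:freq H bd} to force $m=k$. The paper instead routes through parabolic cylinder norms: it first converts the frequency hypothesis into $\|u\|_{L^\infty(Q_r^-)}\lesssim r^k$ and $|Q_r|^{-1/2}\|u\|_{L^2(Q_r^-)}\gg r^{k+1/2}$ via Lemmas~\ref{lem:freq H bd}, \ref{lem: H L2 comp}, \ref{lem energy}, then reads off $p^{(j)}\equiv 0$ for $j<k$ and $p^{(k)}\not\equiv 0$ directly from the cylinder form~\eqref{eq:taylor refble} of the expansion; it then shows $r^{-k}u_r\to p^{(k)}$ locally uniformly in $\{t<0\}$ and passes to $H^{1/2}(1,\zeta_r r^{-k}u_r)\to H^{1/2}(1,p^{(k)})$ using the polynomial growth estimate~\eqref{eq:energy urk blowup pf}. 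For part~(ii) the paper avoids your $D$ computation entirely: one direction is the contrapositive of part~(i), and the other comes from Lemma~\ref{lem:comparability sigma d}. Your direct Gaussian computation is arguably more transparent and yields the exact leading constant in $H$; the paper's route has the advantage of never needing to differentiate inside the slab to estimate $D$, and of recycling cylinder bounds that are already available elsewhere.
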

\begin{proof} Assume first that $(0,0)\in \Sigma_d^{k}$. By  \eqref{eq:H frequency bounds upper}, Lemma \ref{lem: H L2 comp}, and Lemma \ref{lem energy}, we have  
\begin{equation} \label{eq:enoijultlessim}
 \|w+t\|_{L^{\infty}(Q_r^-)}\lesssim r^{k}, \quad r\ll1.   
\end{equation} 
By, \eqref{eq:taylor}, this implies that $p_{(0,0)}^{(j)}\equiv 0$ for $j<k$.  Similarly, by \eqref{eq:H frequency bounds lower} and Lemma \ref{lem: H L2 comp},
\begin{equation*}
  |Q_r|^{-\frac12}\|w+t\|_{L^{2}(Q_r^-)}\gg  r^{k+\frac12}, \quad r\ll1,   
\end{equation*}
which, by \eqref{eq:enoijultlessim} and \eqref{eq:taylor}, implies that $p_{(0,0)}^{(k)} \not \equiv0$.

Given $r\in (0,1)$, let $u_r(x,t)=(w+t)_r(x,t)=w(rx,r^2t)+r^2t$.
From \eqref{eq:taylor}, we have, for any fixed $R>0$, as $r\downarrow 0$,
\begin{equation} \label{eq:r-kurconvpf1bc}
       \|r^{-k}u_r-p^{(k)}_{(0,0)}\|_{L^{\infty}(B_R\times (-R^2,-1/R^2))}\leq Cr=o(1).
   \end{equation}   
   Letting $r\downarrow 0$, we infer that
   \[
   r^{-k}u_r \xrightarrow{r\downarrow 0} p_{(0,0)}^{(k)}\quad  \text{ locally uniformly in} \quad \R^d\times (-\infty,0).\]
   In particular, recalling the polynomial growth estimate \eqref{eq:energy urk blowup pf}, we have
   \begin{equation}\label{eq:energyconvpf2bc}
       H^{\frac12}(1,\zeta_{r} r^{-k}u_{r}) \xrightarrow{r\downarrow 0}H^{\frac12}(1,p_{(0,0)}^{(k)}).
   \end{equation}
   On the other hand, by \eqref{eq:H frequency bounds upper} and Proposition \ref{prop:second blow-up}(a),
   \begin{equation*}
       \|r_j^{-k}u_{r_j}-r_j^{-k}H^{\frac12}(1,\zeta_{r_j} u_{r_j})q\|_{L^{\infty}(B_1\times (-1,-1/2))}= r_j^{-k}H^{\frac12}(r_j,\zeta u)o(1)\leq o(1), \quad j\geq 1.
   \end{equation*} 
Thus, by \eqref{eq:r-kurconvpf1bc} and \eqref{eq:energyconvpf2bc}, we infer that
\begin{equation*}
    H(1,p_{(0,0)}^{(k)})^{\frac12}q=p_{(0,0)}^{(k)},
\end{equation*}
which proves part (i). 

Part (i) implies, in particular, that if $(0,0)\notin \Sigma_d^{\infty}$, then the sequence $\{p_{(0,0)}^{j}\}_{j\geq3}$ is not identically zero, which shows one implication of (ii). Conversely, if $(0,0)\in \Sigma_d^{\infty}$, then, as before, using Lemma \ref{lem:comparability sigma d}, we conclude that \eqref{eq:enoijultlessim} holds for all $k\geq 3$, and thus $p_{(0,0)}^{(k)}\equiv 0$ for all $k\geq 3$. 

\end{proof}

We next upgrade the expansion of Proposition \ref{prop:taylor} to a quantitative stability statement along  $\Sigma_d^{\geq 3}$: nearby base points $X_1$, $X_2$ have jets $q_{X_i}^{(k)}$
  whose coefficients vary Lipschitz-continuously as in \eqref{prop:second blow-up char taylor}, and, crucially, after re-centering by the corresponding space–time translation, estimate \eqref{eq: 2nd blow up variation} shows that the truncations agree up to an $O(r^{k+1})$ error on a common parabolic cylinder of radius $r \simeq |x_1-x_2|$.
 \begin{lem} \label{lem: -t Taylor continuous dependence} Let $w:Q_2 \to \R$ be a bounded solution to \eqref{eq:obstacle intro}. Let us fix $k\geq 3$. For $X=(x,t)\in \Sigma_{d}^{\geq3}\cap Q_1$,
let
\begin{equation*}
    q^{(k)}_X=\sum_{j=3}^{k}p^{(j)}_X,
\end{equation*}
where $p^j_X$ are the unique polynomials of Proposition \ref{prop:taylor}, and let $\| \cdot \|$ be any norm in the space of polynomials of degree $\leq k$. Then there exists a constant $C>0$, depending only on $d$, $k$, and $\|w\|_{\infty}$,  such that
\begin{equation} \label{eq: 2nd blow up lip}
\|q_{X_1}^{(k)}-q_{X_2}^{(k)}\|\leq  C|x_1-x_2|, \quad X_1=(x_1,t_1), \quad X_2=(x_2,t_2), \quad X_1,X_2 \in  \Sigma_{d}^{\geq 3}\cap Q_1.  
\end{equation}
Furthermore, setting $\delta X=(\delta x, \delta t):=(x_1-x_2,t_1-t_2)$ and $r:=4|x_1-x_2|$, one has

\begin{equation} \label{eq: 2nd blow up variation}
    \|q_{X_1}^{(k)}-(q_{X_2}^{(k)}(\cdot +\delta X)-\delta t)\|_{L^{\infty}(Q_r)}\leq Cr^{k+1}.
\end{equation}
\end{lem}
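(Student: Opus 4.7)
The plan is to prove both estimates by applying the pointwise Taylor expansion of Proposition~\ref{prop:taylor} at both base points $X_1$ and $X_2$, subtracting, and using finite-dimensional polynomial norm equivalence to extract coefficient-level bounds. The crucial preliminary step is an improved \emph{quadratic clustering} estimate: any two nearby points $X_1, X_2 \in \Sigma_d^{\geq 3}$ satisfy
\[
|t_1 - t_2| \leq 2|x_1 - x_2|^2.
\]
This is significantly sharper than the linear Lipschitz bound of Proposition~\ref{prop lipschitz} and is precisely what guarantees that the validity cylinders of the Taylor expansions at $X_1$ and $X_2$ overlap at the natural parabolic scale $r \simeq |x_1 - x_2|$.

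To establish the quadratic clustering, set $r_0 := 2|x_1 - x_2|$ and evaluate at $(x_2, t_1 - r_0^2/2)$, which lies in the backwards parabolic cylinder $Q_{r_0}^-(x_1, t_1)$. By Lemma~\ref{lem:cubic convergence},
\[
w(x_2, t_1 - r_0^2/2) \geq r_0^2/2 - C r_0^3 > 0
\]
for $r_0$ sufficiently small. Since $w(x_2,\cdot)$ vanishes for $t \geq t_2$, this forces $t_1 - r_0^2/2 < t_2$, i.e., $t_2 > t_1 - 2|x_1 - x_2|^2$. The symmetric argument at $X_2$ yields the reverse inequality, proving the clustering.

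With the clustering in hand, set $r := 4|x_1 - x_2|$, so $|\delta x| = r/4$ and $|\delta t| \leq C r^2$. Applying~\eqref{eq:taylor refble} at scale $r$ at each $X_i$ and re-expressing both validity regions in the local coordinates $(y, s) = (x - x_1, t - t_1)$, their intersection contains a parabolic sub-cylinder of the form $B_{r/2} \times [-r^2/2, -|\delta t| - r^{2+\beta}]$; the clustering bound on $|\delta t|$ is precisely what keeps this interval non-trivial. Subtracting the two Taylor expansions on this overlap yields
\[
\bigl| q_{X_1}^{(k)}(y, s) - q_{X_2}^{(k)}(y + \delta x, s + \delta t) + \delta t \bigr| \leq C r^{k+1}.
\]
The inner expression, call it $P(y, s)$, is a polynomial of parabolic degree $\leq k$. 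Since the overlap contains a parabolic cylinder of size comparable to $r$, finite-dimensional polynomial norm equivalence (after parabolic rescaling to $Q_1$) transfers the bound to $\|P\|_{L^\infty(Q_r)} \leq C r^{k+1}$, which is exactly~\eqref{eq: 2nd blow up variation}.

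To deduce~\eqref{eq: 2nd blow up lip} from~\eqref{eq: 2nd blow up variation}, decompose $P = \sum_{j=0}^k P_j$ into parabolically homogeneous pieces. The bound $\|P\|_{L^\infty(Q_r)} \leq C r^{k+1}$ together with parabolic homogeneity gives $\|P_j\|_{L^\infty(Q_1)} \leq C r^{k+1-j}$, so $\|P\|_{L^\infty(Q_1)} \leq C r$. The shift increment $R(y,s) := q_{X_2}^{(k)}(y + \delta x, s + \delta t) - q_{X_2}^{(k)}(y, s)$ is estimated on $Q_1$ by the mean value theorem and the uniform $C^1$ bound on $q_{X_2}^{(k)}$ (which follows from~\eqref{eq:taylor refble} at $r = 1$ and norm equivalence): $|R| \leq C(|\delta x| + |\delta t|) \leq C r$. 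Combining with $|\delta t| \leq C r^2 \leq C r$,
\[
\|q_{X_1}^{(k)} - q_{X_2}^{(k)}\|_{L^\infty(Q_1)} \leq \|P\|_{L^\infty(Q_1)} + \|R\|_{L^\infty(Q_1)} + |\delta t| \leq C r,
\]
and finite-dimensional norm equivalence on the space of polynomials of degree $\leq k$ converts this to $\|q_{X_1}^{(k)} - q_{X_2}^{(k)}\| \leq C|x_1 - x_2|$ in the given norm.

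The main obstacle is the initial quadratic clustering estimate. Without it, one would only have the linear Lipschitz bound $|\delta t| \leq C|\delta x|$, which forces the overlap scale to be at least $\rho \simeq |\delta x|^{1/2}$; the resulting bound $\|P\|_{L^\infty(Q_r)} \leq C r^{(k+1)/2}$ would be off by a square root and would prevent~\eqref{eq: 2nd blow up variation} from being sharp. The clustering exploits the rigidity built into $\Sigma_d^{\geq 3}$, namely the uniform cubic approximability of $w$ by $(t_0 - t)$ near any such point provided by Lemma~\ref{lem:cubic convergence}.
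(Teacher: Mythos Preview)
Your proof is correct and follows essentially the same approach as the paper. Both arguments hinge on the quadratic clustering $|t_1-t_2|\lesssim |x_1-x_2|^2$ (the paper obtains it via Lemmas~\ref{lem:two-sided cleaning} and~\ref{lem:cubic convergence}, you derive it directly from Lemma~\ref{lem:cubic convergence}), then subtract the two Taylor expansions on an overlapping parabolic annulus of scale $r$ and use norm equivalence on polynomials of degree $\leq k$; the only cosmetic difference is that the paper works on the fixed sub-annulus $B_r\times[-r^2,-r^2/2]$, which makes the overlap bookkeeping slightly cleaner than your region $B_{r/2}\times[-r^2/2,-|\delta t|-r^{2+\beta}]$.
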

\begin{proof} Observe first that, from \eqref{eq:pk bdd pf}, $\|q_{X}^{(k)}\|$ is bounded, uniformly in $X$. With no loss of generality, we may then assume that $|x_1-x_2|<r_0/100$. Moreover, by Lemmas \ref{lem:two-sided cleaning} and \ref{lem:cubic convergence}, we may also assume that $|t_1-t_2|<r^2/8$. 
Letting $q_i:=q_{X_i}^{(k)}$ and  letting $C>0$ be a constant that may increase at each step, we have
\begin{multline}\label{contfin2111}
   \|-\delta t+q_2(\cdot+\delta X)-q_2\|\leq C|\delta t|+  C\|q_2(\cdot+\delta X)-q_2\|_{L^{\infty}(Q_1)}\\\leq Cr^2+ C\|(\grad q_2,\partial_t q_2)\|_{L^{\infty}(Q_2)} |\delta X| \leq Cr^2+ C\|q_2\| |\delta X|\leq Cr.
\end{multline}
Applying \eqref{eq:taylor refble}, we have
\begin{equation} \label{eq:kapdlo1wqq}
 \|w(X_i+\cdot)+t-q_i\|_{L^{\infty}(B_r \times [-r^2,-r^2/2])}\leq Cr^{k+1}, \quad i\in \{1,2\}.   
\end{equation}
When $i=2$, \eqref{eq:kapdlo1wqq} may be equivalently written as
\begin{equation} \label{eq:kapdlo1wqq2}
 \|w(X_1+\cdot)+t+\delta t-q_2(\cdot+\\\delta X)\|_{L^{\infty}((B_r-\delta x) \times [-r^2-\delta t,-r^2/2-\delta t])}\leq Cr^{k+1}.   
\end{equation}
Hence, recalling that $|\delta x|=r/4$ and $|\delta t| <r^2/8$, we may combine the inequalities \eqref{eq:kapdlo1wqq} for $i=1$ and \eqref{eq:kapdlo1wqq2}, in their common domains, to infer that
\begin{equation} \label{eq:kapdlo1wqq3}
\|q_1-(q_2(\cdot +\delta X)-\delta t)\|_{L^{\infty}(B_{r/2}\times [-7r^2/8,-5r^2/8] )} \leq Cr^{k+1}.
\end{equation}
 By comparability of norms in finite-dimensional spaces (applied to the $L^{\infty}$ norms in the rescaled domains $B_{1/2}\times [-7/8,-5/8]$ and $Q_1$), up to increasing the constant $C$, this proves \eqref{eq: 2nd blow up variation}. Since $q_1$ and $q_2$ are polynomials of parabolic degree at most $k$,  \eqref{eq:kapdlo1wqq3} forces
\begin{equation} \label{contfin2112}
 \|q_1-(q_2(\cdot +\delta X)-\delta t)\|\leq Cr.   
\end{equation}
The claim then follows from \eqref{contfin2111} and \eqref{contfin2112}.  
\end{proof}

The basic properties of parabolic frequency, combined with the superquadratic cleaning property of Lemma \ref{lem:two-sided cleaning}, imply that any two points $(x_1,t_1), (x_2,t_2)\in \Sigma_{d}^{\geq k}$ satisfy the $k^{\text{th}}$-order cleaning estimate $|t_1-t_2|=O(|x_1-x_2|^k)$. We next show that this bound can be upgraded by one degree, which will be essential to obtain the parabolic dimension drop.
\begin{prop}[Refined cleaning of $\Sigma_d^k$] \label{prop: accelerated decay k>2} Let $k\geq 3$, and let $(x_1,t_1) \in \Sigma_d^{\geq k} \cap Q_1$.  Then there exists a constant $C=C(\|w\|_{L^{\infty}}, d, k)$ such that, for every $(x_2,t_2) \in \Sigma_{d}^{k}\cap Q_1$,
\begin{equation*}
    |t_2-t_1|\leq C|x_2-x_1|^{k+1}.
\end{equation*}

\end{prop}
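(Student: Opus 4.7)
The plan is to apply the polynomial comparison \eqref{eq: 2nd blow up variation} of Lemma~\ref{lem: -t Taylor continuous dependence} at order $k$, evaluating at two carefully chosen points in $Q_r$ and exploiting the parabolic $k$-homogeneity of $p_{X_2}^{(k)}$ to decouple its spatial part from $\delta t := t_1-t_2$. Throughout, set $r := 4|x_1-x_2|$, $\delta x := x_1-x_2$, and $\delta X := (\delta x, \delta t)$. By Proposition~\ref{prop:second blow-up char taylor}, the hypotheses $X_1 \in \Sigma_d^{\geq k}$ and $X_2 \in \Sigma_d^k$ give $q_{X_i}^{(k)} = p_{X_i}^{(k)}$ with $p_{X_i}^{(j)} = 0$ for $3 \leq j < k$. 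As a preliminary step I will first establish the baseline bound $|\delta t| \leq Cr^k$: this follows from applying \eqref{eq: 2nd blow up variation} at order $k-1$ if $k \geq 4$, or from Lemma~\ref{lem:two-sided cleaning} with $\delta \sim r$ (supplied by Lemma~\ref{lem:cubic convergence}) if $k = 3$. In particular, $(\delta x, 3\delta t) \in Q_r$ for $r$ small.

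At order $k$, \eqref{eq: 2nd blow up variation} reads
$$\|p_{X_1}^{(k)}(y,s) - p_{X_2}^{(k)}(y+\delta x, s+\delta t) + \delta t\|_{L^\infty(Q_r)} \leq Cr^{k+1},$$
while \eqref{eq: 2nd blow up lip} (with the norm $\|\cdot\|_{L^\infty(Q_1)}$), combined with the scaling $\|p\|_{L^\infty(Q_r)} = r^k \|p\|_{L^\infty(Q_1)}$ arising from parabolic $k$-homogeneity, yields $\|p_{X_1}^{(k)} - p_{X_2}^{(k)}\|_{L^\infty(Q_r)} \leq Cr^{k+1}$. Absorbing this error, I may replace $p_{X_1}^{(k)}$ by $p_{X_2}^{(k)}$ throughout. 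Evaluating the resulting inequality at $(y,s) = (0,0)$, and at $(y,s) = (\delta x, 3\delta t)$ using the homogeneity identity $p_{X_2}^{(k)}(2\delta x, 4\delta t) = 2^k p_{X_2}^{(k)}(\delta x, \delta t)$, produces respectively
$$|\delta t - p_{X_2}^{(k)}(\delta x, \delta t)| \leq Cr^{k+1}, \qquad |p_{X_2}^{(k)}(\delta x, 3\delta t) - 2^k p_{X_2}^{(k)}(\delta x, \delta t) + \delta t| \leq Cr^{k+1}.$$

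To conclude, expand $p_{X_2}^{(k)}(y,s) = \sum_{l \geq 0} a_l(y) s^l$ with $a_l$ spatially homogeneous of degree $k-2l$; for $l \geq 1$, one has $|a_l(\delta x)\,\delta t^l| \leq Cr^{k-2l} \cdot r^{kl} = Cr^{k+l(k-2)} = O(r^{k+1})$ when $k \geq 3$. Setting $\tilde P(y) := a_0(y)$, both $p_{X_2}^{(k)}(\delta x, \delta t)$ and $p_{X_2}^{(k)}(\delta x, 3\delta t)$ equal $\tilde P(\delta x) + O(r^{k+1})$, so the two bounds reduce to
$$\delta t = \tilde P(\delta x) + O(r^{k+1}), \qquad (1 - 2^k)\tilde P(\delta x) + \delta t = O(r^{k+1}).$$
Subtracting yields $(2 - 2^k)\tilde P(\delta x) = O(r^{k+1})$, and since $2 - 2^k \neq 0$ for $k \geq 2$, $\tilde P(\delta x) = O(r^{k+1})$, whence $|\delta t| \leq Cr^{k+1} = C|x_1 - x_2|^{k+1}$.

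The main obstacle is the algebraic isolation of $\tilde P(\delta x)$: a single evaluation at $(0,0)$ only yields $\delta t \approx \tilde P(\delta x)$, which is consistent with $|\tilde P(\delta x)|$ being of generic order $r^k$ and does not improve on the baseline. The crucial insight is to choose the second evaluation point to be $(\delta x, 3\delta t)$, engineered so that the shifted argument $(2\delta x, 4\delta t)$ is a parabolic dilation of $\delta X$ by the factor $2$; $k$-homogeneity then generates a $2^k$-weighted copy of $p_{X_2}^{(k)}(\delta x, \delta t)$ that, paired with the direct evaluation at $(0,0)$, extracts an independent linear combination of $\tilde P(\delta x)$ and $\delta t$, forcing $\tilde P(\delta x)$ to absorb only the order-$r^{k+1}$ error.
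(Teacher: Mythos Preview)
Your proof is correct and follows essentially the same route as the paper: combine \eqref{eq: 2nd blow up lip} and \eqref{eq: 2nd blow up variation} to obtain $\|q_{X_2}^{(k)} - (q_{X_2}^{(k)}(\cdot + \delta X) - \delta t)\|_{L^\infty(Q_r)} \leq Cr^{k+1}$, then evaluate at two points and use parabolic $k$-homogeneity of $q_{X_2}^{(k)}$ to isolate $\delta t$. The only cosmetic differences are that you evaluate the second point at $(\delta x, 3\delta t)$ so the shifted argument $(2\delta x, 4\delta t)$ is exactly a parabolic dilation of $\delta X$, while the paper evaluates at $(\delta x,\delta t)$ and then argues separately that $q_2(2\delta x, 2\delta t) = q_2(2\delta x, 4\delta t) + O(r^{k+1})$; and you establish the sharper baseline $|\delta t| \le Cr^k$, whereas the paper only uses $|\delta t| = O(r^3)$, which already suffices for your final expansion since $|a_l(\delta x)\,\delta t^l| \le Cr^{k-2l}r^{3l} = Cr^{k+l}$.
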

\begin{proof}
Let $X_1=(x_1,t_1), \;X_2=(x_2,t_2)$, and let $q_1:=q_{X_1}^{(k)}$ and $q_2:=q_{X_2}^{(k)}$ be as in Proposition \ref{prop:taylor}. By Proposition \ref{prop:second blow-up char taylor}, $q_{2}$ must be  a homogeneous polynomial of degree $k$, and $q_1$ is either of degree $\geq k$ or identically zero. Therefore, by \eqref{eq: 2nd blow up lip}, we have, for $r=4|x_2-x_1|$,
\begin{equation} \label{eq:acc decay pf1}
\|q_1-q_2\|_{L^{\infty}(Q_r)}\leq Cr^{k+1}.   
\end{equation}
On the other hand, we see from \eqref{eq: 2nd blow up variation} that
\begin{equation}\label{eq:acc decay pf2}
    \|q_1-(q_2(\cdot +\delta X)-\delta t)\|_{L^{\infty}(Q_r)} \leq Cr^{k+1}.
\end{equation}
We deduce from \eqref{eq:acc decay pf1} and \eqref{eq:acc decay pf2} that
\begin{equation*}
         \|q_2-(q_2(\cdot +\delta X)-\delta t)\|_{L^{\infty}(Q_r)} \leq Cr^{k+1}.
\end{equation*}
Evaluating the above inequality at $X:=(0,0)$ and at $X:=(\delta x, \delta t)$, we obtain, respectively,
\begin{equation} \label{eq2delt2blo1}
    |-q_2(\delta X)+\delta t|\leq Cr^{k+1},
    \end{equation}
and    
\begin{equation} \label{eq2delt2blo}|q_2(2\delta X)-q_2(\delta X)+\delta t|\leq Cr^{k+1}.
\end{equation}
On the other hand, we have
\begin{equation} \label{eq:eq:accdecaypf23e1}
q_2(2\delta X)=q_2(2\delta x, 4\delta t)+(q_2(2\delta x,2\delta t)-q_2(2\delta x,4\delta t))=    q_2(2\delta x, 4\delta t)+O(|\delta t|r^{k-2}),
\end{equation}
and, by Lemma \ref{lem:cubic convergence} and Lemma \ref{lem:two-sided cleaning}, we infer that
\begin{equation*}
    |\delta t|=|t_1-t_2|\leq O(r^{3}).
\end{equation*}
Thus, by \eqref{eq:eq:accdecaypf23e1} and the parabolic homogeneity of $q_2$, we have
\begin{equation*}
 q_2(2\delta X)=2^{k}q_2(\delta X)+O(r^{k+1}),  
\end{equation*}
and, therefore, by \eqref{eq2delt2blo},
\begin{equation} \label{eq2delt2blo3}
  |(2^{k}-1)q_2(\delta X)+\delta t|\leq Cr^{k+1}.  
\end{equation}
The result now follows from \eqref{eq2delt2blo1}, \eqref{eq2delt2blo3}, and the triangle inequality.
\end{proof} 

We will make use of the following inequality for harmonic polynomials \cite[Thm. 3.1]{BadEngTor}. 
\begin{lem}[Sharp \L ojasiewicz inequality for harmonic polynomials]\label{lem:loja sharp}There exists constant $c=c(d,k)$ such that, for every nonconstant harmonic polynomial $h:\R^d\to \R$ of degree $\leq k$ and every $x_0\in \{h=0\}$, we have
\begin{equation*}
    |h(x)|\geq c\|h\|_{L^{\infty}(B_1(x_0))}\operatorname{dist}(x,\{h=0\})^k, \quad x\in B_{1/2}(x_0).
\end{equation*}    
\end{lem}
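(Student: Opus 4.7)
The plan is to prove this sharp Łojasiewicz-type inequality by a contradiction-and-compactness argument that exploits the finite-dimensionality of harmonic polynomials of bounded degree, together with the polynomial growth (``doubling'') inequality
\begin{equation}\label{e:doubling}
\|h\|_{L^{\infty}(B_r(z))} \;\geq\; c(d,k)(r/R)^k\, \|h\|_{L^{\infty}(B_R(z))}, \qquad 0<r\leq R,\;\; z\in \R^d,
\end{equation}
valid for every nonzero harmonic polynomial $h$ of degree $\leq k$. I would derive \eqref{e:doubling} from Almgren-type frequency monotonicity: for a nonzero harmonic polynomial of degree at most $k$, the Almgren frequency $N_h(r)$ centered at $z$ is monotone non-decreasing in $r$ and bounded above by $k$ (its value at infinity equals the top homogeneous degree of $h$). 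Integrating the identity $(\log \int_{\partial B_r(z)} h^2)' = (d-1+2N_h(r))/r$ yields an $L^2$-doubling estimate on spheres, and norm equivalence on the finite-dimensional space of polynomials of degree $\leq k$ upgrades this to the $L^\infty$ inequality \eqref{e:doubling}.

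After translating $x_0=0$ and rescaling so that $\|h\|_{L^{\infty}(B_1)}=1$, the admissible polynomials form a compact subset of a finite-dimensional vector space. Suppose toward contradiction the lemma fails; then there exist a sequence $\{h_n\}$ of such polynomials and $x_n \in B_{1/2}$ with $|h_n(x_n)|<n^{-1}\,d(x_n,\{h_n=0\})^k$. I would pick nearest zeros $z_n\in\{h_n=0\}$, set $r_n:=|x_n-z_n|$ and $v_n:=(x_n-z_n)/r_n\in\partial B_1$, and blow up via
\[
\tilde h_n(y):=\frac{h_n(z_n+r_ny)}{M_n},\qquad M_n:=\|h_n\|_{L^{\infty}(B_{r_n}(z_n))}.
\]
The inclusion $B_1(0)\subset B_2(z_n)$ forces $\|h_n\|_{L^{\infty}(B_2(z_n))}\geq 1$, so \eqref{e:doubling} applied at $z_n$ gives $M_n\geq c(d,k) r_n^k$, whence $|\tilde h_n(v_n)|\leq r_n^k/(nM_n)\leq 1/(cn)\to 0$. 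Each $\tilde h_n$ is a harmonic polynomial of degree $\leq k$ with $\|\tilde h_n\|_{L^{\infty}(B_1)}=1$ and $\tilde h_n(0)=0$; by finite-dimensionality, a subsequence converges in $C^{\infty}_{\text{loc}}$ to a nonzero harmonic polynomial $\tilde h$, and $v_n\to v\in\partial B_1$ with $\tilde h(v)=0$.

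The contradiction comes from observing that the disjointness $B_{r_n}(x_n)\cap\{h_n=0\}=\emptyset$ rescales to $B_1(v_n)\cap\{\tilde h_n=0\}=\emptyset$, so each $\tilde h_n$ has constant sign on $B_1(v_n)$; after a further subsequence, $\tilde h$ is sign-definite on $B_1(v)$ with an interior zero at its center $v$. The strong minimum principle then forces $\tilde h \equiv 0$ on $B_1(v)$, and unique continuation extends this to $\tilde h \equiv 0$ globally, contradicting $\|\tilde h\|_{L^{\infty}(B_1)}=1$. The main obstacle in this plan is the polynomial growth inequality \eqref{e:doubling}: producing the exponent \emph{precisely} $k$ with a constant depending only on $d$ and $k$ is what makes the result sharp, as witnessed by $h(x_1,x_2)=\operatorname{Re}((x_1+ix_2)^k)$, and any proof of \eqref{e:doubling} must genuinely exploit harmonicity, since general polynomials of degree $k$ do not obey such a bound.
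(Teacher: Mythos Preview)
Your argument is correct. The paper does not give its own proof of this lemma; it simply cites \cite[Thm.~3.1]{BadEngTor} and moves on. Your proposal therefore supplies a self-contained proof where the paper offers none.

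A few remarks on the details. The key step is the sharp doubling estimate \eqref{e:doubling}, and your derivation via Almgren frequency is the right one: for a harmonic polynomial $h$ of degree $\leq k$, the frequency $N_h(\cdot\,;z)$ centered at any $z$ is monotone and bounded above by the degree of the top homogeneous part of $h$, hence by $k$; integrating $(\log H)' = (d-1+2N)/r$ gives $H(R)/H(r)\leq (R/r)^{d-1+2k}$, and the scale-invariant equivalence $\|h\|_{L^\infty(B_\rho(z))}^2 \asymp \rho^{-(d-1)}\int_{\partial B_\rho(z)}h^2$ (valid with constants depending only on $d,k$, by rescaling and finite-dimensionality of polynomials of degree $\leq k$) converts this to exactly \eqref{e:doubling} with exponent $k$. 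The remainder of the compactness argument is clean: the nearest-zero point $z_n$ stays in $B_1$ since $h_n(0)=0$ forces $r_n\leq|x_n|<1/2$, so $B_1(0)\subset B_2(z_n)$ and the lower bound $M_n\gtrsim r_n^k$ follows. The sign-definiteness of $\tilde h$ on $B_1(v)$ and the strong minimum principle close the contradiction exactly as you wrote.

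For what it is worth, the proof in \cite{BadEngTor} proceeds along broadly similar lines (doubling plus compactness), so your approach is not only correct but also close in spirit to the cited source.
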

In degree two, we will also require the following elementary variant for the non-harmonic case.
\begin{lem}[\L ojasiewicz inequality for quadratic forms] \label{lem:loja 2} Let $f(x)= Ax\cdot x$, where $A$ is a non-zero, symmetric $d\times d$ matrix, and let \begin{equation} \label{min |eig|}
    \alpha=\min\{|\lambda|: \lambda\;\; \emph{is a non-zero eigenvalue of} \;\;A \}.
\end{equation}
Then, for every $x\in \R^d$,
\begin{equation*} 
    \operatorname{dist}(x,\{f=0\})\leq \frac{1}{\sqrt\alpha}|f(x)|^{\frac12}.
\end{equation*}
\end{lem}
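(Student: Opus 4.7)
The plan is a direct linear-algebra argument. Since $A$ is symmetric, I would first diagonalize it by an orthonormal basis $\{e_1,\ldots,e_d\}$ of eigenvectors with corresponding eigenvalues $\lambda_i$, and write $x = \sum_i x_i e_i$ with $x_i := x\cdot e_i$. Splitting the indices into $I_+ = \{i : \lambda_i > 0\}$, $I_- = \{i : \lambda_i < 0\}$, $I_0 = \{i : \lambda_i = 0\}$, we obtain
\begin{equation*}
f(x) = \sum_{i \in I_+} \lambda_i x_i^2 - \sum_{i \in I_-} (-\lambda_i) x_i^2 =: a - b,
\end{equation*}
with $a,b\geq 0$. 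If $f(x) = 0$ there is nothing to prove, so by symmetry (interchanging $I_+$ and $I_-$) it suffices to treat the case $f(x) > 0$, i.e.\ $a > b \geq 0$.

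The key step is to construct an explicit near-point $y\in \{f=0\}$ by rescaling only the ``positive'' coordinates by a common factor. Define $y = \sum_i y_i e_i$ where $y_i = t x_i$ for $i\in I_+$ and $y_i = x_i$ for $i\notin I_+$, where $t \in [0,1]$ is chosen so that $f(y) = t^2 a - b = 0$, namely $t^2 = b/a \leq 1$. Then
\begin{equation*}
|y-x|^2 = (1-t)^2 \sum_{i\in I_+} x_i^2 \leq (1-t^2)\sum_{i\in I_+} x_i^2,
\end{equation*}
using $(1-t)^2 \leq (1-t)(1+t) = 1-t^2$ for $t\in [0,1]$. Since $\lambda_i \geq \alpha$ for $i\in I_+$, we have $\sum_{i\in I_+} x_i^2 \leq a/\alpha$, and $1 - t^2 = (a-b)/a = f(x)/a$, so
\begin{equation*}
|y-x|^2 \leq \frac{f(x)}{a}\cdot \frac{a}{\alpha} = \frac{f(x)}{\alpha},
\end{equation*}
which gives $\operatorname{dist}(x,\{f=0\}) \leq |y-x| \leq \alpha^{-1/2}|f(x)|^{1/2}$, as desired.

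There is no real obstacle here; the main subtlety is the choice of the right one-parameter family, so that $y(t)$ hits $\{f=0\}$ while simultaneously one can control $|y-x|$ in terms of $f(x)$. The inequality $(1-t)^2 \leq 1 - t^2$ is what makes the two quantities compatible and produces the sharp power $1/2$.
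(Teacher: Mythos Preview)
Your proof is correct, and it takes a genuinely different route from the paper's. The paper first establishes the \L ojasiewicz gradient inequality $|\nabla f(x)|^{2}\ge 4\alpha\,|f(x)|$ (via the same diagonalization you use) and then runs the gradient flow $\dot\gamma=-\nabla f(\gamma)$ from $x$; integrating $-\tfrac{d}{dt}f^{1/2}$ along the trajectory bounds its length and hence the distance to $\{f=0\}$. Your argument bypasses the ODE entirely by exhibiting an explicit near-point $y\in\{f=0\}$ obtained by uniformly shrinking the coordinates in $I_{+}$, and the inequality $(1-t)^{2}\le 1-t^{2}$ is exactly what converts this into the sharp bound. Your approach is more elementary and self-contained for quadratic forms; the paper's gradient-flow technique is the standard template for \L ojasiewicz-type estimates and would extend verbatim to any function satisfying a gradient inequality of the form $|\nabla f|^{2}\ge c|f|$, which is perhaps why the authors chose it.
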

\begin{proof} Up to a rotation, we may assume that $A$ is a diagonal matrix, so that $f(x)=\sum_{i=1}^{d}\lambda_ix_i^2$. Note first that, for all $x\in \R^d$,
\begin{equation*}
   |\grad f(x)|^2=4|Ax|^2=4(A^2x) \cdot x=\sum_{i=1}^d\lambda_i^2x_i^2\geq 4\alpha \sum_{i=1}^d|\lambda_i|x_i^2\geq 4\alpha \left|\sum_{i=1}^d\lambda_ix_i^2 \right|=4\alpha|f(x)|.
\end{equation*}
Fix $x_0 \neq 0$, and assume with no loss of generality that $f(x_0)>0$.   Consider the gradient flow
\begin{equation*}
    \dot \gamma(t)=-\grad f(\gamma(t)), \quad t\geq 0, \quad \gamma(0)=x_0.
\end{equation*}
Then
\begin{equation*}
   \frac{d}{dt}f(\gamma(t))=-|\grad f(\gamma(t)) |^2,
\end{equation*}
and there exists $T\in (0,\infty]$ such that
\begin{equation*}
    \lim_{t\to T}f(\gamma(t))=0.
\end{equation*}
We therefore have
\begin{equation*}
\text{dist}(x_0,\{f=0\})\leq \text{length}(\gamma(0,T)) \leq \int_{0}^{T} \frac{|\grad f(\gamma(t))|^2}{|\grad f(\gamma(t))|}dt\leq \frac{1}{\sqrt{\alpha}}\int_0^{T} -\frac{df/dt}{2f^{\frac12}}=\frac{f(x_0)^{\frac12}}{\sqrt{\alpha}}.
\end{equation*}
\end{proof}
We are now ready to prove the main result of this section.
\begin{prop} \label{prop: -t dim bound all} Let $w:Q_1\to \R$ be a bounded solution to \eqref{eq:obstacle intro}. Then
        \begin{equation*}
        \dim_{\operatorname{par}}(\Sigma_{d} \backslash\Sigma_{d}^{\infty})\leq d-1.
    \end{equation*}
\end{prop}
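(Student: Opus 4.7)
The plan is to decompose $\Sigma_d\setminus \Sigma_d^{\infty}$ stratum by stratum, reduce each stratum to a spatial Hausdorff bound, and then extract an approximation of the spatial projection by zero sets of harmonic or quadratic polynomials from the one-sided Taylor expansion. By countable stability of parabolic Hausdorff dimension and the decomposition $\Sigma_d\setminus \Sigma_d^\infty = \Sigma_d^2 \cup \bigcup_{k\geq 3}\Sigma_d^k$, it suffices to bound each piece. The piece $\Sigma_d^2$ is Proposition~\ref{prop:-t dimension bound quadratic}. For each $k\geq 3$, the refined cleaning $|t_1 - t_0| \leq C|x_1 - x_0|^{k+1}$ of Proposition~\ref{prop: accelerated decay k>2} together with Lemma~\ref{lem: FRS cleaning} reduces the goal to $\dim_{\mathcal H}(\pi_x(\Sigma_d^k)) \leq d-1$.

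To bound the spatial projection, I would fix $X_0 = (x_0, t_0) \in \Sigma_d^k$ and exploit the unique second blow-up $p := p_{X_0}^{(k)}$ produced by Proposition~\ref{prop:second blow-up char taylor}. Expanding $p$ in powers of $t$,
\[
    p(x,t) \;=\; \sum_{j=0}^{\lfloor k/2\rfloor} h_j(x)\, t^j,
\]
the caloric recursion $\Delta h_j = (j+1)\,h_{j+1}$ makes the leading coefficient $\ell := h_{j^*}$, with $j^*$ the top index such that $h_{j^*} \not\equiv 0$, a homogeneous harmonic polynomial of degree $d_0 := k - 2 j^*$. For any nearby singular point $X_1 = (x_0 + \xi, t_0 + \tau) \in \Sigma_d^k$, I would evaluate the Taylor expansion of Proposition~\ref{prop:taylor} both at $X_0$ and at $X_1$ at a common intermediate point $(x_0 + \xi, t_0 - \rho^2)$, tie them together using the Lipschitz continuity of the blow-up (Lemma~\ref{lem: -t Taylor continuous dependence}) together with the refined cleaning $|\tau|\leq C|\xi|^{k+1}$, and choose $\rho$ in the admissible window $|\xi|^{1+\beta/2} \leq \rho \leq |\xi|$. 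After isolating the term carrying $\ell$ in the resulting polynomial identity, one obtains a bound of the form $|\ell(\hat\xi)| \leq C|\xi|^{\gamma}$ with $\gamma = \gamma(d_0, j^*, \beta) > 0$. The sharp \L ojasiewicz inequality for harmonic polynomials (Lemma~\ref{lem:loja sharp}) upgrades this to the Reifenberg-type approximation $\mathrm{dist}(\hat\xi,\{\ell = 0\}) \leq C |\xi|^{\gamma/d_0}$, so $\pi_x(\Sigma_d^k)$ is, at every scale around $x_0$, well-approximated by the $(d-1)$-dimensional zero cone of the harmonic polynomial $\ell$; a standard covering argument for sets well-approximated by zero sets of harmonic polynomials then gives $\dim_{\mathcal H}(\pi_x(\Sigma_d^k)) \leq d-1$. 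When $\ell$ is a nonzero constant, which forces $d_0 = 0$ and even $k$, the same scheme carried out one order deeper applies to the penultimate coefficient $h_{j^*-1}$, which is a nonzero homogeneous quadratic form by the relation $\Delta h_{j^*-1} = j^*\ell \neq 0$; the quadratic \L ojasiewicz inequality of Lemma~\ref{lem:loja 2} then plays the role of the harmonic one and the same conclusion follows.

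The main obstacle I anticipate is the third step above, namely extracting a clean \L ojasiewicz inequality on $\ell$ (or on $h_{j^*-1}$ in the constant-leading case) from the Taylor identity. The one-sided validity $|t-t_0|\geq |x-x_0|^{2+\beta}$ of Proposition~\ref{prop:taylor} restricts the choice of $\rho$ precisely to the regime in which the term carrying $\ell$ must be separated from both the $O(r^{k+1})$ Taylor error and from the contributions of the lower-$t$ coefficients $h_0, \ldots, h_{j^*-1}$. Balancing these constraints uniformly in $X_0$, and in particular handling the regime $j^*\geq 2$ where the natural balancing of $\rho$ against $|\xi|$ is tight against the admissible range of $\beta$, is where the argument requires real care. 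The Case~B analysis, which needs a separate, finer cancellation to reach the penultimate coefficient, is a further source of technical difficulty that has no counterpart in the melting problem.
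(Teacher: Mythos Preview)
Your proposal is correct and follows essentially the same approach as the paper: decompose by frequency, handle $\Sigma_d^2$ via Proposition~\ref{prop:-t dimension bound quadratic}, reduce $\Sigma_d^k$ for $k\geq 3$ to the spatial projection via the refined cleaning and Lemma~\ref{lem: FRS cleaning}, expand the second blow-up in powers of $t$, and apply the sharp harmonic \L ojasiewicz inequality to the leading (harmonic) coefficient, falling back to the penultimate (quadratic) coefficient via Lemma~\ref{lem:loja 2} when the leading one is constant.

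The one place your plan and the paper diverge is precisely the step you flag as the main obstacle. You propose evaluating the two Taylor expansions at a common intermediate point $(x_0+\xi,t_0-\rho^2)$ and balancing $\rho$ in the admissible window; this does lead to a delicate bookkeeping problem in the regime $j^*\geq 2$. The paper bypasses this entirely by invoking the variation estimate \eqref{eq: 2nd blow up variation} of Lemma~\ref{lem: -t Taylor continuous dependence} directly: setting $x=0$ there yields $\|\tilde q_k(0,\cdot)-q_k(\xi,\cdot+\tau)\|_{L^\infty([-14r^2,-10r^2])}\leq Cr^{k+1}$ as a polynomial inequality in $t$ alone, from which the $t^{j}$-coefficient is read off with no balancing at all, giving $|f(\xi)|\leq Cr^{b+1}$ with $b=k-2j$ in one line. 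In the even-$k$ case the same trick applied to the $t^{j-1}$-coefficient gives $|f(\xi)+\Delta f(\xi)\cdot\tau|\leq Cr^3$, and $\tau=O(r^{k+1})$ kills the second term. The paper also makes explicit the stratifications by $(k,j)$ and, in the quadratic case, by the number of nonzero eigenvalues of $A$, which is what guarantees the local uniformity in $X_0$ that you correctly identify as needed.
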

\begin{proof}
By Proposition \ref{prop:-t dimension bound quadratic}, Lemma \ref{lem: FRS cleaning}, and the same arguments as Proposition \ref{prop: Hausdorff time dependent}, it suffices to show that 
\begin{equation*}\operatorname{dim}_{\mathcal{H}}(\pi_x(\Sigma_{d} \backslash(\Sigma_{d}^2 \cup \Sigma_{d}^{\infty})))\leq d-1.\end{equation*}
Assume that $X_0=(x_0,t_0)\in \Sigma_d^k$ for some $k\geq 3$. Then, by Proposition \ref{prop:second blow-up char taylor}, $q_k:=q_{X_0}^{(k)}$ is a homogeneous, caloric polynomial of degree $k$. Therefore, there exists a non-zero homogeneous polynomial $h(x)$ of degree $k$ and an integer $j=j(x_0) \in \{0,1,\ldots,\lfloor k/2 \rfloor\}$ such that 
\begin{equation} \label{12eolpk defi}
 q_k(x,t)=\sum_{i=0}^{j} \frac{t^i }{i!} \Delta^{i}h(x),\quad (x,t)\in \R^d\times \R, \quad   \Delta^jh\not \equiv 0, \quad \Delta^{j+1}h(x)\equiv0.
\end{equation}
For each pair of integers $(k,j)$ such that $k\geq 3$ and $0\leq j \leq \lfloor k/2 \rfloor$, let 
\begin{equation*}
  S_{k,j}=\{(x_0,t_0) \in \Sigma_{d}^k:  j(x_0)=j\}. 
\end{equation*}
The problem is now reduced to proving that
\begin{equation} \label{1dskjdimest}
    \dim_{\mathcal{H}}(\pi_x(S_{k,j}))\leq d-1, \quad k\geq 3, \quad 0\leq j \leq \lfloor k/2 \rfloor.
\end{equation}
Assume that, for some small $r_0>0$, $X_1:=(x_1,t_1) \in S_{k,j}\cap (B_{r_0}(x_0)\times (-1,1))$, and let $\tilde{q}_k:=q_{X_1}^{(k)}$. Letting $r=|x_1-x_0|$, by Proposition \ref{prop: accelerated decay k>2}, we must have $(t_1-t_0)\in  (-Cr^{k+1},Cr^{k+1})$ for some constant $C>0$. Thus, we have from \eqref{eq: 2nd blow up variation} that
\begin{equation*}
 \|\tilde{q}_k-q_k(\cdot+(x_1-x_0),\cdot+(t_1-t_0))\|_{L^{\infty}(B_{2r}\times [-14r^2,-10r^2])}\leq Cr^{k+1}.   
\end{equation*}
and, in particular, 
\begin{equation} \label{2dsoakddimest2}
   \|\tilde{q}_k(0,\cdot)-q_k(x_1-x_0,\cdot + ( t_1-t_0))\|_{L^{\infty}( [-14r^2,-10r^2])} \leq Cr^{k+1}. 
\end{equation}
 Recalling \eqref{12eolpk defi}, depending on the parity of $k$, we must have
\begin{equation} \label{1e1pktil}
\tilde{q}_k(0,\cdot)\equiv 0 \quad\text{ or } \quad \tilde{q}_k(0,t)=\frac{t^{k/2}}{(k/2)!}    \Delta^{k/2}\tilde{q}_k(\cdot,0)=:\frac{t^{k/2}}{(k/2)!}\tilde{c}    .
\end{equation}
Assume first that $j<k/2$, and let $f(x):=\Delta^jh(x)$. Then the coefficient of  $t^j$ of $\tilde{q}_k(0,t)$ vanishes, and we have
\begin{equation} \label{eq:expanqklojpf}
    q_k(x_1-x_0,t+(t_1-t_0))=\sum_{i=0}^{j}\frac{1}{i!}(t+(t_1-t_0))^i\Delta^ih(x_1-x_0).
\end{equation}
 Thus, recovering the coefficient of $t^j$ in \eqref{2dsoakddimest2}, we get 
\begin{equation} \label{fprelojdq3}
    |f(x_1-x_0)|\leq Cr^{b+1}, \quad b:=k-2j=\deg(f)\geq1.
\end{equation}
 By the choice of $j$, $f$ is a  non-constant harmonic polynomial of degree $b$. The \L ojasiewicz inequality for harmonic polynomials (Lemma \ref{lem:loja sharp}) then yields
\begin{equation} \label{eq: higher order reifenberg}
\text{dist}(x_1-x_0,\{f=0\})\leq C|f(x_1-x_0)|^{\frac1b}\leq Cr^{\frac{b+1}{b}}, \quad x_1 \in S_{k,j}\cap B_r(x_0).  
\end{equation}
By Lemma \ref{lem: -t Taylor continuous dependence}, the coefficients of $f$ depend continuously on $x_0$, and therefore, the constant $C$ may be chosen locally uniformly in $(x_0,t_0)\in S_{k,j}$. Since $(b+1)/b>1$, \eqref{1dskjdimest} follows from standard arguments (for instance, it follows directly from \cite[Cor. 8.7]{BadLew} and \cite[M. Thm.]{wongkew}).

It remains to treat the case where $k$ is even and $j=k/2$. This time, we consider $f(x)=\Delta^{j-1}h(x)$. Since $h$ has degree $k$, we must have $f(x)=Ax\cdot x$ for some $d\times d$, non-zero matrix $A$. Since $j-1<k/2$, \eqref{2dsoakddimest2},  \eqref{1e1pktil}, and \eqref{eq:expanqklojpf} imply that
\begin{equation*}
    |f(x_1-x_0)+\Delta f(x_1-x_0) (t_1-t_0)|\leq C r^3.
\end{equation*}
Since $t_1-t_0=O(r^{k+1})=O(r^3)$, we infer that
\begin{equation*}
    |f(x_1-x_0)|\leq Cr^3.
\end{equation*}
Let $l=l(x_0)\in \{1,\ldots,d\}$ be the number of non-zero eigenvalues of $A$, and let $\alpha=\alpha(x_0)$ be given by \eqref{min |eig|}. Then, by Lemma \ref{lem:loja 2}, we have
\begin{equation*}
    \text{dist}(x_1-x_0,\{f=0\})\leq \frac{1}{\sqrt{\alpha}}|f(x_1-x_0)|^{\frac{1}{2}}\leq \frac{1}{\sqrt{\alpha}}Cr^{\frac32}.
\end{equation*}
Since the eigenvalues of $A$ depend continuously on $x_0$, the positive constant $\alpha(x_0)$ is locally uniformly bounded away from zero within the set
\begin{equation*}
    S_{k,k/2,l}=\{x_0\in S_{k,k/2}: l(x_0)=l\},
\end{equation*}
and we may conclude as before.
\end{proof} 

\section{Infinite order second blow-up in the top stratum: the set $\Sigma_d^\infty$}\label{sec:top stratum infinite}
In this section we complete the proof of Theorem \ref{thm:sigma} by showing that $\Sigma_d^\infty = \emptyset$ for global solutions (unless that solution is a translation of $(-t)^+$), and by showing that $\Sigma_d^\infty(w)\cap\{t= t_0\}$ is empty for most $t_0$ for local solutions. 

First we turn to the statement for global solutions. Our basic tool will be the (non-cutoff) monotonicity formulas. We recall the definitions of the height, energy, and frequency functions $$\begin{aligned} H(r,f) :=& \int_{t=-r^2} f^2(x)G(x,t)\, dx,\\
D(r, f) :=& 2r^2\int_{t=-r^2}|\nabla f|^2(x) G(x,t)\, dx,\\
\phi(r,f):=& \frac{D(r,f)}{H(r,f)}.\end{aligned}$$ 

We show below that for global solutions these quantities are monotone when applied to the difference between $w$ and its first blowup. Recall from \eqref{eq:good sign monotonicity formulas}  that if $u = w(x,t) - (-t)^+$, then $u\mathbf Hu = (-t)^+\chi_{\{w = 0\}} > 0$. Given that this term has a sign, it is not surprising that $\phi$ is monotone (compare with \cite{figalli} for similar computations with a cutoff and \cite{AEK} where the sign of a related error term expression is leveraged to get monotonicity in the context of elliptic boundary unique continuation). 

\begin{prop}\label{p:almgrenmonotonicity}
    Let $w$ solve \eqref{eq:obstacle intro} in $\mathbb R^d\times (-1, 0]$ with $(0,0) \in \Sigma_d$. Let $u(x,t):= w(x,t) - (-t)^+$ and assume that $u(x,t_0) \not\equiv 0$ for any $t_0\in (-1,0]$. Then \begin{equation}\label{e:monotone}\begin{aligned}
       \frac{2\phi(r,u) + 4}{r} \geq  \frac{d}{dr}\log(H(r,u)) &\geq \frac{2\phi(r,u)}{r}, \qquad \forall r < 1/2\\
\phi(r,u) \geq \phi(s,u) &\geq 2, \qquad \forall 0 < s < r < 1/2.
        \end{aligned}
    \end{equation}
\end{prop}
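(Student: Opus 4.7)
The strategy is a parabolic Almgren--Poon computation, with the obstacle source term $\mathbf{H}u = -\chi_{\{w=0\}}$ acting as a sign-definite perturbation, and exploiting a specific structural identity on the coincidence set. First I would record the basic identities. Since $(0,0)\in\Sigma_d$, the first blowup is $p(x,t)=(-t)^+$, with $\mathbf{H}p = 1$ on $\{t<0\}$, so \eqref{eq:obstacle intro} gives $\mathbf{H}u = -\chi_{\{w=0\}}$. The condition $\{w>0\}=\{w_t<0\}$ in \eqref{eq:obstacle intro} combined with $w\ge 0$ and the $C^{1,1}_x$ regularity forces $\nabla w = w_t = 0$ a.e.\ on $\{w=0\}$, and hence $Zw=0$ there. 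Combined with $Zp = 2p$ for $t<0$, this yields the crucial identity $Zu = 2u$ on $\{w=0\}$, so that
\[
u\mathbf{H}u = p\chi_{\{w=0\}}\ge 0, \qquad \langle Zu,\mathbf{H}u\rangle_r = 2\langle u,\mathbf{H}u\rangle_r \ge 0.
\]

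From here the sandwich bound in \eqref{e:monotone} is essentially algebraic. The lower bound $(\log H)'\ge 2\phi/r$ follows immediately from \eqref{eq:H' comp} and $\langle u,\mathbf{H}u\rangle_r\ge 0$. For the upper bound, at $t=-r^2$ the pointwise identity $u^2 = p^2 = r^4$ on $\{w=0\}$ yields $2r^2\langle u,\mathbf{H}u\rangle_r = 2r^4\int\chi_{\{w=0\}}G \le 2H(r,u)$, and hence $(\log H)'\le (2\phi+4)/r$. For the frequency monotonicity, the Almgren--Poon formula combined with Cauchy--Schwarz $H\langle Zu,Zu\rangle_r\ge \langle u,Zu\rangle_r^2$ and the identity $\langle Zu,\mathbf{H}u\rangle_r = 2\langle u,\mathbf{H}u\rangle_r$ from Step 1 yields, after simplification,
\[
\frac{rH(r)^2}{2}\phi'(r) \;\ge\; 2r^2\langle u,\mathbf{H}u\rangle_r\bigl(D(r,u)+2r^2\langle u,\mathbf{H}u\rangle_r - 2H(r,u)\bigr),
\]
whose first factor is non-negative, and whose second factor is non-negative precisely when $\phi\ge 2$.

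To close the argument, I would prove $\phi\ge 2$ by a first-exit-time argument: at any $r_0$ where $\phi(r_0)=2$, the displayed inequality forces $\phi'(r_0)\ge 0$, so the continuous function $\phi$ cannot cross the level $2$ downward. The base case $\phi(0+,u)\ge 2$ would come from a blowup argument: the rescaled normalization $\tilde u_r := u(r\,\cdot\,, r^2\,\cdot\,)/H(r,u)^{1/2}$ satisfies $H(1,\tilde u_r)=1$, and the uniform $L^\infty_{\mathrm{loc}}$ estimates supplied by Lemma \ref{lem energy} let one extract a subsequential limit $\tilde q$ that is a parabolically $2$-homogeneous caloric polynomial; the trace identity $\operatorname{tr}A = m$ from \eqref{eq:blow-up intro} rules out the degenerate case $\tilde q\propto t$, so a direct computation gives $\phi(1,\tilde q)=2$. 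Together with the first-exit-time argument this yields $\phi(r)\ge 2$ for all $r\in(0,1/2)$, and the monotonicity $\phi(r)\ge\phi(s)$ follows from the displayed inequality.

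The main obstacle is this last step, namely the non-cutoff blowup extraction on the global domain $\mathbb R^d\times(-1,0]$. In the cutoff analysis (Proposition~\ref{prop:second blow-up} and Lemma~\ref{lem: H L2 comp}), the finite-frequency assumption $\phi^\gamma(0+)<\infty$ is used critically to equate $H(r,\zeta u)$ with the cutoff $L^2$-mass and extract compactness. Removing the cutoff requires controlling the Gaussian tails of $u$ uniformly in $r$ via the polynomial growth of $u$ (which for global solutions should follow from obstacle-type estimates and Lemma~\ref{lem:comparability sigma d}); making this rigorous is the technical heart of the argument.
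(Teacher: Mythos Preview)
Your algebraic setup is correct and matches the paper: the identities $\mathbf{H}u=-\chi_{\{w=0\}}$, $Zu=2u$ on $\{w=0\}$, the sandwich bound on $(\log H)'$, and the inequality
\[
\tfrac{r}{2}H^2\phi' \;\ge\; 2r^2\langle u,\mathbf{H}u\rangle_r\bigl(D+2r^2\langle u,\mathbf{H}u\rangle_r - 2H\bigr)
\]
are exactly what the paper uses. The gap is in your closing step. The first-exit argument is not rigorous as stated: knowing $\phi'(r_0)\ge 0$ at a single point where $\phi(r_0)=2$ does not prevent $\phi$ from dipping below $2$ immediately thereafter, and the very existence of $\phi(0+)$ is what you are trying to prove, so invoking it as a base case is circular. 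Your proposed remedy, extracting a second blowup $\tilde q$, compounds the problem: without monotonicity you have no reason for $\tilde q$ to be homogeneous at all, and even granting that, your claim that $\tilde q$ is $2$-homogeneous is false in general (Proposition~\ref{prop:second blow-up} shows the second blowup at $\Sigma_d$ can have any degree $\ge 2$, and at $\Sigma_d^\infty$ there is no polynomial second blowup).

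The paper closes the argument differently and much more simply. Rather than bootstrapping from $\phi(0+)$, it proves directly that $r\mapsto r^{-4}(D(r,u)-2H(r,u))$ is nondecreasing: using \eqref{eq:H' comp}--\eqref{eq:D' comp} and your own identity $Zu=2u$ on $\{w=0\}$, one computes
\[
\frac{d}{dr}\,r^{-4}\bigl(D(r,u)-2H(r,u)\bigr)
= 2r^{-5}\langle Zu-2u,\,Zu-2u\rangle_r \;\ge\; 0.
\]
The limit at $0$ is then computed using only the \emph{first} blowup: since $r^{-4}D(r,u)=D(1,r^{-2}w_r)$ and $r^{-2}w_r(\cdot,-1)\to 1$ in $C^1_{\mathrm{loc}}$, one gets $r^{-4}D\to 0$, and similarly $r^{-4}H\to 0$. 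Hence $D(r,u)\ge 2H(r,u)$ for all $r$, which simultaneously gives $\phi\ge 2$ and (via your displayed inequality) $\phi'\ge 0$. No second blowup, no tail estimates, and no first-exit argument are needed.
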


\begin{proof}
   We compute, using \eqref{eq:H' comp} and \eqref{eq:good sign monotonicity formulas},
   \begin{equation}\label{e:derivh}
    \frac{d}{dr} H(r,u) = \frac{2}{r}(D(r,u)+2r^2\langle u,\cH u\rangle_r)\geq \frac{2}{r}D(r,u).
\end{equation}  
On the other hand, using \eqref{eq:good sign monotonicity formulas} we obtain
\begin{equation*} 4r\langle u,\cH u\rangle _r=4r\langle -u\chi_{\{w=0\}},1\rangle_r=4r^{-1}\langle u^2\chi_{\{w=0\}},1\rangle_r\leq 4r^{-1}H(r,u),    
\end{equation*}
which, dividing by $H(r,u)$ in \eqref{e:derivh}, readily implies the first line in \eqref{e:monotone}.

From \eqref{eq:H' comp} we have
    $$\frac{d}{dr}H(r,u) = 2r^{-1}\langle Zu,u\rangle_r.$$
We also have from \eqref{eq:D' comp} and \eqref{eq:good sign monotonicity formulas} that
\begin{equation}
    \frac{d}{dr}D(r,u)=\frac{1}{r}(2\langle Zu,Zu \rangle_r-4r^2\langle Zu,\cH u\rangle_{r})=\frac{2}{r}(\langle Zu,Zu \rangle_r-4r^2\langle u,\cH u\rangle_{r}).
\end{equation}
Combining the above equations,
\begin{multline} \label{e:derivativeofalmgren}
 H(r,u)^2\frac{d}{dr}   \phi(r,u)=2r^{-1}(\langle Zu, Zu\rangle_r \langle u,u\rangle_r -\langle Zu, u\rangle_r ^2)-8r\langle u, \cH u\rangle_r \langle u,u\rangle_r+4r\langle u,\cH u \rangle_r\langle Zu,u\rangle_r \\
 \geq 4r\langle u, \cH u \rangle_r (\langle Zu,u\rangle _r-2H(r,u))=4r\langle u,\cH u\rangle _r(D(r,u)+2r^2\langle u,\cH u\rangle _r-2H(r,u))\\\geq 4r\langle u,\cH u\rangle _r(D(r,u)-2H(r,u)).
\end{multline}

On the other hand, combining the above equations and using that $Zu = 2u$ in $\{w =0\}$, we get
\begin{equation}
    \frac{d}{dr}r^{-4}\left(D(r,u) - 2H(r,u)\right)=2r^{-5}\langle (Zu-2u,Zu-2u\rangle _r+2r^2 \langle 2 u-Zu,\cH u \rangle_r)=2r^{-5}\langle Zu,Zu\rangle_r\geq0.
\end{equation}

Finally we claim that $$\lim_{r\downarrow 0} r^{-4}(D(r,u) - 2H(r,u))= 0.$$ If this is true then we have that $D(r,u) \geq 2H(r,u)$ for all $0 < r < 1/2$, which, in light of \eqref{e:derivativeofalmgren} implies that $\frac{d}{dr}\phi(r,u) \geq 0$ and that $\phi(r,u) \geq 2$ for all $r < 1/2$. So we are done if we have proven the claim.

The claim follows by simple blow-up analysis. We see that $r^{-4}D(r,u) = r^{-4}D(r,w) = D(1, w_r)$ where $w_r = r^{-2}w(rx, r^2t)$. Note that $w_r(\cdot, -1)\in C^{1,1}_x$ (since $w_r$ also solves the obstacle problem \eqref{eq:obstacle intro}), so $w_r(\cdot, -1) \rightarrow 1$ in $C^1_{\operatorname{loc},x}$. It follows that $r^{-4}D(r,u) \rightarrow 0$ as $r\uparrow 0$. Similarly, by homogeneity we have that $r^{-4}H(r,u) = H(1, w_r - (-t)^+) \rightarrow H(1, (-t)^+-(-t)^+)$ by the same convergence as above. This finishes the proof of the claim and thus the lemma. 
\end{proof}

We now show the last ingredient needed for Theorem \ref{thm:sigma}.

\begin{prop}\label{prop:no jumps global}
    Let $w$ solve \eqref{eq:obstacle intro} in $\mathbb R^d\times (-1, 0]$ and assume that $(0,0) \in \Sigma_d^\infty(w)$. Then $w = (-t)^+$ in $\mathbb R^d \times (-1, 0]$. 
\end{prop}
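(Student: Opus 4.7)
The plan is to combine the infinite-order vanishing of $u := w-(-t)^+$ at $(0,0)$ with the non-cutoff frequency monotonicity of Proposition~\ref{p:almgrenmonotonicity}. First I would upgrade the assumption $(0,0)\in\Sigma_d^\infty$ from $L^2$ to $L^\infty$: since $(0,0)\in\Sigma_d^{\geq\gamma}$ for every $\gamma>2$, Lemma~\ref{lem:comparability sigma d} gives $\|u\|_{L^2(Q_r^-)}\leq K|Q_r|^{1/2}r^\gamma$, and rescaling Lemma~\ref{lem energy} applied to $\tilde u(x,t):=r^{-2}u(rx,r^2t)$ upgrades this to $\|u\|_{L^\infty(Q_r^-)}\leq C_\gamma r^\gamma$ for every $\gamma>0$ and every small $r$.

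Next I would use this to show that the non-cutoff height $H(r,u)$ decays faster than any polynomial. Split $H(r,u)=\int_{\R^d} u(x,-r^2)^2 G(x,-r^2)\,dx$ into the regions $|x|\leq 1/8$ and $|x|>1/8$. On the first piece, the previous step applied at scale $R=2\max(|x|,r)$ gives $|u(x,-r^2)|\leq C_\gamma(|x|+r)^\gamma$, so the change of variables $y=x/r$ in the Gaussian weight reduces the contribution to $C_\gamma r^{2\gamma}\int_{\R^d}(|y|+1)^{2\gamma}e^{-|y|^2/4}\,dy\leq C_\gamma' r^{2\gamma}$. On the second piece, global boundedness of $w$ combined with the concentration of $G(\cdot,-r^2)$ yields an $e^{-c/r^2}$ error. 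Hence $H(r,u)\leq C_\gamma'' r^{2\gamma}$ for every $\gamma>0$ and all small $r$.

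The argument then splits by the dichotomy: either (A) there exists $t_0\in(-1,0)$ with $u(\cdot,t_0)\equiv 0$, or (B) $u(\cdot,t)\not\equiv 0$ for every $t\in(-1,0)$. In Case~(B), $H(r,u)>0$ on $(0,1/2)$, so Proposition~\ref{p:almgrenmonotonicity} applies (its proof uses non-vanishing only to make $\phi(r,u)$ well-defined, which Case~(B) guarantees even if $u(\cdot,0)\equiv 0$). Setting $M:=\phi(1/4,u)<\infty$, monotonicity gives $\phi(r,u)\leq M$ for $r\leq 1/4$; the upper bound in \eqref{e:monotone} then integrates to $H(r,u)\geq c\,r^{2M+4}$, contradicting the previous step upon picking $\gamma>M+2$. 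So Case~(A) must hold. Then $w(\cdot,t_0)\equiv-t_0>0$, and since $w_t\leq 0$ we have $w>0$ on $\R^d\times(-1,t_0]$, so $u$ is bounded and caloric there with $u(\cdot,t_0)\equiv 0$; injectivity of the heat semigroup on $L^\infty(\R^d)$ (since the heat kernel has non-vanishing Fourier symbol $e^{-t|\xi|^2}$) forces $u\equiv 0$ on $(-1,t_0]$. On $[t_0,0]$, spatial translation invariance together with uniqueness of the forward obstacle problem make $w(x,t)$ spatially constant, reducing to the ODE $\psi'=-\chi_{\{\psi>0\}}$ with $\psi(t_0)=-t_0$, whose solution is $\psi(t)=-t$ on $[t_0,0]$. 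Thus $w\equiv(-t)^+$.

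The main obstacle is converting the cutoff, $L^2$-type infinite-order vanishing at $(0,0)$ into super-polynomial decay of the global, Gaussian-weighted object $H(r,u)$ against which the (linearly bounded) frequency monotonicity can be played off; this requires carefully pairing the local $L^\infty$ decay near the origin with the exponential concentration of $G(\cdot,-r^2)$ at small $r$. A secondary subtlety is the borderline case $u(\cdot,0)\equiv 0$ with $u(\cdot,t)\not\equiv 0$ for $t\in(-1,0)$, which is not literally covered by the hypothesis of Proposition~\ref{p:almgrenmonotonicity}, but for which its proof still yields the monotonicity of $\phi$ on $(0,1/2)$.
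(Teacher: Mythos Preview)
Your proposal is correct and follows essentially the same approach as the paper's proof: infinite-order vanishing forces super-polynomial decay of $H(r,u)$, Proposition~\ref{p:almgrenmonotonicity} gives a polynomial lower bound when $u\not\equiv 0$ on every slice, and the remaining case is dispatched by forward/backward uniqueness. You are in fact more careful than the paper on two points it glosses over --- the conversion from local $L^\infty$ decay on $Q_r^-$ to decay of the Gaussian-weighted height $H(r,u)$ via your near/far splitting, and the borderline case $u(\cdot,0)\equiv 0$ in the hypothesis of Proposition~\ref{p:almgrenmonotonicity}.
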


\begin{proof}
    First we assume that $w\not\equiv (-t)^+$. By forward uniqueness of the obstacle problem and backwards uniqueness of bounded solutions to the heat equation in $\mathbb R^d$, this implies that $w(x,t_0) \neq (-t_0)^+$ for any $t_0 \in (-1, 0]$. In particular, if $u = w-(-t)^+$, then $H(r,u) \neq 0$ for any $r\in [0,1)$. 

    We apply Proposition \ref{p:almgrenmonotonicity} and get that $$\infty > \Lambda \equiv \phi(1/2, u) \geq \phi(0,u).$$  Using \eqref{e:monotone} we have, $H(r,u)/H(1/2,u) \geq r^{2\Lambda + C}.$ In other words, there exists some constant $\tilde{C} > 0$ (which depends on $w$ but is non-zero) such that $$H(r,u) \geq \tilde{C}r^{2\Lambda+C}, \qquad \forall r < 1/4.$$

    On the other hand, recalling the definition of $\Sigma_d^k$ (i.e. \eqref{e:defsigmak}) and the estimate \eqref{eq:enoijultlessim} 
    since $(0,0) \in \Sigma_d^k$ for any $k$ we know that for any $k\in \mathbb N$ there exists an $r_k$ such that if $r < r_k$, then $$\|u\|_{L^\infty(Q^-_r)} \leq r^k.$$ Let $k \geq \Lambda + C/2$ and we get our desired contradiction. 
\end{proof}

We are now ready to complete the proof of Theorem \ref{thm:sigma}.

\begin{proof}[Proof of Theorem \ref{thm:sigma}] By Propositions \ref{prop: -t dim bound all} and \ref{prop:no jumps global}, \eqref{eq:dimpar finite freq} and \eqref{dichotomy} hold, respectively. It remains to show the properties of $\Sigma_d^{\infty}$ for local solutions. By a rescaling and partition of unity argument, it is  enough to show that if $w:Q_2 \to \R$ is a bounded solution to \eqref{eq:obstacle intro}, then $\Sigma_d^{\infty} \cap Q_1 \subset \{x\in B_1: t=h(x)\}$ for some $h\in C^{\infty}(B_1)$, and that
\begin{equation} \label{eq:hauspflst}
    \dim_{\mathcal{H}}(\{t\in (-1,1): (x,t)\in \Sigma_d^{\infty}\cap Q_1 \text{ for some } x\in B_1\})=0.
\end{equation}
By Lemmas \ref{lem energy} and \ref{lem:comparability sigma d}, for every $k\geq3$, there exists $C_k>0$ such that for every $(x_0,t_0) \in \Sigma_{d}^{\infty}\cap Q_1$,
\begin{equation}
    \|w-(t_0-t)\|_{L^{\infty}(Q_r^-(x_0,t_0))}\leq C_{k}r^{k}, \quad r\in (0,1).
\end{equation}
Thus, by Lemma \ref{lem:two-sided cleaning}, up to increasing the value of $C_k$,
\begin{equation} \label{eq:prewhit}
 |t_1-t_0|\leq C_{k}|x_1-x_0|^k , \quad (x_0,t_0), (x_1,t_1) \in \Sigma_{d}^{\infty}\cap \overline{Q_{1}}.
\end{equation}
By \cite[Prop. 7.7]{FRS20}, this implies \eqref{eq:hauspflst}.

On the other hand, one readily sees from Lemma \ref{lem:cubic convergence} that $\Sigma_d^{\geq3}\cap \overline{Q_1}$ is a closed set. Thus, by Proposition \ref{prop:second blow-up char taylor} and Lemma \ref{lem: -t Taylor continuous dependence}, $\Sigma_d^{\infty}\cap \overline{Q_1}$ is closed. By \eqref{eq:prewhit} and Whitney's extension theorem, it follows that $\Sigma_d^{\infty}\cap Q_1 \subset \{t=h(x)\}$, where $h\in C^{\infty}(B_1)$, and now we can conclude.
\end{proof}

\section{Examples of non-trivial $\Sigma^\infty_d$ and $\Sigma^2_d$ }\label{sec:global}

In this section we collect some interesting  examples about the ``anomalous'' points in the top stratum, $\Sigma_d^2$ and $\Sigma_d^\infty$. \subsection{Example of a local solution with $\Sigma_d^{\infty} \neq \emptyset$. }Our first example shows that if $w$ is a solution in a parabolic cylinder (as opposed to globally defined) then we can have $\Sigma_d^{\infty}(w) \neq \emptyset$ even when $w\not\equiv (-t)^+$. This shows the necessity of our assumption that $w$ be a global solution in the last part of Theorem \ref{thm:sigma}.

\begin{exa}\label{ex:tychonoff}Let
\begin{equation}
    g(t)=\begin{cases}
        e^{-1/t^2} & t< 0\\
        0 & t\geq 0
    \end{cases}
\end{equation}
and consider the Tychonoff function
\begin{equation}
    \varphi(x,t)= \sum_{k=0}^{\infty}g^{(k)}(t)\frac{x_1^{2k}}{(2k)!}.
\end{equation}
Then $\varphi$ is a smooth caloric function in $\R^d \times \R$, which vanishes for $t\geq 0$. Let $\vep>0$, and let
\begin{equation}
    w(x,t)=(-t)^++\vep \varphi(x,t), \quad (x,t)\in Q_1.
\end{equation}
Then $w$ solves
\begin{equation}
    w_t-\Delta w=-\chi_{\{t<0\}}, \quad (x,t)\in Q_1.
\end{equation}
On the other hand, since $\varphi(\cdot,0)= \varphi_t(\cdot,0)\equiv0$, if we choose $\vep$ sufficiently small, then $w=-t+\vep \varphi>0$ and $w_t=-1+\vep \varphi_t<0$ in $Q_1\cap \{t<0\}$. Thus $\{w>0\}=\{w_t<0\}$, $w$ solves \eqref{eq:obstacle intro} in $Q_1$, $w \not \equiv (-t)^+$, and yet $w(x,0) \equiv 0$, so $B_1 \times \{0\} \subset \Sigma_d^{\infty}$.
    
\end{exa}

\subsection{Examples with $\Sigma_d^2\neq \emptyset$} In this subsection we construct global solutions $w$ such that $\Sigma_d^2(w) \neq \emptyset$. That is we construct points with second blow-up that converges slower than any power.

We begin by showing that global solutions must vanish in finite time (cf. Lemma \ref{lem:two-sided cleaning}).
\begin{prop} Let $w:\R^d \times [0,\infty)\to \R$ be a bounded solution to \eqref{eq:obstacle intro}. The solution $w$ vanishes in finite time, and the extinction time $T:=\sup\{t: \|w(\cdot,t)\|_{\infty} >0\}$. satisfies
\begin{equation}
    T\leq c_d^{-1}\|w(\cdot,0)\|_{L^{\infty}(\R^d)},
\end{equation}
where $c_d$ is the constant of \eqref{eq:w nondegeneracy}.    
\end{prop}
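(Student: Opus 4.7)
The plan is to apply the non-degeneracy property \eqref{eq:w nondegeneracy} at a large parabolic scale. Although the cited statement is phrased for local solutions on $Q_1$ with $r \in (0,1)$, by translation invariance and parabolic rescaling (and because $w$ is defined on all of $\R^d \times [0,\infty)$), the same inequality applies at any point $(x_0,t_0) \in \overline{\{w>0\}}$ and any scale $r \in (0,\sqrt{t_0}\,]$. With this global version in hand, the proof is a one-step barrier argument.

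Fix any $t_1 \in (0,T)$. By the very definition of $T$, one has $\|w(\cdot,t_1)\|_{L^\infty(\R^d)} > 0$, so there exists $x_1 \in \R^d$ with $w(x_1,t_1) > 0$. In particular $(x_1,t_1) \in \{w>0\} \subset \overline{\{w>0\}}$. The next step is to apply \eqref{eq:w nondegeneracy} (in its translated and rescaled form) at $(x_1,t_1)$ with the choice $r := \sqrt{t_1}$, which yields
\[
\sup_{B_{\sqrt{t_1}}(x_1)} w(\cdot,0) \;\geq\; w(x_1,t_1) + c_d\, t_1 \;\geq\; c_d\, t_1.
\]
Hence $\|w(\cdot,0)\|_{L^\infty(\R^d)} \geq c_d\, t_1$. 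Letting $t_1 \uparrow T$ gives the desired bound $T \leq c_d^{-1}\|w(\cdot,0)\|_{L^\infty(\R^d)}$; in particular, $T$ is finite.

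I do not expect any serious obstacle. The only point that must be verified carefully is the global applicability of the non-degeneracy estimate at the (possibly large) scale $r = \sqrt{t_1}$. This is standard: parabolically rescaling $w$ by $r$ produces a solution of \eqref{eq:obstacle intro} on $Q_1$ to which \cite[Lem.~5.1]{CaPeSh} (as recorded in \eqref{eq:w nondegeneracy}) applies directly, and the fact that $w_t \leq 0$ ensures that one may pass to the time slice $\{t=t_1 - r^2\}$ as in the local version. Observe also that no assumption on the freezing time of individual points is needed: the argument uses only the mere existence, at each $t_1 < T$, of a single point where $w$ is positive.
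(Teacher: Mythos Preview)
Your proof is correct and follows essentially the same approach as the paper's: both apply the non-degeneracy estimate \eqref{eq:w nondegeneracy} centered at a point $(x_0,t_0)\in\overline{\{w>0\}}$ with scale $r=\sqrt{t_0}$, so that the supremum lands on the initial time slice and yields $c_d t_0 \leq \|w(\cdot,0)\|_{L^\infty}$. Your version is slightly more detailed (justifying the rescaling and passing to the limit $t_1\uparrow T$), but the argument is the same.
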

\begin{proof}
Let $x_0 \in \mathbb R^d$ and $t_0 >0$ be such that $(x_0,t_0) \in \overline{\{w>0\}}$. By \eqref{eq:w nondegeneracy} applied at scale $r=\sqrt{t_0}$ and centered at $(x_0,t_0)$, there exists a dimensional constant $c_d$ such that $$t_0c_d \leq \|w\|_{L^\infty}\leq \|w(\cdot,0)\|_{\infty}.$$ 
Thus, $w(\cdot,t_0)\equiv0$ for $t_0\geq \|w(\cdot,0)\|_{\infty}c_d^{-1}.$
\end{proof}

Our second lemma will help us to rule out non-quadratic second blow-ups in the radial setting.

\begin{lem} \label{lem: laguerre} Let $k\geq 2$, let $r=|x|$, and let $p$ be the unique homogeneous, radially symmetric, caloric polynomial of degree $2k$ such that $\frac{\partial^{2k} p}{\partial r^{2k}}=1$. Then there exist positive constants $c_1$ and $c_2$, depending only on $k$ and $d$, such that if $c_1t=-r^2$, then $\partial_r p(x,t)\leq -c_2r^{2k-1}$. 
\end{lem}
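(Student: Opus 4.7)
The plan is to identify $p$ explicitly with a generalized Laguerre polynomial under the self-similar variable $y=r^{2}/(-4t)$, and then choose $c_{1}$ using elementary sign properties of Laguerre polynomials.

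First, since $p$ is radial, parabolically $2k$--homogeneous and polynomial, it must take the form $p(x,t)=\sum_{j=0}^{k} a_{j}\, r^{2(k-j)}\, t^{j}$. Imposing $\cH p = 0$ and using $\Delta r^{2m}= 2m(2m+d-2)\, r^{2m-2}$ gives the recurrence
\[
a_{j+1}=\frac{2(k-j)\bigl(2(k-j)+d-2\bigr)}{j+1}\,a_{j}, \qquad 0\le j\le k-1,
\]
while $\partial_{r}^{2k} p=(2k)!\,a_{0}$ combined with the normalization forces $a_{0}=1/(2k)!$, giving existence and uniqueness. Setting $\tau=-t$, $y=r^{2}/(4\tau)$ and $\alpha=d/2-1$, a direct computation shows that $\tilde L(y):=\tau^{-k}p/C$ satisfies Laguerre's ODE $y\tilde L''+(\alpha+1-y)\tilde L'+k\tilde L=0$. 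Therefore
\[
p(x,t)=C\,(-t)^{k}\, L_{k}^{(\alpha)}\!\bigl(r^{2}/(-4t)\bigr),
\qquad C=(-1)^{k}\,\frac{4^{k}\, k!}{(2k)!}.
\]

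Next, on the parabola $c_{1}t=-r^{2}$ the similarity variable becomes constant: $y=c_{1}/4$. Differentiating in $r$ at fixed $t$ and using the classical identity $(L_{k}^{(\alpha)})'(y)=-L_{k-1}^{(\alpha+1)}(y)$ gives
\[
\partial_{r}p\big|_{c_{1}t=-r^{2}}
\;=\;-\frac{C}{2\,c_{1}^{\,k-1}}\,r^{2k-1}\,L_{k-1}^{(\alpha+1)}(c_{1}/4).
\]

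It only remains to choose $c_{1}>0$ making the right-hand side strictly negative. Since $\alpha=d/2-1>-1$, the Laguerre polynomial $L_{k-1}^{(\alpha+1)}$ has $k-1$ simple positive roots $0<\rho_{1}<\dots<\rho_{k-1}$, is positive at $0$ (with value $\binom{k+\alpha}{k-1}$), and alternates sign on the intervals between the roots. The prefactor $-C/(2c_{1}^{\,k-1})$ has sign $(-1)^{k+1}$, so we must place $c_{1}/4$ in an interval where $L_{k-1}^{(\alpha+1)}$ has sign $(-1)^{k}$: choose any $c_{1}/4\in(0,\rho_{1})$ when $k$ is even, and any $c_{1}/4\in(\rho_{1},\rho_{2})$ when $k\ge 3$ is odd (nonempty since $k-1\ge 2$). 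At an interior point $L_{k-1}^{(\alpha+1)}(c_{1}/4)\neq 0$, so setting $c_{2}:=|C|\,|L_{k-1}^{(\alpha+1)}(c_{1}/4)|/(2c_{1}^{\,k-1})$ yields $\partial_{r}p\le -c_{2}\,r^{2k-1}$ on the parabola, with $c_{1},c_{2}$ depending only on $k$ and $d$. The only place some care is required is tracking the sign of $C$: because $C$ alternates with the parity of $k$, the appropriate interval for $c_{1}$ differs in the even and odd cases, but both are always available for $k\ge 2$.
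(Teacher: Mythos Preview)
Your proof is correct. Both you and the paper begin by writing $p(x,t)=(-t)^{k}P(s)$ with a self-similar variable, and both reduce the claim to showing that $P'$ is strictly negative somewhere on $(0,\infty)$; the difference is in how this last fact is established.

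The paper's argument is more self-contained: it derives the second-order ODE satisfied by $P$, differentiates once to obtain the ODE for $F=P'$, and then recasts that ODE in Sturm--Liouville form $(s^{\mu}e^{-\nu s}F')'=-c\,s^{\mu-1}e^{-\nu s}F$ with $c>0$. Since $W(s)=s^{\mu}e^{-\nu s}F'(s)$ vanishes at $0$ and at $+\infty$ but is not identically zero (as $k\ge 2$), $W'$ must change sign, forcing $F=P'$ to take a negative value somewhere. No explicit identification of $P$ is needed.

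Your approach instead names the polynomial explicitly: after the change of variables $y=r^{2}/(-4t)$ one recognizes $P$ as a rescaled generalized Laguerre polynomial $L_{k}^{(d/2-1)}$, and the derivative identity $(L_{k}^{(\alpha)})'=-L_{k-1}^{(\alpha+1)}$ together with the classical interlacing/sign pattern of Laguerre zeros lets you locate a suitable $c_{1}$ directly. This is more explicit (one can in principle read off admissible values of $c_{1}$) at the cost of invoking standard special-function facts rather than deriving everything from the ODE. Both routes are short and yield exactly the same conclusion; the paper's Sturm--Liouville argument is perhaps the more elementary of the two, while yours makes the connection to Laguerre polynomials (already hinted at by the lemma's label) fully transparent.
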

\begin{proof}
Setting $s=r^2/(-t)$, we have, by homogeneity and radial symmetry,
\begin{equation}
    p(x,t)=(-t)^kP(s),
\end{equation}
where $P$ is a polynomial of degree $k$ in one variable (which depends only on $k, d$). Computing the radial derivative, we have
\begin{equation}
    \partial_r p(x,t)=2r^{-1}(-t)^kP'(s)s.
\end{equation}
It is then enough to show that $-c_2 := P'(c_1)<0$ for some $c_1>0$. Since $p$ is caloric, it follows that $P$ satisfies
\begin{equation}
    sP''(s)+(d-s/2)P'(s)+kP(s)=0.
\end{equation}
Differentiating this equation, we have that $F=P'$ satisfies
\begin{equation}
    sF''(s)+(d+1-s/2)F'(s)+(k-1/2)F(s)=0,
\end{equation}
or, equivalently,
\begin{equation}
    (s^{d+1}e^{-s/2}F'(s))'=-(k-1/2)s^{d}e^{-s/2}F(s), \quad s\in (0,\infty).
\end{equation}
The map $W(s)=s^{d+1}e^{-s/2}F'(s)$ satisfies $W(0)=W(+\infty)=0$, with $W \not \equiv 0$ in $(0,\infty)$. Therefore, $W'$ must change sign in $(0,\infty)$. Since $W'(s)=-(k-1/2)s^de^{-s/2}P'(s)$, it follows that $P'(c_1)<0$ for some $c_1\in(0,\infty),$ as wanted.
\end{proof}

Finally, we show that, for radial initial data, the global solutions constructed in \cite[Thm. 9.3]{KK} exhibit, at their extinction time, a singular point with first blow up $(-t)^+$ and quadratic second blow-up. We remark that, by a simple maximum principle argument, these solutions can be shown to be radially decreasing, which yields an entire family of radial solutions for which the following result applies.

\begin{prop}\label{p:sigma2example} Let $w=w(r,t)$ be a radial solution to \eqref{eq:obstacle intro} with $\partial_r w \leq 0$, and assume that $t=0$ is the extinction time. Then $(0,0)\in \Sigma_d^2(w)$, i.e. the first blowup at $(0,0)$ is $-t$ and the second blow-up has degree $2$.
\end{prop}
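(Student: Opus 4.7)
The plan is to verify $(0,0)\in \Sigma_d$ and then exclude both $\Sigma_d^{\infty}$ and $\Sigma_d^k$ for every $k\geq 3$; the disjoint decomposition $\Sigma_d=\Sigma_d^{\infty}\sqcup\bigsqcup_{k=2}^{\infty}\Sigma_d^k$ from Section~\ref{sec:top stratum quadratic} then leaves $(0,0)\in\Sigma_d^2$ as the only possibility.

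That $(0,0)\in\Sigma_d$ is a short blow-up computation. Since $w$ is radially decreasing and nontrivial for $t<0$ near the extinction time while $w(\cdot,0)\equiv 0$, the origin is a free boundary point. Any first blow-up $p_2=-mt+\tfrac12 Ax\cdot x$ must be radial, forcing $A=\tfrac{1-m}{d}I$; evaluating at $t=0$ gives $p_2(x,0)=\tfrac{1-m}{2d}|x|^2=\lim_{r\downarrow 0}r^{-2}w(rx,0)=0$, hence $m=1$ and $p_2=-t$. Furthermore, since the radial solutions under consideration are not identically $(-t)^+$, the dichotomy in Proposition~\ref{prop:no jumps global} rules out $(0,0)\in\Sigma_d^{\infty}$.

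To exclude $\Sigma_d^k$ for $k\geq 3$, I would suppose $(0,0)\in\Sigma_d^k$ and apply the one-sided Taylor expansion of Proposition~\ref{prop:taylor}. Uniqueness of that expansion, together with the radial symmetry of $w$, forces the leading profile $p:=p^{(k)}_{(0,0)}$ to be radial. Since a radial, parabolically homogeneous polynomial is a sum of monomials $|x|^{2\alpha}t^{\beta}$ and thus has even parabolic degree, $k$ must be even, say $k=2\ell$ with $\ell\geq 2$. The one-dimensional space of radial, caloric, parabolically $2\ell$-homogeneous polynomials (as one sees by recursively solving the caloric ODE for the coefficients) then lets us write $p(r,t)=(-t)^{\ell}P(r^{2}/(-t))$ with $\deg P=\ell$. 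Following the argument within the proof of Lemma~\ref{lem: laguerre}, the function $W(s):=s^{d+1}e^{-s/2}P''(s)$ vanishes at both endpoints of $(0,\infty)$ but is not identically zero, so $W'$ — and via $W'(s)=-(\ell-\tfrac12)s^{d}e^{-s/2}P'(s)$ also $P'$ — must change sign, giving an $s_+>0$ with $P'(s_+)>0$.

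On the parabolic curve $\{(x,t):|x|=r,\ t=-r^2/s_+\}$, direct computation yields
\[
\partial_r p(x,t)\;=\;2\,s_+^{\,1-\ell}\,P'(s_+)\,r^{2\ell-1},
\]
with a strictly positive coefficient. By Lemma~\ref{lem: freeze non-stationary} this curve sits inside $\{w>0\}$ for small $r$ and inside the domain of validity of the Taylor expansion, so the error $E:=w+t-p$ is caloric on a parabolic neighborhood of the curve and satisfies $\|E\|_{L^{\infty}}\leq Cr^{k+1}$; interior parabolic regularity then yields $|\partial_r E|\leq Cr^{k}$ along the curve. Therefore $\partial_r w=\partial_r p+O(r^{k})>0$ for sufficiently small $r$, contradicting the hypothesis $\partial_r w\leq 0$ and completing the exclusion of $\Sigma_d^k$ for $k\geq 3$. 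The main technical subtlety is the sign-change statement for $P'$: Lemma~\ref{lem: laguerre} as stated records only one sign, so to carry out the argument for arbitrary nonzero $p^{(k)}_{(0,0)}$ one must revisit the integral identity in its proof rather than quote the lemma as a black box.
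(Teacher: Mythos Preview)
Your proof is correct and follows essentially the same strategy as the paper: show the first blow-up at $(0,0)$ is $-t$, then rule out second blow-ups of degree $\geq 3$ via the Laguerre-type ODE analysis, leaving $\Sigma_d^2$ as the only option.

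There are two minor but worthwhile differences. First, you explicitly exclude $\Sigma_d^{\infty}$ by invoking the global dichotomy (Proposition~\ref{prop:no jumps global}); the paper leaves this step implicit, relying on the context that the solutions in question are global and nontrivial. Second, and more interestingly, you observe that the argument in the proof of Lemma~\ref{lem: laguerre} actually shows $P'$ \emph{changes sign} on $(0,\infty)$, not merely that it is negative somewhere. This lets you find a curve on which $\partial_r p>0$ regardless of the overall sign of the leading coefficient of $p^{(k)}_{(0,0)}$, and thereby bypass the paper's intermediate step of pinning down that sign via $\Delta w(0,t)\leq 0$. The paper instead uses $\Delta w(0,t)\leq 0$ to conclude $\partial_r^{2k}q<0$, and then applies Lemma~\ref{lem: laguerre} as a black box (with its fixed normalization $\partial_r^{2k}p=1$). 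Your route is slightly more economical; the paper's is slightly more modular. Both are valid.
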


\begin{proof}
On each time slice (in the positivity set) $\Delta w = 1+\partial_t w$. If $\partial_t w > - 1$ on the time slice (in the positivity set) then $\Delta w > 0$ which is a contradiction to the fact that $w$ achieves its maximum at $0$. Thus on each time slice (before the vanishing time) $\sup -w_t > 1$ which means the blow-up must be $-t$ at $(0,0)$. 

Assume, by contradiction, that $w$ has a second blow-up of degree $m\geq3$, and let $q$ be the homogeneous polynomial of degree $m$ given by Lemma \ref{lem poly d}. Since $w$ is radial, $q$ must be radial, and, in particular, $m=2k$ for some $k\geq 2.$ Since $w$ is radially decreasing, we must have, for each $t<0$, $\max w(\cdot,t)=w(0,t)$ and, in particular, $\Delta w(0,t)\leq 0$. By Lemma \ref{lem poly d} (and interior estimates for the heat equation), there exists an $\alpha > 0$ such that
\begin{equation}
    0\geq \Delta w(0,-r^2)=\Delta q(0,-r^2)+O(|r|^{2k-2+\alpha}), \quad r_0>r>0.
\end{equation}
Since $\Delta q (x,t)$ has parabolic degree $2k-2$, the above implies that  $\Delta q(0,t) \leq 0$. As such we infer that there exists $a>0$ such that $\frac{\partial^{2k}q}{\partial r^{2k}} = -a$. But then, letting $c_1$ and $c_2$ be the constants of Lemma \ref{lem: laguerre}, we have once more by Lemma \ref{lem poly d}, for small $r>0$,
\begin{equation}
 0 \geq \partial_r w(r,-c_1r^2) =q_r(r,-c_1r^2)+O(r^{2k-1+\alpha})\geq ac_2 r^{2k-1}+O(r^{2k-1+\alpha}).
\end{equation}
This yields a contradiction for $r$ sufficiently small.
    
\end{proof}

The existence of these radial solutions also allows us to build solutions for which $\pi_x(\Sigma_d^2)$ is locally infinite and for which $\pi_t(\Sigma_d^2)$ has an accumulation point at the initial time. 

\begin{exa}\label{ex:lotsofsing}
    We construct a global solution $w: \mathbb R^d\times \mathbb R_{>0}$ to \eqref{eq:obstacle intro} such that $\{w(x, 0) > 0\} \subset B_3$ and $\pi_x(\Sigma_d^2(w))\cap B_2(0)$ has infinite cardinality. Let $u(x,t)$ be a radial solution guaranteed by Proposition \ref{p:sigma2example}. There is a time $t_0 < 0$ (which depends on $u$) such that $\{x\mid u(x, t_0) > 0\} = B_1(0)$. If $e\in \mathbb S^{d-1}$ let $p_n = \sum_{k=0}^n 2^{-k}e$ so that $B_{2^{-n}}(p_n)\cap B_{2^{-m}}(p_m) = \emptyset$ when $m\neq n$ and $B_{2^{-n}}(p_n) \subset B_3$

    Define $$w(x,t) := \sum^\infty_{n=1}2^{-2n}u(2^{n}(x - p_n), 2^{2n}(t -t_0)).$$ We claim that this is the desired solution. Indeed each $2^{-2n}u(2^{n}(x - p_n), 2^{2n}t -t_0)$ solves the obstacle problem and is supported in $B_{2^{-n}}(p_n)\times (0, -2^{-2n}t_0)$. Since the supports are disjoint (by construction) the sum is a solution to \eqref{eq:obstacle intro}. Furthermore, since the supports are disjoint, Proposition \ref{p:sigma2example} shows that $\bigcup_{n=1}^\infty \{(p_n, -2^{-2n}t_0)\} = \Sigma_d^2(w)$. The claim follows. 
\end{exa}


\subsection*{Acknowledgments} ME was partially supported by NSF DMS CAREER 2143719 and by a grant from the Simons Foundation (Grant Award ID BD-Targeted-00017375-ME). IK was partially supported by NSF DMS 2452649. Part of this work was completed during IK's visit to KIAS, and she thanks their hospitality.

\end{document}